\def\O{\Omega}
\newtheorem{prob}{Problem}
\newtheorem{remark}{Remark}[section]
\newtheorem{lemma}{Lemma}[section]
\newtheorem{corollary}{Corollary}[section]
\newtheorem{thm}{Theorem}[section]
\newtheorem{definition}{Definition}
\newcommand\bu{\boldsymbol{u}}
\def\CT{{\mathcal T}}
\newcommand\bPi{\boldsymbol{\Pi}}
\renewcommand\H{\mathrm{H}}
\renewcommand\O{\Omega}
\newcommand\rot{\mathop{\mathrm{rot}}\nolimits}
\newcommand{\vertiii}[1]{{\left\vert\kern-0.25ex\left\vert\kern-0.25ex\left\vert #1 
    \right\vert\kern-0.25ex\right\vert\kern-0.25ex\right\vert}}
\begin{document}

\title[VEM for the elasticity problem with small edges]
{A virtual element method for the elasticity problem allowing small edges}

%\thanks{}
\author{Danilo Amigo}
\address{GIMNAP-Departamento de Matem\'atica, Universidad del B\'io - B\'io, Casilla 5-C, Concepci\'on, Chile.}
\email{danilo.amigo2101@alumnos.ubiobio.cl}
%\thanks{The first author was partially supported by CONICYT/PAI/Concurso Apoyo a Centros Cient\'ificos y Tecnol\'ogicos de Excelencia con Financiamiento Basal AFB 170001.}
\author{Felipe Lepe}
\address{GIMNAP-Departamento de Matem\'atica, Universidad del B\'io - B\'io, Casilla 5-C, Concepci\'on, Chile.}
\email{flepe@ubiobio.cl}
\thanks{The first and second authors were partially supported by
DIUBB through project 2120173 GI/C Universidad del B\'io-B\'io and ANID-Chile through FONDECYT project 11200529 (Chile).}

\author{Gonzalo Rivera}
\address{Departamento de Ciencias Exactas,
Universidad de Los Lagos, Casilla 933, Osorno, Chile.}
\email{gonzalo.rivera@ulagos.cl}
\thanks{The third author was partially supported by
 Universidad de Los Lagos through regular project R02/21.}

%\thanks{The third author was partially supported by BASAL project CMM,
%Universidad de Chile (Chile).
%The fourth author was partially supported by CONICYT-Chile through
%the project AFB170001 of the PIA Program:
%Concurso Apoyo a Centros Cient\'ificos y Tecnol\'ogicos
%de Excelencia con Financiamiento Basal}

\subjclass[2000]{Primary 35J25, 65N15, 65N30, 65N12,74B05, 76M10}

\keywords{Elasticity equations, eigenvalue problems,  error estimates}

\begin{abstract}
In this paper we analyze a virtual element method for the two dimensional elasticity problem allowing small edges. With this approach, the classic  assumptions on the geometrical features of the polygonal meshes can be relaxed. In particular, we consider only star-shaped polygons for the meshes. Suitable error estimates are presented, where a rigorous analysis on the influence of  the Lam\'e constants in each estimate  is presented. We report numerical tests to assess the performance of the method.
\end{abstract}

\maketitle
%%%%%%%%%%%%%%%%%%%%%%%%%%%%%%%%%%%%%%%%%%%%%%%%%%%%%%%%%%%%%
%%%%%%%%%%%%%%%%%%%%%%%%%%%%%%%%%%%%%%%%%%%%%%%%%%%%%%%%%%%%%
%%%%%%%%%%%%%%%%%%%%%%%%%%%%%%%%%%%%%%%%%%%%%%%%%%%%%%%%%%%%%
%%%%%%%%%%%%%%%%%%%%%%%%%%%%%%%%%%%%%%%%%%%%%%%%%%%%%%%%%%%%%
%%%%%%%%%%%%%%%%%%%%%%%%%%%%%%%%%%%%%%%%%%%%%%%%%%%%%%%%%%%%%
%%%%%%%%%%%%%%%%%%%%%%%%%%%%%%%%%%%%%%%%%%%%%%%%%%%%%%%%%%%%%
%%%%%%%%%%%%%%%%%%%%%%%%%%%%%%%%%%%%%%%%%%%%%%%%%%%%%%%%%%%%%
%%%%%%%%%%%%%%%%%%%%%%%%%%%%%%%%%%%%%%%%%%%%%%%%%%%%%%%%%%%%%

\section{Introduction}\label{sec:intro}
The virtual element method (VEM) is a numerical approach that has taken relevance in the community 
of numerical analysis in the nowadays, since t allows to approximate the solutions of partial  differential equations
with accuracy together with a computational  implementation that results to be easy to handle. Moreover, the flexibility of this method
on the geometrical assumptions on the elements of the meshes, which permits arbitrary polygonal elements such as convex and nonconvex polygons, and hanging nodes, that are not allowed in the finite element approach, just for mention the most relevant. These considerations, lead the VEM to be a suitable alternative to approximate physical phenomenons that the standard finite element method is not possible to consider, such as domains with fractures, cuspids, holes, etc. A full treatment of the basic 
elements of the VEM is available in \cite{MR2997471, MR3200242}.

In recent years, different studies have been carried out that demonstrate the good performance of VEM on problems of different natute, such as evolutionary problems, spectral problems, nonlinear problems, among others. For a better knowledge of the recent state of art we resort to  \cite{ABM2022}.  Now, in the continuous advance in the research of virtual methods, a new approach has been  developed that allows the use of even more general and flexible meshes. This  new approach for the VEM is now related to consider small edges for the polygonal elements on the meshes, as is presented in \cite{MR3714637,MR3815658, MR3857893}. More precisely, depending on the regularity of the solution of the partial differential equations, is possible to only consider  star shaped polygonal elements, or a bounded number of edges for the polygons. In both cases
these assumptions are enough to allow sufficiently small sides on the elements. To make matter precise, in \cite{MR3714637}
the authors have developed this new strategy considering the Laplace operator in two dimensions, whereas in \cite{MR3815658} have been capable to extend the ideas for three dimensions, proving that VEM with small edges also 
works when more complicated domains are considered.

The aim now is to analyze the performance of the VEM with small edges in other contexts, in fact, this research is in ongoing progress.  For instance, in \cite{MR4284360} an application of this new approach in eigenvalue problems has shown the accuracy of the approximation of the spectrum for the Steklov eigenvalue problem, in \cite{MR4461634} for elliptic interface problems, \cite{MR4359996} for three dimensional problems considering polytopal meshes, etc.

In the present paper, our contribution is to apply the methodology of small edges for the two dimensional linear elasticity problem. The bibliography related to numerical methods to approximate the displacement of some elastic structure is abundant, where different method as been proposed, as  \cite{MR2293249, MR3860570, MR2754580, MR3453481, MR4497821, MR3376135, MR4477479}, where mixed finite element methods, mixed virtual element methods, stabilized methods, discontinuous Galerkin methods,  just for mention some of them, have been considered. Regarding the study of VEM applied to elasticity problems, we can cite the following works \cite{MR3715385,MR4126310,MR4050542,MR3845217,MR4119968,MR3767447,MR3881593}. In particular,  a VEM for the  primal formulation of the elasticity eigenvalue problem as been proposed in \cite{MR4050542},  where the proposed virtual spaces on this reference are a suitable alternative for the load problem. These virtual spaces for the elasticity equations, together with the small edges approach, are the core of our contribution.

In our paper, we are interested in the primal formulation of the linear elasticity equations with vanishing Dirichlet boundary condition. This boundary condition is needed for our purposes, since lead to the precise regularity that the small edges approach needs to be performed. 

It is well known that the primal formulation for linear elasticity leads to the so called locking phenomenon when finite element families to discretize $\H^1$ are implemented, which not occurs when mixed formulations are considered. This locking is due to the effects of the Poisson ratio, which we denote by $\nu$,  on the computation of the Lam\'e coefficient $\lambda_S$: if $\nu$ tends to $1/2$, then $\lambda_S$ goes to infinity. Then, it is important to have a correct control and knowledge  on the constants that appear on the estimates when the mathematical analysis is performed. Moreover, the numerical tests that we will present lead to the conclusion that,  regardless of the  family of polygonal mesh under consideration, the convergence errors are not deteriorated; however, when $\nu$ tends to $1/2$, the deterioration of the error curves is observed.

With this consideration, for the small edges approach we need to take care on this Lam\'e coefficient as with  any other family of elements that discretizes $\H^1$. Moreover, the small edges approach that we will consider to run the analysis is based in the results of \cite{MR3815658}. Hence, we need to adapt the theory, already available for the Laplace operator, for the elasticity operator. This obligates to have a spacial treatment on this context due the Lam\'e constants, specially $\lambda_S$.

The outline of this manuscript is as follows. In Section \ref{sec:model_problem}  the model problem that we consider in our paper, summarizing the results that are needed along our studies, together with definitions and notations to develop the mathematical analysis. The core of our manuscript is contained in  Section \ref{sec:vem}, where  we introduce the virtual element method of our interest. Under the assumption that the meshes allow small edges for the elements, we prove a series of technical results that are needed for the elasticity operator. These results contain important information related to how the physical parameters, inherent on the linear elasticity equations, are involved on the estimates associated to projections, interpolant operators and error estimates, among others.  Finally, in Section \ref{sec:numerics}, we report numerical tests, that illustrate the theory and exhibit the performance of the proposed method.

%%%%%%%%%%%%%%%%%%%%%%%%%%%%%%%%%%%%%%%%%%%%%%%%%%%%%%%%%%%%%
%%%%%%%%%%%%%%%%%%%%%%%%%%%%%%%%%%%%%%%%%%%%%%%%%%%%%%%%%%%%%
%%%%%%%%%%%%%%%%%%%%%%%%%%%%%%%%%%%%%%%%%%%%%%%%%%%%%%%%%%%%%
%%%%%%%%%%%%%%%%%%%%%%%%%%%%%%%%%%%%%%%%%%%%%%%%%%%%%%%%%%%%%
%%%%%%%%%%%%%%%%%%%%%%%%%%%%%%%%%%%%%%%%%%%%%%%%%%%%%%%%%%%%%
%%%%%%%%%%%%%%%%%%%%%%%%%%%%%%%%%%%%%%%%%%%%%%%%%%%%%%%%%%%%%
%%%%%%%%%%%%%%%%%%%%%%%%%%%%%%%%%%%%%%%%%%%%%%%%%%%%%%%%%%%%%
%%%%%%%%%%%%%%%%%%%%%%%%%%%%%%%%%%%%%%%%%%%%%%%%%%%%%%%%%%%%%

%Let $\O$ be a polygonal Lipschitz bounded domain of $\R^2$ with
%boundary $\DO$. For $s\geq 0$, $\norm{\cdot}_{s,\O}$ stands indistinctly
%for the norm of the Hilbertian Sobolev spaces $\HsO$, $\HsO^2$ or
%$\mathbb{H}^s(\O)$ for scalar, vectorial and tensorial fields, respectively, with the convention $\H^0(\O):=\LO$, $\H^0(\O)^{2}=\L^2(\O)^2$ and $\mathbb{H}^0(\O):=\mathbb{L}^2(\O)$. We also define for
%$s\geq 0$ the Hilbert space 
%$\mathbb{H}^{s}(\bdiv;\O):=\set{\btau\in\mathbb{H}^s(\O):\ \bdiv\btau\in\HsO^2}$, whose norm
%is given by $\norm{\btau}^2_{\mathbb{H}^s(\bdiv;\O)}
%:=\norm{\btau}_{s,\O}^2+\norm{\bdiv\btau}^2_{s,\O}$.
%Henceforth, we denote by $C$ generic constants independent of the discretization
%parameter, which may take different values at different places.
%
\section{The model problem}
\label{sec:model_problem}
Let  $\Omega \subset \mathbb{R}^{2}$ be a convex, bounded, and open Lipschitz domain with boundary $\partial \Omega$.  The linear elasticity problem is as follows
\begin{equation}
\label{elast}
\left\{\begin{array}{cccc}
\textbf{div}(\boldsymbol{\sigma}(\textbf{u})) &=& -\varrho \textbf{f} \quad \text{in} \; \Omega, \\
%\boldsymbol{\sigma}(\textbf{w})\cdot \boldsymbol{\eta} &=&  0 \quad \text{on} \; \Gamma_{N}, \\
\textbf{u} &=&  \boldsymbol{0} \quad \text{on} \; \partial\O,
\end{array}\right.
\end{equation}
where  $\textbf{u}$ represents the displacement of the solid, $\textbf{f}$ is an external load, $\varrho$ represents the density of the material that we assume constant,  $\boldsymbol{\varepsilon}$ represents the strain tensor, and  $\boldsymbol{\sigma}$  is the Cauchy tensor. The strain and Cauchy tensors are related through the operator $\boldsymbol{\mathcal{C}}$, where $\boldsymbol{\sigma}(\textbf{u}) = \boldsymbol{\mathcal{C}}\boldsymbol{\varepsilon}(\textbf{u})$ and,  which according to Hooke's law,   $\boldsymbol{\mathcal{C}}$ is defined by 
\begin{equation*}
\boldsymbol{\mathcal{C}}\boldsymbol{\tau} := 2\mu_{S}\boldsymbol{\tau} + \lambda_{S}\text{tr}(\boldsymbol{\tau})\mathbb{I},
\end{equation*}
where $\mathbb{I}\in\mathbb{R}^{2\times 2}$ is the identity matrix, and  $\mu_S$ and $\lambda_S$ are the Lam\'e coefficients defined by
\begin{equation*}
\mu_{S} = \dfrac{E}{2(1 + \nu)} \quad	\text{and}\quad \lambda_{S} = \dfrac{E\nu}{(1 + \nu)(1 - 2\nu)},
\end{equation*}
where  $E$ is the Young's modulus and $\nu$ is the Poisson ratio. Let us remark that when $\nu\rightarrow 1/2$, then $\lambda_S\rightarrow \infty$, which is an important issue that we need to take in consideration  for our purposes.
The variational formulation of  \eqref{elast} is the following: Given $\textbf{f} \in \textbf{L}^{2}(\Omega)$, find $\textbf{u} \in \textbf{H}_{0}^{1}(\Omega)$ such that 
\begin{equation}
\label{eq:variational1}
\displaystyle{\int_{\Omega}} \boldsymbol{\sigma}(\textbf{u}) : \boldsymbol{\varepsilon}(\textbf{v}) = \displaystyle{\int_{\Omega} \varrho \textbf{f}\cdot \textbf{v}} \quad \forall \; \textbf{v} \in \textbf{H}^{1}_{0}(\Omega).
\end{equation}
\noindent Let us define the symmetric and  continuous bilinear form
\begin{equation}\label{a}
a :  \textbf{H}_{0}^{1}(\Omega) \times \textbf{H}_{0}^{1}(\Omega) \longrightarrow \mathbb{R}, \quad a(\textbf{w}, \textbf{v}) := \displaystyle{\int_{\Omega}} \boldsymbol{\sigma}(\textbf{w}) : \boldsymbol{\varepsilon}(\textbf{v}) \quad \forall \textbf{w}, \textbf{v} \in \textbf{H}_{0}^{1}(\Omega).
\end{equation}
%\begin{center}
%\begin{tabular}{cccl}
%$a:$ & $\textbf{H}_{\Gamma_{D}}^{1}(\Omega) \times \textbf{H}_{\Gamma_{D}}^{1}(\Omega)$ & $\longrightarrow$ & $\mathbb{R}$  \\
% & $(\textbf{u}, \textbf{v})$ & $\longmapsto$ & $a(\textbf{u}, \textbf{v}) := \displaystyle{\int_{\Omega}} \boldsymbol{\sigma}(\textbf{u}) : \boldsymbol{\varepsilon}(\textbf{v})$,
%\end{tabular}
%\end{center}
and the linear functional
\begin{equation}\label{F}
F : \textbf{H}_{0}^{1}(\Omega) \longrightarrow \mathbb{R}, \quad F(\textbf{v}) := \displaystyle{\int_{\Omega} \varrho \textbf{f}\cdot \textbf{v}} \quad \forall \textbf{v} \in \textbf{H}_{0}^{1}(\Omega).
\end{equation}
%\begin{center}
%\begin{tabular}{cccl}
%$F:$ & $\textbf{H}_{\Gamma_{D}}^{1}(\Omega)$ & $\longrightarrow$ & $\mathbb{R}$  \\
% & $\textbf{v}$ & $\longmapsto$ & $F(\textbf{v}) := \displaystyle{\int_{\Omega} \varrho \textbf{f}\cdot \textbf{v}}$.
%\end{tabular}
%\end{center}
Note that $|a(\textbf{w}, \textbf{v})| \lesssim \max\{\lambda_{S},\mu_{S}\}\|\textbf{w}\|_{1,\O}\|\textbf{v}\|_{1,\O}$, and $F(\textbf{v}) \leq \varrho\|\textbf{f}\|_{0,\O}\|\textbf{v}\|_{1,\O}$, proving that $a(\cdot,\cdot)$ and $F(\cdot)$ are bounded. Now, with  \eqref{a} and \eqref{F} at hand, we rewrite \eqref{eq:variational1} as follows
\begin{prob}\label{continuous} 
Given $\textbf{f} \in \textbf{L}^{2}(\Omega)$, find $\textbf{u} \in \textbf{H}_{0}^{1}(\Omega)$ such that 
\begin{equation*}
\label{eq:source_ab}
a(\textbf{u}, \textbf{v}) = F(\textbf{v}), \quad \forall \; \textbf{v} \in \textbf{H}_{0}^{1}(\Omega).
\end{equation*}
\end{prob}
From Korn's inequality and Lax-Milgram's lemma we have that Problem \ref{continuous} is well posed and its solution satisfies  $\|\textbf{u}\|_{1, \Omega} \leq \dfrac{1}{\widehat{C}}\|\textbf{f}\|_{0, \Omega}$, where  $\widehat{C}>0$ is independent of  $\lambda_{S}$. Moreover, from 
 \cite{MR840970}, the following regularity holds.
\begin{lemma}\label{reg}
For every $\textbf{f} \in \textbf{L}^{2}(\Omega)$, the solution of Problem \ref{continuous} is such that $\textbf{u} \in \textbf{H}^{2}(\Omega)$. Moreover, there exists $C > 0$ such that $\|\textbf{u}\|_{2,\Omega} \leq C\|\textbf{f}\|_{0,\Omega}$.
%\begin{equation*} 
%\|\textbf{w}\|_{2,\Omega} \leq C\|\textbf{f}\|_{0,\Omega}.
%\end{equation*}
\end{lemma}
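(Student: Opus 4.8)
The plan is to treat this as a classical second-order elliptic regularity statement and reduce it to the theory of strongly elliptic systems on convex domains. First I would rewrite the system \eqref{elast} in Lam\'e form by inserting Hooke's law. Since $\bsig(\bu)=2\mu_{S}\beps(\bu)+\lambda_{S}\tr(\beps(\bu))\mathbb{I}$ and $\tr(\beps(\bu))=\div\bu$, a direct computation using the identity $\bdiv(2\beps(\bu))=\Delta\bu+\nabla(\div\bu)$ turns the equilibrium equation into
\begin{equation*}
-\mu_{S}\Delta\bu-(\mu_{S}+\lambda_{S})\nabla(\div\bu)=\varrho\bF\quad\text{in }\O,\qquad \bu=\0\ \text{on }\DO.
\end{equation*}
The principal symbol of this operator is $\mu_{S}\abs{\xi}^{2}\mathbb{I}+(\mu_{S}+\lambda_{S})\,\xi\otimes\xi$, which satisfies the Legendre--Hadamard (strong ellipticity) condition because $\mu_{S}>0$ and $\lambda_{S}\ge 0$; hence the full machinery of elliptic regularity is available.

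Second, I would invoke that regularity theory. On a domain with smooth boundary the Agmon--Douglis--Nirenberg estimates yield at once $\bu\in\textbf{H}^{2}(\O)$ together with the a priori bound $\norm{\bu}_{2,\O}\lesssim\norm{\bF}_{0,\O}+\norm{\bu}_{1,\O}$. The genuine difficulty is that $\O$ is only assumed Lipschitz, so corners are admitted and generic corner singularities would push the solution out of $\textbf{H}^{2}$. The convexity hypothesis is exactly what rescues the argument: at a convex corner of opening smaller than $\pi$ the leading singular exponents of the Lam\'e operator exceed one, so no $\textbf{H}^{2}$-obstructing singular function appears in the expansion of $\bu$ near the boundary. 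This singularity-exponent analysis is precisely what \cite{MR840970} carries out, and I would cite it for the corner computation rather than reproduce it.

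Third, I would combine the two estimates. The already-established $\textbf{H}^{1}$ stability bound $\norm{\bu}_{1,\O}\le\widehat{C}^{-1}\norm{\bF}_{0,\O}$ absorbs the lower-order term on the right-hand side of the a priori inequality, producing $\norm{\bu}_{2,\O}\le C\norm{\bF}_{0,\O}$ as claimed.

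The main obstacle is the corner analysis: on a nonconvex or merely Lipschitz domain the conclusion fails outright, so the entire weight of the lemma rests on convexity and the associated computation of singular exponents for the elasticity operator, which is the delicate part delegated to \cite{MR840970}. A secondary subtlety worth flagging is the dependence of $C$ on the Lam\'e parameters: unlike the $\textbf{H}^{1}$ constant $\widehat{C}$, the regularity constant here is not asserted to be independent of $\lambda_{S}$, and one indeed expects it to deteriorate as $\nu\to 1/2$; making that dependence explicit is subtle and, for the purposes of this lemma, is subsumed into the cited estimate.
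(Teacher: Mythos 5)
Your proposal is correct and follows essentially the same route as the paper, which offers no argument of its own beyond citing \cite{MR840970} for exactly this convex-domain regularity result; your reduction to the Lam\'e form, the strong-ellipticity observation, and the delegation of the corner-exponent analysis to that reference is a faithful expansion of what that citation contains. Your closing remark that $C$ is not claimed to be independent of $\lambda_{S}$ is also consistent with the paper, which tracks the Lam\'e-constant dependence only in its later discrete estimates.
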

%\begin{lemma}\label{reg} There exists $r_{\Omega} > 0$ such that for every  $\textbf{f} \in \textbf{L}^{2}(\Omega)$ and for every  $r \in (0, r_{\Omega})$, the solution  of Problem \ref{continuous}  is such that  $\textbf{w} \in \textbf{H}^{1 + s}(\Omega)$ with $s = \min\{1, r\}$. Moreover, there exists $C > 0$ such that  
%\begin{center}
%$\|\textbf{w}\|_{1 + s, \Omega} \leq C\|\textbf{f}\|_{0, \Omega}$.
%\end{center}
%\end{lemma}
\section{The virtual element method}
\label{sec:vem}
In the present section we introduce the virtual element method that we consider to approximate the solution of Problem \ref{continuous}. To do this task, we will consider a more relaxed   conditions compared with those introduced in  \cite{MR2997471} for the classic VEM, where there is no possible to assume more general polygonal meshes allowing  arbitrary edges, more precisely, small edges. Hence, and inspired in  \cite{MR3714637}, if $\{\mathcal{T}_h\}_{h>0}$ represents a family of polygonal meshes to discretize $\O$,  $E\in\CT_h$ is an arbitrary element of the mesh, and  $h := \underset{E \in \mathcal{T}_{h}}{\max} \; h_{E}$ represents the mesh size.  On the other hand, let $\texttt{a},\texttt{b}\in\mathbb{R}$. If $\texttt{a}\leq C\texttt{b}$ and $\texttt{b}\leq C\texttt{a}$ with $C\in\mathbb{R}^+$, we define the relation  $\texttt{a}\simeq\texttt{b}$. Then, from now and on, we will assume that for $E\in \CT_h$, there holds that  $|E|\simeq h_E^2$ and $|\partial E|\simeq h_E$.

Let us assume the following assumption on $\CT_h$:
\begin{itemize}
\item[\textbf{A1.}] There exists  $\gamma \in \mathbb{R}^{+}$ such that each polygon $E \in \{\mathcal{T}_{h}\}_{h>0}$ is star-shaped with respect to a ball  $B_{E}$ with center  $\textbf{x}_{E}$ and radius  $\rho_{E} \geq \gamma h_{E}$.
%\item[\textbf{A2)}] There exists  $C \in \mathbb{N}$ such that  $N_{E} \leq C$, for each polygon $E \in \{\mathcal{T}_{h}\}_{h>0}$.
\end{itemize}
We denote by $N$ the number of vertices of $\mathcal{T}_{h}$. \\

Let us write the bilinear form $a(\cdot, \cdot)$ and the functional $F(\cdot)$ as follows
\begin{center}
$a(\textbf{w}, \textbf{v}) = \displaystyle{\sum_{E \in \mathcal{T}_{h}} a^{E}(\textbf{w}, \textbf{v})}$ \quad where $a^{E}(\textbf{w}, \textbf{v}) := \displaystyle{\int_{E} \boldsymbol{\sigma}(\textbf{w}) : \boldsymbol{\varepsilon}(\textbf{v})}$ \quad $\forall \; \textbf{w}, \textbf{v} \in \textbf{H}^{1}_{0}(\Omega)$,
\end{center}
\begin{center}
$F(\textbf{v}) = \displaystyle{\sum_{E \in \mathcal{T}_{h}} F^{E}(\textbf{v})}$ \quad where $F^{E}(\textbf{v}) :=  \displaystyle{\int_{E} \varrho\textbf{f}\cdot\textbf{v}}$ \quad $\forall \; \textbf{v} \in \textbf{H}^{1}_{0}(\Omega)$.
\end{center}
\subsection{Virtual spaces}
Now we introduce the virtual spaces of our interest. Following \cite{MR4567234} and \cite{MR3714637}, we introduce the following local spaces
\begin{center}
$\mathbb{B}_{\partial E} := \{\textbf{v}_{h} \in \boldsymbol{\mathcal{C}}^{0}(\partial E) \; : \; \textbf{v}_{h}\lvert_{e} \in \mathbb{P}_{k}(e) \; \forall e \subset \partial E\}$,
\end{center}
\begin{center}
$\boldsymbol{\mathcal{W}}_{h}^{E} := \{\textbf{v}_{h} \in \textbf{H}^{1}(E) \; : \; \Delta \textbf{v}_{h} \in [\mathbb{P}_{k}(E)]^{2} \; \text{y} \; \textbf{v}_{h}\lvert_{\partial E} \; \in \mathbb{B}_{\partial E}\}$.
\end{center}
%Observe that  $\mathbb{B}_{\partial E}$ guarantees the continuity of the common edges of two polygons of $\CT_h$.
%On the other hand, from  \cite{article} we have that the following operators are well defined for  $\textbf{v}_{h} \in \boldsymbol{\mathcal{W}}_{h}^{E}$:
%\begin{itemize}
%\item[$\bullet$] $\mathcal{V}_{E}^{h}$: the vectorial values vectoriales of  $\textbf{v}_{h}$ on the vertices of $E$,
%\item[$\bullet$] $\mathcal{E}_{E}^{h}$: the edge moments $\displaystyle{\int_{\ell} \boldsymbol{p}\cdot \textbf{v}_{h}}$, $\boldsymbol{p} \in [\mathbb{P}_{k-2}(\ell)]^{2}$ on each edge $\ell$ of $E$,
%\item[$\bullet$] $\mathcal{K}_{E}^{h}$: the interior moments $\displaystyle{\int_{E} \boldsymbol{p}\cdot \textbf{v}_{h}}$ with  $\boldsymbol{p} \in [\mathbb{P}_{k-2}(E)]^{2}$.
%\end{itemize}
For each,  $E \in \{\mathcal{T}_{h}\}_{h>0}$, we introduce the projection $\boldsymbol{\Pi}_{k,E} : \boldsymbol{\mathcal{W}}_{h}^{E} \longrightarrow [\mathbb{P}_{k}(E)]^{2},$
%\begin{equation*}
%\boldsymbol{\Pi}_{k,E} : \boldsymbol{\mathcal{W}}_{h}^{E} \longrightarrow [\mathbb{P}_{k}(E)]^{2},
%\end{equation*}
defined for every  $\textbf{v}_{h} \in \boldsymbol{\mathcal{W}}_{h}^{E}$ as the solution of  (see \cite{MR3881593} for instance)
\begin{equation*}
\label{elastt}
\left\{\begin{array}{ccl}
a^{E}(\textbf{v}_{h} - \boldsymbol{\Pi}_{k,E}\textbf{v}_{h}, \boldsymbol{p}) &=&  0 \quad \forall \; \boldsymbol{p} \in [\mathbb{P}_{k}(E)]^{2}, \\
%\left\langle \left\langle \textbf{v}_{h} - \boldsymbol{\Pi}_{k,E}\textbf{v}_{h}, \boldsymbol{p} \right\rangle \right\rangle &=& 0 \quad \forall \boldsymbol{p} \in \ker(a_{E}(\cdot, \cdot)),
\displaystyle{\int_{\partial E} \textbf{v}_{h} - \boldsymbol{\Pi}_{k,E}\textbf{v}_{h}} &=& 0, \\
\displaystyle{\int_{E} \rot(\textbf{v}_{h}) - \rot\left(\boldsymbol{\Pi}_{k,E}\textbf{v}_{h}\right)} &=& 0
\end{array}\right.
\end{equation*}
%where, for each  $\textbf{u}_{h}, \textbf{w}_{h} \in \boldsymbol{\mathcal{W}}_{h}^{E}$,  the product $\left\langle \left\langle \cdot, \cdot \right\rangle \right\rangle$ is defined by 
%\begin{center}
%$\left\langle \left\langle \textbf{u}_{h}, \textbf{w}_{h} \right\rangle \right\rangle = \dfrac{1}{N_{E}}\displaystyle{\sum_{i = 1}^{N_{E}} \textbf{u}_{h}(V_{i})\textbf{w}_{h}(V_{i})}$, \
%\end{center}
%with $V_{i}$ being a vertex  of an element $E\in \CT_h$,  $i \in \{1, \ldots, N_{E}\}$, and $N_E$ being the number of vertices of the element $E\in \CT_h$. 
%mientras que el operador $\mathcal{R}$ est\'a definido por $\mathcal{R}\textbf{v}_{h} := |\partial E|^{-1} \displaystyle{\int_{\partial E} \int_{\partial E} \textbf{v}_{h}} \in [\mathbb{P}_{0}(E)]^{2}$.
\begin{definition}
We define the local virtual space by 
\begin{center}
$\boldsymbol{\mathcal{V}}_{h}^{E} := \left\lbrace \textbf{v}_{h} \in \boldsymbol{\mathcal{W}}_{h}^{E} \; : \; \displaystyle{\int_{E} \boldsymbol{p}\cdot (\textbf{v}_{h} - \boldsymbol{\Pi}_{k,E}\textbf{v}_{h}) = 0, \forall \; \boldsymbol{p} \in [\mathbb{P}_{k}(E)]^{2}/[\mathbb{P}_{k-2}(E)]^{2}} \right\rbrace$,
\end{center}
where  the space $[\mathbb{P}_{k}(E)]^{2}/[\mathbb{P}_{k-2}(E)]^{2}$ denotes the polynomials in $[\mathbb{P}_{k}(E)]^{2}$ in   which are orthogonal to  $[\mathbb{P}_{k-2}(E)]^{2}$ with respect to the $\textbf{L}^{2}(E)$ product.
We choose the same degrees of freedom as those  in \cite[Section 4.1]{MR2997471} for the local virtual space defined above.
\end{definition}
%\begin{remark}
%El operador $\boldsymbol{\Pi}_{k,E}$ est\'a bien definido en $\boldsymbol{\mathcal{V}}_{h}^{E}$ pues lo est\'a en $\boldsymbol{\mathcal{W}}_{h}^{E}$ y $\boldsymbol{\mathcal{V}}_{h}^{E} \subset \boldsymbol{\mathcal{W}}_{h}^{E}$. Adem\'as, seg\'un lo desarrollado en \cite{MR4567234} y \cite{MR3460621}, se tiene que el conjunto de operadores $\mathcal{V}_{E}^{h}$, $\mathcal{E}_{E}^{h}$ y $\mathcal{K}_{h}^{E}$ constituyen un conjunto de grados de libertad para el espacio virtual local $\boldsymbol{\mathcal{V}}_{h}^{E}$. Luego, notar que $\boldsymbol{\Pi}_{k,E}$ es calculable con dichos grados de libertad. Por \'ultimo, usando el hecho que $\boldsymbol{\Pi}_{k,E}\boldsymbol{p} = \boldsymbol{p}$ para cada $\boldsymbol{p} \in [\mathbb{P}_{k}(E)]^{2}$, se prueba que $[\mathbb{P}_{k}(E)]^{2} \subset \boldsymbol{\mathcal{V}}_{h}^{E}$. Esto permitir\'a que nuestro espacio tenga buenas propiedades de aproximaci\'on.
%\end{remark}}
Now we are in position to introduce the global virtual space which we define by 
\begin{equation*}
\boldsymbol{\mathcal{V}}_{h} := \{ \textbf{v}_{h} \in \textbf{H}_{0}^{1}(\Omega) \; : \; \textbf{v}_{h}\lvert_{E} \; \in \boldsymbol{\mathcal{V}}_{h}^{E}\}.
\end{equation*}
For the analysis of the VEM  allowing small edges, we introduce the following seminorm which is induced by the stability term  $\vertiii{\cdot}_{k,E} : \textbf{H}^{1}(E) \rightarrow \mathbb{R}$ defined by (see \cite[equation (3.9)]{MR3815658})
\begin{equation}\label{triplenorm}
\vertiii{\boldsymbol{\zeta}}_{k,E}^{2} := \|\bPi_{k-2,E}^{0}\boldsymbol{\zeta}\|_{0,E}^{2} + h_{E}\displaystyle{\sum_{e \in  \mathcal{E}_{E}} \|\bPi_{k-1,e}^{0}\boldsymbol{\zeta}\|_{0,e}^{2}} \quad \forall \boldsymbol{\zeta} \in \textbf{H}^{1}(E),
\end{equation} 
where  $\mathcal{E}_{E}$ represents the set of edges of $E$, $\bPi_{k-1,e}^{0}$ is the  $\textbf{L}^{2}(e)$ orthogonal projection onto the space $[\mathbb{P}_{k-1}(e)]^{2}$ and $\bPi_{k-2,E}^{0}$ is the $\textbf{L}^{2}(E)$ orthogonal projection onto the space $[\mathbb{P}_{k-2}(E)]^{2}$. This seminorm will appear naturally when  we operate to  control the approximation errors of the virtual solutions, in particular to bound the norms of the polynomial projectors and  the interpolations of the solutions. 
From the following trace inequality
\begin{equation}\label{traceineq0}
\|\boldsymbol{\zeta}\|_{0,\partial E}^{2} \lesssim h_{E}^{-1}\|\boldsymbol{\zeta}\|_{0,E}^{2} + h_{E}|\boldsymbol{\zeta}|_{1,E}^{2} \quad \forall \boldsymbol{\zeta} \in \textbf{H}^{1}(E),
\end{equation}
and  \eqref{triplenorm}, there holds that  
%\begin{equation*}
%\vertiii{\boldsymbol{\zeta}}_{k,E}^{2} \lesssim \|\boldsymbol{\zeta}\|_{0,E}^{2} + h_{E}(h_{E}^{-1}\|\boldsymbol{\zeta}\|_{0,E}^{2} + h_{E}|\boldsymbol{\zeta}|_{1,E}^{2}) \lesssim \|\boldsymbol{\zeta}\|_{0,E}^{2} + h_{E}^{2}|\boldsymbol{\zeta}|_{1,E}^{2}
%\end{equation*}
%and 
\begin{equation*}
\vertiii{\boldsymbol{\zeta}}_{k,E} \lesssim \|\boldsymbol{\zeta}\|_{0,E} + h_{E}|\boldsymbol{\zeta}|_{1,E} \quad \forall \boldsymbol{\zeta} \in \textbf{H}^{1}(E).
\end{equation*}
%Moreover, from standard polynomial approximation results and  $|\partial E| \lesssim h_{E}$, we have 
%\begin{equation*}\label{polyaprox}
%\vertiii{\textbf{v}_{h}}_{k,E} \lesssim \|\Pi_{k-2,E}^{0}\textbf{v}_{h}\|_{0,E} + h_{E}\|\textbf{v}_{h}\|_{L^{\infty}(\partial E)} \quad \forall \textbf{v}_{h} \in \boldsymbol{\mathcal{V}}_{h}^{E}.
%\end{equation*}
%}
On the other hand, from Korn's inequality and the fact that $a(\cdot,\cdot)$ is bounded, we have
\begin{equation}\label{cota1}
|\boldsymbol{\Pi}_{k,E}\textbf{v}_{h}|_{1,E} \lesssim \max\{\lambda_{S}\mu_{S}^{-1}, 1\}|\textbf{v}_{h}|_{1,E} \quad \forall \textbf{v}_{h} \in \boldsymbol{\mathcal{V}}_{h}^{E}.
\end{equation}
The following result is an adaptation of \cite[Lemma 3.4]{MR3815658} to our case, and gives an estimate for $\boldsymbol{\Pi}_{k,E}$ in terms of $\vertiii{\cdot}_{k,E}$.
\begin{lemma}\label{lema34}
There holds 
\begin{equation*}
\|\boldsymbol{\Pi}_{k,E}\boldsymbol{\zeta}\|_{0,E} \lesssim \max\{\lambda_{S}\mu_{S}^{-1}, 1\}\vertiii{\boldsymbol{\zeta}}_{k,E} \quad \forall \boldsymbol{\zeta} \in \textbf{H}^{1}(E),
\end{equation*}
where the hidden constant is independent of $h_E$.
\end{lemma}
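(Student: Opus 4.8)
The plan is to set $\boldsymbol{p} := \bPi_{k,E}\boldsymbol{\zeta}\in[\mathbb{P}_{k}(E)]^{2}$ and to control $\|\boldsymbol{p}\|_{0,E}$ by splitting it into a part governed by the symmetric gradient $\|\boldsymbol{\varepsilon}(\boldsymbol{p})\|_{0,E}$, estimated through the first (energy) relation defining $\bPi_{k,E}$, and a rigid‑body‑motion part, estimated through the two remaining (boundary‑average and rotation‑average) conditions. The two facts I lean on are the coercivity lower bound $a^{E}(\boldsymbol{p},\boldsymbol{p})=2\mu_{S}\|\boldsymbol{\varepsilon}(\boldsymbol{p})\|_{0,E}^{2}+\lambda_{S}\|\tr\boldsymbol{\varepsilon}(\boldsymbol{p})\|_{0,E}^{2}\geq 2\mu_{S}\|\boldsymbol{\varepsilon}(\boldsymbol{p})\|_{0,E}^{2}$ and the matching upper bound $\|\boldsymbol{\sigma}(\boldsymbol{p})\|_{0,E}\lesssim\max\{\lambda_{S},\mu_{S}\}\|\boldsymbol{\varepsilon}(\boldsymbol{p})\|_{0,E}$, together with polynomial inverse and trace inequalities whose constants depend on $\gamma$ and $k$ but \emph{not} on the edge lengths.

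For the strain part I test the first identity defining $\bPi_{k,E}$ with $\boldsymbol{q}=\boldsymbol{p}$, so that $a^{E}(\boldsymbol{p},\boldsymbol{p})=a^{E}(\boldsymbol{\zeta},\boldsymbol{p})$, and integrate by parts:
\begin{equation*}
a^{E}(\boldsymbol{p},\boldsymbol{p}) = -\int_{E}\bdiv\boldsymbol{\sigma}(\boldsymbol{p})\cdot\boldsymbol{\zeta} + \int_{\partial E}(\boldsymbol{\sigma}(\boldsymbol{p})\boldsymbol{n})\cdot\boldsymbol{\zeta}.
\end{equation*}
Because $\bdiv\boldsymbol{\sigma}(\boldsymbol{p})\in[\mathbb{P}_{k-2}(E)]^{2}$ and $\boldsymbol{\sigma}(\boldsymbol{p})\boldsymbol{n}|_{e}\in[\mathbb{P}_{k-1}(e)]^{2}$, I may replace $\boldsymbol{\zeta}$ by $\bPi_{k-2,E}^{0}\boldsymbol{\zeta}$ in the volume term and by $\bPi_{k-1,e}^{0}\boldsymbol{\zeta}$ on each edge, which is exactly what makes the two ingredients of $\vertiii{\cdot}_{k,E}$ in \eqref{triplenorm} appear. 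Applying Cauchy--Schwarz, the inverse inequality $\|\bdiv\boldsymbol{\sigma}(\boldsymbol{p})\|_{0,E}\lesssim h_{E}^{-1}\|\boldsymbol{\sigma}(\boldsymbol{p})\|_{0,E}$ and the polynomial trace inequality $\|\boldsymbol{\sigma}(\boldsymbol{p})\|_{0,\partial E}\lesssim h_{E}^{-1/2}\|\boldsymbol{\sigma}(\boldsymbol{p})\|_{0,E}$ gives $a^{E}(\boldsymbol{p},\boldsymbol{p})\lesssim h_{E}^{-1}\max\{\lambda_{S},\mu_{S}\}\,\vertiii{\boldsymbol{\zeta}}_{k,E}\,\|\boldsymbol{\varepsilon}(\boldsymbol{p})\|_{0,E}$. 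Dividing by the coercivity lower bound $2\mu_{S}\|\boldsymbol{\varepsilon}(\boldsymbol{p})\|_{0,E}$ produces
\begin{equation*}
\|\boldsymbol{\varepsilon}(\boldsymbol{p})\|_{0,E} \lesssim h_{E}^{-1}\max\{\lambda_{S}\mu_{S}^{-1},1\}\,\vertiii{\boldsymbol{\zeta}}_{k,E},
\end{equation*}
where the quotient $\max\{\lambda_{S},\mu_{S}\}/\mu_{S}=\max\{\lambda_{S}\mu_{S}^{-1},1\}$ is precisely the advertised Lam\'e factor.

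For the low‑order part I invoke Korn's second inequality and Poincar\'e on the star‑shaped element (constants depending only on $\gamma$): there is a rigid body motion $\boldsymbol{r}(\boldsymbol{x})=\boldsymbol{a}+b(\boldsymbol{x}-\boldsymbol{x}_{E})^{\perp}$ with $\|\boldsymbol{p}-\boldsymbol{r}\|_{0,E}\lesssim h_{E}\|\boldsymbol{\varepsilon}(\boldsymbol{p})\|_{0,E}$ and $|\boldsymbol{p}-\boldsymbol{r}|_{1,E}\lesssim\|\boldsymbol{\varepsilon}(\boldsymbol{p})\|_{0,E}$, so it only remains to bound $\|\boldsymbol{r}\|_{0,E}\lesssim h_{E}|\boldsymbol{a}|+h_{E}^{2}|b|$. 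I recover $b$ from the rotation condition, using $\int_{E}\rot\boldsymbol{p}=\int_{E}\rot\boldsymbol{\zeta}=\int_{\partial E}\boldsymbol{\zeta}\cdot\boldsymbol{t}$ and $\int_{E}\rot\boldsymbol{r}=2b|E|$, and $\boldsymbol{a}$ from the boundary condition $\int_{\partial E}\boldsymbol{p}=\int_{\partial E}\boldsymbol{\zeta}$. Since the constant tangent $\boldsymbol{t}$ and the constants both lie in $[\mathbb{P}_{k-1}(e)]^{2}$, the edge integrals of $\boldsymbol{\zeta}$ are controlled by $\|\bPi_{k-1,e}^{0}\boldsymbol{\zeta}\|_{0,e}$, and a Cauchy--Schwarz over the edges with $|\partial E|\simeq h_{E}$ yields $|\int_{\partial E}\boldsymbol{\zeta}|+|\int_{E}\rot\boldsymbol{\zeta}|\lesssim\vertiii{\boldsymbol{\zeta}}_{k,E}$; the contributions of $\boldsymbol{p}-\boldsymbol{r}$ to these identities are absorbed, via \eqref{traceineq0}, into $h_{E}\|\boldsymbol{\varepsilon}(\boldsymbol{p})\|_{0,E}$. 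This gives $\|\boldsymbol{r}\|_{0,E}\lesssim\vertiii{\boldsymbol{\zeta}}_{k,E}+h_{E}\|\boldsymbol{\varepsilon}(\boldsymbol{p})\|_{0,E}$, and combining with the strain estimate and $h_{E}\|\boldsymbol{\varepsilon}(\boldsymbol{p})\|_{0,E}\lesssim\max\{\lambda_{S}\mu_{S}^{-1},1\}\vertiii{\boldsymbol{\zeta}}_{k,E}$ closes the argument.

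The main obstacle is not the algebra but guaranteeing that all the polynomial inverse, trace and Korn/Poincar\'e inequalities hold with constants independent of the edge lengths; this is the heart of the small‑edges framework of \cite{MR3815658}, and it must be transported from the scalar Laplace setting to the present vector setting. The genuinely elasticity‑specific difficulty is that the kernel of $\boldsymbol{\varepsilon}$ is the three‑dimensional space of rigid body motions rather than the constants, so the single boundary‑average condition of the Laplace case is replaced by the pair (boundary average, rotation average) that I must use to pin down both $\boldsymbol{a}$ and $b$. The second, subtler point is to keep the dependence on the Lam\'e parameters sharp, so that the final constant is exactly $\max\{\lambda_{S}\mu_{S}^{-1},1\}$; this hinges on pairing the coercivity lower bound $2\mu_{S}\|\boldsymbol{\varepsilon}(\boldsymbol{p})\|_{0,E}^{2}$ against the $\max\{\lambda_{S},\mu_{S}\}$ upper bound for $\|\boldsymbol{\sigma}(\boldsymbol{p})\|_{0,E}$.
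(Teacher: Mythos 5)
Your proof is correct, and its first half (testing the energy identity with $\boldsymbol{p}=\bPi_{k,E}\boldsymbol{\zeta}$, integrating by parts, inserting $\bPi_{k-2,E}^{0}$ and $\bPi_{k-1,e}^{0}$, and using inverse and polynomial trace inequalities to get $\|\beps(\boldsymbol{p})\|_{0,E}\lesssim h_E^{-1}\max\{\lambda_S\mu_S^{-1},1\}\vertiii{\boldsymbol{\zeta}}_{k,E}$) is essentially identical to the paper's. The second half, however, takes a genuinely different route. The paper first upgrades the strain bound to a full seminorm bound $|\bPi_{k,E}\boldsymbol{\zeta}|_{1,E}\lesssim h_E^{-1}\max\{\lambda_S\mu_S^{-1},1\}\vertiii{\boldsymbol{\zeta}}_{k,E}$ by opening with a Korn-type inequality $|\bPi_{k,E}\boldsymbol{\zeta}|_{1,E}^{2}\lesssim\int_E\beps(\bPi_{k,E}\boldsymbol{\zeta}):\beps(\bPi_{k,E}\boldsymbol{\zeta})$, then uses only the boundary-average condition of the projector together with the Poincar\'e--Friedrichs inequality \eqref{FP} applied to $\bPi_{k,E}\boldsymbol{\zeta}$ to conclude; the rotation-average condition never enters explicitly. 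You instead keep only the strain estimate and then perform an explicit rigid-body-motion decomposition $\boldsymbol{p}=(\boldsymbol{p}-\boldsymbol{r})+\boldsymbol{r}$ via Korn's second inequality, pinning down the translation $\boldsymbol{a}$ and the rotation $b$ through \emph{both} auxiliary conditions of $\bPi_{k,E}$. Your version is longer but it confronts head-on the elasticity-specific issue that $\ker\beps$ consists of rigid motions, not constants: note that the paper's opening Korn step, as literally written, fails for a function whose projection is a pure rotation (then the right-hand side vanishes while the left does not), so a rigorous reading of the paper's argument also requires exactly the decomposition and the use of the rotation condition that you carry out. What the paper's route buys is brevity and reuse of \eqref{FP}; what yours buys is a self-contained and kernel-correct argument with the same sharp Lam\'e factor $\max\{\lambda_S\mu_S^{-1},1\}$. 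All your auxiliary inequalities (Korn and Poincar\'e constants depending only on the star-shapedness parameter $\gamma$ and on $k$, the polynomial trace and inverse estimates, and the edge bookkeeping using $|\partial E|\simeq h_E$) are legitimate in the small-edges framework, so the argument stands.
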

\begin{proof}
Let  $\boldsymbol{\zeta} \in \textbf{H}^{1}(E)$. Then, using the definition of $\boldsymbol{\Pi}_{k,E}$, integration by parts, the definitions of $\boldsymbol{\Pi}_{k-2,E}^{0}$ and $\boldsymbol{\Pi}_{k-1,e}^{0}$, inverse estimates and trace inequalities, we have
\begin{align*}
|\boldsymbol{\Pi}_{k,E}\boldsymbol{\zeta}|_{1,E}^{2} &\lesssim \displaystyle{\int_{E} \boldsymbol{\varepsilon}(\boldsymbol{\Pi}_{k,E}\boldsymbol{\zeta}) : \boldsymbol{\varepsilon}(\boldsymbol{\Pi}_{k,E}\boldsymbol{\zeta})} \\ 
&\lesssim \mu_{S}^{-1}\displaystyle{\int_{E} \boldsymbol{\sigma}(\boldsymbol{\Pi}_{k,E}\boldsymbol{\zeta}) : \boldsymbol{\varepsilon}(\boldsymbol{\Pi}_{k,E}\boldsymbol{\zeta})} = \mu_{S}^{-1}\displaystyle{\int_{E} \boldsymbol{\sigma}(\boldsymbol{\Pi}_{k,E}\boldsymbol{\zeta}) : \boldsymbol{\varepsilon}(\boldsymbol{\zeta})} \\
%&= \mu_{S}^{-1}\displaystyle{\int_{E} \boldsymbol{\sigma}(\boldsymbol{\Pi}_{k,E}\boldsymbol{\zeta}) : \boldsymbol{\varepsilon}(\boldsymbol{\zeta})} \\
%&= \mu_{S}^{-1}\left(-\displaystyle{\int_{E} \boldsymbol{\zeta}\cdot\textbf{div}(\boldsymbol{\sigma}(\boldsymbol{\Pi}_{k,E}\boldsymbol{\zeta})) + \int_{\partial E} (\boldsymbol{\sigma}(\boldsymbol{\Pi}_{k,E}\boldsymbol{\zeta}) \cdot \textbf{n}_{E})\boldsymbol{\zeta}} \right) \\
%&= \mu_{S}^{-1}\left(-\displaystyle{\int_{E} \boldsymbol{\Pi}_{k-2,E}^{0}\boldsymbol{\zeta}\cdot\textbf{div}(\boldsymbol{\sigma}(\boldsymbol{\Pi}_{k,E}\boldsymbol{\zeta})) + \sum_{e \in \mathcal{E}_{E}} \int_{e} (\boldsymbol{\sigma}(\boldsymbol{\Pi}_{k,E}\boldsymbol{\zeta}) \cdot \textbf{n}_{E})\cdot \boldsymbol{\zeta}} \right) \\
&= \mu_{S}^{-1}\left(-\displaystyle{\int_{E} \boldsymbol{\Pi}_{k-2,E}^{0}\boldsymbol{\zeta}\cdot\textbf{div}(\boldsymbol{\sigma}(\boldsymbol{\Pi}_{k,E}\boldsymbol{\zeta})) + \sum_{e \in \mathcal{E}_{E}} \int_{e} (\boldsymbol{\sigma}(\boldsymbol{\Pi}_{k,E}\boldsymbol{\zeta}) \cdot \textbf{n}_{E})\cdot\boldsymbol{\Pi}_{k-1,e}^{0}\boldsymbol{\zeta}} \right) \\
&\leq \mu_{S}^{-1}\left( \|\boldsymbol{\Pi}_{k-2,E}^{0}\boldsymbol{\zeta}\|_{0,E}\|\textbf{div}(\boldsymbol{\sigma}(\boldsymbol{\Pi}_{k,E}\boldsymbol{\zeta}))\|_{0,E} + \displaystyle{\sum_{e \in \mathcal{E}_{E}} \|\boldsymbol{\Pi}_{k-1,e}^{0}\boldsymbol{\zeta}\|_{0,e}\|\boldsymbol{\sigma}(\boldsymbol{\Pi}_{k,E}\boldsymbol{\zeta})\|_{0,e}} \right) \\
%&\leq \mu_{S}^{-1}\left( \|\boldsymbol{\Pi}_{k-2,E}^{0}\boldsymbol{\zeta}\|_{0,E}\|\textbf{div}(\boldsymbol{\sigma}(\boldsymbol{\Pi}_{k,E}\boldsymbol{\zeta}))\|_{0,E} + \left(\displaystyle{\sum_{e \in \mathcal{E}_{E}} \|\boldsymbol{\Pi}_{k-1,e}^{0}\boldsymbol{\zeta}\|_{0,e}^{2}}\right)^{1/2} \left(\displaystyle{\sum_{e \in \mathcal{E}_{E}} \|\boldsymbol{\sigma}(\boldsymbol{\Pi}_{k,E}\boldsymbol{\zeta})\|_{0,e}^{2}} \right)^{1/2} \right) 
%\end{align*}
%\begin{align*}
%&\leq \mu_{S}^{-1}\left( h_{E}^{-1}\|\Pi_{k-2,E}^{0}\boldsymbol{\zeta}\|_{0,E}\|\boldsymbol{\sigma}(\Pi_{k,E}\boldsymbol{\zeta})\|_{0,E} + \|\boldsymbol{\sigma}(\Pi_{k,E}\boldsymbol{\zeta})\|_{0,\partial E}\left(\displaystyle{\sum_{e \in \mathcal{E}_{E}} \|\Pi_{k-1,e}^{0}\boldsymbol{\zeta}\|_{0,e}^{2}}\right)^{1/2} \right)\\
%&\leq \mu_{S}^{-1}\left( h_{E}^{-1}\|\Pi_{k-2,E}^{0}\boldsymbol{\zeta}\|_{0,E}\|\boldsymbol{\sigma}(\Pi_{k,E}\boldsymbol{\zeta})\|_{0,E} + h_{E}^{-1/2}\|\boldsymbol{\sigma}(\Pi_{k,E}\boldsymbol{\zeta})\|_{0,E}\left(\displaystyle{\sum_{e \in \mathcal{E}_{E}} \|%\Pi_{k-1,e}^{0}\boldsymbol{\zeta}\|_{0,e}^{2}}\right)^{1/2} \right)\\
&\lesssim h_{E}^{-1}\mu_{S}^{-1}\|\boldsymbol{\sigma}(\boldsymbol{\Pi}_{k,E}\boldsymbol{\zeta})\|_{0,E}\left( \|\boldsymbol{\Pi}_{k-2,E}^{0}\boldsymbol{\zeta}\|_{0,E} + h_{E}^{1/2}\left(\displaystyle{\sum_{e \in \mathcal{E}_{E}} \|\boldsymbol{\Pi}_{k-1,e}^{0}\boldsymbol{\zeta}\|_{0,e}^{2}}\right)^{1/2} \right)\\
&\lesssim h_{E}^{-1}\mu_{S}^{-1}\|\boldsymbol{\sigma}(\boldsymbol{\Pi}_{k,E}\boldsymbol{\zeta})\|_{0,E}\vertiii{\boldsymbol{\zeta}}_{k,E}.
\end{align*}
From the definition of $\boldsymbol{\sigma}$ we have 
\begin{equation*}
\|\boldsymbol{\sigma}(\boldsymbol{\Pi}_{k,E}\boldsymbol{\zeta})\|_{0,E}
 \lesssim \max\{\lambda_{S}, \mu_{S}\}\|\boldsymbol{\varepsilon}(\boldsymbol{\Pi}_{k,E}\boldsymbol{\zeta})\|_{0,E},
\end{equation*}
and invoking the estimate  $\|\boldsymbol{\varepsilon}(\boldsymbol{\Pi}_{k,E}\boldsymbol{\zeta})\|_{0,E} \lesssim |\boldsymbol{\Pi}_{k,E}\boldsymbol{\zeta}|_{1,E}$ we obtain directly the following estimate for the seminorm of the projection
\begin{equation}\label{parte1}
|\boldsymbol{\Pi}_{k,E}\boldsymbol{\zeta}|_{1,E} \lesssim h_{E}^{-1}\max\{\lambda_{S}\mu_{S}^{-1},1\}\vertiii{\boldsymbol{\zeta}}_{k,E}.
\end{equation}
On the other hand, notice that from the definition of $\vertiii{\cdot}_{k,E}$, the definition of $\boldsymbol{\Pi}_{k,E}$ and the fact that $|\partial E| \simeq h_{E}$, we have
\begin{equation}\label{parte2}
\left\lvert \displaystyle{\int_{\partial E} \boldsymbol{\Pi}_{k,E}\boldsymbol{\zeta} } \right\lvert = \left\lvert \displaystyle{\int_{\partial E}\boldsymbol{\zeta} } \right\lvert = \left\lvert \displaystyle{\sum_{e \in \mathcal{E}_{E}} \int_{e} \boldsymbol{\Pi}_{0,e}^{0}\boldsymbol{\zeta} } \right\lvert \lesssim \left(h_{E}\displaystyle{\sum_{e \in \mathcal{E}_{E}} \|\boldsymbol{\Pi}_{k-1,e}^{0}\boldsymbol{\zeta}\|_{0,e}^{2}} \right)^{1/2} \leq \vertiii{\boldsymbol{\zeta}}_{k,E},
%= \left\lvert \displaystyle{\int_{\partial E} \boldsymbol{\zeta}}\right\lvert = \left\lvert\displaystyle{\sum_{e \in \mathcal{E}_{E}} \int_{e} \boldsymbol{\Pi}_{0,e}^{0}\boldsymbol{\zeta}}\right\lvert \leq \left\lvert\displaystyle{\sum_{e \in \mathcal{E}_{E}} h_{e}^{1/2}\|\boldsymbol{\Pi}_{0,e}^{0}\boldsymbol{\zeta}\|_{0,e}}\right\lvert \\
\end{equation}
%and from the definition of  $\vertiii{\cdot}_{k,E}$, we derive  
%\begin{center}
%$\left\lvert \displaystyle{\int_{\partial E} \boldsymbol{\Pi}_{k,E}\boldsymbol{\zeta} } \right\lvert \leq \vertiii{\boldsymbol{\zeta}}_{k,E}$
%\end{center}
where in the first inequality, we  have used Cauchy-Schwarz inequality and the fact that $\|\boldsymbol{\Pi}_{0,e}^{0}\boldsymbol{\zeta}\|_{0,e} \leq \|\boldsymbol{\Pi}_{k-1,e}^{0}\boldsymbol{\zeta}\|_{0,e}$. Hence, from the following  Poincar\'e-Friedrich inequality 
\begin{equation}\label{FP}
\|\boldsymbol{\zeta}\|_{0,E} \lesssim \left\lvert \displaystyle{\int_{\partial E} \boldsymbol{\zeta} } \right\lvert + h_{E}|\boldsymbol{\zeta}|_{1,E} \quad \forall \boldsymbol{\zeta} \in \textbf{H}^{1}(E),
\end{equation}
and using \eqref{parte1} and \eqref{parte2}, we obtain 
%\begin{multline*}
%\|\boldsymbol{\Pi}_{k,E}\boldsymbol{\zeta}\|_{0,E} \lesssim \left\lvert \displaystyle{\int_{\partial E} \boldsymbol{\Pi}_{k,E}\boldsymbol{\zeta} } \right\lvert + h_{E}|\boldsymbol{\Pi}_{k,E}\boldsymbol{\zeta}|_{1,E}\\
% \lesssim \vertiii{\boldsymbol{\zeta}}_{k,E} + \max\{\lambda_{S}\mu_{S}^{-1},1\}\vertiii{\boldsymbol{\zeta}}_{k,E} \lesssim \max\{\lambda_{S}\mu_{S}^{-1},1\}\vertiii{\boldsymbol{\zeta}}_{k,E}.
%\end{multline*}
\begin{equation*}
\|\boldsymbol{\Pi}_{k,E}\boldsymbol{\zeta}\|_{0,E} \lesssim \left\lvert \displaystyle{\int_{\partial E} \boldsymbol{\Pi}_{k,E}\boldsymbol{\zeta} } \right\lvert + h_{E}|\boldsymbol{\Pi}_{k,E}\boldsymbol{\zeta}|_{1,E} \lesssim \max\{\lambda_{S}\mu_{S}^{-1},1\}\vertiii{\boldsymbol{\zeta}}_{k,E}.
\end{equation*}
This concludes the proof.
\end{proof}
Now we need approximation properties for $\boldsymbol{\Pi}_{k,E}$. In the following result we derive such an estimates, that are an adaptation of those in \cite[Lemma 3.5]{MR3815658}, in which we emphasize that the estimates depends on the Lam\'e coefficients.
\begin{lemma}\label{lema35}
The following estimates hold
\begin{itemize}
\item[i)] For all $\boldsymbol{\zeta} \in \textbf{H}^{\ell + 1}(E)$ and  $0 \leq \ell \leq k$, there holds 
$$\|\boldsymbol{\zeta} - \boldsymbol{\Pi}_{k,E}\boldsymbol{\zeta}\|_{0,E} \lesssim \max\{\lambda_{S}\mu_{S}^{-1}, 1\}h_{E}^{\ell + 1}|\boldsymbol{\zeta}|_{\ell + 1,E},$$
\item[ii)]  For all  $\boldsymbol{\zeta} \in \textbf{H}^{\ell + 1}(E)$ and  $1 \leq \ell \leq k$, there holds 
$$|\boldsymbol{\zeta} - \boldsymbol{\Pi}_{k,E}\boldsymbol{\zeta}|_{1,E} \lesssim \max\{\lambda_{S}\mu_{S}^{-1},1\}h_{E}^{\ell}|\boldsymbol{\zeta}|_{\ell + 1,E},$$
\item[iii)]  For all $\boldsymbol{\zeta} \in \textbf{H}^{\ell + 1}(E)$ and  $1 \leq \ell \leq k$, there holds
$$|\boldsymbol{\zeta} - \boldsymbol{\Pi}_{k,E}\boldsymbol{\zeta}|_{2,E} \lesssim \max\{\lambda_{S}\mu_{S}^{-1},1\}h_{E}^{\ell - 1}|\boldsymbol{\zeta}|_{\ell + 1,E},$$
\end{itemize}
where in each estimate,  the hidden constants are independent of $h_E$.
\end{lemma}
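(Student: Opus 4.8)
The plan is to combine the polynomial invariance of $\boldsymbol{\Pi}_{k,E}$ with the stability estimate of Lemma \ref{lema34} and a standard polynomial approximation result. Since $\boldsymbol{\Pi}_{k,E}$ reproduces $[\mathbb{P}_k(E)]^2$, for any $\boldsymbol{p}\in[\mathbb{P}_k(E)]^2$ we have $\boldsymbol{\Pi}_{k,E}\boldsymbol{p}=\boldsymbol{p}$, so that for each seminorm $|\cdot|_{m,E}$ (with $|\cdot|_{0,E}=\|\cdot\|_{0,E}$) the triangle inequality gives
\begin{equation*}
|\boldsymbol{\zeta}-\boldsymbol{\Pi}_{k,E}\boldsymbol{\zeta}|_{m,E}\le |\boldsymbol{\zeta}-\boldsymbol{p}|_{m,E}+|\boldsymbol{\Pi}_{k,E}(\boldsymbol{\zeta}-\boldsymbol{p})|_{m,E}.
\end{equation*}
I would fix $\boldsymbol{p}$ to be the best local polynomial approximation guaranteed by the Bramble--Hilbert (Dupont--Scott) theorem on domains star-shaped with respect to a ball: under Assumption \textbf{A1} there exists $\boldsymbol{p}\in[\mathbb{P}_k(E)]^2$ with $|\boldsymbol{\zeta}-\boldsymbol{p}|_{m,E}\lesssim h_E^{\ell+1-m}|\boldsymbol{\zeta}|_{\ell+1,E}$ for $0\le m\le \ell+1$, the hidden constant depending only on $\gamma$, $k$ and the dimension, and in particular not on the presence of small edges. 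The first term is then controlled directly; the work lies in the second, polynomial term.

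For i), I would apply Lemma \ref{lema34} to $\boldsymbol{\zeta}-\boldsymbol{p}$ and then the bound $\vertiii{\boldsymbol{\eta}}_{k,E}\lesssim \|\boldsymbol{\eta}\|_{0,E}+h_E|\boldsymbol{\eta}|_{1,E}$ that follows from \eqref{triplenorm} and \eqref{traceineq0}, obtaining
\begin{equation*}
\|\boldsymbol{\Pi}_{k,E}(\boldsymbol{\zeta}-\boldsymbol{p})\|_{0,E}\lesssim \max\{\lambda_S\mu_S^{-1},1\}\bigl(\|\boldsymbol{\zeta}-\boldsymbol{p}\|_{0,E}+h_E|\boldsymbol{\zeta}-\boldsymbol{p}|_{1,E}\bigr)\lesssim \max\{\lambda_S\mu_S^{-1},1\}\,h_E^{\ell+1}|\boldsymbol{\zeta}|_{\ell+1,E},
\end{equation*}
where the approximation estimates for $m=0$ and $m=1$ are used. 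Adding $\|\boldsymbol{\zeta}-\boldsymbol{p}\|_{0,E}\lesssim h_E^{\ell+1}|\boldsymbol{\zeta}|_{\ell+1,E}$ yields the claim, since $\max\{\lambda_S\mu_S^{-1},1\}\ge 1$. For ii), the same decomposition with $m=1$ together with the seminorm bound \eqref{parte1} applied to $\boldsymbol{\zeta}-\boldsymbol{p}$ gives
\begin{equation*}
|\boldsymbol{\Pi}_{k,E}(\boldsymbol{\zeta}-\boldsymbol{p})|_{1,E}\lesssim h_E^{-1}\max\{\lambda_S\mu_S^{-1},1\}\vertiii{\boldsymbol{\zeta}-\boldsymbol{p}}_{k,E}\lesssim \max\{\lambda_S\mu_S^{-1},1\}\,h_E^{\ell}|\boldsymbol{\zeta}|_{\ell+1,E},
\end{equation*}
which I would combine with $|\boldsymbol{\zeta}-\boldsymbol{p}|_{1,E}\lesssim h_E^{\ell}|\boldsymbol{\zeta}|_{\ell+1,E}$.

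For iii) I would use that $\boldsymbol{\Pi}_{k,E}(\boldsymbol{\zeta}-\boldsymbol{p})\in[\mathbb{P}_k(E)]^2$, so that the inverse inequality $|\boldsymbol{q}|_{2,E}\lesssim h_E^{-1}|\boldsymbol{q}|_{1,E}$—valid for polynomials on elements star-shaped with respect to a ball of radius $\ge\gamma h_E$, hence a bulk estimate unaffected by small edges—reduces the second term to the case already treated in ii):
\begin{equation*}
|\boldsymbol{\Pi}_{k,E}(\boldsymbol{\zeta}-\boldsymbol{p})|_{2,E}\lesssim h_E^{-1}|\boldsymbol{\Pi}_{k,E}(\boldsymbol{\zeta}-\boldsymbol{p})|_{1,E}\lesssim \max\{\lambda_S\mu_S^{-1},1\}\,h_E^{\ell-1}|\boldsymbol{\zeta}|_{\ell+1,E}.
\end{equation*}
Together with $|\boldsymbol{\zeta}-\boldsymbol{p}|_{2,E}\lesssim h_E^{\ell-1}|\boldsymbol{\zeta}|_{\ell+1,E}$ this finishes the proof. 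The main point to verify carefully is that every constant entering the bounds on $\boldsymbol{\Pi}_{k,E}(\boldsymbol{\zeta}-\boldsymbol{p})$—the inverse inequality for iii) and the approximation constants—is controlled solely through the ball condition in \textbf{A1}, so that no spurious dependence on edge lengths is introduced; the Lam\'e ratio $\max\{\lambda_S\mu_S^{-1},1\}$ then enters exactly once, through Lemma \ref{lema34} and \eqref{parte1}, and propagates linearly into each of the three estimates.
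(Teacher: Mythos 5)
Your proof is correct, and its skeleton is the same as the paper's: insert a polynomial $\boldsymbol{p}$, use the invariance $\boldsymbol{\Pi}_{k,E}\boldsymbol{p}=\boldsymbol{p}$, bound the projected term by a stability estimate, and conclude with the triangle inequality and \eqref{bramblehilbert}. The differences lie in which stability estimates you invoke. For ii), the paper applies the $\textbf{H}^{1}$-stability bound \eqref{cota1} directly to $\boldsymbol{\zeta}-\boldsymbol{q}$, a one-line argument; you instead route through \eqref{parte1} and the bound $\vertiii{\boldsymbol{\zeta}-\boldsymbol{p}}_{k,E}\lesssim \|\boldsymbol{\zeta}-\boldsymbol{p}\|_{0,E}+h_{E}|\boldsymbol{\zeta}-\boldsymbol{p}|_{1,E}$, which is equivalent but needs the $m=0$ and $m=1$ approximation estimates for the same $\boldsymbol{p}$. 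The more substantive divergence is in i): the paper never uses Lemma~\ref{lema34} there; it exploits the projector's defining property $\int_{\partial E}(\boldsymbol{\zeta}-\boldsymbol{\Pi}_{k,E}\boldsymbol{\zeta})=0$ together with the Poincar\'e--Friedrichs inequality \eqref{FP}, so that i) follows from ii) with no additional polynomial approximation input, and in particular covers $\ell=0$ for free. Your route via Lemma~\ref{lema34} and the triple-norm bound is also valid and yields the same constant $\max\{\lambda_{S}\mu_{S}^{-1},1\}$. Part iii) is essentially identical in both proofs (inverse estimate \eqref{approxpoly3} for polynomials plus \eqref{cota1}). One point you rightly flag and should keep explicit: since you apply two different seminorm estimates to the same $\boldsymbol{p}$, you need the simultaneous (Dupont--Scott averaged Taylor) form of Bramble--Hilbert under \textbf{A1}, not merely the infimum form \eqref{bramblehilbert} stated in the paper; this is standard and not a gap --- indeed the paper itself implicitly relies on it in iii), while its Poincar\'e--Friedrichs argument for i) is precisely a way of avoiding it there.
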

\begin{proof}
Let us recall the following Bramble-Hilbert estimate 
\begin{equation}\label{bramblehilbert}
\underset{\boldsymbol{q} \in [\mathbb{P}_{\ell}(E)]^{2}}{\inf} |\boldsymbol{\zeta} - \boldsymbol{q}|_{m,E} \lesssim h_{E}^{\ell + 1 - m}|\boldsymbol{\zeta}|_{\ell + 1,E} \quad \forall \boldsymbol{\zeta} \in \textbf{H}^{\ell + 1}(E).
\end{equation}
We begin by proving  $\text{ii)}$. To do this task,  given $\boldsymbol{q} \in [\mathbb{P}_{\ell}(E)]^{2}$ arbitrary, and invoking estimate \eqref{cota1} we have 
\begin{equation*}
|\boldsymbol{\Pi}_{k,E}\boldsymbol{\zeta} - \boldsymbol{q}|_{1,E} = |\boldsymbol{\Pi}_{k,E}(\boldsymbol{\zeta} - \boldsymbol{q})|_{1,E} \leq \max\{\lambda_{S}\mu_{S}^{-1},1\}|\boldsymbol{\zeta} - \boldsymbol{q}|_{1,E}.
\end{equation*}
From triangle inequality and   \eqref{bramblehilbert},  we have 
\begin{equation*}
\begin{split}
|\boldsymbol{\zeta} - \boldsymbol{\Pi}_{k,E}\boldsymbol{\zeta}|_{1,E} 
%\leq |\boldsymbol{\zeta} - \boldsymbol{q}|_{1,E} + |\boldsymbol{q} - \boldsymbol{\Pi}_{k,E}\boldsymbol{\zeta}|_{1,E} &\lesssim \underset{\boldsymbol{q} \in [\mathbb{P}_{\ell}(E)]^{2}}{\inf} |\boldsymbol{\zeta} - \boldsymbol{q}|_{1,E} + \max\{\lambda_{S}\mu_{S}^{-1},1\}|\boldsymbol{\zeta} - \boldsymbol{q}|_{1,E} \\
%&\lesssim \max\{\lambda_{S}\mu_{S}^{-1},1\} \underset{\boldsymbol{q} \in [\mathbb{P}_{\ell}(E)]^{2}}{\inf} |\boldsymbol{\zeta} - \boldsymbol{q}|_{1,E} \\
&\lesssim \max\{\lambda_{S}\mu_{S}^{-1},1\}h_{E}^{\ell}|\boldsymbol{\zeta}|_{\ell + 1,E}.
\end{split}
\end{equation*}
Now to derive  $\text{i)}$, invoking  \eqref{FP},  $\text{ii)}$ and the definition of $\boldsymbol{\Pi}_{k,E}$, for each  $\boldsymbol{\zeta} \in \textbf{H}^{\ell + 1}(E)$ we have 
\begin{multline*}
\|\boldsymbol{\zeta} - \boldsymbol{\Pi}_{k,E}\boldsymbol{\zeta}\|_{0,E} \lesssim \left\lvert \displaystyle{\int_{\partial E} \left( \boldsymbol{\zeta} - \boldsymbol{\Pi}_{k,E}\boldsymbol{\zeta}\right)}\right\lvert + h_{E}|\boldsymbol{\zeta} - \boldsymbol{\Pi}_{k,E}\boldsymbol{\zeta}|_{1,E} \\
= h_{E}|\boldsymbol{\zeta} - \boldsymbol{\Pi}_{k,E}\boldsymbol{\zeta}|_{1,E} \lesssim \max\{\lambda_{S}\mu_{S}^{-1},1\}h_{E}^{\ell + 1}|\boldsymbol{\zeta}|_{\ell + 1,E}.
\end{multline*}
Finally, from the following polynomial approximation property (see \cite[(4.5.3) Lemma]{MR2373954})
\begin{equation}\label{approxpoly3}
|\boldsymbol{p}|_{1,\Omega} \lesssim h_{E}^{-1}\|\boldsymbol{p}\|_{0,E} \quad \forall \boldsymbol{p} \in [\mathbb{P}_{k}(E)]^{2},
\end{equation}
together with  \eqref{cota1}, yields to 
\begin{equation*}
|\boldsymbol{\Pi}_{k,E}\boldsymbol{\zeta}|_{2,E} \lesssim h_{E}^{-1}|\boldsymbol{\Pi}_{k,E}\boldsymbol{\zeta}|_{1,E} \lesssim \max\{\lambda_{S}\mu_{S}^{-1},1\}h_{E}^{-1}|\boldsymbol{\zeta}|_{1,E}.
\end{equation*}
Then, given $\boldsymbol{q} \in [\mathbb{P}_{k}(E)]^{2}$ arbitrary, triangle inequality  and  \eqref{bramblehilbert}, we obtain
\begin{equation*}
\begin{split}
|\boldsymbol{\zeta} - \boldsymbol{\Pi}_{k,E}\boldsymbol{\zeta}|_{2,E} &\lesssim \max\{\lambda_{S}\mu_{S}^{-1},1\}h_{E}^{\ell - 1}|\boldsymbol{\zeta}|_{\ell + 1,E},
%|\boldsymbol{\zeta} - \boldsymbol{\Pi}_{k,E}\boldsymbol{\zeta}|_{2,E} &\leq |\boldsymbol{\zeta} - \boldsymbol{q}|_{2,E} + |\boldsymbol{q} - \boldsymbol{\Pi}_{k,E}\boldsymbol{\zeta}|_{2,E} \\
%&\lesssim \underset{\boldsymbol{q} \in [\mathbb{P}_{\ell}(E)]^{2}}{\inf} |\boldsymbol{\zeta} - \boldsymbol{q}|_{2,E} + \max\{\lambda_{S}\mu_{S}^{-1},1\}h_{E}^{-1}|\boldsymbol{\zeta} - \boldsymbol{q}|_{1,E} \\
%&\lesssim \underset{\boldsymbol{q} \in [\mathbb{P}_{\ell}(E)]^{2}}{\inf} |\boldsymbol{\zeta} - \boldsymbol{q}|_{2,E} + \max\{\lambda_{S}\mu_{S}^{-1},1\}h_{E}^{-1}\underset{\boldsymbol{q} \in [\mathbb{P}_{\ell}(E)]^{2}}{\inf}|\boldsymbol{\zeta} - \boldsymbol{q}|_{1,E} \\
%&\lesssim \max\{\lambda_{S}\mu_{S}^{-1},1\}h_{E}^{\ell - 1}|\boldsymbol{\zeta}|_{\ell + 1,E}, 
\end{split}
\end{equation*}
concluding $\text{iii)}$ and hence,  the  proof.
\end{proof}
In the previous result, we have made clear the dependence of the Lam\'e constants on the estimates. This is important to take into account, since in particular $\lambda_S$ leads to numerical locking. Now, we are going to analyze stability and approximation properties for the $\textbf{L}^{2}(E)$ orthogonal projector $\boldsymbol{\Pi}_{k,E}^{0}$ onto the space $[\mathbb{P}_{k}(E)]^{2}$.
\begin{remark}
%Observe that since  $\Pi_{k,E}^{0}$ is the orthogonal projection of  $\textbf{L}^{2}(E)$ sobre $[\mathbb{P}_{k}(E)]^{2}$ y se satisface
%
%\begin{equation*}
%\|\Pi_{k,E}^{0}\boldsymbol{\zeta}\|_{0,E}^{2} = \displaystyle{\int_{E} \boldsymbol{\zeta}\Pi_{k,E}^{0}\boldsymbol{\zeta}} \leq \|\boldsymbol{\zeta}\|_{0,E}\|\Pi_{k,E}^{0}\boldsymbol{\zeta}\|_{0,E}
%\end{equation*}
%
%donde la \'ultima desigualdad es por Cauchy-Schwarz. 
The following estimate for  $\boldsymbol{\Pi}_{k,E}^{0}$ holds 
\begin{equation}\label{stimaproye1}
\|\boldsymbol{\Pi}_{k,E}^{0}\boldsymbol{\zeta}\|_{0,E} \leq \|\boldsymbol{\zeta}\|_{0,E}.
\end{equation}
Moreover, since  $\boldsymbol{\Pi}_{k,E}^{0}\boldsymbol{p} = \boldsymbol{p}$ for all $\boldsymbol{p} \in [\mathbb{P}_{k}(E)]^{2}$, as a consequence of  \eqref{bramblehilbert} there holds 
\begin{equation}\label{stimaproye2}
\|\boldsymbol{\zeta} - \boldsymbol{\Pi}_{k,E}^{0}\boldsymbol{\zeta}\|_{0,E} \lesssim h_{E}^{\ell + 1}|\boldsymbol{\zeta}|_{\ell + 1,E} \quad \forall \boldsymbol{\zeta} \in \textbf{H}^{\ell + 1}(E), \quad 0 \leq \ell \leq k.
\end{equation}
\end{remark}
The following lemma is adapted from \cite[Lemma 3.6]{MR3815658} to our case, and states an estimate for the seminorm $|\cdot|_{1,E}$ of $\boldsymbol{\Pi}_{k,E}^{0}$. 
\begin{lemma}\label{lema36}
There holds
\begin{equation*}\label{stimaproye3}
|\boldsymbol{\Pi}_{k,E}^{0}\boldsymbol{\zeta}|_{1,E} \lesssim \max\{\lambda_{S}\mu_{S}^{-1},1\}|\boldsymbol{\zeta}|_{1,E} \quad \forall \boldsymbol{\zeta} \in \textbf{H}^{1}(E),
\end{equation*}
where the hidden constant is independent of $h_E$.
\end{lemma}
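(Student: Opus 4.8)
The plan is to adapt the argument of \cite[Lemma 3.6]{MR3815658} by comparing the $\textbf{L}^{2}(E)$ projection $\boldsymbol{\Pi}_{k,E}^{0}\boldsymbol{\zeta}$ with the energy projection $\boldsymbol{\Pi}_{k,E}\boldsymbol{\zeta}$, exploiting the fact that both quantities belong to $[\mathbb{P}_{k}(E)]^{2}$. First I would split, by the triangle inequality,
$$|\boldsymbol{\Pi}_{k,E}^{0}\boldsymbol{\zeta}|_{1,E} \leq |\boldsymbol{\Pi}_{k,E}^{0}\boldsymbol{\zeta} - \boldsymbol{\Pi}_{k,E}\boldsymbol{\zeta}|_{1,E} + |\boldsymbol{\Pi}_{k,E}\boldsymbol{\zeta}|_{1,E},$$
and control the last term immediately by \eqref{cota1}, which gives $|\boldsymbol{\Pi}_{k,E}\boldsymbol{\zeta}|_{1,E} \lesssim \max\{\lambda_{S}\mu_{S}^{-1},1\}|\boldsymbol{\zeta}|_{1,E}$. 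Although \eqref{cota1} is stated on $\boldsymbol{\mathcal{V}}_{h}^{E}$, it follows from Korn's inequality and the continuity of $a^{E}(\cdot,\cdot)$ for every $\boldsymbol{\zeta}\in\textbf{H}^{1}(E)$, exactly as already used in the proof of Lemma \ref{lema35}.

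For the remaining term, the key observation is that $\boldsymbol{\Pi}_{k,E}^{0}\boldsymbol{\zeta} - \boldsymbol{\Pi}_{k,E}\boldsymbol{\zeta} \in [\mathbb{P}_{k}(E)]^{2}$, so the inverse estimate \eqref{approxpoly3} applies and converts the $|\cdot|_{1,E}$ seminorm into $h_{E}^{-1}$ times the $\textbf{L}^{2}(E)$ norm of the difference. I would then insert $\boldsymbol{\zeta}$ and use the triangle inequality once more,
$$|\boldsymbol{\Pi}_{k,E}^{0}\boldsymbol{\zeta} - \boldsymbol{\Pi}_{k,E}\boldsymbol{\zeta}|_{1,E} \lesssim h_{E}^{-1}\left(\|\boldsymbol{\zeta} - \boldsymbol{\Pi}_{k,E}^{0}\boldsymbol{\zeta}\|_{0,E} + \|\boldsymbol{\zeta} - \boldsymbol{\Pi}_{k,E}\boldsymbol{\zeta}\|_{0,E}\right).$$
The first $\textbf{L}^{2}$ term is bounded by \eqref{stimaproye2} with $\ell=0$, yielding $\|\boldsymbol{\zeta} - \boldsymbol{\Pi}_{k,E}^{0}\boldsymbol{\zeta}\|_{0,E} \lesssim h_{E}|\boldsymbol{\zeta}|_{1,E}$, while the second is bounded by Lemma \ref{lema35} i) with $\ell=0$, yielding $\|\boldsymbol{\zeta} - \boldsymbol{\Pi}_{k,E}\boldsymbol{\zeta}\|_{0,E} \lesssim \max\{\lambda_{S}\mu_{S}^{-1},1\}h_{E}|\boldsymbol{\zeta}|_{1,E}$.

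Substituting these two estimates, the factor $h_{E}^{-1}$ from the inverse inequality cancels exactly against the $h_{E}$ coming from the lowest-order $\textbf{L}^{2}$ approximation bounds, so that $|\boldsymbol{\Pi}_{k,E}^{0}\boldsymbol{\zeta} - \boldsymbol{\Pi}_{k,E}\boldsymbol{\zeta}|_{1,E} \lesssim \max\{\lambda_{S}\mu_{S}^{-1},1\}|\boldsymbol{\zeta}|_{1,E}$, and combining with the bound for $|\boldsymbol{\Pi}_{k,E}\boldsymbol{\zeta}|_{1,E}$ delivers the claim with a constant independent of $h_{E}$. I expect the main point to watch to be precisely this cancellation of the mesh size: since the small-edges framework forbids any hidden dependence on $h_{E}$ or on the edge lengths, every power of $h_{E}$ must be accounted for, which is why using the $\ell=0$ versions of \eqref{stimaproye2} and Lemma \ref{lema35} i) — valid already for $\boldsymbol{\zeta}\in\textbf{H}^{1}(E)$ — is essential rather than incidental. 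The Lam\'e dependence, by contrast, is routine once these ingredients are in place: the factor $\max\{\lambda_{S}\mu_{S}^{-1},1\}$ is simply inherited from the energy projection $\boldsymbol{\Pi}_{k,E}$ through \eqref{cota1} and Lemma \ref{lema35}, and is the price paid by this comparison strategy.
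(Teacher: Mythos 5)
Your proof is correct and follows essentially the same route as the paper's: the same splitting via $\boldsymbol{\Pi}_{k,E}\boldsymbol{\zeta}$, the inverse estimate \eqref{approxpoly3}, the bound \eqref{cota1}, and the $\ell=0$ cases of \eqref{stimaproye2} and Lemma \ref{lema35}~i), with the $h_E^{-1}$ cancelling exactly as you describe. Your remark that \eqref{cota1} extends to all of $\textbf{H}^1(E)$ is also consistent with how the paper itself uses that estimate in the proof of Lemma \ref{lema35}.
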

\begin{proof}
Let  $\boldsymbol{\zeta} \in \textbf{H}^{1}(E)$. From  triangle inequality, \eqref{approxpoly3}, \eqref{cota1}, and item  $\text{i)}$ of  Lemma \ref{lema35}, we obtain 
\begin{equation*}
\begin{split}
|\boldsymbol{\Pi}_{k,E}^{0}\boldsymbol{\zeta}|_{1,E} &\leq |\boldsymbol{\Pi}_{k,E}^{0}\boldsymbol{\zeta} - \boldsymbol{\Pi}_{k,E}\boldsymbol{\zeta}|_{1,E} + |\boldsymbol{\Pi}_{k,E}\boldsymbol{\zeta}|_{1,E} \\
&\lesssim h_{E}^{-1}\|\boldsymbol{\Pi}_{k,E}^{0}\boldsymbol{\zeta} - \boldsymbol{\Pi}_{k,E}\boldsymbol{\zeta}\|_{0,E} + \max\{\lambda_{S}\mu_{S}^{-1},1\}|\boldsymbol{\zeta}|_{1,E} \\
&\lesssim h_{E}^{-1}(\|\boldsymbol{\Pi}_{k,E}^{0}\boldsymbol{\zeta} - \boldsymbol{\zeta}\|_{0,E} +  \|\boldsymbol{\zeta} - \boldsymbol{\Pi}_{k,E}\boldsymbol{\zeta}\|_{0,E}) + \max\{\lambda_{S}\mu_{S}^{-1},1\}|\boldsymbol{\zeta}|_{1,E} \\
&\lesssim h_{E}^{-1}h_{E}|\boldsymbol{\zeta}|_{1,E} + \max\{\lambda_{S}\mu_{S}^{-1},1\}|\boldsymbol{\zeta}|_{1,E} \lesssim \max\{\lambda_{S}\mu_{S}^{-1},1\}|\boldsymbol{\zeta}|_{1,E},
\end{split}
\end{equation*}
which concludes the proof.
\end{proof}
The following result is adapted from \cite[Lemma 3.7]{MR3815658}, and gives us an approximation for $\boldsymbol{\Pi}_{k,E}^{0}$ in seminorm $|\cdot|_{1,E}$.
\begin{lemma}\label{lema37}
The following estimate holds
\begin{equation*}
|\boldsymbol{\zeta} - \boldsymbol{\Pi}_{k,E}^{0}\boldsymbol{\zeta}|_{1,E} \lesssim \max\{\lambda_{S}\mu_{S}^{-1},1\}h_{E}^{\ell}|\boldsymbol{\zeta}|_{1+\ell,E}, \quad 1 \leq \ell \leq k,
\end{equation*}
where the hidden constant is independent of $h_E$.
\end{lemma}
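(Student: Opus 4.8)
The plan is to exploit the polynomial invariance of $\boldsymbol{\Pi}_{k,E}^{0}$ together with the stability estimate just established in Lemma \ref{lema36}, following the classical ``stability plus polynomial approximation'' template. First I would fix an arbitrary $\boldsymbol{q} \in [\mathbb{P}_{\ell}(E)]^{2}$; since $1 \leq \ell \leq k$ this polynomial lies in $[\mathbb{P}_{k}(E)]^{2}$, so that $\boldsymbol{\Pi}_{k,E}^{0}\boldsymbol{q} = \boldsymbol{q}$. Writing $\boldsymbol{\zeta} - \boldsymbol{\Pi}_{k,E}^{0}\boldsymbol{\zeta} = (\boldsymbol{\zeta} - \boldsymbol{q}) - \boldsymbol{\Pi}_{k,E}^{0}(\boldsymbol{\zeta} - \boldsymbol{q})$ and applying the triangle inequality in the $|\cdot|_{1,E}$ seminorm splits the error into $|\boldsymbol{\zeta} - \boldsymbol{q}|_{1,E}$ plus $|\boldsymbol{\Pi}_{k,E}^{0}(\boldsymbol{\zeta} - \boldsymbol{q})|_{1,E}$.

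Next I would bound the projection term by applying Lemma \ref{lema36} to $\boldsymbol{\zeta} - \boldsymbol{q} \in \textbf{H}^{1}(E)$, which yields $|\boldsymbol{\Pi}_{k,E}^{0}(\boldsymbol{\zeta} - \boldsymbol{q})|_{1,E} \lesssim \max\{\lambda_{S}\mu_{S}^{-1},1\}|\boldsymbol{\zeta} - \boldsymbol{q}|_{1,E}$. Combining this with the remaining term and absorbing the additive constant into the maximum (legitimate since $\max\{\lambda_{S}\mu_{S}^{-1},1\} \geq 1$) produces the single estimate $|\boldsymbol{\zeta} - \boldsymbol{\Pi}_{k,E}^{0}\boldsymbol{\zeta}|_{1,E} \lesssim \max\{\lambda_{S}\mu_{S}^{-1},1\}|\boldsymbol{\zeta} - \boldsymbol{q}|_{1,E}$, valid for every $\boldsymbol{q} \in [\mathbb{P}_{\ell}(E)]^{2}$.

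Finally I would take the infimum over $\boldsymbol{q} \in [\mathbb{P}_{\ell}(E)]^{2}$ and invoke the Bramble-Hilbert estimate \eqref{bramblehilbert} with $m = 1$, namely $\inf_{\boldsymbol{q}} |\boldsymbol{\zeta} - \boldsymbol{q}|_{1,E} \lesssim h_{E}^{\ell}|\boldsymbol{\zeta}|_{\ell+1,E}$, which delivers the claimed bound with an $h_E$-independent hidden constant.

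I do not expect a genuine obstacle here, as every nontrivial ingredient has already been prepared. The one point that deserves attention is that the factor $\max\{\lambda_{S}\mu_{S}^{-1},1\}$ is inherited \emph{entirely} from the stability constant of Lemma \ref{lema36}: the Lam\'e dependence of the present estimate is exactly that of the $|\cdot|_{1,E}$ stability of $\boldsymbol{\Pi}_{k,E}^{0}$, while the Bramble-Hilbert step contributes only the $h_{E}^{\ell}$ scaling and an $h_E$-independent constant. This is consistent with the emphasis throughout the section on tracking how $\lambda_{S}$ enters the estimates.
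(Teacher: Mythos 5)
Your proposal is correct and follows essentially the same route as the paper's own (much terser) proof: fix $\boldsymbol{q} \in [\mathbb{P}_{\ell}(E)]^{2}$, use the triangle inequality together with the polynomial invariance of $\boldsymbol{\Pi}_{k,E}^{0}$, apply the stability of Lemma \ref{lema36}, and conclude with the Bramble--Hilbert estimate \eqref{bramblehilbert}. Your write-up simply makes explicit the decomposition and the origin of the Lam\'e-dependent constant, which the paper leaves implicit.
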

\begin{proof}
Let $1\leq\ell\leq k$  and  $\boldsymbol{\zeta} \in \textbf{H}^{\ell + 1}(E)$. Then, we obtain the result for a  given  $\boldsymbol{q} \in [\mathbb{P}_{\ell}(E)]^{2}$, triangle inequality, \eqref{bramblehilbert} and Lemma \ref{lema36}.
%\begin{equation*}
%\begin{split}
%|\boldsymbol{\zeta} - \Pi_{k,E}^{0}\boldsymbol{\zeta}|_{1,E} \leq |\boldsymbol{\zeta} - q|_{1,E} + |q - \Pi_{k,E}%^{0}\boldsymbol{\zeta}|_{1,E} &\lesssim \underset{q \in [\mathbb{P}_{\ell}(E)]^{2}}{\inf} |\boldsymbol{\zeta} - q|_{1,E} + |\Pi_{k,E}%(\boldsymbol{\zeta} - q)|_{1,E} \\
%&\lesssim \underset{q \in [\mathbb{P}_{\ell}(E)]^{2}}{\inf} |\boldsymbol{\zeta} - q|_{1,E} + \max\{\lambda_{S}\mu_{S}^{-1},1\}|\boldsymbol{\zeta} - q|_{1,E} \\
%&\lesssim \max\{\lambda_{S}\mu_{S}^{-1},1\}\underset{q \in [\mathbb{P}_{\ell}(E)]^{2}}{\inf} |\boldsymbol{\zeta} - q|_{1,E} \\
%&\lesssim \max\{\lambda_{S}\mu_{S}^{-1},1\}h_{E}^{\ell}|\boldsymbol{\zeta}|_{\ell + 1,E}.
%\end{split}
%\end{equation*}
%This concludes the proof.
\end{proof}
Now we prove a stability result for $\textbf{L}^2$ norm of the projection $\boldsymbol{\Pi}_{k,E}^{0}$ with respect to the triple norm $\vertiii{\cdot}_{k,E}$, for virtual functions. A similar result can be found in  \cite[Lemma 3.8]{MR3815658} for the Laplace problem.
\begin{lemma}\label{lema38}
There holds 
\begin{equation*}
\|\boldsymbol{\Pi}_{k,E}^{0}\textbf{v}_{h}\|_{0,E} \lesssim \max\{\lambda_{S}\mu_{S}^{-1},1\}\vertiii{\textbf{v}_{h}}_{k,E} \quad \forall \textbf{v}_{h} \in \boldsymbol{\mathcal{V}}_{h}^{E},
\end{equation*}
where the hidden constant is independent of $h_{E}$.
\end{lemma}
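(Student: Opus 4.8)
The plan is to split the full projection into the piece that already appears in the triple norm and an orthogonal remainder, and to control the remainder through the defining property of $\boldsymbol{\mathcal{V}}_{h}^{E}$. Concretely, I would write
\[
\boldsymbol{\Pi}_{k,E}^{0}\textbf{v}_{h} = \boldsymbol{\Pi}_{k-2,E}^{0}\textbf{v}_{h} + \boldsymbol{q}, \qquad \boldsymbol{q} := \boldsymbol{\Pi}_{k,E}^{0}\textbf{v}_{h} - \boldsymbol{\Pi}_{k-2,E}^{0}\textbf{v}_{h}.
\]
A short computation using the two projection identities (each projection reproduces its test polynomials against $\textbf{v}_h$) shows that $\boldsymbol{q}$ is $\textbf{L}^{2}(E)$-orthogonal to $[\mathbb{P}_{k-2}(E)]^{2}$, i.e. $\boldsymbol{q} \in [\mathbb{P}_{k}(E)]^{2}/[\mathbb{P}_{k-2}(E)]^{2}$. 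The first summand is immediately harmless: by the very definition \eqref{triplenorm} of $\vertiii{\cdot}_{k,E}$ one has $\|\boldsymbol{\Pi}_{k-2,E}^{0}\textbf{v}_{h}\|_{0,E} \leq \vertiii{\textbf{v}_{h}}_{k,E}$, with no Lam\'e factor at all.

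The core of the argument is the estimate of $\|\boldsymbol{q}\|_{0,E}$. Testing $\boldsymbol{q}$ against itself and using its orthogonality to $[\mathbb{P}_{k-2}(E)]^{2}$ to discard the low-order contribution gives $\|\boldsymbol{q}\|_{0,E}^{2} = \int_{E} \boldsymbol{q}\cdot\boldsymbol{\Pi}_{k,E}^{0}\textbf{v}_{h}$; since $\boldsymbol{q} \in [\mathbb{P}_{k}(E)]^{2}$, the definition of the $\textbf{L}^{2}(E)$-projection lets me replace $\boldsymbol{\Pi}_{k,E}^{0}\textbf{v}_{h}$ by $\textbf{v}_{h}$, so that $\|\boldsymbol{q}\|_{0,E}^{2} = \int_{E}\boldsymbol{q}\cdot\textbf{v}_{h}$. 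At this point I would invoke the defining orthogonality of the virtual space $\boldsymbol{\mathcal{V}}_{h}^{E}$: because $\boldsymbol{q}$ lies in $[\mathbb{P}_{k}(E)]^{2}/[\mathbb{P}_{k-2}(E)]^{2}$, we may trade $\textbf{v}_{h}$ for $\boldsymbol{\Pi}_{k,E}\textbf{v}_{h}$, obtaining $\|\boldsymbol{q}\|_{0,E}^{2} = \int_{E}\boldsymbol{q}\cdot\boldsymbol{\Pi}_{k,E}\textbf{v}_{h}$. A Cauchy--Schwarz inequality and cancellation of one factor $\|\boldsymbol{q}\|_{0,E}$ then yield $\|\boldsymbol{q}\|_{0,E} \leq \|\boldsymbol{\Pi}_{k,E}\textbf{v}_{h}\|_{0,E}$.

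To finish I would feed this into Lemma \ref{lema34}, which bounds $\|\boldsymbol{\Pi}_{k,E}\textbf{v}_{h}\|_{0,E} \lesssim \max\{\lambda_{S}\mu_{S}^{-1},1\}\vertiii{\textbf{v}_{h}}_{k,E}$, and combine with the trivial bound on $\boldsymbol{\Pi}_{k-2,E}^{0}\textbf{v}_{h}$ through the triangle inequality to conclude. The step carrying all the subtlety is the passage from $\textbf{v}_{h}$ to $\boldsymbol{\Pi}_{k,E}\textbf{v}_{h}$: it is exactly where the elasticity projector $\boldsymbol{\Pi}_{k,E}$, rather than a bare $\textbf{L}^{2}$ projector, enters, and hence where the Lam\'e ratio $\lambda_{S}\mu_{S}^{-1}$ gets injected via Lemma \ref{lema34}. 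The only point requiring genuine care is verifying that $\boldsymbol{q}$ really belongs to $[\mathbb{P}_{k}(E)]^{2}/[\mathbb{P}_{k-2}(E)]^{2}$, so that the definition of $\boldsymbol{\mathcal{V}}_{h}^{E}$ is applicable; everything else is a direct chain of Cauchy--Schwarz estimates and the orthogonality of $\textbf{L}^{2}$ projections.
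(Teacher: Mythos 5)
Your proposal is correct and follows essentially the same route as the paper: the identical decomposition $\boldsymbol{\Pi}_{k,E}^{0}\textbf{v}_{h} = \boldsymbol{\Pi}_{k-2,E}^{0}\textbf{v}_{h} + (\boldsymbol{\Pi}_{k,E}^{0}-\boldsymbol{\Pi}_{k-2,E}^{0})\textbf{v}_{h}$, the same key step of trading $\textbf{v}_{h}$ for $\boldsymbol{\Pi}_{k,E}\textbf{v}_{h}$ in the high-order part via the defining orthogonality of $\boldsymbol{\mathcal{V}}_{h}^{E}$, and the same final appeal to Lemma \ref{lema34}. The only cosmetic difference is that the paper exploits orthogonality through Pythagoras' theorem where you use a duality/Cauchy--Schwarz computation plus the triangle inequality, which yields the same bound.
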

\begin{proof}
Given  $\textbf{v}_{h} \in \boldsymbol{\mathcal{V}}_{h}^{E}$, from the definition of $\boldsymbol{\Pi}_{k-2,E}^{0}$ we have 
\begin{equation*}
\begin{split}
\displaystyle{\int_{E} \boldsymbol{\Pi}_{k-2,E}^{0}\textbf{v}_{h}\cdot(\boldsymbol{\Pi}_{k-2,E}^{0}\textbf{v}_{h} - \boldsymbol{\Pi}_{k,E}^{0}\textbf{v}_{h})} = 0.
%&= \displaystyle{\int_{E} |\boldsymbol{\Pi}_{k-2,E}^{0}\textbf{v}_{h}|^{2} - \int_{E} \boldsymbol{\Pi}_{k-2,E}^{0}\textbf{v}_{h}\cdot\boldsymbol{\Pi}_{k,E}^{0}\textbf{v}_{h}} \\
%&= \displaystyle{\int_{E} \textbf{v}_{h}\cdot\boldsymbol{\Pi}_{k-2,E}^{0}\textbf{v}_{h} - \int_{E} \textbf{v}_{h}\cdot\boldsymbol{\Pi}_{k-2,E}^{0}\textbf{v}_{h}} = 0.
\end{split}
\end{equation*}
Hence, from  Pithagoras Theorem, the definition of  $\boldsymbol{\mathcal{V}}_{h}^{E}$, \eqref{triplenorm}, and  Lemma \ref{lema34} we obtain
\begin{multline*}
\|\boldsymbol{\Pi}_{k,E}^{0}\textbf{v}_{h}\|_{0,E}^{2} = \|\boldsymbol{\Pi}_{k-2,E}^{0}\textbf{v}_{h}\|_{0,E}^{2} + \|(\boldsymbol{\Pi}_{k,E}^{0} - \boldsymbol{\Pi}_{k-2,E}^{0})\textbf{v}_{h}\|_{0,E}^{2} \\
= \|\boldsymbol{\Pi}_{k-2,E}^{0}\textbf{v}_{h}\|_{0,E}^{2} + \|(\boldsymbol{\Pi}_{k,E}^{0} - \boldsymbol{\Pi}_{k-2,E}^{0})\boldsymbol{\Pi}_{k,E}\textbf{v}_{h}\|_{0,E}^{2} \\
%\leq \|\boldsymbol{\Pi}_{k-2,E}^{0}\textbf{v}_{h}\|_{0,E}^{2} + \|\boldsymbol{\Pi}_{k,E}\textbf{v}_{h}\|_{0,E}^{2} \\
\lesssim \vertiii{\textbf{v}_{h}}_{k,E}^{2} + \max\{\lambda_{S}\mu_{S}^{-1},1\}^{2}\vertiii{\textbf{v}_{h}}_{k,E}^{2} 
\lesssim \max\{\lambda_{S}\mu_{S}^{-1},1\}^{2}\vertiii{\textbf{v}_{h}}_{k,E}^{2},
\end{multline*}
concluding the proof. 
\end{proof}
To prove the following result, we need to adapt  the arguments of the proof of \cite[Lemma 3.9]{MR3815658} for the vectorial case, taking into account that now, in the estimates, the Lam\'e constants appear naturally.
\begin{lemma}\label{lema39}
Assume that \textbf{A1} holds. Then, there exists a constant  $C > 0$ depending on  $\rho_{E}$ \textcolor{blue}{and}  $k$ such that 
\begin{equation*}\label{stima1}
|\textbf{v}_{h}|_{1,E} \leq C\max\{\lambda_{S}\mu_{S}^{-1},1\}(h_{E}^{-1}\vertiii{\textbf{v}_{h}}_{k,E} + |\textbf{v}_{h}|_{1/2,\partial E}) \quad \forall \textbf{v}_{h} \in \boldsymbol{\mathcal{V}}_{h}^{E}.
\end{equation*}
\end{lemma}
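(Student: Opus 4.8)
The plan is to follow the strategy of \cite[Lemma 3.9]{MR3815658}, splitting a virtual function into a harmonic piece that carries the boundary data and a bubble piece with polynomial Laplacian, while tracking where the factor $\max\{\lambda_S\mu_S^{-1},1\}$ is introduced. Given $\textbf{v}_h\in\boldsymbol{\mathcal{V}}_h^E$, recall that by the definition of $\boldsymbol{\mathcal{W}}_h^E$ we have $\textbf{p}:=\Delta\textbf{v}_h\in[\mathbb{P}_k(E)]^2$. I would decompose $\textbf{v}_h=\textbf{v}_0+\textbf{v}_1$, where $\textbf{v}_1$ is the componentwise harmonic extension of $\textbf{v}_h|_{\partial E}$ (so $\Delta\textbf{v}_1=\boldsymbol{0}$ in $E$ and $\textbf{v}_1=\textbf{v}_h$ on $\partial E$) and $\textbf{v}_0:=\textbf{v}_h-\textbf{v}_1\in\textbf{H}_0^1(E)$ satisfies $\Delta\textbf{v}_0=\textbf{p}$. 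The triangle inequality gives $|\textbf{v}_h|_{1,E}\le|\textbf{v}_0|_{1,E}+|\textbf{v}_1|_{1,E}$, and I would estimate the two contributions separately.

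For the harmonic part, the Dirichlet principle together with a scaled bounded extension $\textbf{H}^{1/2}(\partial E)\to\textbf{H}^1(E)$, valid on star-shaped elements under \textbf{A1} with a constant depending only on $\rho_E$ and $k$, yields $|\textbf{v}_1|_{1,E}\lesssim|\textbf{v}_h|_{1/2,\partial E}$; note that no Lam\'e constant enters here. For the bubble part, since $\textbf{v}_0\in\textbf{H}_0^1(E)$ and $\Delta\textbf{v}_0=\textbf{p}$, integration by parts gives $|\textbf{v}_0|_{1,E}^2=-\int_E\textbf{p}\cdot\textbf{v}_0=-\int_E\textbf{p}\cdot\textbf{v}_h+\int_E\textbf{p}\cdot\textbf{v}_1$. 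Because $\textbf{p}\in[\mathbb{P}_k(E)]^2$ we have $\int_E\textbf{p}\cdot\textbf{v}_h=\int_E\textbf{p}\cdot\boldsymbol{\Pi}_{k,E}^0\textbf{v}_h$, so Cauchy--Schwarz and Lemma \ref{lema38} give $|\int_E\textbf{p}\cdot\textbf{v}_h|\lesssim\|\textbf{p}\|_{0,E}\max\{\lambda_S\mu_S^{-1},1\}\vertiii{\textbf{v}_h}_{k,E}$; this is precisely where the Lam\'e factor appears. For the remaining term I would apply Cauchy--Schwarz and the Poincar\'e--Friedrichs inequality \eqref{FP} to obtain $\|\textbf{v}_1\|_{0,E}\lesssim|\int_{\partial E}\textbf{v}_1|+h_E|\textbf{v}_1|_{1,E}=|\int_{\partial E}\textbf{v}_h|+h_E|\textbf{v}_1|_{1,E}$, and then bound $|\int_{\partial E}\textbf{v}_h|\lesssim\vertiii{\textbf{v}_h}_{k,E}$ exactly as in \eqref{parte2}, together with the harmonic estimate for $|\textbf{v}_1|_{1,E}$. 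Collecting these bounds yields $|\textbf{v}_0|_{1,E}^2\lesssim\|\textbf{p}\|_{0,E}\bigl(\max\{\lambda_S\mu_S^{-1},1\}\vertiii{\textbf{v}_h}_{k,E}+h_E|\textbf{v}_h|_{1/2,\partial E}\bigr)$.

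To close the argument I must eliminate $\|\textbf{p}\|_{0,E}=\|\Delta\textbf{v}_0\|_{0,E}$, and here I would invoke the inverse-type inequality $\|\Delta\textbf{v}_0\|_{0,E}\lesssim h_E^{-1}|\textbf{v}_0|_{1,E}$ on the finite-dimensional space of bubbles with $[\mathbb{P}_k(E)]^2$-valued Laplacian, with a constant uniform in $h_E$ and independent of the number and size of the edges, depending only on $\gamma$ (through $\rho_E$) and on $k$; this follows by the scaling argument to a reference configuration used in \cite{MR3815658}. Substituting and dividing by $|\textbf{v}_0|_{1,E}$ gives $|\textbf{v}_0|_{1,E}\lesssim\max\{\lambda_S\mu_S^{-1},1\}h_E^{-1}\vertiii{\textbf{v}_h}_{k,E}+|\textbf{v}_h|_{1/2,\partial E}$, and adding the harmonic estimate while using $\max\{\lambda_S\mu_S^{-1},1\}\ge1$ to absorb the stand-alone $|\textbf{v}_h|_{1/2,\partial E}$ produces the stated bound.

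I expect the main obstacle to be exactly the uniform, small-edges-robust inverse inequality on the bubble space: since the admissible elements are only assumed star-shaped and may carry arbitrarily many arbitrarily small edges, one cannot reduce to a single compact reference family, so the independence of the constant from $h_E$ and from the edge structure must be argued with care. The scaled harmonic-extension estimate $|\textbf{v}_1|_{1,E}\lesssim|\textbf{v}_h|_{1/2,\partial E}$ is of the same geometric nature and is the second delicate point; both are purely Laplacian/geometry facts, so the Lam\'e dependence is confined to the single application of Lemma \ref{lema38}.
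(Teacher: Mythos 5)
Your proof is correct and follows essentially the same route as the paper, which defers almost entirely to the proof of \cite[Lemma 3.9]{MR3815658}: both arguments split $\textbf{v}_{h}$ into a lifting of its boundary trace plus an $\textbf{H}_{0}^{1}(E)$ remainder with polynomial Laplacian, and both introduce the factor $\max\{\lambda_{S}\mu_{S}^{-1},1\}$ at exactly one point, namely through Lemma \ref{lema38} applied to $\boldsymbol{\Pi}_{k,E}^{0}\textbf{v}_{h}$. The only difference is one of presentation: where the paper imports the function $\boldsymbol{\zeta}$, the polynomial $\boldsymbol{p}$ and the bound $\|\boldsymbol{p}\|_{0,E}\lesssim\|\boldsymbol{\Pi}_{k,E}^{0}\textbf{v}_{h}-\textbf{w}_{h}\|_{0,E}$ wholesale from that reference, you reconstruct the mechanism explicitly (harmonic extension in place of the lifting operator $\Upsilon$, and an interior-bubble inverse estimate for the polynomial Laplacian, both depending only on $\rho_{E}$ and $k$ and hence robust to small edges), which is a sound way to fill in the same steps.
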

\begin{proof}
The proof is similar to what was done in \cite[Lemma 3.9]{MR3815658}, since the two-dimensional case is direct.
Then, we have
\begin{equation*}
\|\boldsymbol{p}\|_{0,E} \lesssim \|\boldsymbol{\Pi}_{k,E}^{0}\textbf{v}_{h} - \textbf{w}_{h}\|_{0,E},
\end{equation*}
where $\textbf{w}_{h} := \Upsilon(\textbf{v}_{h})$, the operator $\Upsilon$, called \textit{lifting operator}, was introduced in \cite[Section 2.7]{MR3815658}, and $\boldsymbol{p}$ was defined in the proof of \cite[Lemma 3.9]{MR3815658}.
Therefore, from triangle inequality and Lemma \ref{lema38} we have
\begin{equation}\label{lifting}
\|\boldsymbol{p}\|_{0,E} 
%&\leq \|\boldsymbol{\Pi}_{k,E}^{0}\textbf{v}_{h}\|_{0,E} + \|\textbf{w}\|_{0,E} \lesssim \|\boldsymbol{\Pi}_{k,E}^{0}\textbf{v}_{h}\|_{0,E} + |\textbf{v}_{h}|_{1/2,\partial E} \\
%&\lesssim \max\{\lambda_{S}\mu_{S}^{-1},1\}\vertiii{\textbf{v}_{h}}_{k,E} + |\textbf{v}_{h}|_{1/2,\partial E} \\
\lesssim \max\{\lambda_{S}\mu_{S}^{-1},1\}(\vertiii{\textbf{v}_{h}}_{k,E} + |\textbf{v}_{h}|_{1/2,\partial E}).
\end{equation}
Hence, we conclude that
\begin{equation}\label{lifting2}
\begin{split}
|\textbf{v}_{h}|_{1,E} \leq |\boldsymbol{\zeta}|_{1,E} &\lesssim |\textbf{v}_{h}|_{1/2, \partial E} + \|\boldsymbol{p}\|_{0,E} \\
 &\lesssim \max\{\lambda_{S}\mu_{S}^{-1},1\}(\vertiii{\textbf{v}_{h}}_{k,E} + |\textbf{v}_{h}|_{1/2,\partial E}),
\end{split}
\end{equation}
where $\boldsymbol{\zeta}$ was defined in the proof of \cite[Lemma 3.9]{MR3815658}, and \eqref{lifting2} is obtained from \cite[Lemma 3.9]{MR3815658}, together with  \eqref{lifting}.
\end{proof}
As a direct consequence of lemma above, we have the following result.
\begin{corollary}\label{lema310}
Assume that \textbf{A1} holds. Then, there exists  $C > 0$ depending on  $\rho_{E}$ y $k$, such that 
\begin{equation*}\label{stima2}
|\textbf{v}_{h}|_{1,E} \leq C\max\{\lambda_{S}\mu_{S}^{-1},1\}\left(h_{E}^{-1}\vertiii{\textbf{v}_{h}}_{k,E} + h_{E}^{1/2}\|\partial_{s}\textbf{v}_{h}\|_{0,\partial E}\right) \quad \forall \textbf{v}_{h} \in \boldsymbol{\mathcal{V}}_{h}^{E}.
\end{equation*}
where  $\partial_{s}\textbf{v}_{h}$ denotes the tangential derivative of  $\textbf{v}_{h}$.
\end{corollary}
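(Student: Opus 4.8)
The plan is to derive the corollary directly from Lemma \ref{lema39} by trading the fractional boundary seminorm $|\textbf{v}_{h}|_{1/2,\partial E}$ for the tangential-derivative term $h_{E}^{1/2}\|\partial_{s}\textbf{v}_{h}\|_{0,\partial E}$, while leaving the factor $\max\{\lambda_{S}\mu_{S}^{-1},1\}$ untouched. Concretely, starting from the estimate of Lemma \ref{lema39}, it suffices to establish the scale-invariant boundary bound
\[
|\textbf{v}_{h}|_{1/2,\partial E} \lesssim h_{E}^{1/2}\|\partial_{s}\textbf{v}_{h}\|_{0,\partial E} \qquad \forall \textbf{v}_{h} \in \boldsymbol{\mathcal{V}}_{h}^{E},
\]
since inserting it into Lemma \ref{lema39} reproduces exactly the claimed inequality.

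To prove this boundary bound I would first exploit that both $|\cdot|_{1/2,\partial E}$ and $\|\partial_{s}\cdot\|_{0,\partial E}$ annihilate constants, so one may replace $\textbf{v}_{h}$ by $\textbf{v}_{h}-\overline{\textbf{v}}_{h}$, where $\overline{\textbf{v}}_{h}$ is the average of $\textbf{v}_{h}$ over the connected curve $\partial E$. Next I would use the interpolation inequality between $\textbf{L}^{2}(\partial E)$ and $\H^{1}(\partial E)$,
\[
|\textbf{v}_{h}|_{1/2,\partial E}^{2} \lesssim \|\textbf{v}_{h}-\overline{\textbf{v}}_{h}\|_{0,\partial E}\,\|\partial_{s}\textbf{v}_{h}\|_{0,\partial E},
\]
which is scale invariant and can be justified by a reference-configuration argument (or, equivalently, by a Fourier expansion on the closed boundary curve). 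Finally, a Poincar\'e--Wirtinger inequality on $\partial E$ yields $\|\textbf{v}_{h}-\overline{\textbf{v}}_{h}\|_{0,\partial E} \lesssim h_{E}\|\partial_{s}\textbf{v}_{h}\|_{0,\partial E}$; combining the last two displays gives $|\textbf{v}_{h}|_{1/2,\partial E}^{2} \lesssim h_{E}\|\partial_{s}\textbf{v}_{h}\|_{0,\partial E}^{2}$, that is, the desired estimate.

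The delicate point is to guarantee that the constants in both the interpolation and the Poincar\'e--Wirtinger inequalities are \emph{independent of the possibly very small edge lengths} of $E$, and this is exactly where Assumption \textbf{A1} enters. I would apply these inequalities on $\partial E$ regarded as a single connected Lipschitz curve of total length $|\partial E|\simeq h_{E}$, rather than edge by edge, so that the relevant constants depend only on the star-shapedness parameter $\gamma$ (and on $k$) and not on the number or size of the individual edges; a scaling to a reference configuration of unit diameter then fixes the powers of $h_{E}$. With this robust boundary estimate in hand, the corollary is an immediate consequence of Lemma \ref{lema39}.
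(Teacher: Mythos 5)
Your proposal is correct and takes essentially the same route as the paper: the paper's proof consists precisely of applying Lemma \ref{lema39} together with the boundary estimate $|\boldsymbol{\zeta}|_{1/2,\partial E} \lesssim h_{E}^{1/2}\|\partial_{s}\boldsymbol{\zeta}\|_{0,\partial E}$ for $\boldsymbol{\zeta} \in \textbf{H}^{1}(\partial E)$, which it simply quotes as a known inverse estimate. The only difference is that you additionally sketch a proof of that estimate (mean subtraction, the $\textbf{L}^{2}$--$\H^{1}$ interpolation bound on $\partial E$, and Poincar\'e--Wirtinger on the closed boundary curve, with constants controlled via \textbf{A1}), which the paper leaves to the cited literature.
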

\begin{proof}
To derive the result, it is enough to apply  Lemma \ref{lema39}, together with the inverse estimate 
\begin{equation*}\label{approx1}
|\boldsymbol{\zeta}|_{1/2,\partial E} \lesssim h_{E}^{1/2}\|\partial_{s}\boldsymbol{\zeta}\|_{0,\partial E} \quad \forall \boldsymbol{\zeta} \in \textbf{H}^{1}(\partial E).
\end{equation*}
\end{proof}
%\textcolor{blue}{The following lemma is easily extended from \cite[Lemma 6.6]{MR3714637} to our case.
%\begin{lemma}\label{lema311}
%Assume that \textbf{A1)} and  \textbf{A2)} holds. Then, for $\textbf{v}_h\in\boldsymbol{\mathcal{V}}_h$, there exists $C>0$ depending  $\rho_{E}$, $|\mathcal{E}_{E}|$ and  $k$, such that 
%\begin{equation*}\label{stima3}
%|\textbf{v}_{h}|_{1,E} \leq C\max\{\lambda_{S}\mu_{S}^{-1},1\}\left(h_{E}^{-1}\vertiii{\textbf{v}_{h}}_{k,E} + \sqrt{c(h)}\|\textbf{v}_{h}\|_{L^{\infty}(\partial E)}\right),
%\end{equation*}
%where  $c(h) := \underset{E \in \mathcal{T}_{h}}{\max} \; \log\left(1 + \dfrac{h_{E}}{h_{m}(E)}\right)$. 
%\end{lemma}}
%\begin{proof}
%For a given  $\textbf{v}_{h} \in \boldsymbol{\mathcal{V}}_{h}^{E}$ and invoking Lemma \ref{lema6.6} we obtain
%\begin{equation}\label{x1}
%|\textbf{v}_{h}|_{1/2,\partial E} \leq C\sqrt{c(h)}\|\textbf{v}_{h}\|_{L^{\infty}(\partial E)},
%\end{equation}
%where  $C > 0$ depends on $\rho_{E}$, $|\mathcal{E}_{E}|$ y $k$. Finally, applying Lemma \ref{lema39} and  \eqref{x1}.
%concludes the proof.
%\end{proof}
%}
\subsection{The interpolation operator $\mathbf{I}_{k,E}$}
In order to obtain error estimates for our numerical method, and interpolation operator is needed. This operator is defined locally, and its construction is based in the references \cite{MR3340705} and \cite{MR3714637}, where this last reference makes the construction of this operator for the small edges approach.

We consider for $s > 1$ the interpolation operator $\mathbf{I}_{k,E} : \textbf{H}^{s}(E) \longrightarrow \boldsymbol{\mathcal{V}}_{h}^{E}$, which satisfies that the degrees of freedom of $\boldsymbol{\zeta}$ and $\mathbf{I}_{k,E}\boldsymbol{\zeta}$ are the same, together with the identity $\boldsymbol{\Pi}_{k-2,E}^{0}\textbf{I}_{k,E}\boldsymbol{\zeta} = \boldsymbol{\Pi}_{k-2,E}^{0}\boldsymbol{\zeta}$
%\begin{equation*}\label{interpoly}
%\Pi_{k-2,E}^{0}\mathbf{I}_{k,E}\boldsymbol{\zeta} = \Pi_{k-2,E}^{0}\boldsymbol{\zeta}.
%\end{equation*}
and $\mathbf{I}_{k,E}\boldsymbol{q} = \boldsymbol{q}\quad \forall \; \boldsymbol{q} \in [\mathbb{P}_{k}(E)]^{2}.$
%\begin{equation*}\label{ident1}
%\mathbf{I}_{k,E}\boldsymbol{q} = \boldsymbol{q}\quad \forall \; \boldsymbol{q} \in [\mathbb{P}_{k}(E)]^{2}.
%\end{equation*}
%together with the following estimate 
%\begin{equation*}\label{stima4}
%\|\mathbf{I}_{k,E}\boldsymbol{\zeta}\|_{\textbf{L}^{\infty}(\partial E)} \lesssim \underset{V_{i} \; vertice \atop de \; E}{\max} \; |\boldsymbol{\zeta}(V_{i})| \lesssim \|\boldsymbol{\zeta}\|_{\textbf{L}^{\infty}(\partial E)}.
%\end{equation*}

The following result is a stability estimate for $\textbf{I}_{k,E}$ in $\vertiii{\cdot}_{k,E}$ norm and its proof is  straightforward from \cite[Lemma 3.12]{MR3815658}.
\begin{lemma}\label{lema312}
Assume that \textbf{A1} holds. Then, there holds 
\begin{equation*}
\vertiii{\mathbf{I}_{k,E}\boldsymbol{\zeta}}_{k,E} \lesssim \left(\|\boldsymbol{\zeta}\|_{0,E} + h_{E}|\boldsymbol{\zeta}|_{1,E} + h_{E}^{2}|\boldsymbol{\zeta}|_{2,E}\right) \quad \forall \boldsymbol{\zeta} \in \textbf{H}^{2}(E),
\end{equation*}
where the hidden constant depends on $\rho_{E}$ and $k$,  but not on $h_E$.
%\begin{itemize}
%\item[i)] $\vertiii{\mathbf{I}_{k,E}\boldsymbol{\zeta}}_{k,E} \leq C\left(\|\boldsymbol{\zeta}\|_{0,E} + h_{E}|\boldsymbol{\zeta}|_{1,E} + h_{E}^{2}|\boldsymbol{\zeta}|_{2,E}\right) \quad \forall \boldsymbol{\zeta} \in \textbf{H}^{2}(E)$,
%\item[ii)] $\vertiii{\mathbf{I}_{k,E}\boldsymbol{\zeta}}_{k,E} \leq C\left(\|\boldsymbol{\zeta}\|_{0,E} + h_{E}|\boldsymbol{\zeta}|_{1,E} + h_{E}^{3/2}|\boldsymbol{\zeta}|_{3/2,E}\right) \quad \forall \boldsymbol{\zeta} \in \textbf{H}^{3/2}(E)$.
%\end{itemize}
\end{lemma}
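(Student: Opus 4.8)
The plan is to expand the triple norm of $\mathbf{I}_{k,E}\boldsymbol{\zeta}$ according to its definition \eqref{triplenorm} and to estimate the interior moment term and the edge terms separately, exploiting the two defining properties of the interpolant (matching degrees of freedom and the identity $\boldsymbol{\Pi}_{k-2,E}^{0}\mathbf{I}_{k,E}\boldsymbol{\zeta}=\boldsymbol{\Pi}_{k-2,E}^{0}\boldsymbol{\zeta}$). Writing
\[
\vertiii{\mathbf{I}_{k,E}\boldsymbol{\zeta}}_{k,E}^{2}=\|\boldsymbol{\Pi}_{k-2,E}^{0}\mathbf{I}_{k,E}\boldsymbol{\zeta}\|_{0,E}^{2}+h_{E}\sum_{e\in\mathcal{E}_{E}}\|\boldsymbol{\Pi}_{k-1,e}^{0}\mathbf{I}_{k,E}\boldsymbol{\zeta}\|_{0,e}^{2},
\]
the interior term is immediate: by the identity $\boldsymbol{\Pi}_{k-2,E}^{0}\mathbf{I}_{k,E}\boldsymbol{\zeta}=\boldsymbol{\Pi}_{k-2,E}^{0}\boldsymbol{\zeta}$ together with the $\textbf{L}^{2}(E)$-stability \eqref{stimaproye1} of the orthogonal projection, one gets $\|\boldsymbol{\Pi}_{k-2,E}^{0}\mathbf{I}_{k,E}\boldsymbol{\zeta}\|_{0,E}=\|\boldsymbol{\Pi}_{k-2,E}^{0}\boldsymbol{\zeta}\|_{0,E}\leq\|\boldsymbol{\zeta}\|_{0,E}$, which already accounts for the first term on the right-hand side.

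For each edge term I would first discard the projection by its $\textbf{L}^{2}(e)$-stability, so that $\|\boldsymbol{\Pi}_{k-1,e}^{0}\mathbf{I}_{k,E}\boldsymbol{\zeta}\|_{0,e}\leq\|\mathbf{I}_{k,E}\boldsymbol{\zeta}\|_{0,e}$, and then split $\mathbf{I}_{k,E}\boldsymbol{\zeta}=(\mathbf{I}_{k,E}\boldsymbol{\zeta}-\boldsymbol{\zeta})+\boldsymbol{\zeta}$ on $\partial E$ via the triangle inequality. Summing over $e$ and multiplying by $h_{E}$ leaves two contributions, $h_{E}\|\boldsymbol{\zeta}\|_{0,\partial E}^{2}$ and $h_{E}\|\mathbf{I}_{k,E}\boldsymbol{\zeta}-\boldsymbol{\zeta}\|_{0,\partial E}^{2}$. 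The first is controlled directly by the trace inequality \eqref{traceineq0}, which yields $h_{E}\|\boldsymbol{\zeta}\|_{0,\partial E}^{2}\lesssim\|\boldsymbol{\zeta}\|_{0,E}^{2}+h_{E}^{2}|\boldsymbol{\zeta}|_{1,E}^{2}$ and so reproduces the first two terms of the claimed bound.

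The genuine work is the second contribution, the boundary interpolation error $h_{E}\|\mathbf{I}_{k,E}\boldsymbol{\zeta}-\boldsymbol{\zeta}\|_{0,\partial E}^{2}$. Since $\boldsymbol{\zeta}\in\textbf{H}^{2}(E)$ its trace is continuous on $\partial E$, so the point/edge degrees of freedom make sense, and because $\mathbf{I}_{k,E}\boldsymbol{\zeta}$ and $\boldsymbol{\zeta}$ share the same boundary degrees of freedom, on each edge $\mathbf{I}_{k,E}\boldsymbol{\zeta}|_{e}$ is exactly the edge interpolant of $\boldsymbol{\zeta}|_{e}$. At this point I would invoke the small-edge interpolation estimate underlying \cite[Lemma 3.12]{MR3815658} (built on \cite{MR3714637}), which gives $\|\mathbf{I}_{k,E}\boldsymbol{\zeta}-\boldsymbol{\zeta}\|_{0,\partial E}\lesssim h_{E}^{3/2}|\boldsymbol{\zeta}|_{2,E}$ with a constant depending only on $\rho_{E}$ and $k$. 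Multiplying by $h_{E}^{1/2}$ produces exactly the $h_{E}^{2}|\boldsymbol{\zeta}|_{2,E}$ term, completing the estimate.

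The hard part will be precisely this boundary interpolation bound: a naive scaling to a reference edge degrades for small edges (the edge length $|e|$ may be far smaller than $h_{E}$), so the argument must rely on the robust small-edge interpolation results of \cite{MR3714637,MR3815658} rather than on classical scaling, which is also the source of the dependence of the hidden constant on $\rho_{E}$ and $k$. I would finally note that the two-dimensional vector-valued case reduces to the scalar argument component by component, and that neither the triple norm nor $\mathbf{I}_{k,E}$ involves the elasticity form $a^{E}(\cdot,\cdot)$ but only $\textbf{L}^{2}$ projections; consequently no Lam\'e-constant factor $\max\{\lambda_{S}\mu_{S}^{-1},1\}$ appears, in agreement with the statement.
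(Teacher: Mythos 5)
Your proposal is correct and follows essentially the same route as the paper, whose entire proof of Lemma~\ref{lema312} is the citation of \cite[Lemma 3.12]{MR3815658}: the decomposition you give (exact interior moments via $\boldsymbol{\Pi}_{k-2,E}^{0}\mathbf{I}_{k,E}\boldsymbol{\zeta}=\boldsymbol{\Pi}_{k-2,E}^{0}\boldsymbol{\zeta}$ and \eqref{stimaproye1}, stability of the edge projections, the trace inequality \eqref{traceineq0} for $\boldsymbol{\zeta}$, and a boundary interpolation bound) is precisely the argument underlying that reference, and it correctly avoids any Lam\'e-constant factor since only $\textbf{L}^{2}$ projections are involved. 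The only remark is that the boundary estimate $\|\boldsymbol{\zeta}-\mathbf{I}_{k,E}\boldsymbol{\zeta}\|_{0,\partial E}\lesssim h_{E}^{3/2}|\boldsymbol{\zeta}|_{2,E}$ you import is sharper than needed (as stated it requires a Bramble--Hilbert argument under \textbf{A1}); the elementary edge-by-edge bound $\|\boldsymbol{\zeta}-\mathbf{I}_{k,E}\boldsymbol{\zeta}\|_{0,e}\lesssim h_{E}\|\partial_{s}\boldsymbol{\zeta}\|_{0,e}$ combined with \eqref{traceineq0} applied to $\nabla\boldsymbol{\zeta}$ produces only the additional admissible term $h_{E}|\boldsymbol{\zeta}|_{1,E}$ and already suffices to conclude.
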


We have the following estimate for $|\mathbf{I}_{k,E}\boldsymbol{\zeta}|_{1,E}$. This result is adapted from \cite[Lemma 3.13]{MR3815658} to our case.
%depending on the regularity of the functions that we can consider.

\begin{lemma}\label{lema313}
The following estimate hold
\begin{equation*}
|\mathbf{I}_{k,E}\boldsymbol{\zeta}|_{1,E} \lesssim \max\{\lambda_{S}\mu_{S}^{-1},1\}\left(|\boldsymbol{\zeta}|_{1,E} + h_{E}|\boldsymbol{\zeta}|_{2,E}\right) \quad \forall \boldsymbol{\zeta} \in \textbf{H}^{2}(E),
\end{equation*}
where the hidden constant depends on $\rho_{E}$ and $k$.
%The following estimates hold
%\begin{itemize}
%\item[i)] $|\mathbf{I}_{k,E}\boldsymbol{\zeta}|_{1,E} \lesssim \max\{\lambda_{S}\mu_{S}^{-1},1\}\left(|\boldsymbol{\zeta}|_{1,E} + h_{E}|\boldsymbol{\zeta}|_{2,E}\right) \quad \forall \boldsymbol{\zeta} \in \textbf{H}^{2}(E)$, where the hidden constant depends on $\rho_{E}, k$ y
%\item[ii)] $|\mathbf{I}_{k,E}\boldsymbol{\zeta}|_{1,E} \lesssim \max\{\lambda_{S}\mu_{S}^{-1},1\}\left(|\boldsymbol{\zeta}|_{1,E} + h_{E}^{1/2}|\boldsymbol{\zeta}|_{3/2,E}\right) \quad \forall \boldsymbol{\zeta} \in \textbf{H}^{3/2}(E)$, where the hidden constant depends on $\rho_{E}, k$ y $|\mathcal{E}_{E}|$.
%\end{itemize}
\end{lemma}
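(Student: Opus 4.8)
The plan is to estimate $|\mathbf{I}_{k,E}\boldsymbol{\zeta}|_{1,E}$ by passing through the triple norm and the lifting-type bound already established in Corollary \ref{lema310}. First I would apply Corollary \ref{lema310} to the virtual function $\textbf{v}_h = \mathbf{I}_{k,E}\boldsymbol{\zeta} \in \boldsymbol{\mathcal{V}}_h^E$, which gives
\begin{equation*}
|\mathbf{I}_{k,E}\boldsymbol{\zeta}|_{1,E} \lesssim \max\{\lambda_{S}\mu_{S}^{-1},1\}\left(h_{E}^{-1}\vertiii{\mathbf{I}_{k,E}\boldsymbol{\zeta}}_{k,E} + h_{E}^{1/2}\|\partial_{s}\mathbf{I}_{k,E}\boldsymbol{\zeta}\|_{0,\partial E}\right).
\end{equation*}
This isolates the two quantities I must control: the triple norm of the interpolant and the tangential boundary term.

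For the first term, Lemma \ref{lema312} directly bounds $\vertiii{\mathbf{I}_{k,E}\boldsymbol{\zeta}}_{k,E}$ by $\|\boldsymbol{\zeta}\|_{0,E} + h_{E}|\boldsymbol{\zeta}|_{1,E} + h_{E}^{2}|\boldsymbol{\zeta}|_{2,E}$. After multiplying by $h_E^{-1}$ the leading contribution is $h_E^{-1}\|\boldsymbol{\zeta}\|_{0,E}$, which is not yet in seminorm form; here I would invoke the fact that the interpolant preserves polynomials ($\mathbf{I}_{k,E}\boldsymbol{q}=\boldsymbol{q}$ for $\boldsymbol{q}\in[\mathbb{P}_k(E)]^2$) to subtract a best-approximating polynomial (in particular a constant or linear) and apply the Bramble-Hilbert estimate \eqref{bramblehilbert}, so that the genuinely problematic $h_E^{-1}\|\boldsymbol{\zeta}\|_{0,E}$ is replaced by seminorm contributions $|\boldsymbol{\zeta}|_{1,E} + h_E|\boldsymbol{\zeta}|_{2,E}$. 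This is the standard device in \cite[Lemma 3.13]{MR3815658}, adapted componentwise to the vectorial setting.

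For the boundary term, since $\mathbf{I}_{k,E}\boldsymbol{\zeta}$ and $\boldsymbol{\zeta}$ share the same degrees of freedom, their edge traces coincide (both lie in $\mathbb{B}_{\partial E}$ and the edgewise polynomial values are determined by the shared DOFs), so $\partial_s \mathbf{I}_{k,E}\boldsymbol{\zeta} = \partial_s\boldsymbol{\zeta}$ on $\partial E$. Then $h_E^{1/2}\|\partial_s\boldsymbol{\zeta}\|_{0,\partial E}$ is estimated via the trace inequality \eqref{traceineq0} applied to $\partial_s\boldsymbol{\zeta}$, yielding $h_E^{1/2}(h_E^{-1/2}|\boldsymbol{\zeta}|_{1,E}^{1/2}\cdots)$; more directly, the trace bound gives $h_E\|\partial_s\boldsymbol{\zeta}\|_{0,\partial E}^2 \lesssim |\boldsymbol{\zeta}|_{1,E}^2 + h_E^2|\boldsymbol{\zeta}|_{2,E}^2$, so this term contributes $|\boldsymbol{\zeta}|_{1,E} + h_E|\boldsymbol{\zeta}|_{2,E}$ as desired. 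Collecting both contributions and factoring out $\max\{\lambda_S\mu_S^{-1},1\}$ gives the claimed bound.

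I expect the main obstacle to be the first term: converting the full-norm contribution $h_E^{-1}\|\boldsymbol{\zeta}\|_{0,E}$ coming out of Lemma \ref{lema312} into the pure seminorm quantities on the right-hand side. This requires carefully exploiting the polynomial-invariance of $\mathbf{I}_{k,E}$ and of the triple norm simultaneously, since a naive application leaves a spurious $h_E^{-1}\|\boldsymbol{\zeta}\|_{0,E}$ term that does not vanish as $h_E\to 0$. The boundary term and the tracking of the Lam\'e factor $\max\{\lambda_S\mu_S^{-1},1\}$ are routine by comparison.
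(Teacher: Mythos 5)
Your strategy is the same as the paper's: combine Corollary \ref{lema310} with Lemma \ref{lema312}, kill the spurious $h_E^{-1}\|\boldsymbol{\zeta}\|_{0,E}$ term by exploiting that $\mathbf{I}_{k,E}$ preserves polynomials, and control the tangential boundary term via the trace inequality \eqref{traceineq0}. The paper implements the subtraction exactly as you anticipate, with a constant: it applies Corollary \ref{lema310} to $\mathbf{I}_{k,E}(\boldsymbol{\zeta}-\overline{\boldsymbol{\zeta}}_E)$, where $\overline{\boldsymbol{\zeta}}_E$ is the mean of $\boldsymbol{\zeta}$ over $\partial E$, so that $|\mathbf{I}_{k,E}\boldsymbol{\zeta}|_{1,E}=|\mathbf{I}_{k,E}(\boldsymbol{\zeta}-\overline{\boldsymbol{\zeta}}_E)|_{1,E}$, and then Lemma \ref{lema312} together with $\|\boldsymbol{\zeta}-\overline{\boldsymbol{\zeta}}_E\|_{0,E}\lesssim h_E|\boldsymbol{\zeta}|_{1,E}$ gives the seminorm bound. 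Note that a genuine constant (rather than a general best-approximating linear polynomial, which you also float) is what makes this clean: subtracting a constant changes neither the $\mathrm{H}^1$ seminorm of the interpolant nor the tangential derivative on $\partial E$, whereas a higher-degree $\boldsymbol{q}$ would reappear in both terms and require extra bookkeeping.

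The one incorrect step is your justification of the boundary term: it is not true that $\partial_s\mathbf{I}_{k,E}\boldsymbol{\zeta}=\partial_s\boldsymbol{\zeta}$ on $\partial E$. The trace of $\mathbf{I}_{k,E}\boldsymbol{\zeta}$ on each edge belongs to $\mathbb{B}_{\partial E}$, i.e.\ it is a polynomial of degree $k$, while the trace of a generic $\boldsymbol{\zeta}\in\mathbf{H}^{2}(E)$ is not a polynomial; sharing degrees of freedom only means that $\mathbf{I}_{k,E}\boldsymbol{\zeta}|_e$ is the one-dimensional degree-$k$ interpolant of $\boldsymbol{\zeta}|_e$ on each edge $e$. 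What saves the argument is the scale-invariant $\mathrm{H}^1(e)$ stability of this edgewise interpolation, $\|\partial_s\mathbf{I}_{k,E}\boldsymbol{\zeta}\|_{0,e}\lesssim\|\partial_s\boldsymbol{\zeta}\|_{0,e}$ with a constant depending only on $k$ (this is what the paper invokes as ``standard interpolant estimates''); after this substitution your trace-inequality bound $h_E\|\partial_s\boldsymbol{\zeta}\|_{0,\partial E}^2\lesssim|\boldsymbol{\zeta}|_{1,E}^2+h_E^2|\boldsymbol{\zeta}|_{2,E}^2$ applies verbatim and your proof coincides with the paper's.
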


\begin{proof}
Given $\boldsymbol{\zeta} \in \textbf{H}^{2}(E)$, let us define 
\begin{equation*}
\overline{\boldsymbol{\zeta}}_{E} := \dfrac{1}{|\partial E|}\displaystyle{\int_{\partial E} \boldsymbol{\zeta}}.
\end{equation*}
Invoking Corollary~\ref{lema310} and Lemma~\ref{lema312} we obtain
\begin{multline*}
|\mathbf{I}_{k,E}\boldsymbol{\zeta}|_{1,E}^{2} = |\mathbf{I}_{k,E}(\boldsymbol{\zeta} - \overline{\boldsymbol{\zeta}}_{E})|_{1,E}^{2} \\
\lesssim \max\{\lambda_{S}\mu_{S}^{-1},1\}^{2}\left( h_{E}^{-2}\vertiii{\mathbf{I}_{k,E}(\boldsymbol{\zeta} - \overline{\boldsymbol{\zeta}}_{E})}_{k,E}^{2} + h_{E}\|\partial_{s}\mathbf{I}_{k,E}\boldsymbol{\zeta}\|_{0,E}^{2} \right) \\
= \max\{\lambda_{S}\mu_{S}^{-1},1\}^{2}\left(h_{E}^{-2}\|\boldsymbol{\zeta} - \overline{\boldsymbol{\zeta}}_{E}\|_{0,E}^{2} + |\boldsymbol{\zeta}|_{1,E}^{2} + h_{E}^{2}|\boldsymbol{\zeta}|_{2,E} + h_{E}\|\partial_{s}\mathbf{I}_{k,E}\boldsymbol{\zeta}\|_{0,\partial E}^{2}\right).
\end{multline*}
Now, from \eqref{traceineq0}
%la siguiente desigualdad de trazas
%\begin{equation}\label{traceineq0}
%\|\boldsymbol{\zeta}\|_{0,\partial E}^{2} \lesssim h_{E}^{-1}\|\boldsymbol{\zeta}\|_{0,E}^{2} + h_{E}|\boldsymbol{\zeta}|_{1,E}^{2} \quad \forall \boldsymbol{\zeta} \in \textbf{H}^{1}(E),
%\end{equation}
and standard interpolant estimates  we obtain
\begin{equation*}
h_{E}\|\partial_{s}\mathbf{I}_{k,E}\boldsymbol{\zeta}\|_{0,\partial E}^{2} \lesssim \displaystyle{\sum_{e \in \mathcal{E}_{E}} h_{E}\|\partial_{s}\boldsymbol{\zeta}\|_{0,e}^{2}} \lesssim |\boldsymbol{\zeta}|_{1,E}^{2} + h_{E}^{2}|\boldsymbol{\zeta}|_{2,E}.
\end{equation*}
Hence, using the estimate $\|\boldsymbol{\zeta} - \overline{\boldsymbol{\zeta}}_{E}\|_{0,E} \lesssim h_{E}|\boldsymbol{\zeta}|_{1,E}$, we obtain
\begin{equation*}
|\mathbf{I}_{k,E}\boldsymbol{\zeta}|_{1,E}^{2} \lesssim \max\{\lambda_{S}\mu_{S}^{-1},1\}^{2}\left(|\boldsymbol{\zeta}|_{1,E}^{2} + h_{E}^{2}|\boldsymbol{\zeta}|_{2,E}^{2}\right).
\end{equation*}
This concludes the proof.
\end{proof}

We also have a stability result for $\mathbf{I}_{k,E}\boldsymbol{\zeta}$ in $\|\cdot\|_{0,E}$ norm, which is adapted from \cite[Lemma 3.14]{MR3815658}.

\begin{lemma}\label{lema314}
The following estimate hold 
\begin{equation*}
\|\mathbf{I}_{k,E}\boldsymbol{\zeta}\|_{0,E} \lesssim \max\{\lambda_{S}\mu_{S}^{-1},1\}^{2}\left(\|\boldsymbol{\zeta}\|_{0,E} + h_{E}|\boldsymbol{\zeta}|_{1,E} + h_{E}^{2}|\boldsymbol{\zeta}|_{2,E}\right) \quad \forall \boldsymbol{\zeta} \in \textbf{H}^{2}(E),
\end{equation*}
where the hidden constant depends on $\rho_{E}$ and $k$.
%\begin{itemize}
%\item[i)] $\|\mathbf{I}_{k,E}\boldsymbol{\zeta}\|_{0,E} \lesssim \max\{\lambda_{S}\mu_{S}^{-1},1\}^{2}\left(\|\boldsymbol{\zeta}\|_{0,E} + h_{E}|\boldsymbol{\zeta}|_{1,E} + h_{E}^{2}|\boldsymbol{\zeta}|_{2,E}\right) \quad \forall \boldsymbol{\zeta} \in \textbf{H}^{2}(\Omega)$, where the hidden constant depends on $\rho_{E}$ y $k$.
%\item[ii)] $\|\mathbf{I}_{k,E}\boldsymbol{\zeta}\|_{0,E} \lesssim \max\{\lambda_{S}\mu_{S}^{-1},1\}^{2}\left(\|\boldsymbol{\zeta}\|_{0,E} + h_{E}|\boldsymbol{\zeta}|_{1,E} + h_{E}^{3/2}|\boldsymbol{\zeta}|_{3/2,E}\right) \,\,\forall \boldsymbol{\zeta} \in \textbf{H}^{3/2}(E)$, where the hidden constant depends on $\rho_{E}$, $k$ y $|\mathcal{E}_{E}|$.
%\end{itemize}
\end{lemma}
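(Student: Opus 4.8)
The plan is to decompose $\mathbf{I}_{k,E}\boldsymbol{\zeta}$ into its elliptic projection plus a remainder and to control each piece with the stability estimates already established. Since $\mathbf{I}_{k,E}\boldsymbol{\zeta}\in\boldsymbol{\mathcal{V}}_{h}^{E}\subset\textbf{H}^{1}(E)$, I would start from the triangle inequality
\[
\|\mathbf{I}_{k,E}\boldsymbol{\zeta}\|_{0,E}\leq\|\mathbf{I}_{k,E}\boldsymbol{\zeta}-\boldsymbol{\Pi}_{k,E}\mathbf{I}_{k,E}\boldsymbol{\zeta}\|_{0,E}+\|\boldsymbol{\Pi}_{k,E}\mathbf{I}_{k,E}\boldsymbol{\zeta}\|_{0,E},
\]
and estimate the two terms separately.

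For the projected term, Lemma~\ref{lema34} applied to $\mathbf{I}_{k,E}\boldsymbol{\zeta}$ gives $\|\boldsymbol{\Pi}_{k,E}\mathbf{I}_{k,E}\boldsymbol{\zeta}\|_{0,E}\lesssim\max\{\lambda_{S}\mu_{S}^{-1},1\}\vertiii{\mathbf{I}_{k,E}\boldsymbol{\zeta}}_{k,E}$, and then Lemma~\ref{lema312} bounds the triple norm of the interpolant by $\|\boldsymbol{\zeta}\|_{0,E}+h_{E}|\boldsymbol{\zeta}|_{1,E}+h_{E}^{2}|\boldsymbol{\zeta}|_{2,E}$, so that this contribution carries only a single factor of $\max\{\lambda_{S}\mu_{S}^{-1},1\}$. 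For the remainder, the point is that $\mathbf{I}_{k,E}\boldsymbol{\zeta}$ is merely an $\textbf{H}^{1}(E)$ function, so the only available approximation estimate for $\boldsymbol{\Pi}_{k,E}$ is item~i) of Lemma~\ref{lema35} in the case $\ell=0$, namely $\|\mathbf{w}-\boldsymbol{\Pi}_{k,E}\mathbf{w}\|_{0,E}\lesssim\max\{\lambda_{S}\mu_{S}^{-1},1\}\,h_{E}|\mathbf{w}|_{1,E}$ for $\mathbf{w}\in\textbf{H}^{1}(E)$. Taking $\mathbf{w}=\mathbf{I}_{k,E}\boldsymbol{\zeta}$ and then invoking Lemma~\ref{lema313} to control $|\mathbf{I}_{k,E}\boldsymbol{\zeta}|_{1,E}$ yields
\[
\|\mathbf{I}_{k,E}\boldsymbol{\zeta}-\boldsymbol{\Pi}_{k,E}\mathbf{I}_{k,E}\boldsymbol{\zeta}\|_{0,E}\lesssim\max\{\lambda_{S}\mu_{S}^{-1},1\}^{2}\left(h_{E}|\boldsymbol{\zeta}|_{1,E}+h_{E}^{2}|\boldsymbol{\zeta}|_{2,E}\right).
\]
Adding the two contributions and using $\max\{\lambda_{S}\mu_{S}^{-1},1\}\geq1$ to absorb the single factor into the square then gives the asserted bound.

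The main obstacle, and the reason the exponent two appears rather than one, is exactly this low regularity of the interpolant: one cannot reach the second-order term $h_{E}^{2}|\boldsymbol{\zeta}|_{2,E}$ through a direct $\textbf{L}^{2}$ projection of $\mathbf{I}_{k,E}\boldsymbol{\zeta}$, so the remainder must be routed through the $\textbf{H}^{1}(E)$ seminorm of the interpolant, and the two estimates involved — the approximation property of $\boldsymbol{\Pi}_{k,E}$ and the interpolant stability of Lemma~\ref{lema313} — each contribute an independent factor of $\max\{\lambda_{S}\mu_{S}^{-1},1\}$. In carrying this out one should be careful that every intermediate constant remains independent of $h_{E}$ (depending only on $\rho_{E}$ and $k$), so that the $h_{E}$-powers are tracked solely through the Bramble--Hilbert scaling already exploited in Lemmas~\ref{lema312} and \ref{lema313}.
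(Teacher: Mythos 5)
Your proof is correct and follows essentially the same route as the paper: the same triangle-inequality split of $\mathbf{I}_{k,E}\boldsymbol{\zeta}$ into $\boldsymbol{\Pi}_{k,E}\mathbf{I}_{k,E}\boldsymbol{\zeta}$ plus a remainder, with the remainder handled by item i) of Lemma~\ref{lema35} (case $\ell=0$) combined with Lemma~\ref{lema313}, and the projected part by Lemmas~\ref{lema34} and~\ref{lema312}. Your closing remark correctly identifies where the squared factor $\max\{\lambda_{S}\mu_{S}^{-1},1\}^{2}$ originates, matching the paper's constant tracking.
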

\begin{proof}
Let $\boldsymbol{\zeta} \in \textbf{H}^{2}(\Omega)$. From triangle inequality we obtain
\begin{equation}\label{dt}
\|\mathbf{I}_{k,E}\boldsymbol{\zeta}\|_{0,E} \leq \|\mathbf{I}_{k,E}\boldsymbol{\zeta} - \bPi_{k,E}\mathbf{I}_{k,E}\boldsymbol{\zeta}\|_{0,E} + \|\bPi_{k,E}\mathbf{I}_{k,E}\boldsymbol{\zeta}\|_{0,E}. 
\end{equation}
Now the aim is to estimate each term on the right hand side of \eqref{dt}. Invoking item  $\text{i)}$ of Lemma \ref{lema35} with  $\ell=0$ and Lemma \ref{lema313}, we have
\begin{equation}\label{x1}
\begin{split}
\|\mathbf{I}_{k,E}\boldsymbol{\zeta} - \bPi_{k,E}\mathbf{I}_{k,E}\boldsymbol{\zeta}\|_{0,E} &\lesssim \max\{\lambda_{S}\mu_{S}^{-1},1\}h_{E}|\mathbf{I}_{k,E}\boldsymbol{\zeta}|_{1,E} \\
&\lesssim \max\{\lambda_{S}\mu_{S}^{-1},1\}^{2}\left(h_{E}|\boldsymbol{\zeta}|_{1,E} + h_{E}^{2}|\boldsymbol{\zeta}|_{2,E}\right).
\end{split}
\end{equation}
On the other hand, from Lemmas \ref{lema34} and \ref{lema312} we have
\begin{equation}\label{x2}
\begin{split}
\|\bPi_{k,E}\mathbf{I}_{k,E}\boldsymbol{\zeta}\|_{0,E} &\lesssim \max\{\lambda_{S}\mu_{S}^{-1},1\}\vertiii{\mathbf{I}_{k,E}\boldsymbol{\zeta}}_{k,E} \\
&\lesssim \max\{\lambda_{S}\mu_{S}^{-1},1\}\left(\|\boldsymbol{\zeta}\|_{0,E} + h_{E}|\boldsymbol{\zeta}|_{1,E} + h_{E}^{2}|\boldsymbol{\zeta}|_{2,E}\right).
\end{split}
\end{equation}
Finally, replacing \eqref{x1} and \eqref{x2} in \eqref{dt}, we conclude the proof.
\end{proof}

The following result establishes approximation properties for  $\mathbf{I}_{k,E}$. This results are adapted from those in \cite[Lemma 3.15]{MR3815658}, and we again emphasize that the Lam\'e coefficients appears in our estimates.
\begin{lemma}\label{lema315}
For $1 \leq \ell \leq k$ and  $\boldsymbol{\zeta} \in \textbf{H}^{\ell + 1}(E)$, the following estimates hold 
\begin{itemize}
\item[i)] $\|\boldsymbol{\zeta} - \mathbf{I}_{k,E}\boldsymbol{\zeta}\|_{0,E} + \|\boldsymbol{\zeta} - \boldsymbol{\Pi}_{k,E}\mathbf{I}_{k,E}\boldsymbol{\zeta}\|_{0,E} \lesssim \max\{\lambda_{S}\mu_{S}^{-1},1\}^{2}h_{E}^{\ell + 1}|\boldsymbol{\zeta}|_{\ell + 1,E}$,
\item[ii)] $|\boldsymbol{\zeta} - \mathbf{I}_{k,E}\boldsymbol{\zeta}|_{1,E} + |\boldsymbol{\zeta} - \boldsymbol{\Pi}_{k,E}\mathbf{I}_{k,E}\boldsymbol{\zeta}|_{1,E} \lesssim \max\{\lambda_{S}\mu_{S}^{-1},1\}h_{E}^{\ell}|\boldsymbol{\zeta}|_{\ell + 1,\Omega}$,
\item[iii)] $|\boldsymbol{\zeta} - \boldsymbol{\Pi}_{k,E}\mathbf{I}_{k,E}\boldsymbol{\zeta}|_{2,E} \lesssim \max\{\lambda_{S}\mu_{S}^{-1},1\}^{2}h_{E}^{\ell - 1}|\boldsymbol{\zeta}|_{\ell + 1,E}$,
\end{itemize}
where the hidden constants depend on $\rho_{E}$ and $k$. 
%Moreover,  if  $\boldsymbol{\zeta} \in \textbf{H}^{\ell + 1/2}(E)$ the follwing estimates hold
%\begin{itemize}
%\item[iv)] $\|\boldsymbol{\zeta} - \mathbf{I}_{k,E}\boldsymbol{\zeta}\|_{0,E} \lesssim \max\{\lambda_{S}\mu_{S}^{-1},1\}^{2}h_{E}^{\ell + 1/2}|\boldsymbol{\zeta}|_{\ell + 1/2,E}$,
%\item[v)] $|\boldsymbol{\zeta} - \mathbf{I}_{k,E}\boldsymbol{\zeta}|_{1,E} \lesssim \max\{\lambda_{S}\mu_{S}^{-1},1\}h_{E}^{\ell - 1/2}|\boldsymbol{\zeta}|_{\ell + 1/2,E}$.
%\end{itemize}
%where the hidden constants depend on $\rho_{E}, k$ y $|\mathcal{E}_{E}|$.
\end{lemma}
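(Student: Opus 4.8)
The plan is to exploit that both $\mathbf{I}_{k,E}$ and $\boldsymbol{\Pi}_{k,E}$ reproduce $[\mathbb{P}_{k}(E)]^{2}$, so that the composite $\boldsymbol{\Pi}_{k,E}\mathbf{I}_{k,E}$ does as well, and to measure every error against a single polynomial approximant. First I would fix, for $1\le\ell\le k$, a polynomial $\boldsymbol{q}\in[\mathbb{P}_{\ell}(E)]^{2}$ (an averaged Taylor polynomial, as in \cite{MR2373954}) for which the Bramble-Hilbert bound $|\boldsymbol{\zeta}-\boldsymbol{q}|_{m,E}\lesssim h_{E}^{\ell+1-m}|\boldsymbol{\zeta}|_{\ell+1,E}$ holds simultaneously for $0\le m\le\ell+1$; this strengthens \eqref{bramblehilbert} to a single $\boldsymbol{q}$ valid in all the seminorms that will appear (note $\ell+1\ge2$, so $m=2$ is admissible). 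Since $\ell\le k$ we have $\mathbf{I}_{k,E}\boldsymbol{q}=\boldsymbol{q}$ and $\boldsymbol{\Pi}_{k,E}\boldsymbol{q}=\boldsymbol{q}$, hence $\boldsymbol{\Pi}_{k,E}\mathbf{I}_{k,E}\boldsymbol{q}=\boldsymbol{q}$, so each error term may be rewritten with $\boldsymbol{\zeta}-\boldsymbol{q}$ in place of $\boldsymbol{\zeta}$.

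For the two terms carrying $\mathbf{I}_{k,E}$ alone I would use a triangle inequality about $\boldsymbol{q}$ and the stability lemmas directly. For item i), $\|\boldsymbol{\zeta}-\mathbf{I}_{k,E}\boldsymbol{\zeta}\|_{0,E}\le\|\boldsymbol{\zeta}-\boldsymbol{q}\|_{0,E}+\|\mathbf{I}_{k,E}(\boldsymbol{\zeta}-\boldsymbol{q})\|_{0,E}$; bounding the second summand with Lemma~\ref{lema314} and then the simultaneous Bramble-Hilbert bound makes all resulting contributions $\lesssim h_{E}^{\ell+1}|\boldsymbol{\zeta}|_{\ell+1,E}$, the $\max\{\lambda_{S}\mu_{S}^{-1},1\}^{2}$ factor coming from Lemma~\ref{lema314}. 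For item ii), the analogous split $|\boldsymbol{\zeta}-\mathbf{I}_{k,E}\boldsymbol{\zeta}|_{1,E}\le|\boldsymbol{\zeta}-\boldsymbol{q}|_{1,E}+|\mathbf{I}_{k,E}(\boldsymbol{\zeta}-\boldsymbol{q})|_{1,E}$ together with Lemma~\ref{lema313} produces the single factor $\max\{\lambda_{S}\mu_{S}^{-1},1\}$ at order $h_{E}^{\ell}$.

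The delicate terms are those involving the composite $\boldsymbol{\Pi}_{k,E}\mathbf{I}_{k,E}$, where a careless route overcounts the Lam\'e factor. The key device is that $\boldsymbol{\Pi}_{k,E}\mathbf{I}_{k,E}(\boldsymbol{\zeta}-\boldsymbol{q})$ is a polynomial, so its higher seminorms are controlled by its $\textbf{L}^{2}(E)$ norm through the inverse estimate \eqref{approxpoly3}: $|\boldsymbol{\Pi}_{k,E}\mathbf{I}_{k,E}(\boldsymbol{\zeta}-\boldsymbol{q})|_{m,E}\lesssim h_{E}^{-m}\|\boldsymbol{\Pi}_{k,E}\mathbf{I}_{k,E}(\boldsymbol{\zeta}-\boldsymbol{q})\|_{0,E}$. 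I would then bound that $\textbf{L}^{2}(E)$ norm by Lemma~\ref{lema34} followed by Lemma~\ref{lema312}, obtaining $\|\boldsymbol{\Pi}_{k,E}\mathbf{I}_{k,E}(\boldsymbol{\zeta}-\boldsymbol{q})\|_{0,E}\lesssim\max\{\lambda_{S}\mu_{S}^{-1},1\}\vertiii{\mathbf{I}_{k,E}(\boldsymbol{\zeta}-\boldsymbol{q})}_{k,E}\lesssim\max\{\lambda_{S}\mu_{S}^{-1},1\}h_{E}^{\ell+1}|\boldsymbol{\zeta}|_{\ell+1,E}$, where the last step uses that Lemma~\ref{lema312} carries no Lam\'e factor together with the simultaneous Bramble-Hilbert bound. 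Combining with $|\boldsymbol{\zeta}-\boldsymbol{\Pi}_{k,E}\mathbf{I}_{k,E}\boldsymbol{\zeta}|_{m,E}\le|\boldsymbol{\zeta}-\boldsymbol{q}|_{m,E}+|\boldsymbol{\Pi}_{k,E}\mathbf{I}_{k,E}(\boldsymbol{\zeta}-\boldsymbol{q})|_{m,E}$ for $m=0,1,2$ yields the projection parts of i), ii), iii) at orders $h_{E}^{\ell+1}$, $h_{E}^{\ell}$, $h_{E}^{\ell-1}$.

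Adding the projection estimates to the $\mathbf{I}_{k,E}$-only ones and absorbing powers of $\max\{\lambda_{S}\mu_{S}^{-1},1\}\ge1$ gives the stated bounds; for item iii) one may alternatively split $\boldsymbol{\zeta}-\boldsymbol{\Pi}_{k,E}\mathbf{I}_{k,E}\boldsymbol{\zeta}=(\boldsymbol{\zeta}-\boldsymbol{\Pi}_{k,E}\boldsymbol{\zeta})+\boldsymbol{\Pi}_{k,E}(\boldsymbol{\zeta}-\mathbf{I}_{k,E}\boldsymbol{\zeta})$ and use item iii) of Lemma~\ref{lema35}, the inverse estimate \eqref{approxpoly3}, the bound \eqref{cota1} and item ii), which reproduces the displayed $\max\{\lambda_{S}\mu_{S}^{-1},1\}^{2}$ factor exactly. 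The main obstacle is precisely this bookkeeping of the Lam\'e factor: one must route the composite through the inverse estimate and Lemmas~\ref{lema34} and \ref{lema312} rather than through \eqref{cota1} applied to $\boldsymbol{\zeta}-\mathbf{I}_{k,E}\boldsymbol{\zeta}$, since the latter would stack an extra factor of $\max\{\lambda_{S}\mu_{S}^{-1},1\}$ and break the stated powers.
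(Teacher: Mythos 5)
Your proof is correct, and although its skeleton coincides with the paper's (polynomial reproduction by $\mathbf{I}_{k,E}$ and $\boldsymbol{\Pi}_{k,E}\mathbf{I}_{k,E}$, a triangle inequality about a simultaneous Bramble--Hilbert approximant, then the stability lemmas), your handling of the composite terms is genuinely different and, in one place, does more than the paper's. For item i) the two arguments agree: the interpolant term goes through Lemma~\ref{lema314} (producing the factor $\max\{\lambda_{S}\mu_{S}^{-1},1\}^{2}$) and the composite term through Lemma~\ref{lema34} combined with Lemma~\ref{lema312}. The divergence is in ii) and iii): the paper bounds $|\boldsymbol{\Pi}_{k,E}\mathbf{I}_{k,E}\boldsymbol{\zeta}|_{1,E}$ by composing \eqref{cota1} with Lemma~\ref{lema313} (and, for iii), reducing the $|\cdot|_{2,E}$ seminorm to $|\cdot|_{1,E}$ via \eqref{approxpoly3}), whereas you reduce all seminorms $m=1,2$ of the polynomial $\boldsymbol{\Pi}_{k,E}\mathbf{I}_{k,E}(\boldsymbol{\zeta}-\boldsymbol{q})$ directly to its $\textbf{L}^{2}(E)$ norm by iterating \eqref{approxpoly3}, and then invoke Lemma~\ref{lema34} followed by Lemma~\ref{lema312}, which together carry only a single Lam\'e factor. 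This difference is not cosmetic: composing \eqref{cota1} with Lemma~\ref{lema313}, as the paper's proof of ii) literally does, stacks two factors of $\max\{\lambda_{S}\mu_{S}^{-1},1\}$ on the composite term, so the paper's intermediate display in ii) (written with a single factor) does not follow from the ingredients it cites, and strictly speaking that argument only yields ii) with the square; your route is the one that actually delivers the single power asserted in the statement, and your closing remark pinpoints exactly this pitfall. In iii) your main route even produces one power where the statement allows two (harmless, since the factor is $\geq 1$), while your alternative splitting for iii) essentially reproduces the paper's own argument. Two small points you rely on are legitimate but worth stating explicitly in a final write-up: the existence of a single $\boldsymbol{q}$ realizing \eqref{bramblehilbert} for all orders $m\leq\ell+1$ simultaneously (the averaged Taylor polynomial, which you do mention), and the iterated inverse estimate $|\boldsymbol{p}|_{2,E}\lesssim h_{E}^{-2}\|\boldsymbol{p}\|_{0,E}$, which follows by applying \eqref{approxpoly3} to the components of $\nabla\boldsymbol{p}$ --- the same device the paper uses inside Lemma~\ref{lema35}.
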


\begin{proof}
First we prove $\text{i)}$. We observe that, from Lemmas \ref{lema34}, \ref{lema312} and \ref{lema313}, we have the estimate 
\begin{equation*}
\begin{split}
\|\mathbf{I}_{k,E}\boldsymbol{\zeta}\|_{0,E} + \|\boldsymbol{\Pi}_{k,E}\mathbf{I}_{k,E}\boldsymbol{\zeta}\|_{0,E} &\lesssim \|\mathbf{I}_{k,E}\boldsymbol{\zeta}\|_{0,E} + \max\{\lambda_{S}\mu_{S}^{-1},1\}\vertiii{\mathbf{I}_{k,E}\boldsymbol{\zeta}}_{k,E} \\
&\lesssim \max\{\lambda_{S}\mu_{S}^{-1},1\}^{2}\left(\|\boldsymbol{\zeta}\|_{0,E} + h_{E}|\boldsymbol{\zeta}|_{1,E} + h_{E}^{2}|\boldsymbol{\zeta}|_{2,E}\right).
\end{split}
\end{equation*}
Therefore, applying triangular inequality and invoking \eqref{bramblehilbert}, we conclude that
%Ahora, para cada $\boldsymbol{q} \in [\mathbb{P}_{\ell}(E)]^{2}$ y aplicando desigualdad triangular se tiene
%\begin{multline*}
%\|\boldsymbol{\zeta} - \mathbf{I}_{k,E}\boldsymbol{\zeta}\|_{0,E} + \|\boldsymbol{\zeta} - \boldsymbol{\Pi}_{k,E}\mathbf{I}_{k,E}\boldsymbol{\zeta}\|_{0,E} \\\lesssim \|\boldsymbol{\zeta} - \boldsymbol{q}\|_{0,E} + \|\mathbf{I}_{k,E}(\boldsymbol{\zeta} - \boldsymbol{q})\|_{0,E} + \|\boldsymbol{\Pi}_{k,E}\mathbf{I}_{k,E}(\boldsymbol{\zeta} - \boldsymbol{q})\|_{0,E} \\
%\lesssim \max\{\lambda_{S}\mu_{S}^{-1},1\}^{2}\left(\|\boldsymbol{\zeta} - \boldsymbol{q}\|_{0,E} + h_{E}|\boldsymbol{\zeta} - \boldsymbol{q}|_{1,E} + h_{E}^{2}|\boldsymbol{\zeta} - \boldsymbol{q}|_{2,E}\right),
%\end{multline*}
%y usando  \eqref{bramblehilbert} en  cada t\'ermino del lado derecho de la desigualdad anterior
%%se obtiene
%%
%%\begin{equation*}
%%\underset{q \in [\mathbb{P}_{\ell}(E)]^{2}}{\inf} \|\boldsymbol{\zeta} - q\|_{0,E} \lesssim h_{E}^{\ell + 1}|\boldsymbol{\zeta}|_{\ell + 1,E},
%%\end{equation*}
%%\begin{equation*}
%%\underset{q \in [\mathbb{P}_{\ell}(E)]^{2}}{\inf} |\boldsymbol{\zeta} - q|_{1,E} \lesssim h_{E}^{\ell}|\boldsymbol{\zeta}|_{\ell + 1,E},
%%\end{equation*}
%%\begin{equation*}
%%\underset{q \in [\mathbb{P}_{\ell}(E)]^{2}}{\inf} |\boldsymbol{\zeta} - q|_{2,E} \lesssim h_{E}^{\ell - 1}|\boldsymbol{\zeta}|_{\ell - 1,E},
%%\end{equation*}
%se puede concluir que 
\begin{equation*}
\|\boldsymbol{\zeta} - \mathbf{I}_{k,E}\boldsymbol{\zeta}\|_{0,E} + \|\boldsymbol{\zeta} - \boldsymbol{\Pi}_{k,E}\mathbf{I}_{k,E}\boldsymbol{\zeta}\|_{0,E} \lesssim \max\{\lambda_{S}\mu_{S}^{-1},1\}^{2}h_{E}^{\ell + 1}|\boldsymbol{\zeta}|_{\ell + 1,E}.
\end{equation*}
To  prove $\text{ii)}$, we invoke  \eqref{cota1} and Lemma \ref{lema313}, in order to obtain 
\begin{equation*}
|\mathbf{I}_{k,E}\boldsymbol{\zeta}|_{1,E} + |\boldsymbol{\Pi}_{k,E}\mathbf{I}_{k,E}\boldsymbol{\zeta}|_{1,E} 
\lesssim \max\{\lambda_{S}\mu_{S}^{-1},1\}\left(|\boldsymbol{\zeta}|_{1,E} + h_{E}|\boldsymbol{\zeta}|_{2,E}\right).
\end{equation*}
Hence, applying triangular inequality and invoking \eqref{bramblehilbert}, we obtain
%con lo cual, para cada $\boldsymbol{q} \in [\mathbb{P}_{\ell}(E)]^{2}$ y aplicando desigualdad triangular se tiene 
%\begin{equation*}
%|\boldsymbol{\zeta} - \mathbf{I}_{k,E}\boldsymbol{\zeta}|_{1,E} + |\boldsymbol{\zeta} - \boldsymbol{\Pi}_{k,E}\mathbf{I}_{k,E}\boldsymbol{\zeta}|_{1,E}\lesssim \max\{\lambda_{S}\mu_{S}^{-1},1\}\left(|\boldsymbol{\zeta} - \boldsymbol{q}
%|_{1,E} + h_{E}|\boldsymbol{\zeta} - \boldsymbol{q}|_{2,E}\right), 
%\end{equation*}
%y aplicando \eqref{bramblehilbert} a cada t\'ermino del lado derecho  se puede concluir que 
\begin{equation*}
|\boldsymbol{\zeta} - \mathbf{I}_{k,E}\boldsymbol{\zeta}|_{1,E} + |\boldsymbol{\zeta} - \boldsymbol{\Pi}_{k,E}\mathbf{I}_{k,E}\boldsymbol{\zeta}|_{1,E} \lesssim \max\{\lambda_{S}\mu_{S}^{-1},1\}h_{E}^{\ell}|\boldsymbol{\zeta}|_{\ell + 1,E}.
\end{equation*}
Finally, for  $\text{iii)}$, we use  \eqref{approxpoly3}, \eqref{cota1} and Lemma \ref{lema313} to obtain 
\begin{equation*}
|\boldsymbol{\Pi}_{k,E}\mathbf{I}_{k,E}\boldsymbol{\zeta}|_{2,E} \lesssim h_{E}^{-1}|\boldsymbol{\Pi}_{k,E}\mathbf{I}_{k,E}\boldsymbol{\zeta}|_{1,E}
\lesssim \max\{\lambda_{S}\mu_{S}^{-1},1\}^{2}\left(h_{E}^{-1}|\boldsymbol{\zeta}|_{1,E} + |\boldsymbol{\zeta}|_{2,E}\right),
\end{equation*}
Therefore, applying triangular inequality and \eqref{bramblehilbert}, we have
%con lo cual, para cada $\boldsymbol{q} \in [\mathbb{P}_{\ell}(E)]^{2}$, aplicando desigualdad triangular se tiene
%\begin{equation*}
%\begin{split}
%|\boldsymbol{\zeta} - \boldsymbol{\Pi}_{k,E}\mathbf{I}_{k,E}\boldsymbol{\zeta}|_{2,E} &\leq |\boldsymbol{\zeta} - \boldsymbol{q}|_{2,E} + |\boldsymbol{\Pi}_{k,E}\mathbf{I}_{k,E}(\boldsymbol{\zeta} - \boldsymbol{q})|_{2,E} \\
%&\lesssim \max\{\lambda_{S}\mu_{S}^{-1},1\}^{2}\left(h_{E}^{-1}|\boldsymbol{\zeta} - \boldsymbol{q}|_{1,E} + |\boldsymbol{\zeta} - \boldsymbol{q}|_{2,E}\right), \\
%\end{split}
%\end{equation*}
%y aplicando \eqref{bramblehilbert} a cada t\'ermino del lado derecho, se obtiene
%\begin{equation*}
%\underset{\boldsymbol{q} \in [\mathbb{P}_{\ell}(E)]^{2}}{\inf} |\boldsymbol{\zeta} - \boldsymbol{q}|_{1,E} \lesssim h_{E}^{\ell}|\boldsymbol{\zeta}|_{\ell + 1,E}\quad\text{and}\quad
%\underset{\boldsymbol{q} \in [\mathbb{P}_{\ell}(E)]^{2}}{\inf} |\boldsymbol{\zeta} - \boldsymbol{q}|_{2,E} \lesssim h_{E}^{\ell - 1}|\boldsymbol{\zeta}|_{\ell - 1,E},
%\end{equation*}
%where $1\leq\ell \leq k$. This allows us to conclude that  
\begin{equation*}
|\boldsymbol{\zeta} - \boldsymbol{\Pi}_{k,E}\mathbf{I}_{k,E}\boldsymbol{\zeta}|_{2,E} \lesssim \max\{\lambda_{S}\mu_{S}^{-1},1\}^{2}h_{E}^{\ell - 1}|\boldsymbol{\zeta}|_{\ell + 1,E}.
\end{equation*}
This concludes the proof.

\end{proof}
The following result is adapted from \cite[Lemma 3.16]{MR3815658} to our case, and gives us an approximation property for the interpolant operator   $\mathbf{I}_{k,E}\boldsymbol{\zeta}$, when  $\boldsymbol{\zeta} \in \textbf{H}^{2}(\Omega)$. We will need this results to demonstrate properties about the operator $F(\cdot)$ and its discrete counterpart.
\begin{lemma}\label{lema316}
The following estimate holds
\begin{equation*}
\|\mathbf{I}_{k,E}\boldsymbol{\zeta} - \boldsymbol{\Pi}_{1,E}^{0}\mathbf{I}_{k,E}\boldsymbol{\zeta}\|_{0,E} \lesssim  \max\{\lambda_{S}\mu_{S}^{-1},1\}^{2}h_{E}^{2}|\boldsymbol{\zeta}|_{2,E} \quad \forall \boldsymbol{\zeta} \in \textbf{H}^{2}(\Omega),
\end{equation*}
where the hidden constant depends on $\rho_{E}$ and $k$.
\end{lemma}
\begin{proof}
Let $\boldsymbol{\zeta} \in \textbf{H}^{2}(E)$. From Lemma \ref{lema314} and  \eqref{stimaproye1} we obtain
\begin{equation*}
\begin{split}
\|\textbf{I}_{k,E}\boldsymbol{\zeta}\|_{0,E} + \|\boldsymbol{\Pi}_{1,E}^{0}\textbf{I}_{k,E}\boldsymbol{\zeta}\|_{0,E} 
%&\leq 2\|\textbf{I}_{k,E}\boldsymbol{\zeta}\|_{0,E} \\
\lesssim \max\{\lambda_{S}\mu_{S}^{-1},1\}^{2}\left(\|\boldsymbol{\zeta}\|_{0,E} + h_{E}|\boldsymbol{\zeta}|_{1,E} + h_{E}^{2}|\boldsymbol{\zeta}|_{2,E}\right).
\end{split}
\end{equation*}
Hence, using triangle inequality and \eqref{bramblehilbert} with $\ell = 1$, we obtain
%Hence, for  $\boldsymbol{q} \in [\mathbb{P}_{1}(E)]^{2}$ and the  triangle inequality reveal that 
%\begin{equation*}
%\begin{split}
%\|\textbf{I}_{k,E}\boldsymbol{\zeta} - \boldsymbol{\Pi}_{1,E}^{0}\textbf{I}_{k,E}\|_{0,E} &\leq \|\textbf{I}_{k,E}(\boldsymbol{\zeta} - \boldsymbol{q})\|_{0,E} + \|\boldsymbol{\Pi}_{1,E}^{0}\textbf{I}_{k,E}(\boldsymbol{\zeta} - \boldsymbol{q})\|_{0,E} \\
%&\lesssim \max\{\lambda_{S}\mu_{S}^{-1},1\}^{2}\left(\|\boldsymbol{\zeta} - \boldsymbol{q}\|_{0,E} + h_{E}|\boldsymbol{\zeta} - \boldsymbol{q}|_{1,E} + h_{E}^{2}|\boldsymbol{\zeta} - \boldsymbol{q}|_{2,E}\right).
%\end{split}
%\end{equation*}
%Now, invoking \eqref{bramblehilbert} and applying this estimate in each term of the right hand side of the estimate above a we obtain 
\begin{equation*}
\|\textbf{I}_{k,E}\boldsymbol{\zeta} - \boldsymbol{\Pi}_{1,E}^{0}\textbf{I}_{k,E}\boldsymbol{\zeta}\|_{0,E} \lesssim \max\{\lambda_{S}\mu_{S}^{-1},1\}^{2}h^{2}|\boldsymbol{\zeta}|_{2,E},
\end{equation*}
which concludes the proof.
\end{proof}

The following result is straightforward from \cite[Lemma 3.18]{MR3815658} which is essential  to prove that the discrete bilinear form $a_{h}(\cdot,\cdot)$ is elliptic in $\boldsymbol{\mathcal{V}}_{h}$.
\begin{lemma}\label{lema318}
The following estimate holds  
\begin{equation*}
\vertiii{\textbf{v}}_{k,E}^{2} \lesssim h_{E}\displaystyle{\sum_{e \in \mathcal{E}_{E}} \|\bPi_{k-1,e}^{0}\textbf{v}\|_{0,e}^{2}} \quad \forall \textbf{v} \; \text{such that} \,\,\boldsymbol{\Pi}_{k,E}\textbf{v} = 0,
\end{equation*}
where the hidden constant is independent of $h_E$
\end{lemma}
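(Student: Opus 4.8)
、The plan is to read off the two pieces of the triple norm from \eqref{triplenorm}. Since
\[
\vertiii{\textbf{v}}_{k,E}^{2} = \|\bPi_{k-2,E}^{0}\textbf{v}\|_{0,E}^{2} + h_{E}\sum_{e \in \mathcal{E}_{E}}\|\bPi_{k-1,e}^{0}\textbf{v}\|_{0,e}^{2},
\]
the edge sum is already the right-hand side, so the whole matter reduces to bounding the volume term, i.e. to showing that $\|\bPi_{k-2,E}^{0}\textbf{v}\|_{0,E}^{2} \lesssim h_{E}\sum_{e \in \mathcal{E}_{E}}\|\bPi_{k-1,e}^{0}\textbf{v}\|_{0,e}^{2}$ whenever $\boldsymbol{\Pi}_{k,E}\textbf{v}=0$. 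Writing $\boldsymbol{\phi}:=\bPi_{k-2,E}^{0}\textbf{v}\in[\mathbb{P}_{k-2}(E)]^{2}$ and using that $\bPi_{k-2,E}^{0}$ is the $\textbf{L}^{2}(E)$-orthogonal projection, I would start from the identity $\|\boldsymbol{\phi}\|_{0,E}^{2}=\int_{E}\boldsymbol{\phi}\cdot\textbf{v}$ and estimate the right-hand side by a duality/lifting argument, exactly as in the scalar case of \cite[Lemma 3.18]{MR3815658}.

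The decisive step is to introduce a polynomial lifting $\boldsymbol{\Psi}\in[\mathbb{P}_{k}(E)]^{2}$ solving the Navier--Lam\'e identity $-\textbf{div}(\boldsymbol{\sigma}(\boldsymbol{\Psi}))=\boldsymbol{\phi}$ on $E$. Such a $\boldsymbol{\Psi}$ exists because the constant-coefficient Lam\'e operator $\boldsymbol{\tau}\mapsto-\textbf{div}(\boldsymbol{\sigma}(\boldsymbol{\tau}))$ maps $[\mathbb{P}_{k}(E)]^{2}$ onto $[\mathbb{P}_{k-2}(E)]^{2}$; fixing $\boldsymbol{\Psi}$ in a complement of the rigid motions and passing to a reference element of unit diameter, a finite-dimensional norm equivalence (whose constant depends only on $k$ and on the Lam\'e coefficients through $\boldsymbol{\mathcal{C}}$) together with the scaling of the equation furnishes the bound $\|\boldsymbol{\sigma}(\boldsymbol{\Psi})\|_{0,E}\lesssim h_{E}\|\boldsymbol{\phi}\|_{0,E}$, with hidden constant independent of $h_{E}$.

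With $\boldsymbol{\Psi}$ in hand I would integrate by parts via the elasticity Green formula,
\[
\int_{E}\boldsymbol{\phi}\cdot\textbf{v} = a^{E}(\boldsymbol{\Psi},\textbf{v}) - \sum_{e \in \mathcal{E}_{E}}\int_{e}(\boldsymbol{\sigma}(\boldsymbol{\Psi})\cdot\textbf{n}_{E})\cdot\textbf{v},
\]
and then use the hypothesis $\boldsymbol{\Pi}_{k,E}\textbf{v}=0$: the first defining relation of $\boldsymbol{\Pi}_{k,E}$ gives $a^{E}(\textbf{v},\boldsymbol{p})=0$ for all $\boldsymbol{p}\in[\mathbb{P}_{k}(E)]^{2}$, so by symmetry $a^{E}(\boldsymbol{\Psi},\textbf{v})=0$ and only the boundary sum remains. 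Since $\boldsymbol{\sigma}(\boldsymbol{\Psi})$ has entries in $\mathbb{P}_{k-1}(E)$, each traction $(\boldsymbol{\sigma}(\boldsymbol{\Psi})\cdot\textbf{n}_{E})|_{e}$ belongs to $[\mathbb{P}_{k-1}(e)]^{2}$, so I may freely replace $\textbf{v}$ by $\bPi_{k-1,e}^{0}\textbf{v}$ on every edge $e$.

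Finally I would close the estimate by a Cauchy--Schwarz inequality over the edges, the polynomial inverse trace inequality $\|\boldsymbol{\sigma}(\boldsymbol{\Psi})\cdot\textbf{n}_{E}\|_{0,\partial E}^{2}\lesssim h_{E}^{-1}\|\boldsymbol{\sigma}(\boldsymbol{\Psi})\|_{0,E}^{2}$ (which holds on star-shaped elements, by \textbf{A1}, with a constant unaffected by small edges), and the lifting bound of the second step, obtaining
\[
\|\boldsymbol{\phi}\|_{0,E}^{2} \lesssim h_{E}^{1/2}\|\boldsymbol{\phi}\|_{0,E}\Big(\sum_{e \in \mathcal{E}_{E}}\|\bPi_{k-1,e}^{0}\textbf{v}\|_{0,e}^{2}\Big)^{1/2}.
\]
Cancelling one factor $\|\boldsymbol{\phi}\|_{0,E}=\|\bPi_{k-2,E}^{0}\textbf{v}\|_{0,E}$ and squaring gives the volume estimate, hence the lemma. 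I expect the main obstacle to be the lifting step: one must secure solvability of $-\textbf{div}(\boldsymbol{\sigma}(\boldsymbol{\Psi}))=\boldsymbol{\phi}$ within $[\mathbb{P}_{k}(E)]^{2}$ and, above all, the scaling-robust control $\|\boldsymbol{\sigma}(\boldsymbol{\Psi})\|_{0,E}\lesssim h_{E}\|\boldsymbol{\phi}\|_{0,E}$ uniformly in $h_{E}$, which is where the reference-element reduction and the careful bookkeeping of the Lam\'e coefficients in $\boldsymbol{\sigma}(\boldsymbol{\Psi})$ enter, mirroring the scalar Laplace argument but now for the vector Lam\'e operator.
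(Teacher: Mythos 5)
Your proof is correct and follows essentially the same route as the paper: the paper offers no written argument for this lemma, stating only that it is straightforward from \cite[Lemma 3.18]{MR3815658}, and your proof is precisely the vectorial adaptation of that scalar argument, replacing the polynomial Poisson lifting by a polynomial Lam\'e lifting $-\textbf{div}(\boldsymbol{\sigma}(\boldsymbol{\Psi}))=\boldsymbol{\phi}$ and exploiting the $a^{E}$-orthogonality encoded in $\boldsymbol{\Pi}_{k,E}\textbf{v}=\boldsymbol{0}$ exactly as the scalar proof exploits the $\nabla$-orthogonality. The only point to note is that your hidden constant inherits a dependence on the Lam\'e coefficients through the lifting bound $\|\boldsymbol{\sigma}(\boldsymbol{\Psi})\|_{0,E}\lesssim h_{E}\|\boldsymbol{\phi}\|_{0,E}$; this is compatible with the statement as written (which claims only independence of $h_{E}$), although the paper's subsequent use of the lemma tacitly treats the constant as carrying no Lam\'e factor, so removing that dependence (e.g.\ by a Stokes-limit argument for large $\lambda_{S}$) would be a worthwhile refinement.
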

The following result establishes an estimate for  $\|\textbf{v}_{h}\|_{0,\partial E}$, with $\textbf{v}_{h} \in \mathbb{B}_{\partial E}$, and its proof is naturally extended to our vectorial case, from the proof of \cite[Lemma 3.19]{MR3815658}.
\begin{lemma}\label{lema319}
For every  $\textbf{v}_{h} \in \mathbb{B}_{\partial E}$ that vanishes in some part of  $\partial E$, there holds 
\begin{equation*}\label{stima5}
\|\textbf{v}_{h}\|_{0,\partial E} \lesssim h_{E}\|\partial_{s}\textbf{v}_{h}\|_{0,\partial E},
\end{equation*}
where  the hidden constant  depends only on  $k$.
\end{lemma}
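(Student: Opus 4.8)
The plan is to treat this as a one-dimensional Poincar\'e--Friedrichs inequality along the closed curve $\partial E$, using the continuity of $\textbf{v}_h$ together with the hypothesis that it vanishes somewhere. First I would note that, since $\textbf{v}_h\in\mathbb{B}_{\partial E}$ is continuous on $\partial E$ and vanishes on a part of $\partial E$, there is a point $s_0\in\partial E$ with $\textbf{v}_h(s_0)=\mathbf{0}$. Parametrizing $\partial E$ by arc length with origin at $s_0$, the $\boldsymbol{\mathcal{C}}^0$-continuity across the vertices lets me apply the fundamental theorem of calculus edge by edge and concatenate, so that for every $s\in\partial E$
\[
\textbf{v}_h(s)=\int_{s_0}^{s}\partial_s\textbf{v}_h(t)\, dt,
\]
the integration being taken along $\partial E$.

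Next I would bound the pointwise value by Cauchy--Schwarz, using that the arc-length distance from $s_0$ to any point of $\partial E$ does not exceed $|\partial E|$:
\[
|\textbf{v}_h(s)|^2\le |s-s_0|\int_{\partial E}|\partial_s\textbf{v}_h(t)|^2\, dt\le |\partial E|\,\|\partial_s\textbf{v}_h\|_{0,\partial E}^2.
\]
Integrating this over $\partial E$ and then invoking $|\partial E|\simeq h_E$ gives
\[
\|\textbf{v}_h\|_{0,\partial E}^2=\int_{\partial E}|\textbf{v}_h(s)|^2\, ds\le |\partial E|^2\,\|\partial_s\textbf{v}_h\|_{0,\partial E}^2\lesssim h_E^2\,\|\partial_s\textbf{v}_h\|_{0,\partial E}^2,
\]
which is the claimed estimate after taking square roots; the resulting constant is controlled by the one in $|\partial E|\simeq h_E$ and is in particular independent of $h_E$, consistently with the dependence on $k$ recorded in \cite[Lemma 3.19]{MR3815658}.

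I do not expect a genuine obstacle here: the estimate is essentially the scalar Poincar\'e inequality applied componentwise along the boundary, and the vectorial setting introduces nothing new beyond reading $|\cdot|$ as the Euclidean norm in $\mathbb{R}^2$. The only two points requiring a word of care are the justification of the global fundamental-theorem-of-calculus identity across the vertices of $\partial E$, which is exactly what the continuity built into $\mathbb{B}_{\partial E}$ provides, and the existence of the base point $s_0$, which is furnished precisely by the hypothesis that $\textbf{v}_h$ vanishes on some part of $\partial E$.
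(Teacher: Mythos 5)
Your argument is correct: the continuity built into $\mathbb{B}_{\partial E}$ plus the vanishing hypothesis gives a base point, the fundamental theorem of calculus applies edge by edge and concatenates across the vertices, and Cauchy--Schwarz together with the standing mesh assumption $|\partial E|\simeq h_{E}$ yields $\|\textbf{v}_{h}\|_{0,\partial E}\lesssim h_{E}\|\partial_{s}\textbf{v}_{h}\|_{0,\partial E}$. Where you differ from the paper is that the paper does not actually prove this lemma at all: it simply states that the proof of the scalar result \cite[Lemma 3.19]{MR3815658} ``is naturally extended to our vectorial case,'' i.e.\ it defers entirely to the cited reference and the componentwise reading of the estimate. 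Your proposal replaces that citation with a complete, self-contained one-dimensional Poincar\'e--Friedrichs argument along the closed curve $\partial E$, which is essentially the content underlying the cited scalar lemma. What your route buys is transparency about the constant: your derivation shows the hidden constant is controlled solely by the equivalence constant in $|\partial E|\simeq h_{E}$ (a standing assumption of the paper) and in fact involves no dependence on the polynomial degree $k$, which is consistent with, and slightly sharper than, the dependence ``only on $k$'' recorded in the statement; note only that you should acknowledge this reliance on the mesh assumption $|\partial E|\lesssim h_{E}$, since that is where your constant genuinely lives. What the paper's route buys is brevity, at the cost of leaving the reader to check that nothing in the scalar proof breaks in the vector-valued setting --- a check your componentwise reading of $|\cdot|$ as the Euclidean norm makes explicit.
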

\begin{remark}
Let  $\textbf{v} \in \textbf{H}^{1}(E)$ be such that  $\boldsymbol{\Pi}_{k,E}\textbf{v} = 0$. Then
\begin{center}
$\displaystyle{\int_{\partial E} \textbf{v}} = 0$,
\end{center}
which implies that  $\textbf{v}$ vanishes in some part of  $\partial E$. Then, invoking Corollary \ref{lema310}, Lemmas \ref{lema318} and  \ref{lema319}, we obtain that 
\begin{equation}\label{stimaxx}
|\textbf{v}|_{1,E} \lesssim  \max\{\lambda_{S}\mu_{S}^{-1},1\}h_{E}^{1/2}\|\partial_{s}\textbf{v}\|_{0,\partial E}.
\end{equation}
\end{remark}

Let us introduce the following stabilization term  $S^{E}(\cdot, \cdot)$ defined for $\mathbf{w}_{h},\mathbf{v}_{h}\in\boldsymbol{\mathcal{V}}_h$ by 
\begin{equation*}
S^{E}(\textbf{w}_{h}, \textbf{v}_{h}) := h_{E}\displaystyle{\int_{\partial E} \partial_{s}\textbf{w}_{h}\partial_{s}\textbf{v}_{h}},
\end{equation*}
which corresponds to a scaled inner product between $\partial_{s}\textbf{w}_{h}$ and $\partial_{s}\textbf{v}_{h}$ in $\textbf{L}^{2}(\partial E)$. Let us introduce the discrete bilinear form  $a_{h}(\cdot, \cdot):\boldsymbol{\mathcal{V}}_h\times \boldsymbol{\mathcal{V}}_h\rightarrow\mathbb{R}$ defined by 
\begin{equation*}
a_{h}(\textbf{w}_{h}, \textbf{v}_{h}) := \displaystyle{\sum_{E \in \mathcal{T}_{h}} \left[a^{E}(\boldsymbol{\Pi}_{k,E}\textbf{w}_{h}, \boldsymbol{\Pi}_{k,E}\textbf{v}_{h}) + S^{E}(\textbf{w}_{h} - \boldsymbol{\Pi}_{k,E}\textbf{w}_{h}, \textbf{v}_{h} - \boldsymbol{\Pi}_{k,E}\textbf{v}_{h})\right]}.
\end{equation*}
Now we  define the following discrete operator
\begin{equation}\label{Fdiscrete}
F_{h}(\textbf{v}_{h}) := \displaystyle{\int_{\Omega} \varrho\textbf{f}\cdot \Xi_{h}\textbf{v}_{h}},
\end{equation}
where $\Xi_{h} : \boldsymbol{\mathcal{V}}_{h} \longrightarrow \boldsymbol{\mathcal{P}}_{k,h}$ is defined by 
\begin{equation*}
\Xi_{h} = \left\{\begin{array}{lll}
\boldsymbol{\Pi}_{1,h}^{0} \quad &\text{if} \; k=1,2, \\
\boldsymbol{\Pi}_{k-2,h}^{0} \quad &\text{if} \; k \geq 3.
\end{array}\right.
\end{equation*}
where   $\boldsymbol{\mathcal{P}}_{k,h}$ is the piecewise discontinuous polynomial space of degree   $k$  respect to  $\mathcal{T}_{h}$ and $\boldsymbol{\Pi}_{k,h}^{0}$ is the global $\textbf{L}^{2}(\Omega)$-projector, such that $\boldsymbol{\Pi}_{k,h}^{0}|_{E} = \boldsymbol{\Pi}_{k,E}^{0}$. Finally, $\boldsymbol{\Pi}_{k,h}$ is the global projector with respect to $a(\cdot, \cdot)$, such that $\boldsymbol{\Pi}_{k,h}|_{E} = \boldsymbol{\Pi}_{k,E}$.

Now, we can write the virtual element discretization of Problem \ref{continuous}. 
\begin{prob}\label{discrette}
Given $\textbf{f} \in \textbf{L}^{2}(\Omega)$, find $\textbf{u}_{h} \in \boldsymbol{\mathcal{V}}_{h}$ such that

\begin{equation*}\label{discrete}
a_{h}(\textbf{u}_{h}, \textbf{v}_{h}) = F_{h}(\textbf{v}_{h}) \quad \forall \; \textbf{v}_{h} \in \boldsymbol{\mathcal{V}}_{h}.
\end{equation*}
\end{prob}

\begin{remark}
Observe that from triangle inequality and \eqref{stimaxx}, there holds 
\begin{equation*}
\begin{split}
|\textbf{v}_{h}|_{1,E}^{2} 
%%\leq |\Pi_{k,E}\textbf{v}|_{1,E}^{2} + |\textbf{v} - \Pi_{k,E}\textbf{v}|_{1,E}^{2} \\
%%&\lesssim |\Pi_{k,E}\textbf{v}|_{1,E}^{2} + \max\{\lambda_{S}^{2}\mu_{S}^{-2},1\}c(h)\|\textbf{v} - \Pi_{k,E}\textbf{v}\|_{L^{\infty}(\partial E)}^{2} \\
%%&\lesssim \max\{\lambda_{S}^{2}\mu_{S}^{-2},\mu_{S}^{-1},1\}c(h)\left(a^{E}(\Pi_{k,E}\textbf{v}, \Pi_{k,E}\textbf{v}) + S^{E}(\textbf{v} - \Pi_{k,E}\textbf{v}, \textbf{v} - \Pi_{k,E}\textbf{v})\right) \\
\lesssim \max\{\lambda_{S}^{2}\mu_{S}^{-2},\mu_{S}^{-1},1\}a_{h}^{E}(\textbf{v}_{h}, \textbf{v}_{h}).
\end{split} 
\end{equation*}
Moreover, the following estimate holds
\begin{equation}\label{stima8}
|\textbf{v}_{h}|_{1,\O}^{2} \lesssim \max\{\lambda_{S}^{2}\mu_{S}^{-2},\mu_{S}^{-1},1\}a_{h}(\textbf{v}_{h}, \textbf{v}_{h}).
\end{equation}
\end{remark}
It is easy to check that $a_{h}(\cdot,\cdot)$ is elliptic in $\boldsymbol{\mathcal{V}}_{h}$ and hence, we deduce that Problem \ref{discrette} has unique solution as a consequence of Lax-Milgram's Lemma.

Now for the functional $F(\cdot)$ defined in  \eqref{F} and its discrete counterpart  $F_{h}(\cdot)$ defined in  \eqref{Fdiscrete}, we have the following approximation result (see \cite[Lemma 4.1]{MR3815658}).
\begin{lemma}\label{lema41}
Let $1 \leq \ell \leq k$. The following estimates hold true 
\begin{itemize}
\item[i)]  For all $\textbf{f} \in \textbf{H}^{\ell - 1}(\Omega)$ and for all  $\textbf{w}_{h} \in \boldsymbol{\mathcal{V}}_{h}$,
$$|F(\textbf{w}_{h}) - F_{h}(\textbf{w}_{h})| \lesssim h^{\ell}|\textbf{f}|_{\ell - 1, \Omega}|\textbf{w}_{h}|_{1,\Omega}.$$
\item[ii)] For all $\textbf{f} \in \textbf{H}^{\ell - 1}(\Omega)$ and for all $\boldsymbol{\zeta} \in \textbf{H}^{2}(\Omega)$,
$$|F(\mathbf{I}_{k,h}\boldsymbol{\zeta}) - F_{h}(\mathbf{I}_{k,h}\boldsymbol{\zeta})| \lesssim \max\{\lambda_{S}\mu_{S}^{-1},1\}^{2}h^{\ell + 1}|\textbf{f}|_{\ell - 1, \Omega}|\boldsymbol{\zeta}|_{2,\Omega},$$
\end{itemize}
%where the hidden constants depends on $\rho_{E}$, $k$ and $\varrho$.
where the hidden constants are independent of $h$.
\end{lemma}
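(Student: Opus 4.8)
The plan is to reduce both estimates to an elementwise analysis of
$F(\textbf{w}_h)-F_h(\textbf{w}_h)=\sum_{E\in\mathcal{T}_h}\int_E\varrho\,\textbf{f}\cdot(\textbf{w}_h-\Xi_h\textbf{w}_h)$ and to exploit the $\textbf{L}^2(E)$-orthogonality of $\Xi_h$. Writing $\Xi_h|_E$ as the $\textbf{L}^2(E)$-projection onto $[\mathbb{P}_m(E)]^2$, with $m=1$ if $k\le 2$ and $m=k-2$ if $k\ge 3$ (so $m\ge 1$ in every case), the key observation is that $\textbf{w}_h-\Xi_h\textbf{w}_h$ is orthogonal to $[\mathbb{P}_m(E)]^2$. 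Hence, since $\Xi_h\textbf{f}\in[\mathbb{P}_m(E)]^2$, one has $\int_E\varrho\,\textbf{f}\cdot(\textbf{w}_h-\Xi_h\textbf{w}_h)=\int_E\varrho\,(\textbf{f}-\Xi_h\textbf{f})\cdot(\textbf{w}_h-\Xi_h\textbf{w}_h)$, and Cauchy--Schwarz decouples the two factors. The whole difference between i) and ii) lies in how the factor $\|\textbf{w}_h-\Xi_h\textbf{w}_h\|_{0,E}$ is bounded.

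For i), I would estimate $\|\textbf{f}-\Xi_h\textbf{f}\|_{0,E}\lesssim h_E^{\ell-1}|\textbf{f}|_{\ell-1,E}$ using \eqref{bramblehilbert} together with the stability of $\Xi_h$; this is legitimate for the whole range $1\le\ell\le k$ because the projection onto $[\mathbb{P}_m(E)]^2$ retains approximation order up to $h_E^{m+1}=h_E^{k-1}\ge h_E^{\ell-1}$ when $k\ge 3$, and up to $h_E^{2}$ when $k\le 2$. For the second factor, since $\textbf{w}_h\in\textbf{H}^1(E)$ and $m\ge 0$, a Poincar\'e-type argument (the $\ell=0$ case of \eqref{stimaproye2}) gives $\|\textbf{w}_h-\Xi_h\textbf{w}_h\|_{0,E}\lesssim h_E|\textbf{w}_h|_{1,E}$. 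Multiplying the two factors yields $h_E^{\ell}|\textbf{f}|_{\ell-1,E}|\textbf{w}_h|_{1,E}$ on each $E$; summing over the mesh with $h_E\le h$ and a discrete Cauchy--Schwarz inequality gives the stated bound, whose constant is independent of the Lam\'e coefficients.

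For ii), the point is that $\textbf{w}_h=\mathbf{I}_{k,E}\boldsymbol{\zeta}$ now inherits extra regularity from $\boldsymbol{\zeta}\in\textbf{H}^2(\Omega)$, buying one extra power of $h$ in the second factor. Since $[\mathbb{P}_1(E)]^2\subseteq[\mathbb{P}_m(E)]^2$, the best-approximation property of $\Xi_h$ gives $\|\mathbf{I}_{k,E}\boldsymbol{\zeta}-\Xi_h\mathbf{I}_{k,E}\boldsymbol{\zeta}\|_{0,E}\le\|\mathbf{I}_{k,E}\boldsymbol{\zeta}-\boldsymbol{\Pi}_{1,E}^{0}\mathbf{I}_{k,E}\boldsymbol{\zeta}\|_{0,E}$, and Lemma~\ref{lema316} bounds the latter by $\max\{\lambda_S\mu_S^{-1},1\}^2 h_E^{2}|\boldsymbol{\zeta}|_{2,E}$. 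Combining this with $\|\textbf{f}-\Xi_h\textbf{f}\|_{0,E}\lesssim h_E^{\ell-1}|\textbf{f}|_{\ell-1,E}$ and summing over $\mathcal{T}_h$ produces the claimed $\max\{\lambda_S\mu_S^{-1},1\}^2 h^{\ell+1}|\textbf{f}|_{\ell-1,\Omega}|\boldsymbol{\zeta}|_{2,\Omega}$, with the Lam\'e factor entering \emph{only} through Lemma~\ref{lema316}.

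The one genuinely delicate point is the degree bookkeeping. I must verify that the reduced projection degree $m=k-2$ for $k\ge 3$ still approximates $\textbf{f}\in\textbf{H}^{\ell-1}(\Omega)$ at the full rate $h^{\ell-1}$ up to the endpoint $\ell=k$ (it does, since its approximation order is $h^{k-1}$ and $\ell-1\le k-1$), and that the inclusion $[\mathbb{P}_1(E)]^2\subseteq[\mathbb{P}_m(E)]^2$ always holds so that Lemma~\ref{lema316} is applicable in ii) uniformly in $k$. Everything else is orthogonality and Cauchy--Schwarz, so no further obstacle is expected.
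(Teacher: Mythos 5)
Your proposal is correct and takes essentially the same route as the paper: the paper's proof is just a deferral to \cite[Lemma 4.1]{MR3815658} ``with the aid of Lemma \ref{lema316}'', and that reference's argument is exactly your reduction --- $\textbf{L}^2$-orthogonality of $\Xi_h$ to rewrite the error as $\int_E \varrho(\textbf{f}-\Xi_h\textbf{f})\cdot(\textbf{w}_h-\Xi_h\textbf{w}_h)$, Cauchy--Schwarz, the Bramble--Hilbert bound $h_E^{\ell-1}|\textbf{f}|_{\ell-1,E}$ for the load factor, the Poincar\'e bound $h_E|\textbf{w}_h|_{1,E}$ for part i), and Lemma \ref{lema316} (via $[\mathbb{P}_1(E)]^2\subseteq[\mathbb{P}_m(E)]^2$) supplying the extra power of $h$ and the factor $\max\{\lambda_S\mu_S^{-1},1\}^2$ in part ii). Your degree bookkeeping ($m\ge 1$ in all cases and $m+1\ge\ell-1$ for $1\le\ell\le k$) is precisely the check needed and holds as you verified.
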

\begin{proof} 
The result follows from adapting the proof of \cite[Lemma 4.1]{MR3815658} together with the aid of Lemma \ref{lema316}.\end{proof}

To conclude this section, we introduce the broken $\textbf{H}^{1}$-seminorm with respect to $\mathcal{T}_{h}$, given by

\begin{equation*}
|\textbf{v}|_{1,h}^{2} := \displaystyle{\sum_{E \in \mathcal{T}_{h}} |\textbf{v}|_{1,E}^{2}} \quad \forall \; \textbf{v}_{h} \in \textbf{L}^{2}(\Omega) \; \text{such that} \; \textbf{v}|_{E} \in \textbf{H}^{1}(E).
\end{equation*}
\subsection{Error estimates} 
Now our aim is to obtain error estimates for the proposed method. Clearly, due the previous results, the error estimates will depend on the Lam\'e coefficients.  This fact gives us the hint that the error estimates that we can derive will not be optimal when the Poisson ratio is close to $1/2$.
%
%Recordemos \eqref{stima8}
%
%\begin{equation*}
%|\textbf{v}|_{1,h} \lesssim \max\{\lambda_{S}\mu_{S}^{-1},\mu_{S}^{-1/2},1\}\sqrt{c(h)}\|\textbf{v}_{h}\|_{h}.
%\end{equation*}

We introduce the energy norm $\|\cdot\|_{h}$ defined by $\|\textbf{v}_{h}\|_{h}^{2} := a_{h}(\textbf{v}_{h},\textbf{v}_{h})$ $\forall \textbf{v}_{h} \in \boldsymbol{\mathcal{V}}_{h}$, and let us recall the following standard estimate (see  \cite{MR2373954} for instance)
\begin{equation}\label{stima10}
\|\textbf{u} - \textbf{u}_{h}\|_{h} \leq \underset{\textbf{v}_{h} \in \boldsymbol{\mathcal{V}}_{h}}{\inf} \|\textbf{u} - \textbf{v}_h\|_{h} + \underset{\textbf{v}_{h} \in \boldsymbol{\mathcal{V}}_{h}}{\sup} \dfrac{a_{h}(\textbf{u}, \textbf{v}_h) - F_{h}(\textbf{v}_h)}{\|\textbf{v}_h\|_{h}}.
\end{equation}
The main task now is to estimate correctly the supremum in \eqref{stima10}. With this aim in mind, we begin with the following result that states  an approximation for the elliptic projector $\boldsymbol{\Pi}_{k,h}$ in the energy norm.
\begin{remark}
For any  $\textbf{v}_{h} \in \boldsymbol{\mathcal{V}}_{h}$, from the definition of $\boldsymbol{\Pi}_{k,h}$ and $a_{h}(\cdot,\cdot)$, the following estimate holds
\begin{equation}\label{lemadanilo}
\|\textbf{v}_{h} - \boldsymbol{\Pi}_{k,h}\textbf{v}_{h}\|_{h} \leq \|\textbf{v}_{h}\|_{h} \quad \forall \textbf{v}_{h} \in \boldsymbol{\mathcal{V}}_{h}.
\end{equation}
\end{remark}
%\begin{proof}
%Sea $\textbf{v}_{h} \in \boldsymbol{\mathcal{V}}_{h}$. Por definici\'on de $\|\cdot\|_{h}$ se tiene
%
%\begin{equation*}
%\begin{split}
%\|\textbf{v}_{h} - \Pi_{k,h}\textbf{v}_{h}\|_{h}^{2} &= \displaystyle{\sum_{E \in \mathcal{T}_{h}} S^{E}(\textbf{v}_{h} - \Pi_{k,h}\textbf{v}_{h}, \textbf{v}_{h} - \Pi_{k,h}\textbf{v}_{h})} \\
%&\leq \displaystyle{\sum_{E \in \mathcal{T}_{h}} \left[a^{E}(\Pi_{k,h}\textbf{v}_{h},\Pi_{k,h}\textbf{v}_{h}) + S^{E}(\textbf{v}_{h} - \Pi_{k,h}\textbf{v}_{h}, \textbf{v}_{h} - \Pi_{k,h}\textbf{v}_{h})\right]} \\
%&= \|\textbf{v}_{h}\|_{h}^{2}
%\end{split}
%\end{equation*}
%
%de donde se concluye la demostraci\'on.
%\end{proof}
Let us recall estimate \eqref{stima10}. Now our task is to estimate each of the terms on its right hand side. With this goal in mind, let us begin by recalling that  from the definition of $\boldsymbol{\Pi}_{k,E}$ and for each  $\textbf{v}_{h} \in \boldsymbol{\mathcal{V}}_{h}$ we have 
\begin{equation*}
a_{h}(\textbf{u},\textbf{v}_{h}) = \displaystyle \sum_{E \in \mathcal{T}_{h}} \left[a^{E}(\boldsymbol{\Pi}_{k,E}\textbf{u} - \textbf{u}, \textbf{v}_{h} - \boldsymbol{\Pi}_{k,E}\textbf{v}_{h})
+ S^{E}(\textbf{u} - \boldsymbol{\Pi}_{k,E}\textbf{u}, \textbf{v}_{h} - \boldsymbol{\Pi}_{k,E}\textbf{v}_{h})\right] + F(\textbf{v}_{h}).
\end{equation*}
Therefore, applying the Cauchy Schwarz inequality, \eqref{lemadanilo} and the continuity of $a^{E}(\cdot,\cdot)$, we obtain
\begin{multline*}
a_{h}(\textbf{u},\textbf{v}_{h}) - F_{h}(\textbf{v}_{h}) = \displaystyle{\sum_{E \in \mathcal{T}_{h}} \left[a^{E}(\boldsymbol{\Pi}_{k,E}\textbf{u} - \textbf{u}, \textbf{v}_{h} - \boldsymbol{\Pi}_{k,E}\textbf{v}_{h})\right.} \\
+ \left. S^{E}(\textbf{u} - \boldsymbol{\Pi}_{k,E}\textbf{u}, \textbf{v}_{h} - \boldsymbol{\Pi}_{k,E}\textbf{v}_{h})\right] + F(\textbf{v}_{h}) - F_{h}(\textbf{v}_{h}) \\
%\lesssim \max\{\lambda_{S}^{1/2},\mu_{S}^{1/2}\}\left(\displaystyle{\sum_{E \in \mathcal{T}_{h}} |\boldsymbol{\Pi}_{k,E}\textbf{u} - \textbf{u}|_{1,E}|\boldsymbol{\Pi}_{k,E}\textbf{v}_{h} - \textbf{v}_{h}|_{1,E}}\right) \\
% + \|\textbf{u} - \Pi_{k,h}\textbf{u}\|_{h}\|\textbf{v}_{h} - \Pi_{k,h}\textbf{v}_{h}\|_{h} + \dfrac{F(\textbf{v}_{h}) - F_{h}(\textbf{v}_{h})}{|\textbf{v}_{h}|_{1,\Omega}}|\textbf{v}_{h}|_{1,\Omega} \\
\lesssim \max\{\lambda_{S},\mu_{S}\}\max\{\lambda_{S}\mu_{S}^{-1}\mu_{S}^{-1/2},1\}\left(|\boldsymbol{\Pi}_{k,E}\textbf{u} - \textbf{u}|_{1,h}\|\textbf{v}_{h}\|_{h}\right) + \|\textbf{u} - \bPi_{k,h}\textbf{u}\|_{h}\|\textbf{v}_{h}\|_{h} \\
+ \max\{\lambda_{S}\mu_{S}^{-1},\mu_{S}^{-1/2},1\}\boldsymbol{\mathcal{R}}_{h}\|\textbf{v}_{h}\|_{h}, 
\end{multline*}
where $\boldsymbol{\mathcal{R}}_{h} := \underset{\textbf{v}_{h} \in \boldsymbol{\mathcal{V}}_{h}}{\sup} \dfrac{F(\textbf{v}_h) - F_{h}(\textbf{v}_h)}{|\textbf{v}_h|_{1,\Omega}}$. Hence, we have
\begin{equation*}
\dfrac{a_{h}(\textbf{u}, \textbf{v}_{h}) - F_{h}(\textbf{v}_{h})}{\|\textbf{v}_{h}\|_{h}} \lesssim C_{1}(\lambda_{S},\mu_{S})\left(|\boldsymbol{\Pi}_{k,E}\textbf{u} - \textbf{u}|_{1,h} + \|\textbf{u} - \bPi_{k,h}\textbf{u}\|_{h} + \boldsymbol{\mathcal{R}}_{h}\right),
\end{equation*}
with $C_{1}(\lambda_{S},\mu_{S}) := \max\{\lambda_{S}\mu_{S}^{-1}\mu_{S}^{-1/2},1\}\max\{\lambda_{S},\mu_{S},1\}$. On the other hand, there holds
\begin{equation*}
\underset{\textbf{v}_{h} \in \boldsymbol{\mathcal{V}}_{h}}{\inf} \|\textbf{u} - \textbf{v}_{h}\|_{h} \leq \|\textbf{u} - \textbf{I}_{k,h}\textbf{u}\|_{h},
\end{equation*}
and therefore we obtain
\begin{equation}\label{stima11}
\|\textbf{u} - \textbf{u}_{h}\|_{h} \lesssim C_{1}(\lambda_{S},\mu_{S})\left(|\boldsymbol{\Pi}_{k,h}\textbf{u} - \textbf{u}|_{1,h} + \|\textbf{u} - \bPi_{k,h}\textbf{u}\|_{h} 
+ \boldsymbol{\mathcal{R}}_{h} + \|\textbf{u} - \textbf{I}_{k,h}\textbf{u}\|_{h}\right).
\end{equation}
Now, using the results of the previous sections, we can give an error estimate in $\|\cdot\|_{h}$ norm, under the assumption that $\textbf{u}$ belongs to $\textbf{H}^{\ell + 1}(\Omega)$, with $1 \leq \ell \leq k$. From $\text{i)}$ of Lemma \ref{lema41} and \ref{elast}, we have
%podemos entregar una estimaci\'on del error en la norma de la energ\'ia, bajo el supuesto que $\textbf{u} \in \textbf{H}^{\ell + 1}(\Omega)$, $1 \leq \ell \leq k$. Por un lado, notar que gracias al Lema \ref{lema41} se tiene
%\begin{equation*}
%\boldsymbol{\mathcal{R}}_{h} \lesssim h^{\ell}|\textbf{f}|_{\ell - 1,\Omega},
%\end{equation*}
%pero como $-\varrho \textbf{f} = \textbf{div}(\boldsymbol{\sigma}(\textbf{u}))$ se tiene
%\begin{equation}\label{stima12}
%\begin{split}
%\underset{\textbf{w}_{h} \in \boldsymbol{\mathcal{V}}_{h}}{\sup} \dfrac{F(\textbf{v}_{h}) - F_{h}(\textbf{v}_{h})}{|\textbf{w}_{h}|_{1,\Omega}} \lesssim h^{\ell}|\textbf{div}(\boldsymbol{\sigma}(\textbf{u}))|_{\ell - 1,\Omega} &\lesssim h^{\ell}|\boldsymbol{\sigma}(\textbf{u})|_{\ell,\Omega} \\
%%&\lesssim \max\{\lambda_{S},\mu_{S}\}h^{\ell}|\boldsymbol{\varepsilon}(\textbf{u})|_{\ell, \Omega}  \\
%&\lesssim \max\{\lambda_{S},\mu_{S}\}h^{\ell}|\textbf{u}|_{\ell + 1, \Omega}.
%\end{split}
%\end{equation}
\begin{equation}\label{stima12}
\boldsymbol{\mathcal{R}}_{h} \lesssim h^{\ell}|\textbf{div}(\boldsymbol{\sigma}(\textbf{u}))|_{\ell - 1,\Omega} \lesssim h^{\ell}|\boldsymbol{\sigma}(\textbf{u})|_{\ell,\Omega}
%&\lesssim \max\{\lambda_{S},\mu_{S}\}h^{\ell}|\boldsymbol{\varepsilon}(\textbf{u})|_{\ell, \Omega}  \\
\lesssim \max\{\lambda_{S},\mu_{S}\}h^{\ell}|\textbf{u}|_{\ell + 1, \Omega}.
\end{equation}
On the other hand, from $\text{ii})$ of Lemma \ref{lema35}, there holds 
\begin{equation*}
|\textbf{u} - \boldsymbol{\Pi}_{k,h}\textbf{u}|_{1,h}^{2} \lesssim \max\{\lambda_{S}\mu_{S}^{-1}, 1\}^{2}\displaystyle{\sum_{E \in \mathcal{T}_{h}} h_{E}^{2\ell}|\textbf{u}|_{\ell + 1,E}^{2}} \lesssim \max\{\lambda_{S}\mu_{S}^{-1}, 1\}^{2}h^{2\ell}|\textbf{u}|_{\ell + 1,\Omega}^{2}, 
\end{equation*}
%de donde se obtiene la estimaci\'on
in which we derive
\begin{equation}\label{stima13}
|\textbf{u} - \boldsymbol{\Pi}_{k,h}\textbf{u}|_{1,h} \lesssim \max\{\lambda_{S}\mu_{S}^{-1}, 1\}h^{\ell}|\textbf{u}|_{\ell + 1,\Omega}.
\end{equation}
Now, to estimate $\|\textbf{u} - \textbf{I}_{k,h}\textbf{u}\|_{h}$, we put $\textbf{j} := \textbf{u} - \textbf{I}_{k,h}\textbf{u}$ and $\textbf{j}_{E} := \textbf{u} - \textbf{I}_{k,E}\textbf{u}$. Then, we have
\begin{multline*}
\|\textbf{j}\|_{h}^{2} = \displaystyle{\sum_{E \in \mathcal{T}_{h}} \left[a^{E}(\boldsymbol{\Pi}_{k,E}\textbf{j}_{E}, \boldsymbol{\Pi}_{k,E}\textbf{j}_{E}) + S^{E}(\textbf{j}_{E} - \boldsymbol{\Pi}_{k,E}\textbf{j}_{E}, \textbf{j}_{E} - \boldsymbol{\Pi}_{k,E}\textbf{j}_{E})\right]} \\
\lesssim \displaystyle{\sum_{E \in \mathcal{T}_{h}} \max\{\lambda_{S},\mu_{S}\}|\boldsymbol{\Pi}_{k,E}\textbf{j}_{E}|_{1,E}^{2}} + \displaystyle{\sum_{E \in \mathcal{T}_{h}} h_{E}\|\partial_{s}\textbf{j}_{E}\|_{0,\partial E}^{2}} 
+ \displaystyle{\sum_{E \in \mathcal{T}_{h}} h_{E}\|\partial_{s}\boldsymbol{\Pi}_{k,E}\textbf{j}_{E}\|_{0, \partial E}^{2}}.
\end{multline*}
First, from  item $\text{ii})$ of Lemma \ref{lema315} and \eqref{cota1}, we obtain
\begin{equation*}
\displaystyle{\sum_{E \in \mathcal{T}_{h}} \max\{\lambda_{S},\mu_{S}\}|\boldsymbol{\Pi}_{k,E}\textbf{j}_{E}|_{1,E}^{2}} \lesssim \max\{\lambda_{S},\mu_{S}\}\max\{\lambda_{S}\mu_{S}^{-1},1\}^{3}h^{2\ell}|\textbf{u}|_{\ell + 1,\Omega}^{2}.
\end{equation*}
%Por otra parte, utilizando la siguiente aproximaci\'on polinomial
%
%\begin{equation}\label{stimapoly5}
%\|\textbf{p}\|_{0,\partial E}^{2} \lesssim h_{E}^{-1}\|\textbf{p}\|_{0,E}^{2},
%\end{equation}
%
%en conjunto con $ii)$ del Lema \ref{lema315}, se obtiene
On the other hand, using the estimate
\begin{equation*}
|\boldsymbol{\zeta}|_{1/2,\partial E} \lesssim |\boldsymbol{\zeta}|_{1,E} \quad \forall \boldsymbol{\zeta} \in \textbf{H}^{1}(E),
\end{equation*}
applied to the $\ell$-order derivatives of $\textbf{u}$, and standard interpolation estimates, we obtain
%aplicado a las derivadas de orden $\ell$ de $\textbf{u}$, y resultados est\'andar de interpolaci\'on en una variable (que se extienden al caso 2-dimensional) se tiene
\begin{equation*}
\begin{split}
\displaystyle{\sum_{E \in \mathcal{T}_{h}} h_{E}\|\partial_{s}\textbf{s}_{E}\|_{0,\partial E}^{2}} = \displaystyle{\sum_{E \in \mathcal{T}_{h}} h_{E}\|\partial_{s}(\textbf{u} - \textbf{I}_{k,E}\textbf{u})\|_{0,\partial E}^{2}} &\lesssim \displaystyle{\sum_{E \in \mathcal{T}_{h}} h_{E}\sum_{e \in \mathcal{E}_{E}} h_{e}^{2\ell  - 1}|\partial_{s}^{\ell}\textbf{u}|_{1/2,e}^{2}} \\
&\lesssim h^{2\ell}|\textbf{u}|_{\ell + 1,\Omega}^{2}.
\end{split}
\end{equation*}
%\begin{equation*}
%\displaystyle{\sum_{E \in \mathcal{T}_{h}} h_{E}\left\|\dfrac{\partial \textbf{s}_{E}}{\partial s}\right\|_{0,\partial E}^{2}} = \displaystyle{\sum_{E \in \mathcal{T}_{h}} h_{E}\left\|\dfrac{\partial(\textbf{u} - \textbf{I}_{k,E}\textbf{u})}{\partial s}\right\|_{0,\partial E}^{2}} \lesssim \displaystyle{\sum_{E \in \mathcal{T}_{h}} |\textbf{u} - \textbf{I}_{k,E}\textbf{u}|_{1,E}^{2}} \lesssim \max\{\lambda_{S}\mu_{S}^{-1},1\}h^{2\ell}|\textbf{u}|_{\ell + 1,\Omega}^{2}.
%\end{equation*}
%Por \'ultimo, aplicando nuevamente la estimaci\'on polinomial $\|\textbf{p}\|_{0,\partial E}^{2} \lesssim h_{E}^{-1}\|\textbf{p}\|_{0,E}^{2}$
Finally, applying the polynomial estimate $\|\textbf{p}\|_{0,\partial E}^{2} \lesssim h_{E}^{-1}\|\textbf{p}\|_{0,E}^{2}$ together with \eqref{cota1} and $\text{ii)}$ of Lemma \ref{lema315}, we obtain
%\begin{equation*}\label{stimapoly5}
%\|\textbf{p}\|_{0,\partial E}^{2} \lesssim h_{E}^{-1}\|\textbf{p}\|_{0,E}^{2},
%\end{equation*}
%en conjunto con \eqref{cota1} y $ii)$ del Lema \ref{lema315} se tiene
\begin{multline*}
\displaystyle{\sum_{E \in \mathcal{T}_{h}} h_{E}\|\partial_{s}\boldsymbol{\Pi}_{k,E}\textbf{s}_{E}\|_{0, \partial E}^{2}} = \displaystyle{\sum_{E \in \mathcal{T}_{h}} h_{E}\|\partial_{s}\boldsymbol{\Pi}_{k,E}(\textbf{u} - \textbf{I}_{k,E}\textbf{u})\|_{0, \partial E}^{2}}\\
 \lesssim \displaystyle{\sum_{E \in \mathcal{T}_{h}} |\boldsymbol{\Pi}_{k,E}(\textbf{u} - \textbf{I}_{k,E}\textbf{u})|_{1,E}^{2}} 
\lesssim \max\{\lambda_{S}\mu_{S}^{-1},1\}^{2}\displaystyle{\sum_{E \in \mathcal{T}_{h}} |\textbf{u} - \textbf{I}_{k,E}\textbf{u}|_{1,E}^{2}}\\ \lesssim \max\{\lambda_{S}\mu_{S}^{-1},1\}^{3}h^{2\ell}|\textbf{u}|_{\ell + 1,\Omega}^{2}.
\end{multline*}
Then, there holds
\begin{equation}\label{stima14}
\|\textbf{u} - \textbf{I}_{k,h}\textbf{u}\|_{h} \lesssim C_{2}(\lambda_{S},\mu_{S})h^{\ell}|\textbf{u}|_{\ell + 1,\Omega},
\end{equation}
with $C_{2}(\lambda_{S},\mu_{S}) := \max\{\lambda_{S}\mu_{S}^{-1},1\}^{3/2}\max\{\lambda_{S}^{1/2},\mu_{S}^{1/2},1\}$. \\
%Finalmente, utilizando lo anterior, $ii)$ del Lema \ref{lema315} (\textcolor{red}{$v)$ del Lema \ref{lema315}}) y $i)$ del Lema \ref{lema317} (\textcolor{red}{$ii)$ del Lema \ref{lema317}}) se tiene
%\begin{equation*}
%\|\textbf{u} - I_{k,h}\textbf{u}\|_{h}^{2} \lesssim \max\{\lambda_{S}^{4}\mu_{S}^{-3},\lambda_{S}^{3}\mu_{S}^{-3}, \lambda_{S}^{3}\mu_{S}^{-2},\lambda_{S}^{2}\mu_{S}^{-2},\lambda_{S}^{2}\mu_{S}^{-1},\lambda_{S}\mu_{S}^{-1},\lambda_{S},\mu_{S},1\}h^{2\ell}|\textbf{u}|_{\ell + 1,\Omega}^{2},
%\end{equation*}
%obteni\'endose entonces la estimaci\'on
%\begin{equation}\label{stima14}
%\|\textbf{u} - I_{k,h}\textbf{u}\|_{h} \lesssim \max\{\lambda_{S}^{2}\mu_{S}^{-3/2},\lambda_{S}^{3/2}\mu_{S}^{-3/2}, \lambda_{S}^{3/2}\mu_{S}^{-1},\lambda_{S}\mu_{S}^{-1},\lambda_{S}\mu_{S}^{-1/2},\lambda_{S}^{1/2}\mu_{S}^{-1/2},\lambda_{S}^{1/2},\mu_{S}^{1/2},1\}h^{\ell}|\textbf{u}|_{\ell + 1,\Omega}.
%\end{equation}
As a final task, we provide an  estimate for $\|\textbf{u} - \boldsymbol{\Pi}_{k,h}\textbf{u}\|_{h}$. From the definition of $\|\cdot\|_{h}$ and $S^{E}(\cdot, \cdot)$, \eqref{traceineq0} and Lemma \ref{lema35}, we have
%Por \'ultimo, para estimar el t\'ermino $\|\textbf{u} - \boldsymbol{\Pi}_{k,h}\textbf{u}\|_{h}$, se tiene por definici\'on de $\|\cdot\|_{h}$ y $S^{E}(\cdot, \cdot)$, \eqref{traceineq0} y el Lema \ref{lema35} se tiene
\begin{multline*}
\|\textbf{u} - \boldsymbol{\Pi}_{k,h}\textbf{u}\|_{h}^{2} = \displaystyle{\sum_{E \in \mathcal{T}_{h}} S^{E}(\textbf{u} - \boldsymbol{\Pi}_{k,h}\textbf{u}, \textbf{u} - \boldsymbol{\Pi}_{k,h}\textbf{u})} = \displaystyle{\sum_{E \in \mathcal{T}_{h}} h_{E}\|\partial_{s}(\textbf{u} - \boldsymbol{\Pi}_{k,E}\textbf{u})\|_{0,\partial E}^{2}} \\
\lesssim \displaystyle{\sum_{E \in \mathcal{T}_{h}} \left[|\textbf{u} - \boldsymbol{\Pi}_{k,E}\textbf{u}|_{1,E}^{2} + h_{E}^{2}|\textbf{u} - \boldsymbol{\Pi}_{k,E}\textbf{u}|_{2,E}^{2}\right]}
 \lesssim \max\{\lambda_{S}\mu_{S}^{-1},1\}^{2}h^{2\ell}|\textbf{u}|_{\ell+1,\Omega}^{2}.
\end{multline*}
Hence, we derive the estimate
\begin{equation}\label{stima15}
\|\textbf{u} - \boldsymbol{\Pi}_{k,h}\textbf{u}\|_{h} \lesssim \max\{\lambda_{S}\mu_{S}^{-1},1\}h^{\ell}|\textbf{u}|_{\ell+1,\Omega}.
\end{equation}
%Por lo tanto, tenemos el siguiente teorema, el cual nos entrega una estimaci\'on del error en la norma de la energ\'ia.
The above estimates give us the following result, that is an adaptation of \cite[Theorem 4.1]{MR3815658}, and we emphasize that the Lam\'e coefficients appears.
\begin{thm}\label{teorema41}
Assuming that the solution $\textbf{u}$ of  Problem \ref{continuous} belongs to $\textbf{H}^{\ell + 1}(\Omega)$ for $1 \leq \ell \leq k$. Then, the following estimate holds 
\begin{equation*}
\|\textbf{u} - \textbf{u}_{h}\|_{h} \lesssim C(\lambda_{S},\mu_{S})h^{\ell}|\textbf{u}|_{\ell + 1,\Omega},
\end{equation*}
where the constants $C(\lambda_{S},\mu_{S})$ is given by 
\begin{equation*}
C(\lambda_{S},\mu_{S}) := C_{1}(\lambda_{S},\mu_{S})\widetilde{C}_{2}(\lambda_{S},\mu_{S}), \,\,\,\text{and}\quad
\widetilde{C}_{2}(\lambda_{S},\mu_{S}) := \max\{C_{2}(\lambda_{S},\mu_{S}),\lambda_{S}\mu_{S}^{-1},1\},
\end{equation*}
%and the hidden constant depends on $\rho_{E}$, $k$ and  $N_{E}$. 
and the hidden constant is independent of $h$.
\end{thm}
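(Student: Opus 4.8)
The plan is to start from the abstract quasi-optimality estimate \eqref{stima10}, which decomposes the energy error into a best-approximation term $\inf_{\textbf{v}_h}\|\textbf{u}-\textbf{v}_h\|_h$ and a consistency (variational crime) term $\sup_{\textbf{v}_h}\big(a_h(\textbf{u},\textbf{v}_h)-F_h(\textbf{v}_h)\big)/\|\textbf{v}_h\|_h$. Since every building block has already been prepared, the proof is essentially an assembly. First I would treat the consistency term: inserting $\boldsymbol{\Pi}_{k,E}\textbf{u}$ into $a_h(\cdot,\cdot)$ and using the defining orthogonality $a^E(\textbf{u}-\boldsymbol{\Pi}_{k,E}\textbf{u},\boldsymbol{p})=0$ for polynomials, I obtain the identity $a_h(\textbf{u},\textbf{v}_h)=\sum_E[a^E(\boldsymbol{\Pi}_{k,E}\textbf{u}-\textbf{u},\textbf{v}_h-\boldsymbol{\Pi}_{k,E}\textbf{v}_h)+S^E(\textbf{u}-\boldsymbol{\Pi}_{k,E}\textbf{u},\textbf{v}_h-\boldsymbol{\Pi}_{k,E}\textbf{v}_h)]+F(\textbf{v}_h)$. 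Applying Cauchy--Schwarz, the continuity of $a^E(\cdot,\cdot)$, and the stability bound \eqref{lemadanilo} then yields \eqref{stima11}, in which the residual quantities $|\boldsymbol{\Pi}_{k,h}\textbf{u}-\textbf{u}|_{1,h}$, $\|\textbf{u}-\boldsymbol{\Pi}_{k,h}\textbf{u}\|_h$ and $\boldsymbol{\mathcal{R}}_h$ appear with the constant $C_1(\lambda_S,\mu_S)$.

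Next I would bound the best-approximation term by the admissible choice $\textbf{v}_h=\mathbf{I}_{k,h}\textbf{u}$, so that $\inf_{\textbf{v}_h}\|\textbf{u}-\textbf{v}_h\|_h\le\|\textbf{u}-\mathbf{I}_{k,h}\textbf{u}\|_h$, a quantity already incorporated into \eqref{stima11}. With \eqref{stima11} at hand, the remaining work is to substitute the four a priori estimates established above: \eqref{stima13} for $|\boldsymbol{\Pi}_{k,h}\textbf{u}-\textbf{u}|_{1,h}$, \eqref{stima15} for $\|\textbf{u}-\boldsymbol{\Pi}_{k,h}\textbf{u}\|_h$, \eqref{stima12} for $\boldsymbol{\mathcal{R}}_h$, and \eqref{stima14} for $\|\textbf{u}-\mathbf{I}_{k,h}\textbf{u}\|_h$. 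Each contributes the common rate $h^\ell|\textbf{u}|_{\ell+1,\Omega}$ together with a Lam\'e-dependent prefactor, namely $\max\{\lambda_S\mu_S^{-1},1\}$ for the two projection terms, $\max\{\lambda_S,\mu_S\}$ for $\boldsymbol{\mathcal{R}}_h$, and $C_2(\lambda_S,\mu_S)$ for the interpolation term.

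The final step is to collect the constants. Factoring $C_1(\lambda_S,\mu_S)$ in front, I would verify that the bracketed sum of prefactors is dominated by $\widetilde{C}_2(\lambda_S,\mu_S)=\max\{C_2(\lambda_S,\mu_S),\lambda_S\mu_S^{-1},1\}$, which delivers $\|\textbf{u}-\textbf{u}_h\|_h\lesssim C_1(\lambda_S,\mu_S)\widetilde{C}_2(\lambda_S,\mu_S)\,h^\ell|\textbf{u}|_{\ell+1,\Omega}$, i.e.\ the asserted bound with $C(\lambda_S,\mu_S)=C_1\widetilde{C}_2$. The $h^\ell$ rate and the seminorm $|\textbf{u}|_{\ell+1,\Omega}$ come out uniformly once the per-element estimates are summed over $\mathcal{T}_h$ under assumption \textbf{A1}.

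I expect the main obstacle to be the bookkeeping of the Lam\'e coefficients rather than any single hard inequality. In particular, $\boldsymbol{\mathcal{R}}_h$ enters \eqref{stima12} with the factor $\max\{\lambda_S,\mu_S\}$, which is \emph{not} by itself controlled by $\widetilde{C}_2$; its absorption into $C_1\widetilde{C}_2$ relies on retaining the sharper coefficient $\max\{\lambda_S\mu_S^{-1},\mu_S^{-1/2},1\}$ that multiplies $\boldsymbol{\mathcal{R}}_h$ in the consistency estimate, rather than crudely replacing it by $C_1$. Checking that every prefactor is genuinely $\le C_1\widetilde{C}_2$ across the regimes where $\lambda_S$ dominates $\mu_S$ or conversely is the delicate point, and it is exactly where the careful tracking of $\lambda_S$ through Lemmas \ref{lema34}--\ref{lema41} is essential.
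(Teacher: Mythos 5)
Your proposal is correct and follows essentially the same route as the paper: the paper's proof of Theorem \ref{teorema41} is precisely the assembly you describe, namely substituting \eqref{stima12}, \eqref{stima13}, \eqref{stima14} and \eqref{stima15} into \eqref{stima11}, after bounding the infimum in \eqref{stima10} by the interpolant. Your closing observation is in fact a genuine refinement of the paper's terse argument: plugging \eqref{stima12} into the lossy form \eqref{stima11} yields the factor $C_{1}(\lambda_{S},\mu_{S})\max\{\lambda_{S},\mu_{S}\}$ for the $\boldsymbol{\mathcal{R}}_{h}$ term, which is not dominated by $C_{1}\widetilde{C}_{2}$ in every parameter regime (e.g.\ $\mu_{S}$ large, $\lambda_{S}\lesssim 1$), whereas retaining the sharper coefficient $\max\{\lambda_{S}\mu_{S}^{-1},\mu_{S}^{-1/2},1\}$ on $\boldsymbol{\mathcal{R}}_{h}$, as you propose, does deliver the stated constant.
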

\begin{proof}
The proof follows by replacing  \eqref{stima12}, \eqref{stima13}, \eqref{stima14} and \eqref{stima15} in  \eqref{stima11}.
\end{proof}
We also have the following result, that give us an error estimate for $\boldsymbol{\Pi}_{k,h}\textbf{u}_{h}$ and $\boldsymbol{\Pi}_{k,h}^{0}\textbf{u}$ in $|\cdot|_{1,\Omega}$ seminorm. This result is adapted from \cite[Theorem 4.2]{MR3815658} to our case.
\begin{thm}\label{teorema42}
Assuming that the solution $\textbf{u}$ of Problem \ref{continuous} belongs to $\textbf{H}^{\ell + 1}(\Omega)$, $1 \leq \ell \leq k$. Then, there holds
\begin{equation*}
|\textbf{u} - \textbf{u}_{h}|_{1,\Omega} + |\textbf{u} - \boldsymbol{\Pi}_{k,h}\textbf{u}_{h}|_{1,h} + |\textbf{u} - \boldsymbol{\Pi}_{k,h}^{0}\textbf{u}|_{1,h} \lesssim K(\lambda_{S},\mu_{S})h^{\ell}|\textbf{u}|_{\ell+1, \Omega},
\end{equation*}
where $K(\lambda_{S},\mu_{S})$ is a positive constant depending on the Lam\'e coefficients.
%\begin{equation*}
%\begin{split}
%K(\lambda_{S},\mu_{S}) &:= \max\{\lambda_{S}^{2}\mu_{S}^{-2},1,C_{5}(\lambda_{S},\mu_{S})\}, \quad C_{5}(\lambda_{S},\mu_{S}) := \max\{\lambda_{S}\mu_{S}^{-1},1,\widetilde{C}_{5}(\lambda_{S},\mu_{S})\}, \\
%\widetilde{C}_{5}(\lambda_{S},\mu_{S}) &:= \max\{\mu_{S}^{-1/2},1\}\max\{C(\lambda_{S},\mu_{S}),C_{2}(\lambda_{S},\mu_{S})\}
%\end{split}
%\end{equation*}
\end{thm}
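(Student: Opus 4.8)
The plan is to bound the three contributions on the left-hand side separately, reducing each to estimates already established and carefully tracking the dependence on the Lam\'e coefficients. The last term is immediate: summing the local bound of Lemma~\ref{lema37} over $E\in\mathcal{T}_h$ gives
\[
|\textbf{u} - \boldsymbol{\Pi}_{k,h}^{0}\textbf{u}|_{1,h}\lesssim\max\{\lambda_S\mu_S^{-1},1\}\,h^{\ell}|\textbf{u}|_{\ell+1,\Omega}.
\]

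For the first term $|\textbf{u}-\textbf{u}_h|_{1,\Omega}$ the difficulty is that the coercivity-type inequality \eqref{stima8}, which converts the $\textbf{H}^1$-seminorm into the energy norm $\|\cdot\|_h$, is only available for functions lying in $\boldsymbol{\mathcal{V}}_h$, whereas $\textbf{u}-\textbf{u}_h\notin\boldsymbol{\mathcal{V}}_h$ in general. I would therefore split $\textbf{u}-\textbf{u}_h=(\textbf{u}-\textbf{I}_{k,h}\textbf{u})+(\textbf{I}_{k,h}\textbf{u}-\textbf{u}_h)$. The first summand is controlled by summing item ii) of Lemma~\ref{lema315}. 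The second summand belongs to $\boldsymbol{\mathcal{V}}_h$, so \eqref{stima8} applies and bounds its seminorm by $\max\{\lambda_S^2\mu_S^{-2},\mu_S^{-1},1\}^{1/2}\|\textbf{I}_{k,h}\textbf{u}-\textbf{u}_h\|_h$; a triangle inequality through $\textbf{u}$ then reduces this energy norm to $\|\textbf{u}-\textbf{I}_{k,h}\textbf{u}\|_h+\|\textbf{u}-\textbf{u}_h\|_h$, which are controlled by \eqref{stima14} and Theorem~\ref{teorema41}, respectively.

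For the middle term I would insert $\boldsymbol{\Pi}_{k,h}\textbf{u}$ and write $|\textbf{u}-\boldsymbol{\Pi}_{k,h}\textbf{u}_h|_{1,h}\le|\textbf{u}-\boldsymbol{\Pi}_{k,h}\textbf{u}|_{1,h}+|\boldsymbol{\Pi}_{k,h}(\textbf{u}-\textbf{u}_h)|_{1,h}$. The first piece is exactly \eqref{stima13}. For the second, the stability estimate \eqref{cota1}---which holds for any $\textbf{H}^1(E)$ argument, since the elliptic projection is defined on all of $\textbf{H}^1(E)$---yields $|\boldsymbol{\Pi}_{k,h}(\textbf{u}-\textbf{u}_h)|_{1,h}\lesssim\max\{\lambda_S\mu_S^{-1},1\}|\textbf{u}-\textbf{u}_h|_{1,\Omega}$, so this contribution is absorbed into the bound for the first term already obtained.

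Collecting the three estimates and taking $K(\lambda_S,\mu_S)$ to be the largest of the resulting prefactors (a product of the constants $C_1,C_2$ appearing in \eqref{stima11}--\eqref{stima14} with the additional factors $\max\{\lambda_S\mu_S^{-1},1\}$ and $\max\{\lambda_S^2\mu_S^{-2},\mu_S^{-1},1\}^{1/2}$) completes the proof. I expect the only genuine obstacle to be the first term: the passage from the energy-norm estimate of Theorem~\ref{teorema41} to the $\textbf{H}^1$-seminorm must be routed through $\textbf{I}_{k,h}\textbf{u}$ precisely because \eqref{stima8} is restricted to $\boldsymbol{\mathcal{V}}_h$, and the bookkeeping of the Lam\'e factors accumulated along this detour is what ultimately fixes the constant $K(\lambda_S,\mu_S)$.
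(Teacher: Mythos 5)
Your proof is correct, and its skeleton coincides with the paper's: the three terms are bounded separately, the first by splitting through $\textbf{I}_{k,h}\textbf{u}$ and the second by inserting $\boldsymbol{\Pi}_{k,h}\textbf{u}$. Your treatment of $|\textbf{u}-\textbf{u}_{h}|_{1,\Omega}$ is in fact exactly the paper's: Lemma \ref{lema315} ii) for $\textbf{u}-\textbf{I}_{k,h}\textbf{u}$, then \eqref{stima8} applied to $\textbf{I}_{k,h}\textbf{u}-\textbf{u}_{h}\in\boldsymbol{\mathcal{V}}_{h}$, then the energy-norm triangle inequality with \eqref{stima14} and Theorem \ref{teorema41}. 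Where you genuinely diverge is in the other two terms, and in both cases your route is shorter. For $|\boldsymbol{\Pi}_{k,h}(\textbf{u}-\textbf{u}_{h})|_{1,h}$ the paper does not recycle the first-term bound: it uses that $\sum_{E}a^{E}(\boldsymbol{\Pi}_{k,E}\textbf{v},\boldsymbol{\Pi}_{k,E}\textbf{v})\le\|\textbf{v}\|_{h}^{2}$ (the stabilization part being nonnegative) and pays a coercivity factor $\mu_{S}^{-1/2}$ to return to the $\textbf{H}^{1}$ seminorm, so that Theorem \ref{teorema41} applies directly and produces the factor $\mu_{S}^{-1/2}C(\lambda_{S},\mu_{S})$; your alternative via the stability \eqref{cota1} applied to $\textbf{u}-\textbf{u}_{h}$, combined with the bound on the first term, is equally legitimate --- and your remark that \eqref{cota1} extends to $\textbf{H}^{1}(E)$ arguments is consistent with the paper's own usage, since the proof of Lemma \ref{lema35} already applies \eqref{cota1} to $\boldsymbol{\zeta}-\boldsymbol{q}$ with $\boldsymbol{\zeta}$ a general Sobolev function. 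For the last term the paper does not invoke Lemma \ref{lema37} at all; it writes $\textbf{u}-\boldsymbol{\Pi}_{k,h}^{0}\textbf{u}=(\textbf{u}-\boldsymbol{\Pi}_{k,h}\textbf{u})+\boldsymbol{\Pi}_{k,h}^{0}(\boldsymbol{\Pi}_{k,h}\textbf{u}-\textbf{u})$ and combines Lemma \ref{lema36} with \eqref{stima13}, which costs the factor $\max\{\lambda_{S}^{2}\mu_{S}^{-2},1\}$, whereas your one-line appeal to Lemma \ref{lema37} yields the sharper $\max\{\lambda_{S}\mu_{S}^{-1},1\}$ --- arguably the more natural use of that lemma, since it was proved for precisely this purpose. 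Since $K(\lambda_{S},\mu_{S})$ is left unspecified in the statement, both bookkeepings are acceptable; the only structural trade-off is that the paper's three bounds are mutually independent, while your bound on the middle term relies on the first one having been established beforehand.
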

\begin{proof}
Note that, from the definition of $\|\cdot\|_{h}$ and Theorem \ref{teorema41}, we have
\begin{multline*}
|\boldsymbol{\Pi}_{k,h}(\textbf{u} - \textbf{u}_{h})|_{1,h}^{2} \leq \displaystyle{\sum_{E \in \mathcal{T}_{h}} \mu_{S}^{-1}a^{E}(\boldsymbol{\Pi}_{k,E}(\textbf{u} - \textbf{u}_{h}),\boldsymbol{\Pi}_{k,E}(\textbf{u} - \textbf{u}_{h}))}\\
 \lesssim \mu_{S}^{-1}\|\textbf{u} - \textbf{u}_{h}\|_{h}^{2} 
\lesssim \mu_{S}^{-1}C(\lambda_{S},\mu_{S})^{2}h^{2\ell}|\textbf{u}|_{\ell + 1,\Omega}^{2}.
\end{multline*}
This gives us the following estimate
%\begin{equation*}
%|\boldsymbol{\Pi}_{k,h}(\textbf{u} - \textbf{u}_{h})|_{1,h} \lesssim C_{3}(\lambda_{S},\mu_{S})h^{\ell}|\textbf{u}|_{\ell + 1,\Omega},
%\end{equation*}
\begin{equation*}
|\boldsymbol{\Pi}_{k,h}(\textbf{u} - \textbf{u}_{h})|_{1,h} \lesssim \mu_{S}^{-1/2}C(\lambda_{S},\mu_{S})h^{\ell}|\textbf{u}|_{\ell + 1,\Omega},
\end{equation*}
%with $C_{3}(\lambda_{S},\mu_{S}) := \mu_{S}^{-1/2}C(\lambda_{S},\mu_{S})$. 
Therefore, applying triangular inequality and \eqref{stima13}, we obtain
\begin{equation}\label{stima16}
|\textbf{u} - \boldsymbol{\Pi}_{k,h}\textbf{u}_{h}|_{1,h} \leq |\textbf{u} - \boldsymbol{\Pi}_{k,h}\textbf{u}|_{1,h} + |\boldsymbol{\Pi}_{k,h}(\textbf{u} - \textbf{u}_{h})|_{1,h} \lesssim C_{3}(\lambda_{S},\mu_{S})h^{\ell}|\textbf{u}|_{\ell + 1,\Omega},
\end{equation}
where $C_{3}(\lambda_{S},\mu_{S}) := \max\{\lambda_{S}\mu_{S}^{-1},1, \mu_{S}^{-1/2}C(\lambda_{S},\mu_{S})\}$. On the other hand, applying triangular inequality, Lemma \ref{lema36} and \eqref{stima13}, we deduce that
\begin{equation}\label{stima17}
|\textbf{u} - \boldsymbol{\Pi}_{k,h}^{0}\textbf{u}|_{1,h} \leq |\textbf{u} - \boldsymbol{\Pi}_{k,h}\textbf{u}|_{1,h} + |\boldsymbol{\Pi}_{k,h}^{0}(\boldsymbol{\Pi}_{k,h}\textbf{u} - \textbf{u})|_{1,h} \lesssim \max\{\lambda_{S}^{2}\mu_{S}^{-2},1\}h^{\ell}|\textbf{u}|_{\ell + 1,\Omega}.
\end{equation}
%Por \'ultimo, aplicando desigualdad triangular, \eqref{stima8}, \eqref{stima14}, el Teorema \ref{teorema41} y $ii)$ del Lema \ref{lema315} se tiene
Finally, from triangular inequality, \eqref{stima8}, \eqref{stima14}, Theorem \ref{teorema41}, and $\text{ii)}$ of Lemma \ref{lema315}, we obtain
\begin{equation}\label{stima18}
\begin{split}
|\textbf{u} - \textbf{u}_{h}|_{1,\Omega} &\lesssim |\textbf{u} - \textbf{I}_{k,h}\textbf{u}|_{1,\Omega} + \max\{\lambda_{S}\mu_{S}^{-1},\mu_{S}^{-1/2},1\}\|\textbf{I}_{k,h}\textbf{u} - \textbf{u}_{h}\|_{h} \\
&\lesssim |\textbf{u} - \textbf{I}_{k,h}\textbf{u}|_{1,\Omega} + \max\{\lambda_{S}\mu_{S}^{-1},\mu_{S}^{-1/2},1\}\left(\|\textbf{I}_{k,h}\textbf{u} - \textbf{u}\|_{h} + \|\textbf{u} - \textbf{u}_{h}\|_{h}\right) \\
&\lesssim C_{4}(\lambda_{S},\mu_{S})h^{\ell}|\textbf{u}|_{\ell + 1,\Omega},
\end{split}
\end{equation}
with
\begin{equation*}
\begin{split}
C_{4}(\lambda_{S},\mu_{S}) &:= \max\{\lambda_{S}\mu_{S}^{-1},1,\widetilde{C}_{4}(\lambda_{S},\mu_{S})\}, \\
\widetilde{C}_{4}(\lambda_{S},\mu_{S}) &:= \max\{\lambda_{S}\mu_{S}^{-1},\mu_{S}^{-1/2},1\}\max\{C(\lambda_{S},\mu_{S}),C_{2}(\lambda_{S},\mu_{S})\}.
\end{split}
\end{equation*}
Therefore, we conclude the proof from \eqref{stima16}, \eqref{stima17} y \eqref{stima18}.
\end{proof}
Now, we will prove two results involving the stability term $S(\cdot,\cdot)$. These results are adapted respectively from \cite[Lemma 4.2]{MR3815658} and \cite[Lemma 4.3]{MR3815658}, to our case.
\begin{lemma}\label{lema42}
For all $\boldsymbol{\zeta} \in \textbf{H}_{0}^{1}(\Omega) \cap \textbf{H}^{2}(\Omega)$, there holds 
\begin{equation*}
\displaystyle{\sum_{E \in \mathcal{T}_{h}} S^{E}(\boldsymbol{\zeta} - \boldsymbol{\Pi}_{k,E}\textbf{I}_{k,E}\boldsymbol{\zeta},\boldsymbol{\zeta} - \boldsymbol{\Pi}_{k,E}\textbf{I}_{k,E}\boldsymbol{\zeta})} \lesssim \max\{\lambda_{S}\mu_{S}^{-1},1\}^{2}h^{2}|\boldsymbol{\zeta}|_{2,\Omega}^{2}.
\end{equation*}
\end{lemma}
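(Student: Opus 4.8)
The plan is to exploit that the diagonal of the stabilization is a scaled tangential--derivative seminorm, namely $S^{E}(\mathbf{g},\mathbf{g}) = h_{E}\|\partial_{s}\mathbf{g}\|_{0,\partial E}^{2}$, and to split $\mathbf{g} := \boldsymbol{\zeta} - \boldsymbol{\Pi}_{k,E}\mathbf{I}_{k,E}\boldsymbol{\zeta}$ so as to isolate a purely continuous part from a polynomial part. Using the linearity of $\boldsymbol{\Pi}_{k,E}$, I would write
\begin{equation*}
\boldsymbol{\zeta} - \boldsymbol{\Pi}_{k,E}\mathbf{I}_{k,E}\boldsymbol{\zeta} = (\boldsymbol{\zeta} - \boldsymbol{\Pi}_{k,E}\boldsymbol{\zeta}) + \boldsymbol{\Pi}_{k,E}(\boldsymbol{\zeta} - \mathbf{I}_{k,E}\boldsymbol{\zeta}),
\end{equation*}
and then estimate the two contributions to $\sum_{E} h_{E}\|\partial_{s}(\cdot)\|_{0,\partial E}^{2}$ separately, specializing every later estimate to $\ell = 1$.

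For the first contribution, $\sum_{E} h_{E}\|\partial_{s}(\boldsymbol{\zeta} - \boldsymbol{\Pi}_{k,E}\boldsymbol{\zeta})\|_{0,\partial E}^{2}$, I would repeat the computation already carried out in the derivation of \eqref{stima15}: bound $\|\partial_{s}\cdot\|_{0,\partial E} \le \|\nabla\cdot\|_{0,\partial E}$, apply the trace inequality \eqref{traceineq0} componentwise to $\nabla(\boldsymbol{\zeta} - \boldsymbol{\Pi}_{k,E}\boldsymbol{\zeta})$ to reach the bound $|\boldsymbol{\zeta} - \boldsymbol{\Pi}_{k,E}\boldsymbol{\zeta}|_{1,E}^{2} + h_{E}^{2}|\boldsymbol{\zeta} - \boldsymbol{\Pi}_{k,E}\boldsymbol{\zeta}|_{2,E}^{2}$, and then insert items ii) and iii) of Lemma \ref{lema35}. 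Since both of those items carry only the first power of $\max\{\lambda_{S}\mu_{S}^{-1},1\}$, this contribution is controlled by $\max\{\lambda_{S}\mu_{S}^{-1},1\}^{2}h^{2}|\boldsymbol{\zeta}|_{2,\Omega}^{2}$ after summing over $E$ and using $h_{E}\le h$.

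The second contribution is the delicate one, because $\boldsymbol{\Pi}_{k,E}(\boldsymbol{\zeta} - \mathbf{I}_{k,E}\boldsymbol{\zeta})$ is a polynomial whose tangential derivative must be controlled on edges that may be arbitrarily small. Here I would \emph{not} pass through the trace inequality, which would force the $|\cdot|_{2,E}$ seminorm of the projection and hence, via item iii) of Lemma \ref{lema315}, a higher power of the Lam\'e factor. Instead, since the argument is a polynomial, I would use the inverse estimate $\|\partial_{s}\mathbf{p}\|_{0,\partial E}^{2}\lesssim h_{E}^{-1}|\mathbf{p}|_{1,E}^{2}$ exactly as in the treatment of the third term in the derivation of \eqref{stima14}, reducing $h_{E}\|\partial_{s}\boldsymbol{\Pi}_{k,E}(\boldsymbol{\zeta}-\mathbf{I}_{k,E}\boldsymbol{\zeta})\|_{0,\partial E}^{2}$ to $|\boldsymbol{\Pi}_{k,E}(\boldsymbol{\zeta}-\mathbf{I}_{k,E}\boldsymbol{\zeta})|_{1,E}^{2}$.

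The key point, and the step I expect to be the main obstacle since a naive use of \eqref{cota1} would square the Lam\'e constant, is to bound this $\textbf{H}^{1}$-seminorm by routing through $\boldsymbol{\zeta}$ rather than through $\eqref{cota1}$. Writing $\boldsymbol{\Pi}_{k,E}(\boldsymbol{\zeta}-\mathbf{I}_{k,E}\boldsymbol{\zeta}) = (\boldsymbol{\Pi}_{k,E}\boldsymbol{\zeta}-\boldsymbol{\zeta}) + (\boldsymbol{\zeta}-\boldsymbol{\Pi}_{k,E}\mathbf{I}_{k,E}\boldsymbol{\zeta})$ and applying the triangle inequality, item ii) of Lemma \ref{lema35} and item ii) of Lemma \ref{lema315} (both with $\ell=1$) give $|\boldsymbol{\Pi}_{k,E}(\boldsymbol{\zeta}-\mathbf{I}_{k,E}\boldsymbol{\zeta})|_{1,E}\lesssim \max\{\lambda_{S}\mu_{S}^{-1},1\}h_{E}|\boldsymbol{\zeta}|_{2,E}$, i.e. only the first power of the Lam\'e factor. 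Summing over $E$ then yields $\max\{\lambda_{S}\mu_{S}^{-1},1\}^{2}h^{2}|\boldsymbol{\zeta}|_{2,\Omega}^{2}$ for this contribution as well, and adding the two contributions closes the proof. The whole difficulty is thus concentrated in choosing the decomposition and the bounding route so that the Lam\'e dependence never exceeds the second power.
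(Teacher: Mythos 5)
Your proof is correct, and it takes a genuinely different route from the paper's. The paper argues directly: it applies the trace inequality \eqref{traceineq0} to the whole error $\boldsymbol{\zeta} - \boldsymbol{\Pi}_{k,E}\mathbf{I}_{k,E}\boldsymbol{\zeta}$, obtaining $|\cdot|_{1,E}^{2} + h_{E}^{2}|\cdot|_{2,E}^{2}$, and then invokes items ii) and iii) of Lemma \ref{lema315} with $\ell = 1$. You instead split off the polynomial part, writing $\boldsymbol{\zeta} - \boldsymbol{\Pi}_{k,E}\mathbf{I}_{k,E}\boldsymbol{\zeta} = (\boldsymbol{\zeta} - \boldsymbol{\Pi}_{k,E}\boldsymbol{\zeta}) + \boldsymbol{\Pi}_{k,E}(\boldsymbol{\zeta} - \mathbf{I}_{k,E}\boldsymbol{\zeta})$, treat the first piece with the trace inequality and Lemma \ref{lema35} (both items of which carry only the first power of $\max\{\lambda_{S}\mu_{S}^{-1},1\}$), and treat the second piece with the polynomial inverse trace estimate already used in the derivation of \eqref{stima14}, followed by a second decomposition that bounds $|\boldsymbol{\Pi}_{k,E}(\boldsymbol{\zeta}-\mathbf{I}_{k,E}\boldsymbol{\zeta})|_{1,E}$ via Lemma \ref{lema35} ii) and Lemma \ref{lema315} ii) rather than via \eqref{cota1}. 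This extra care buys something real: item iii) of Lemma \ref{lema315} carries $\max\{\lambda_{S}\mu_{S}^{-1},1\}^{2}$, so the paper's route, with the constants tracked literally, yields $\max\{\lambda_{S}\mu_{S}^{-1},1\}^{4}h^{2}|\boldsymbol{\zeta}|_{2,\Omega}^{2}$ rather than the power $2$ asserted in the statement and in the last line of the paper's proof. Your argument never invokes an estimate carrying more than the first power of the Lam\'e factor before squaring, so it delivers exactly the claimed bound $\max\{\lambda_{S}\mu_{S}^{-1},1\}^{2}h^{2}|\boldsymbol{\zeta}|_{2,\Omega}^{2}$; in other words, the constant stated in the lemma is actually justified by your route, while the paper's shorter direct argument is, strictly speaking, lossier.
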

\begin{proof}
From the definition of $S^{E}(\cdot, \cdot)$, \eqref{traceineq0}, and Lemma \ref{lema315} with $\ell = 1$, we have
\begin{multline*}
\displaystyle{\sum_{E \in \mathcal{T}_{h}} S^{E}(\boldsymbol{\zeta} - \boldsymbol{\Pi}_{k,E}\textbf{I}_{k,E}\boldsymbol{\zeta},\boldsymbol{\zeta} - \boldsymbol{\Pi}_{k,E}\textbf{I}_{k,E}\boldsymbol{\zeta})} \lesssim \displaystyle{\sum_{E \in \mathcal{T}_{h}} h_{E}\|\partial_{s}\left(\boldsymbol{\zeta} - \boldsymbol{\Pi}_{k,E}\textbf{I}_{k,E}\boldsymbol{\zeta}\right)\|_{0,\partial E}^{2}} \\
\lesssim \displaystyle{\sum_{E \in \mathcal{T}_{h}} \left(|\boldsymbol{\zeta} - \boldsymbol{\Pi}_{k,E}\textbf{I}_{k,E}\boldsymbol{\zeta}|_{1,E}^{2} + h_{E}^{2}|\boldsymbol{\zeta} - \boldsymbol{\Pi}_{k,E}\textbf{I}_{k,E}\boldsymbol{\zeta}|_{2,E}^{2}\right)} \lesssim \max\{\lambda_{S}^{2}\mu_{S}^{-2},1\}h^{2}|\boldsymbol{\zeta}|_{2,\Omega}^{2}.
\end{multline*}
This concludes the proof.
\end{proof}
\begin{lemma}\label{lema43}
Assume that $\textbf{u} \in \textbf{H}^{\ell + 1}(\Omega)$, $1 \leq \ell \leq k$. Then there holds
\begin{equation*}
\displaystyle{\sum_{E \in \mathcal{T}_{h}} S^{E}(\textbf{u}_{h} - \boldsymbol{\Pi}_{k,E}\textbf{u}_{h},\textbf{u}_{h} - \boldsymbol{\Pi}_{k,E}\textbf{u}_{h})} \lesssim \mathcal{C}(\lambda_{S},\mu_{S})h^{2\ell}|\textbf{u}|_{\ell + 1,\Omega}^{2},
\end{equation*}
where $\mathcal{C}(\lambda_{S},\mu_{S})$ is a positive constant depending on the Lam\'e coefficients.
%where the constant $C_{6}(\lambda_{S},\mu_{S})$ is defined by
%\begin{equation*}
%C_{6}(\lambda_{S},\mu_{S}) := \max\{C(\lambda_{S},\mu_{S})^{2}, \widetilde{C}_{6}(\lambda_{S},\mu_{S}), \max\{\lambda_{S}^{2}\mu_{S}^{-2},1\}\}
%\end{equation*}
%with
%\begin{equation*}
%\widetilde{C}_{6}(\lambda_{S},\mu_{S}) := \max\{\lambda_{S}^{2}\mu_{S}^{-2},\lambda_{S},\mu_{S}\}K^{2}(\lambda_{S},\mu_{S}).
%\end{equation*}
\end{lemma}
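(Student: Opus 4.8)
The plan is to insert the interpolant $\textbf{I}_{k,E}\textbf{u}$ and exploit the elementary identity
\[
\textbf{u}_{h} - \boldsymbol{\Pi}_{k,E}\textbf{u}_{h} = (\mathrm{id} - \boldsymbol{\Pi}_{k,E})(\textbf{u}_{h} - \textbf{I}_{k,E}\textbf{u}) + (\textbf{I}_{k,E}\textbf{u} - \boldsymbol{\Pi}_{k,E}\textbf{I}_{k,E}\textbf{u}),
\]
which follows from the linearity and idempotency of $\boldsymbol{\Pi}_{k,E}$. Since $S^{E}(\cdot,\cdot)$ induces a seminorm, the triangle inequality reduces the quantity of interest to $T \lesssim A + B$, where $A := \sum_{E \in \mathcal{T}_{h}} S^{E}\big((\mathrm{id}-\boldsymbol{\Pi}_{k,E})\textbf{w}_{h}, (\mathrm{id}-\boldsymbol{\Pi}_{k,E})\textbf{w}_{h}\big)$ with $\textbf{w}_{h} := \textbf{u}_{h} - \textbf{I}_{k,h}\textbf{u} \in \boldsymbol{\mathcal{V}}_{h}$, and $B := \sum_{E \in \mathcal{T}_{h}} S^{E}(\textbf{I}_{k,E}\textbf{u} - \boldsymbol{\Pi}_{k,E}\textbf{I}_{k,E}\textbf{u}, \textbf{I}_{k,E}\textbf{u} - \boldsymbol{\Pi}_{k,E}\textbf{I}_{k,E}\textbf{u})$.

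For $A$ I would use that $a^{E}(\cdot,\cdot)$ is positive semidefinite, so that by the very definition of $a_{h}(\cdot,\cdot)$ each summand satisfies $S^{E}(\textbf{w}_{h} - \boldsymbol{\Pi}_{k,E}\textbf{w}_{h}, \textbf{w}_{h} - \boldsymbol{\Pi}_{k,E}\textbf{w}_{h}) \le a_{h}^{E}(\textbf{w}_{h}, \textbf{w}_{h})$. Summing over $E$ gives $A \le \|\textbf{w}_{h}\|_{h}^{2} = \|\textbf{u}_{h} - \textbf{I}_{k,h}\textbf{u}\|_{h}^{2}$, and a triangle inequality through $\textbf{u}$ together with Theorem~\ref{teorema41} and \eqref{stima14} yields $A \lesssim \big(C(\lambda_{S},\mu_{S}) + C_{2}(\lambda_{S},\mu_{S})\big)^{2} h^{2\ell}|\textbf{u}|_{\ell+1,\Omega}^{2}$. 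This is the step where the discrete error $\textbf{u}-\textbf{u}_h$ enters, and I expect it to be the conceptual heart of the argument: one must recognize that $(\mathrm{id}-\boldsymbol{\Pi}_{k,E})(\textbf{u}_{h} - \textbf{I}_{k,E}\textbf{u})$ is precisely the stabilization part of $a_{h}(\textbf{w}_{h},\textbf{w}_{h})$ for a genuine element $\textbf{w}_{h}\in\boldsymbol{\mathcal{V}}_{h}$, so that the already-proven energy estimates apply verbatim.

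For $B$ I would insert $\textbf{u}$ and split once more, writing $\textbf{I}_{k,E}\textbf{u} - \boldsymbol{\Pi}_{k,E}\textbf{I}_{k,E}\textbf{u} = (\textbf{I}_{k,E}\textbf{u} - \textbf{u}) + (\textbf{u} - \boldsymbol{\Pi}_{k,E}\textbf{I}_{k,E}\textbf{u})$ and using $S^{E}(\textbf{v},\textbf{v}) = h_{E}\|\partial_{s}\textbf{v}\|_{0,\partial E}^{2}$. The contribution of $\textbf{I}_{k,E}\textbf{u} - \textbf{u}$ is an edge interpolation error controlled by the same standard estimates used in the derivation of \eqref{stima14}, giving a term $\lesssim h^{2\ell}|\textbf{u}|_{\ell+1,\Omega}^{2}$ with no Lam\'e dependence. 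For the contribution of $\textbf{u} - \boldsymbol{\Pi}_{k,E}\textbf{I}_{k,E}\textbf{u}$ I would apply the trace inequality \eqref{traceineq0} to $\partial_{s}(\textbf{u} - \boldsymbol{\Pi}_{k,E}\textbf{I}_{k,E}\textbf{u})$, bounding $h_{E}\|\partial_{s}(\cdot)\|_{0,\partial E}^{2}$ by $|\textbf{u} - \boldsymbol{\Pi}_{k,E}\textbf{I}_{k,E}\textbf{u}|_{1,E}^{2} + h_{E}^{2}|\textbf{u} - \boldsymbol{\Pi}_{k,E}\textbf{I}_{k,E}\textbf{u}|_{2,E}^{2}$, and then invoking items ii) and iii) of Lemma~\ref{lema315}. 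The $|\cdot|_{2,E}$ term carries the worst factor $\max\{\lambda_{S}\mu_{S}^{-1},1\}^{2}$, so after squaring and summing over the mesh one obtains $B \lesssim \max\{\lambda_{S}\mu_{S}^{-1},1\}^{4} h^{2\ell}|\textbf{u}|_{\ell+1,\Omega}^{2}$.

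Combining the bounds for $A$ and $B$ through $T \lesssim A + B$ delivers the stated estimate, with $\mathcal{C}(\lambda_{S},\mu_{S})$ absorbing $\max\{C(\lambda_{S},\mu_{S}), C_{2}(\lambda_{S},\mu_{S})\}^{2}$ and $\max\{\lambda_{S}\mu_{S}^{-1},1\}^{4}$. Apart from the reduction of $A$ to the energy norm, everything else is a routine bookkeeping of the Lam\'e factors propagated through the trace inequality and Lemma~\ref{lema315}, so the only genuine obstacle is identifying the correct three-term decomposition that makes $A$ amenable to Theorem~\ref{teorema41}.
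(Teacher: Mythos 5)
Your proof is correct, and it takes a genuinely different route from the paper's. The paper starts from the exact identity $\sum_{E \in \mathcal{T}_{h}} S^{E}(\textbf{u}_{h} - \boldsymbol{\Pi}_{k,E}\textbf{u}_{h},\textbf{u}_{h} - \boldsymbol{\Pi}_{k,E}\textbf{u}_{h}) = \|\textbf{u}_{h}-\boldsymbol{\Pi}_{k,h}\textbf{u}_{h}\|_{h}^{2}$ (the consistency part of $a_{h}$ vanishes for this particular argument since $\boldsymbol{\Pi}_{k,E}(\textbf{u}_{h}-\boldsymbol{\Pi}_{k,E}\textbf{u}_{h})=\boldsymbol{0}$), splits $\textbf{u}_{h}-\boldsymbol{\Pi}_{k,h}\textbf{u}_{h}=(\textbf{u}_{h}-\textbf{u})+(\textbf{u}-\boldsymbol{\Pi}_{k,h}\textbf{u})+\boldsymbol{\Pi}_{k,h}(\textbf{u}-\textbf{u}_{h})$, and bounds the three pieces by Theorem~\ref{teorema41}, estimate \eqref{stima15}, and, for the last piece, the continuity of $a^{E}(\cdot,\cdot)$, \eqref{cota1} and the $\textbf{H}^{1}$ error estimate of Theorem~\ref{teorema42}. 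You pivot instead through the interpolant: your term $A$ is dominated by $\|\textbf{u}_{h}-\textbf{I}_{k,h}\textbf{u}\|_{h}^{2}$ using only the positive semidefiniteness of $a^{E}(\cdot,\cdot)$, and is closed with Theorem~\ref{teorema41} and \eqref{stima14}, while your term $B$ is essentially a general-$\ell$ rerun of Lemma~\ref{lema42} (stated in the paper only for $\ell=1$), combining the boundary interpolation estimate already used in the derivation of \eqref{stima14} with the trace inequality \eqref{traceineq0} and items ii) and iii) of Lemma~\ref{lema315}. Both arguments are sound, and although your constant $\max\{C(\lambda_{S},\mu_{S}),C_{2}(\lambda_{S},\mu_{S})\}^{2}+\max\{\lambda_{S}\mu_{S}^{-1},1\}^{4}$ differs in form from the paper's $\max\{C(\lambda_{S},\mu_{S})^{2},C_{5}(\lambda_{S},\mu_{S}),\lambda_{S}^{2}\mu_{S}^{-2},1\}$, the lemma only requires some constant depending on the Lam\'e coefficients, so this is immaterial. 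Your route buys a leaner dependency chain: you bypass Theorem~\ref{teorema42} entirely, needing only the energy estimate of Theorem~\ref{teorema41} plus the Section~3 projection and interpolation machinery, and your Lam\'e bookkeeping is fully explicit. The paper's route buys brevity: with Theorem~\ref{teorema42} already in hand, the exact identity and a single three-term triangle inequality finish the proof without re-deriving any boundary estimates.
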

\begin{proof}
From triangle inequality, the continuity of $a^{E}(\cdot, \cdot)$, Theorem \ref{teorema41}, and \eqref{stima15} we obtain
\begin{multline}\label{stima19}
\displaystyle{\sum_{E \in \mathcal{T}_{h}} S^{E}(\textbf{u}_{h} - \boldsymbol{\Pi}_{k,E}\textbf{u}_{h},\textbf{u}_{h} - \boldsymbol{\Pi}_{k,E}\textbf{u}_{h})} = \|\textbf{u}_{h} - \boldsymbol{\Pi}_{k,h}\textbf{u}_{h}\|_{h}^{2} \\
\lesssim \|\textbf{u} - \textbf{u}_{h}\|_{h}^{2} + \|\textbf{u} - \boldsymbol{\Pi}_{k,h}\textbf{u}\|_{h}^{2} + \|\bPi_{k,h}(\textbf{u} - \textbf{u}_{h})\|_{h}^{2} \\
\lesssim \left(C(\lambda_{S},\mu_{S})^{2} + \max\{\lambda_{S}^{2}\mu_{S}^{-2},1\}\right)h^{2\ell}|\textbf{u}|_{\ell + 1,\Omega} \\
+ \|\bPi_{k,h}(\textbf{u} - \textbf{u}_{h})\|_{h}^{2}. 
\end{multline}
On the other hand, from the definition of $\|\cdot\|_{h}$, \eqref{cota1}, and Theorem \ref{teorema42}, we have
\begin{equation*}
\begin{split}
\|\boldsymbol{\Pi}_{k,h}(\textbf{u} - \textbf{u}_{h})\|_{h}^{2} &= \displaystyle{\sum_{E \in \mathcal{T}_{h}} a^{E}(\boldsymbol{\Pi}_{k,E}(\textbf{u} - \textbf{u}_{h}),\boldsymbol{\Pi}_{k,E}(\textbf{u} - \textbf{u}_{h}))} \\
%&\lesssim \max\{\lambda_{S},\mu_{S}\}\displaystyle{\sum_{E \in \mathcal{T}_{h}} |\boldsymbol{\Pi}_{k,E}(\textbf{u} - \textbf{u}_{h})|_{1,E}^{2}} \\
&\lesssim \max\{\lambda_{S}^{3}\mu_{S}^{-2},\lambda_{S}^{2}\mu_{S}^{-1},\lambda_{S},\mu_{S}\}\displaystyle{\sum_{E \in \mathcal{T}_{h}} |\textbf{u} - \textbf{u}_{h}|_{1,E}^{2}} \\
&\lesssim C_{5}(\lambda_{S},\mu_{S})h^{2\ell}|\textbf{u}|_{\ell + 1,\Omega}^{2},
\end{split}
\end{equation*}
where $C_{5}(\lambda_{S},\mu_{S}) := \max\{\lambda_{S}^{3}\mu_{S}^{-2},\lambda_{S}^{2}\mu_{S}^{-1},\lambda_{S},\mu_{S}\}(K(\lambda_{S},\mu_{S}))^{2}$. Hence, replacing this into \eqref{stima19} we obtain
\begin{equation*}
\displaystyle{\sum_{E \in \mathcal{T}_{h}} S^{E}(\textbf{u}_{h} - \boldsymbol{\Pi}_{k,E}\textbf{u}_{h},\textbf{u}_{h} - \boldsymbol{\Pi}_{k,E}\textbf{u}_{h})} \lesssim \mathcal{C}(\lambda_{S},\mu_{S})h^{2\ell}|\textbf{u}|_{\ell + 1,\Omega}^{2},
\end{equation*}
where $\mathcal{C}(\lambda_{S},\mu_{S}) := \max\{C(\lambda_{S},\mu_{S})^{2}, C_{5}(\lambda_{S},\mu_{S}), \max\{\lambda_{S}^{2}\mu_{S}^{-2},1\}\}$.
This concludes the proof.
\end{proof}
The following lemma is adapted from \cite[Lemma 4.4]{MR3815658}, and gives us a consistency result.
\begin{lemma}\label{lema44}
Assuming that $\textbf{u} \in \textbf{H}^{\ell + 1}(\Omega)$ with $1 \leq \ell \leq k$. Then, there holds
\begin{equation*}
a(\textbf{u} - \textbf{u}_{h}, \textbf{I}_{k,h}\boldsymbol{\zeta}) \lesssim \mathfrak{D}(\lambda_{S},\mu_{S})h^{\ell + 1}|\textbf{u}|_{\ell + 1,\Omega}|\boldsymbol{\zeta}|_{2,\Omega} \quad \forall \boldsymbol{\zeta} \in \textbf{H}^{2}(\Omega) \cap \textbf{H}_{0}^{1}(\Omega),
\end{equation*}
where $\mathfrak{D}(\lambda_{S},\mu_{S})$ is a positive constant depending on the Lam\'e coefficients.
%\begin{equation*}
%\begin{split}
%C_{8}(\lambda_{S},\mu_{S}) &:= \max\{\widetilde{C}_{8}(\lambda_{S},\mu_{S}), C_{7}(\lambda_{S},\mu_{S}), 1\}, \quad C_{7}(\lambda_{S},\mu_{S}) := \max\{\lambda_{S}\mu_{S}^{-1},1\}C_{6}(\lambda_{S},\mu_{S})^{1/2}, \\
%\widetilde{C}_{8}(\lambda_{S},\mu_{S}) &:= \max\{\lambda_{S}^{3}\mu_{S}^{-1}, \lambda_{S}\mu_{S}, \lambda_{S}^{2}, \mu_{S}^{2}\}\max\{C(\lambda_{S},\mu_{S}),K(\lambda_{S},\mu_{S})\}.
%\end{split}
%\end{equation*}
\end{lemma}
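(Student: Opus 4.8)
The plan is to argue by consistency, in the spirit of a Strang-type lemma, exploiting that the test function $\textbf{I}_{k,h}\boldsymbol{\zeta}$ lives in the discrete space $\boldsymbol{\mathcal{V}}_{h}\subset\textbf{H}_{0}^{1}(\Omega)$. Writing $\boldsymbol{\zeta}_{I}:=\textbf{I}_{k,h}\boldsymbol{\zeta}$, the continuous problem gives $a(\textbf{u},\boldsymbol{\zeta}_{I})=F(\boldsymbol{\zeta}_{I})$ since $\boldsymbol{\zeta}_{I}\in\textbf{H}_{0}^{1}(\Omega)$, while Problem \ref{discrette} gives $a_{h}(\textbf{u}_{h},\boldsymbol{\zeta}_{I})=F_{h}(\boldsymbol{\zeta}_{I})$. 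Adding and subtracting these identities yields the decomposition
\begin{equation*}
a(\textbf{u}-\textbf{u}_{h},\boldsymbol{\zeta}_{I})=\big[F(\boldsymbol{\zeta}_{I})-F_{h}(\boldsymbol{\zeta}_{I})\big]+\big[a_{h}(\textbf{u}_{h},\boldsymbol{\zeta}_{I})-a(\textbf{u}_{h},\boldsymbol{\zeta}_{I})\big],
\end{equation*}
so that it suffices to bound the load-consistency term and the form-consistency term separately, each by $\mathfrak{D}(\lambda_{S},\mu_{S})h^{\ell+1}|\textbf{u}|_{\ell+1,\Omega}|\boldsymbol{\zeta}|_{2,\Omega}$.

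For the first bracket I would apply item ii) of Lemma \ref{lema41} directly, bounding $|F(\boldsymbol{\zeta}_{I})-F_{h}(\boldsymbol{\zeta}_{I})|$ by $\max\{\lambda_{S}\mu_{S}^{-1},1\}^{2}h^{\ell+1}|\textbf{f}|_{\ell-1,\Omega}|\boldsymbol{\zeta}|_{2,\Omega}$. Since $\textbf{f}=-\varrho^{-1}\bdiv(\bsig(\textbf{u}))$ by \eqref{elast}, the same computation used in \eqref{stima12} gives $|\textbf{f}|_{\ell-1,\Omega}\lesssim|\bsig(\textbf{u})|_{\ell,\Omega}\lesssim\max\{\lambda_{S},\mu_{S}\}|\textbf{u}|_{\ell+1,\Omega}$, which produces the required order $h^{\ell+1}$ with an explicit Lam\'e factor.

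The real work is the form-consistency term $\sum_{E}\big[a_{h}^{E}(\textbf{u}_{h},\boldsymbol{\zeta}_{I})-a^{E}(\textbf{u}_{h},\boldsymbol{\zeta}_{I})\big]$. Here I would exploit the $k$-consistency of the method: for any $\boldsymbol{p}\in[\mathbb{P}_{k}(E)]^{2}$ one has $\boldsymbol{\Pi}_{k,E}\boldsymbol{p}=\boldsymbol{p}$, so the stabilization $S^{E}$ drops out and the defining property of $\boldsymbol{\Pi}_{k,E}$ gives $a_{h}^{E}(\cdot,\boldsymbol{p})=a^{E}(\boldsymbol{\Pi}_{k,E}\cdot,\boldsymbol{p})=a^{E}(\cdot,\boldsymbol{p})$, and symmetrically in the first slot. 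Choosing elementwise best polynomial approximants $\textbf{u}_{\pi},\boldsymbol{\zeta}_{\pi}\in[\mathbb{P}_{k}(E)]^{2}$ from \eqref{bramblehilbert} and using bilinearity, consistency lets me subtract these polynomials from both arguments to arrive at
\begin{equation*}
a_{h}^{E}(\textbf{u}_{h},\boldsymbol{\zeta}_{I})-a^{E}(\textbf{u}_{h},\boldsymbol{\zeta}_{I})=a_{h}^{E}(\textbf{u}_{h}-\textbf{u}_{\pi},\boldsymbol{\zeta}_{I}-\boldsymbol{\zeta}_{\pi})-a^{E}(\textbf{u}_{h}-\textbf{u}_{\pi},\boldsymbol{\zeta}_{I}-\boldsymbol{\zeta}_{\pi}).
\end{equation*}
I would then bound $a^{E}$ by its continuity constant $\max\{\lambda_{S},\mu_{S}\}$, and $a_{h}^{E}$ through its two constitutive pieces, namely the polynomial part via \eqref{cota1} and the stabilization $S^{E}$ via the trace inequality \eqref{traceineq0}, all controlled by $|\textbf{u}_{h}-\textbf{u}_{\pi}|_{1,E}$ and $|\boldsymbol{\zeta}_{I}-\boldsymbol{\zeta}_{\pi}|_{1,E}$ up to powers of $\max\{\lambda_{S}\mu_{S}^{-1},1\}$.

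A Cauchy--Schwarz inequality over the elements then factorizes the sum: the factor $|\textbf{u}_{h}-\textbf{u}_{\pi}|_{1,h}$ is $O(h^{\ell}|\textbf{u}|_{\ell+1,\Omega})$ by the triangle inequality, Theorem \ref{teorema42} and \eqref{bramblehilbert}, while $|\boldsymbol{\zeta}_{I}-\boldsymbol{\zeta}_{\pi}|_{1,h}$ is $O(h|\boldsymbol{\zeta}|_{2,\Omega})$ by the triangle inequality, item ii) of Lemma \ref{lema315} with $\ell=1$, and \eqref{bramblehilbert}; their product gives the desired $h^{\ell+1}$. I expect the main obstacle to be the bookkeeping of constants rather than any single estimate: one must track how the factor $\max\{\lambda_{S},\mu_{S}\}$ and the several powers of $\max\{\lambda_{S}\mu_{S}^{-1},1\}$ coming from continuity of $a^{E}$, from \eqref{cota1}, from the stabilization bound, and from Theorem \ref{teorema42} accumulate, and absorb them into a single constant $\mathfrak{D}(\lambda_{S},\mu_{S})$. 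Obtaining the correct dependence on the Lam\'e coefficients, which is the whole point of the paper, requires particular care when bounding the stabilization part of $a_{h}^{E}$ against the $\textbf{H}^{1}$ seminorms.
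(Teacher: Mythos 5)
Your top-level decomposition is exactly the one the paper uses in \eqref{stima20}: the continuous problem gives $a(\textbf{u},\textbf{I}_{k,h}\boldsymbol{\zeta})=F(\textbf{I}_{k,h}\boldsymbol{\zeta})$, the discrete problem gives $a_{h}(\textbf{u}_{h},\textbf{I}_{k,h}\boldsymbol{\zeta})=F_{h}(\textbf{I}_{k,h}\boldsymbol{\zeta})$, and the error splits into a load-consistency bracket plus a form-consistency bracket. Your treatment of the load bracket (Lemma \ref{lema41} ii) combined with $|\textbf{f}|_{\ell-1,\Omega}\lesssim\max\{\lambda_{S},\mu_{S}\}|\textbf{u}|_{\ell+1,\Omega}$ as in \eqref{stima12}) coincides with the paper's \eqref{stima21}. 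Your polynomial-subtraction identity for the form-consistency bracket is also correct, and it is in fact algebraically equivalent to the paper's expansion: since $\boldsymbol{\Pi}_{k,E}$ fixes polynomials, $(\textbf{u}_{h}-\textbf{u}_{\pi})-\boldsymbol{\Pi}_{k,E}(\textbf{u}_{h}-\textbf{u}_{\pi})=\textbf{u}_{h}-\boldsymbol{\Pi}_{k,E}\textbf{u}_{h}$, so after expanding $a_{h}^{E}$ your two pieces are precisely the paper's terms $S^{E}(\textbf{u}_{h}-\boldsymbol{\Pi}_{k,E}\textbf{u}_{h},\textbf{I}_{k,E}\boldsymbol{\zeta}-\boldsymbol{\Pi}_{k,E}\textbf{I}_{k,E}\boldsymbol{\zeta})$ and $a^{E}(\boldsymbol{\Pi}_{k,E}\textbf{u}_{h}-\textbf{u}_{h},\textbf{I}_{k,E}\boldsymbol{\zeta}-\boldsymbol{\Pi}_{k,E}\textbf{I}_{k,E}\boldsymbol{\zeta})$. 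The $a^{E}$ piece you bound essentially as the paper does in \eqref{stima23} (continuity, Theorem \ref{teorema42}, Lemma \ref{lema315} ii), Bramble--Hilbert), and that part is sound.

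The genuine gap is your treatment of the stabilization piece. You propose to control $S^{E}$ ``via the trace inequality \eqref{traceineq0}'' by $|\textbf{u}_{h}-\textbf{u}_{\pi}|_{1,E}\,|\boldsymbol{\zeta}_{I}-\boldsymbol{\zeta}_{\pi}|_{1,E}$. But $S^{E}$ involves tangential \emph{derivatives} on $\partial E$: to invoke \eqref{traceineq0} one must apply it to the gradient, which produces a term $h_{E}^{2}|\cdot|_{2,E}^{2}$, and both of your arguments are virtual functions, whose $\textbf{H}^{2}(E)$ seminorm is not controlled by their $\textbf{H}^{1}(E)$ seminorm. The edge-wise alternative, the polynomial inverse inequality $\|\partial_{s}\textbf{v}_{h}\|_{0,e}\lesssim h_{e}^{-1}\|\textbf{v}_{h}\|_{0,e}$, carries a constant that degenerates exactly when $h_{e}\ll h_{E}$; in other words, the estimate $S^{E}(\textbf{v}_{h}-\boldsymbol{\Pi}_{k,E}\textbf{v}_{h},\textbf{v}_{h}-\boldsymbol{\Pi}_{k,E}\textbf{v}_{h})\lesssim|\textbf{v}_{h}|_{1,E}^{2}$ for virtual $\textbf{v}_{h}$ is precisely the inequality that the small-edge framework does \emph{not} provide (it is what the classical VEM stability assumptions would give, and what assumption \textbf{A1} alone cannot). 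This is why the paper needs Lemma \ref{lema42}, whose argument $\boldsymbol{\zeta}-\boldsymbol{\Pi}_{k,E}\textbf{I}_{k,E}\boldsymbol{\zeta}$ genuinely lies in $\textbf{H}^{2}(E)$ so that \eqref{traceineq0} applies, and, crucially, Lemma \ref{lema43}, which bounds $\sum_{E}S^{E}(\textbf{u}_{h}-\boldsymbol{\Pi}_{k,E}\textbf{u}_{h},\textbf{u}_{h}-\boldsymbol{\Pi}_{k,E}\textbf{u}_{h})$ not by any stability estimate but by identifying it with $\|\textbf{u}_{h}-\boldsymbol{\Pi}_{k,h}\textbf{u}_{h}\|_{h}^{2}$ and detouring through the exact (smooth) solution, using the already-established error estimates of Theorems \ref{teorema41} and \ref{teorema42}, then concluding by Cauchy--Schwarz as in \eqref{stima22}. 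Your proof must incorporate this detour (or an equivalent one); as written, the stabilization bound fails on meshes with small edges, and this is not a bookkeeping issue about Lam\'e constants but a missing estimate.
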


\begin{proof}
From the linearity of $a(\cdot, \cdot)$, the definition of $a_{h}(\cdot, \cdot)$, Problems \ref{continuous} and \ref{discrette}, we have
\begin{multline}\label{stima20}
a(\textbf{u} - \textbf{u}_{h},\textbf{I}_{k,h}\boldsymbol{\zeta}) = a(\textbf{u},\textbf{I}_{k,h}\textbf{u}) - \displaystyle{\sum_{E \in \mathcal{T}_{h}} a^{E}(\textbf{u}_{h},\textbf{I}_{k,h}\boldsymbol{\zeta})} \\
%= F(\textbf{I}_{k,h}\boldsymbol{\zeta}) - \displaystyle{\sum_{E \in \mathcal{T}_{h}} a^{E}(\textbf{u}_{h},\boldsymbol{\Pi}_{k,E}\textbf{I}_{k,E}\boldsymbol{\zeta})} - \displaystyle{\sum_{E \in \mathcal{T}_{h}} a^{E}(\textbf{u}_{h},\textbf{I}_{k,E}\boldsymbol{\zeta} - \boldsymbol{\Pi}_{k,E}\textbf{I}_{k,E}\boldsymbol{\zeta})} \\
= F(\textbf{I}_{k,h}\boldsymbol{\zeta}) - F_{h}(\textbf{I}_{k,h}\boldsymbol{\zeta}) + \displaystyle{\sum_{E \in \mathcal{T}_{h}} S^{E}(\textbf{u}_{h} - \boldsymbol{\Pi}_{k,E}\textbf{u}_{h}, \textbf{I}_{k,E}\boldsymbol{\zeta} - \boldsymbol{\Pi}_{k,E}\textbf{I}_{k,E}\boldsymbol{\zeta})} \\
+ \displaystyle{\sum_{E \in \mathcal{T}_{h}} a^{E}(\boldsymbol{\Pi}_{k,E}\textbf{u}_{h} - \textbf{u}_{h},\textbf{I}_{k,h}\boldsymbol{\zeta} - \boldsymbol{\Pi}_{k,E}\textbf{I}_{k,E}\boldsymbol{\zeta})}.
\end{multline}
Now, from item $\text{ii)}$ of  Lemma~\ref{lema41} we have 
\begin{equation}\label{stima21}
\left\lvert F(\textbf{I}_{k,h}\boldsymbol{\zeta}) - F_{h}(\textbf{I}_{k,h}\boldsymbol{\zeta})\right\lvert \lesssim \max\{\lambda_{S}\mu_{S}^{-1},1\}^{2}h^{\ell + 1}|\textbf{u}|_{\ell + 1,\Omega}|\boldsymbol{\zeta}|_{2,\Omega}.
\end{equation}
On the other hand, applying Cauchy Schwarz inequality, and Lemmas \ref{lema42} and \ref{lema43}, we obtain
\begin{equation}\label{stima22}
\displaystyle{\sum_{E \in \mathcal{T}_{h}} S^{E}(\textbf{u}_{h} - \boldsymbol{\Pi}_{k,E}\textbf{u}_{h}, \textbf{I}_{k,E}\boldsymbol{\zeta} - \boldsymbol{\Pi}_{k,E}\textbf{I}_{k,E}\boldsymbol{\zeta})} \lesssim C_{6}(\lambda_{S},\mu_{S})h^{\ell + 1}|\textbf{u}|_{\ell + 1,\Omega}|\boldsymbol{\zeta}|_{2,\Omega},
% \leq \left(\displaystyle{\sum_{E \in \mathcal{T}_{h}} S^{E}(\textbf{u}_{h} - \boldsymbol{\Pi}_{k,E}\textbf{u}_{h},\textbf{u}_{h} - \boldsymbol{\Pi}_{k,E}\textbf{u}_{h})}\right)^{1/2} \\
%\times \left(\displaystyle{\sum_{E \in \mathcal{T}_{h}} S^{E}(\textbf{I}_{k,E}\boldsymbol{\zeta} - \boldsymbol{\Pi}_{k,E}\textbf{I}_{k,E}\boldsymbol{\zeta},\textbf{u}_{h} - \boldsymbol{\Pi}_{k,E}\textbf{u}_{h})}\right)^{1/2} \\
\end{equation}
\noindent with $C_{6}(\lambda_{S},\mu_{S}) := \max\{\lambda_{S}\mu_{S}^{-1},1\}\sqrt{\mathcal{C}(\lambda_{S},\mu_{S})}$. Finally, invoking the Cauchy Schwarz inequality, Theorems \ref{teorema41} and \ref{teorema42}, and part $\text{ii)}$ of Lemma \ref{lema315} yields to
\begin{multline}\label{stima23}
\displaystyle{\sum_{E \in \mathcal{T}_{h}} a^{E}(\boldsymbol{\Pi}_{k,E}\textbf{u}_{h} - \textbf{u}_{h},\textbf{I}_{k,h}\boldsymbol{\zeta} - \boldsymbol{\Pi}_{k,E}\textbf{I}_{k,E}\boldsymbol{\zeta})} \\
\lesssim \max\{\lambda_{S}^{2},\mu_{S}^{2}\}\left(|\textbf{u}_{h} - \boldsymbol{\Pi}_{k,h}\textbf{u}_{h}|_{1,h}\right)\left(|\textbf{I}_{k,h}\boldsymbol{\zeta} - \boldsymbol{\Pi}_{k,h}\textbf{I}_{k,h}\boldsymbol{\zeta}|_{1,h}\right) \\
\leq \max\{\lambda_{S}^{2},\mu_{S}^{2}\}\left(|\textbf{u}_{h} - \textbf{u}|_{1,\Omega} + |\textbf{u} - \boldsymbol{\Pi}_{k,h}\textbf{u}_{h}|_{1,h}\right)\left(|\textbf{I}_{k,h}\boldsymbol{\zeta} - \boldsymbol{\zeta}|_{1,\Omega} + |\boldsymbol{\zeta} - \boldsymbol{\Pi}_{k,h}\textbf{I}_{k,h}\boldsymbol{\zeta}|_{1,h}\right) \\
\lesssim C_{7}(\lambda_{S},\mu_{S})h^{\ell + 1}|\textbf{u}|_{\ell + 1,\Omega}|\boldsymbol{\zeta}|_{2,\Omega},
\end{multline}
%\begin{multline}\label{stima23}
%\begin{split}
%\displaystyle{\sum_{E \in \mathcal{T}_{h}} a^{E}(\boldsymbol{\Pi}_{k,E}\textbf{u}_{h} - \textbf{u}_{h},\textbf{I}_{k,h}\boldsymbol{\zeta} - \boldsymbol{\Pi}_{k,E}\textbf{I}_{k,E}\boldsymbol{\zeta})} \leq \left(\displaystyle{\sum_{E \in \mathcal{T}_{h}} a^{E}(\boldsymbol{\Pi}_{k,E}\textbf{u}_{h} - \textbf{u}_{h},\boldsymbol{\Pi}_{k,E}\textbf{u}_{h} - \textbf{u}_{h})}\right)^{1/2} \\
%\times \left(\displaystyle{\sum_{E \in \mathcal{T}_{h}} a^{E}(\textbf{I}_{k,h}\boldsymbol{\zeta} - \boldsymbol{\Pi}_{k,E}\textbf{I}_{k,E}\boldsymbol{\zeta},\textbf{I}_{k,h}\boldsymbol{\zeta} - \boldsymbol{\Pi}_{k,E}\textbf{I}_{k,E}\boldsymbol{\zeta})}\right)^{1/2} \\
%\lesssim \max\{\lambda_{S}^{2},\mu_{S}^{2}\}\left(|\textbf{u}_{h} - \boldsymbol{\Pi}_{k,h}\textbf{u}_{h}|_{1,h}\right)\left(|\textbf{I}_{k,h}\boldsymbol{\zeta} - \boldsymbol{\Pi}_{k,h}\textbf{I}_{k,h}\boldsymbol{\zeta}|_{1,h}\right) \\
%\leq \max\{\lambda_{S}^{2},\mu_{S}^{2}\}\left(|\textbf{u}_{h} - u|_{1,\Omega} + |\textbf{u} - \boldsymbol{\Pi}_{k,h}\textbf{u}_{h}|_{1,h}\right)\left(|\textbf{I}_{k,h}\boldsymbol{\zeta} - \boldsymbol{\zeta}|_{1,\Omega} + |\boldsymbol{\zeta} - \boldsymbol{\Pi}_{k,h}I_{k,h}\boldsymbol{\zeta}|_{1,h}\right) \\
%\lesssim \widetilde{C}_{8}(\lambda_{S},\mu_{S})h^{\ell + 1}|\textbf{u}|_{\ell + 1,\Omega}|\boldsymbol{\zeta}|_{2,\Omega},
%\end{split}
%\end{multline}
where $C_{7}(\lambda_{S},\mu_{S}) := \max\{\lambda_{S}^{3}\mu_{S}^{-1}, \lambda_{S}\mu_{S}, \lambda_{S}^{2}, \mu_{S}^{2}\}\max\{C(\lambda_{S},\mu_{S}),K(\lambda_{S},\mu_{S})\}$. Thus, from \eqref{stima20}, \eqref{stima21}, \eqref{stima22} and \eqref{stima23} allows us to conclude
\begin{equation*}
a(\textbf{u} - \textbf{u}_{h},\textbf{I}_{k,h}\boldsymbol{\zeta}) \lesssim \mathfrak{D}(\lambda_{S},\mu_{S})h^{\ell + 1}|\textbf{u}|_{\ell + 1,\Omega}|\boldsymbol{\zeta}|_{2,\Omega},
\end{equation*}
with $\mathfrak{D}(\lambda_{S},\mu_{S}) := \max\{C_{6}(\lambda_{S},\mu_{S}), C_{7}(\lambda_{S},\mu_{S}), \max\{\lambda_{S}\mu_{S}^{-1},1\}^{2}\}$. This concludes the proof.
\end{proof}

The following result is an adaptation of \cite[Theorem 4.3]{MR3815658} to our case, and gives us an error estimate in $\textbf{L}^{2}(\O)$ norm.

\begin{thm}\label{teorema43}
Assuming that $\textbf{u} \in \textbf{H}^{\ell + 1}(\Omega)$, $1 \leq \ell \leq k$, there holds
\begin{equation*}
\|\textbf{u} - \textbf{u}_{h}\|_{0,\Omega} \lesssim \mathfrak{R}(\lambda_{S},\mu_{S})h^{\ell + 1}|\textbf{u}|_{\ell + 1,\Omega},
\end{equation*}
%where the hidden constant depends on $\rho_{E}$, $k$, $N$, and the domain $\Omega$, and $\mathfrak{R}(\lambda_{S},\mu_{S})$ is a positive constant depending on the Lam\'e coefficients.
where the hidden constant depends on $\O$ and not on $h$, and $\mathfrak{R}(\lambda_{S},\mu_{S})$ is a positive constant depending on the Lam\'e coefficients.
%\begin{equation*}
%\begin{split}
%C_{9}(\lambda_{S},\mu_{S}) &:= C_{\Omega}\max\{C_{8}(\lambda_{S},\mu_{S}), \widetilde{C}_{9}(\lambda_{S},\mu_{S})\}, \\\widetilde{C}_{9}(\lambda_{S},\mu_{S}) &:= \max\{\lambda_{S}^{2}\mu_{S}^{-1},\lambda_{S},\mu_{S}\}K(\lambda_{S},\mu_{S}),
%\end{split}
%\end{equation*}
%with $C_{\Omega} > 0$ depends only on $\Omega$.
\end{thm}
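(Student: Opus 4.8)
The plan is to carry out an Aubin--Nitsche duality argument. First I would introduce the dual problem: given the error $\textbf{u}-\textbf{u}_h\in\textbf{L}^2(\Omega)$, find $\boldsymbol{\varphi}\in\textbf{H}_0^1(\Omega)$ such that
\begin{equation*}
a(\textbf{v},\boldsymbol{\varphi}) = \int_\Omega (\textbf{u}-\textbf{u}_h)\cdot\textbf{v} \quad \forall\,\textbf{v}\in\textbf{H}_0^1(\Omega).
\end{equation*}
Since $a(\cdot,\cdot)$ is symmetric and bounded, this auxiliary problem is of the same type as Problem \ref{continuous} with load $\textbf{u}-\textbf{u}_h\in\textbf{L}^2(\Omega)$; as $\Omega$ is convex, Lemma \ref{reg} then guarantees $\boldsymbol{\varphi}\in\textbf{H}^2(\Omega)$ with $|\boldsymbol{\varphi}|_{2,\Omega}\lesssim\|\textbf{u}-\textbf{u}_h\|_{0,\Omega}$, where any Lam\'e dependence of the regularity constant is absorbed into $\mathfrak{R}$.

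Next, choosing $\textbf{v}=\textbf{u}-\textbf{u}_h$ in the dual problem and inserting the interpolant yields
\begin{equation*}
\|\textbf{u}-\textbf{u}_h\|_{0,\Omega}^2 = a(\textbf{u}-\textbf{u}_h,\boldsymbol{\varphi}-\textbf{I}_{k,h}\boldsymbol{\varphi}) + a(\textbf{u}-\textbf{u}_h,\textbf{I}_{k,h}\boldsymbol{\varphi}).
\end{equation*}
The second term is precisely the quantity controlled by the consistency Lemma \ref{lema44} with $\boldsymbol{\zeta}=\boldsymbol{\varphi}$, delivering a bound of order $\mathfrak{D}(\lambda_S,\mu_S)\,h^{\ell+1}|\textbf{u}|_{\ell+1,\Omega}|\boldsymbol{\varphi}|_{2,\Omega}$. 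For the first term I would use that $a(\textbf{w},\textbf{v})\lesssim\max\{\lambda_S,\mu_S\}|\textbf{w}|_{1,\Omega}|\textbf{v}|_{1,\Omega}$, then bound $|\textbf{u}-\textbf{u}_h|_{1,\Omega}$ via Theorem \ref{teorema42} (of order $K(\lambda_S,\mu_S)h^\ell|\textbf{u}|_{\ell+1,\Omega}$) and $|\boldsymbol{\varphi}-\textbf{I}_{k,h}\boldsymbol{\varphi}|_{1,\Omega}$ via item ii) of Lemma \ref{lema315} with $\ell=1$ (of order $\max\{\lambda_S\mu_S^{-1},1\}\,h|\boldsymbol{\varphi}|_{2,\Omega}$). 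This gives a second contribution again of order $h^{\ell+1}|\textbf{u}|_{\ell+1,\Omega}|\boldsymbol{\varphi}|_{2,\Omega}$, now with a Lam\'e prefactor collecting $\max\{\lambda_S,\mu_S\}$, $K(\lambda_S,\mu_S)$ and $\max\{\lambda_S\mu_S^{-1},1\}$.

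Combining the two contributions, I obtain
\begin{equation*}
\|\textbf{u}-\textbf{u}_h\|_{0,\Omega}^2 \lesssim \mathfrak{R}(\lambda_S,\mu_S)\,h^{\ell+1}|\textbf{u}|_{\ell+1,\Omega}\,|\boldsymbol{\varphi}|_{2,\Omega},
\end{equation*}
and inserting the regularity estimate $|\boldsymbol{\varphi}|_{2,\Omega}\lesssim\|\textbf{u}-\textbf{u}_h\|_{0,\Omega}$ and dividing by $\|\textbf{u}-\textbf{u}_h\|_{0,\Omega}$ closes the argument, with $\mathfrak{R}(\lambda_S,\mu_S)$ assembled from $\mathfrak{D}$, $K$, and the continuity/interpolation prefactors above.

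The main obstacle is not the duality mechanism, which is standard, but ensuring the genuine gain of one power of $h$ (from $h^\ell$ to $h^{\ell+1}$). That gain rests entirely on the consistency Lemma \ref{lema44} --- in particular on item ii) of Lemma \ref{lema41}, which supplies the improved $h^{\ell+1}$ control of $F-F_h$ against the interpolated dual solution --- together with the fact that $|\boldsymbol{\varphi}-\textbf{I}_{k,h}\boldsymbol{\varphi}|_{1,\Omega}=O(h)$. Beyond securing this extra order, the only real care needed is the bookkeeping of how $\max\{\lambda_S,\mu_S\}$, $\max\{\lambda_S\mu_S^{-1},1\}$, $K(\lambda_S,\mu_S)$ and $\mathfrak{D}(\lambda_S,\mu_S)$ multiply, all of which are packaged into $\mathfrak{R}(\lambda_S,\mu_S)$ and signal the expected deterioration as $\nu\to 1/2$.
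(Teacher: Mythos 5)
Your proposal is correct and follows essentially the same route as the paper's own proof: the identical dual problem, the same splitting $a(\textbf{u}-\textbf{u}_h,\boldsymbol{\zeta}-\textbf{I}_{k,h}\boldsymbol{\zeta})+a(\textbf{u}-\textbf{u}_h,\textbf{I}_{k,h}\boldsymbol{\zeta})$, the same treatment of each piece (continuity of $a(\cdot,\cdot)$ with Theorem \ref{teorema42} and item ii) of Lemma \ref{lema315} for the first, Lemma \ref{lema44} for the second), and the same use of the convexity regularity estimate $|\boldsymbol{\zeta}|_{2,\Omega}\lesssim\|\textbf{u}-\textbf{u}_h\|_{0,\Omega}$ to close the argument. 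Your bookkeeping of the Lam\'e-dependent prefactors also matches the paper's assembly of $\mathfrak{R}(\lambda_S,\mu_S)$ from $\mathfrak{D}(\lambda_S,\mu_S)$ and the continuity/interpolation constants.
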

\begin{proof}
Let $\boldsymbol{\zeta} \in \textbf{H}_{0}^{1}(\Omega)$ the solution of
\begin{equation*}
a(\textbf{v}, \boldsymbol{\zeta}) = \displaystyle{\int_{\Omega} \textbf{v}(\textbf{u} - \textbf{u}_{h})} \quad \forall \textbf{v} \in \textbf{H}_{0}^{1}(\Omega).
\end{equation*}
Then, we have
\begin{equation*}
\|\textbf{u} - \textbf{u}_{h}\|_{0,\Omega}^{2} = a(\textbf{u} - \textbf{u}_{h}, \boldsymbol{\zeta}) = a(\textbf{u} - \textbf{u}_{h}, \boldsymbol{\zeta} - \textbf{I}_{k,h}\boldsymbol{\zeta}) + a(\textbf{u} - \textbf{u}_{h}, \textbf{I}_{k,h}\boldsymbol{\zeta}).
\end{equation*}
Since we are assuming that $\O$ is convex, and invoking Lemma \ref{reg}, we have the following estimate for the $\mathbf{H}^2$ seminorm
\begin{equation}\label{convex}
|\boldsymbol{\zeta}|_{2,\Omega} \leq \|\boldsymbol{\zeta}\|_{2,\Omega} \lesssim C_{\Omega}\|\textbf{u} - \textbf{u}_{h}\|_{0,\Omega}.
\end{equation}
Then, from the continuity of  $a(\cdot, \cdot)$, part $\text{ii)}$ of Lemma \ref{lema315}, and  Theorem \ref{teorema42} we obtain 
\begin{multline*}
a(\textbf{u} - \textbf{u}_{h}, \boldsymbol{\zeta} - \textbf{I}_{k,h}\boldsymbol{\zeta}) \lesssim \max\{\lambda_{S},\mu_{S}\}|\textbf{u} - \textbf{u}_{h}|_{1,\Omega}|\boldsymbol{\zeta} - \textbf{I}_{k,h}\boldsymbol{\zeta}|_{1,\Omega}\\
 \lesssim C_{8}(\lambda_{S},\mu_{S})h^{\ell + 1}|\textbf{u}|_{\ell + 1,\Omega}|\boldsymbol{\zeta}|_{2,\Omega},
\end{multline*}
where  $C_{8}(\lambda_{S},\mu_{S}) := \max\{\lambda_{S}^{2}\mu_{S}^{-1},\lambda_{S},\mu_{S}\}K(\lambda_{S},\mu_{S})$.
%Por otro lado, del Lema \ref{lema44} se tiene
%\begin{equation*}
%a(\textbf{u} - \textbf{u}_{h}, \textbf{I}_{k,h}\boldsymbol{\zeta}) \lesssim C_{8}(\lambda_{S},\mu_{S})h^{\ell + 1}|\textbf{u}|_{\ell + 1,\Omega}|\boldsymbol{\zeta}|_{2,\Omega},
%\end{equation*}
Therefore, from \eqref{convex} and Lemma \ref{lema44} we obtain
\begin{equation*}
\begin{split}
\|\textbf{u} - \textbf{u}_{h}\|_{0,\Omega}^{2} &\lesssim  
%a(\textbf{u} - \textbf{u}_{h}, \boldsymbol{\zeta} - \textbf{I}_{k,h}\boldsymbol{\zeta}) + a(\textbf{u} - \textbf{u}_{h}, \textbf{I}_{k,h}\boldsymbol{\zeta}) \\ 
\max\{C_{8}(\lambda_{S},\mu_{S}), \mathfrak{D}(\lambda_{S},\mu_{S})\}h^{\ell + 1}|\textbf{u}|_{\ell + 1,\Omega}|\boldsymbol{\zeta}|_{2,\Omega} \\
&\lesssim \mathfrak{R}(\lambda_{S},\mu_{S})h^{\ell + 1}|\textbf{u}|_{\ell + 1,\Omega}\|\textbf{u} - \textbf{u}_{h}\|_{0,\Omega},
\end{split}
\end{equation*}
where $\mathfrak{R}(\lambda_{S},\mu_{S}) := C_{\Omega}\max\{\mathfrak{D}(\lambda_{S},\mu_{S}), C_{8}(\lambda_{S},\mu_{S})\}$. This concludes the proof.
\end{proof}
%Por \'ultimo, el siguiente teorema es una estimaci\'on del error en la norma $\|\cdot\|_{0,\Omega}$ para $\boldsymbol{\Pi}_{k,h}^{0}\textbf{u}_{h}$ y $\boldsymbol{\Pi}_{k,h}\textbf{u}_{h}$.
Finally, we conclude this section with the following result, that gives us an error estimate for $\boldsymbol{\Pi}_{k,h}^{0}\textbf{u}_{h}$ and $\boldsymbol{\Pi}_{k,h}\textbf{u}_{h}$ in $\|\cdot\|_{0,\Omega}$ norm. This result is adapted from \cite[Theorem 4.4]{MR3815658} to our case.
\begin{thm}\label{teorema44}
Assuming that $\textbf{u} \in \textbf{H}^{\ell + 1}(\Omega)$, $1 \leq \ell \leq k$, there holds
\begin{equation*}
\|\textbf{u} - \boldsymbol{\Pi}_{k,h}^{0}\textbf{u}_{h}\|_{0,\Omega} + \|\textbf{u} - \boldsymbol{\Pi}_{k,h}\textbf{u}_{h}\|_{0,\Omega} \lesssim \mathfrak{C}(\lambda_{S},\mu_{S})h^{\ell + 1}|\textbf{u}|_{\ell + 1,\Omega},
\end{equation*}
where $\mathfrak{C}(\lambda_{S},\mu_{S})$ is a positive constant depending on the Lam\'e coefficients. 
%\begin{equation*}
%\begin{split}
%C_{14}(\lambda_{S},\mu_{S}) &:= C_{10}(\lambda_{S},\mu_{S}) + C_{13}(\lambda_{S},\mu_{S}), \quad C_{10}(\lambda_{S},\mu_{S}) := \max\{C_{9}(\lambda_{S},\mu_{S}),1\}, \\
%C_{13}(\lambda_{S},\mu_{S}) &:= \max\{\lambda_{S},\mu_{S}\} + C_{12}(\lambda_{S},\mu_{S}), \quad C_{12}(\lambda_{S},\mu_{S}) := \max\{\lambda_{S}\mu_{S}^{-1},1\}(C_{11}(\lambda_{S},\mu_{S}) + \max\{\lambda_{S}^{2}\mu_{S}^{-2},1\}), \\
%C_{11}(\lambda_{S},\mu_{S}) &:= C_{9}(\lambda_{S},\mu_{S}) + K(\lambda_{S},\mu_{S}).
%\end{split}
%\end{equation*}
\end{thm}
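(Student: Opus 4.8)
The plan is to treat the two contributions separately, in each case inserting the corresponding projection of the exact solution $\textbf{u}$ as an intermediate quantity and then invoking the $\textbf{H}^1$- and $\textbf{L}^2$-error estimates already proved in Theorems \ref{teorema42} and \ref{teorema43}.

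For the $\textbf{L}^2$-projection term I would first write, by the triangle inequality,
\begin{equation*}
\|\textbf{u} - \boldsymbol{\Pi}_{k,h}^{0}\textbf{u}_{h}\|_{0,\Omega} \leq \|\textbf{u} - \boldsymbol{\Pi}_{k,h}^{0}\textbf{u}\|_{0,\Omega} + \|\boldsymbol{\Pi}_{k,h}^{0}(\textbf{u} - \textbf{u}_{h})\|_{0,\Omega}.
\end{equation*}
The first summand is controlled by the polynomial approximation estimate \eqref{stimaproye2}, contributing the optimal order $h^{\ell+1}|\textbf{u}|_{\ell+1,\Omega}$ with no dependence on the Lam\'e constants. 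For the second summand I would use the $\textbf{L}^2$-stability \eqref{stimaproye1} of $\boldsymbol{\Pi}_{k,h}^{0}$ to bound it by $\|\textbf{u}-\textbf{u}_h\|_{0,\Omega}$, and then apply Theorem \ref{teorema43}, which already delivers order $h^{\ell+1}$ together with the constant $\mathfrak{R}(\lambda_S,\mu_S)$. Since both pieces are of order $h^{\ell+1}$, this term is settled.

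For the elliptic-projection term the same splitting gives
\begin{equation*}
\|\textbf{u} - \boldsymbol{\Pi}_{k,h}\textbf{u}_{h}\|_{0,\Omega} \leq \|\textbf{u} - \boldsymbol{\Pi}_{k,h}\textbf{u}\|_{0,\Omega} + \|\boldsymbol{\Pi}_{k,h}(\textbf{u} - \textbf{u}_{h})\|_{0,\Omega},
\end{equation*}
where the first term is handled by item i) of Lemma \ref{lema35} (summed over the elements), contributing $\max\{\lambda_S\mu_S^{-1},1\}h^{\ell+1}|\textbf{u}|_{\ell+1,\Omega}$. The delicate point, which I expect to be the main obstacle, is the second term: unlike $\boldsymbol{\Pi}_{k,h}^{0}$, the energy projector $\boldsymbol{\Pi}_{k,h}$ is \emph{not} $\textbf{L}^2$-stable, so it cannot simply be bounded by $\|\textbf{u}-\textbf{u}_h\|_{0,\Omega}$. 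Instead I would work element by element and apply the Poincar\'e--Friedrich inequality \eqref{FP} to $\boldsymbol{\Pi}_{k,E}(\textbf{u}-\textbf{u}_h)$. Using that the elliptic projector preserves the boundary mean, $\int_{\partial E}\boldsymbol{\Pi}_{k,E}(\textbf{u}-\textbf{u}_h) = \int_{\partial E}(\textbf{u}-\textbf{u}_h)$, together with the trace inequality \eqref{traceineq0} and the $\textbf{H}^1$-stability \eqref{cota1} of $\boldsymbol{\Pi}_{k,E}$, I would reduce the $\textbf{L}^2$ norm of the projection to a combination of $\|\textbf{u}-\textbf{u}_h\|_{0,E}$ and $h_E\max\{\lambda_S\mu_S^{-1},1\}|\textbf{u}-\textbf{u}_h|_{1,E}$; summing over $E\in\mathcal{T}_h$ then yields
\begin{equation*}
\|\boldsymbol{\Pi}_{k,h}(\textbf{u}-\textbf{u}_h)\|_{0,\Omega} \lesssim \|\textbf{u}-\textbf{u}_h\|_{0,\Omega} + h\max\{\lambda_S\mu_S^{-1},1\}|\textbf{u}-\textbf{u}_h|_{1,\Omega}.
\end{equation*}

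Finally I would close the argument by feeding in Theorem \ref{teorema43} for the $\textbf{L}^2$ error and Theorem \ref{teorema42} for the $\textbf{H}^1$ error; since the latter carries an extra factor $h^{\ell}$ and is here multiplied by $h$, both resulting terms are of the optimal order $h^{\ell+1}|\textbf{u}|_{\ell+1,\Omega}$, with a prefactor that is a product of $\max\{\lambda_S\mu_S^{-1},1\}$ with the constants $K(\lambda_S,\mu_S)$ and $\mathfrak{R}(\lambda_S,\mu_S)$. Collecting all the prefactors into a single constant $\mathfrak{C}(\lambda_S,\mu_S)$ gives the stated estimate. The only genuine subtlety is the absence of $\textbf{L}^2$-stability for $\boldsymbol{\Pi}_{k,h}$, resolved through the Poincar\'e--Friedrich/trace route above; everything else is a routine assembly of previously proven bounds.
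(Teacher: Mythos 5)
Your argument is correct, and its first half is exactly the paper's: the same triangle inequality around $\boldsymbol{\Pi}_{k,h}^{0}\textbf{u}$, with \eqref{stimaproye2} for the projection error and the $\textbf{L}^{2}$-stability \eqref{stimaproye1} plus Theorem \ref{teorema43} for the remainder. Where you genuinely diverge is the elliptic-projection term. The paper does not insert $\boldsymbol{\Pi}_{k,h}\textbf{u}$; it inserts $\boldsymbol{\Pi}_{k,h}\textbf{I}_{k,h}\textbf{u}$, bounds $\|\textbf{u}-\boldsymbol{\Pi}_{k,h}\textbf{I}_{k,h}\textbf{u}\|_{0,\Omega}$ by item i) of Lemma \ref{lema315}, and controls $\|\boldsymbol{\Pi}_{k,h}(\textbf{I}_{k,h}\textbf{u}-\textbf{u}_{h})\|_{0,\Omega}$ through Lemma \ref{lema34} together with the bound $\vertiii{\boldsymbol{\zeta}}_{k,E}\lesssim \|\boldsymbol{\zeta}\|_{0,E}+h_{E}|\boldsymbol{\zeta}|_{1,E}$, which reduces everything to interpolation errors (estimate \eqref{stima27}) and the errors of Theorems \ref{teorema42} and \ref{teorema43}. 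You instead insert $\boldsymbol{\Pi}_{k,h}\textbf{u}$, use Lemma \ref{lema35} i) for the first piece, and rebuild the needed stability by hand from \eqref{FP}, the boundary-mean preservation in the definition of $\boldsymbol{\Pi}_{k,E}$, the trace inequality \eqref{traceineq0}, and \eqref{cota1}. Your route is leaner---it bypasses the interpolant entirely for this term---and it closes with the same two theorems, so the final constant is admissible even though it differs in form from the paper's $\mathfrak{C}(\lambda_{S},\mu_{S})$. Two observations, though. The ``genuine subtlety'' you flag is already packaged in the paper: Lemma \ref{lema34} is stated for every $\boldsymbol{\zeta}\in\textbf{H}^{1}(E)$, not only for virtual functions, so combined with $\vertiii{\boldsymbol{\zeta}}_{k,E}\lesssim\|\boldsymbol{\zeta}\|_{0,E}+h_{E}|\boldsymbol{\zeta}|_{1,E}$ it yields precisely the weighted stability $\|\boldsymbol{\Pi}_{k,h}(\textbf{u}-\textbf{u}_{h})\|_{0,\Omega}\lesssim\max\{\lambda_{S}\mu_{S}^{-1},1\}\left(\|\textbf{u}-\textbf{u}_{h}\|_{0,\Omega}+h|\textbf{u}-\textbf{u}_{h}|_{1,\Omega}\right)$ that you re-derive; your Poincar\'e--Friedrich argument is essentially a replay of that lemma's proof. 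Also, \eqref{cota1} is stated only for $\textbf{v}_{h}\in\boldsymbol{\mathcal{V}}_{h}^{E}$; it does extend to all of $\textbf{H}^{1}(E)$ (the paper itself applies it this way inside the proof of Lemma \ref{lema35}), but you should note this explicitly when applying it to $\textbf{u}-\textbf{u}_{h}$.
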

\begin{proof}
First, using triangular inequality, \eqref{stimaproye2} and Theorem \ref{teorema43}, we obtain
\begin{multline}\label{stima24}
%\begin{split}
\|\textbf{u} - \boldsymbol{\Pi}_{k,h}^{0}\textbf{u}_{h}\|_{0,\Omega} \leq
% \|\textbf{u} - \boldsymbol{\Pi}_{k,h}^{0}\textbf{u}\|_{0,\Omega} + \|\boldsymbol{\Pi}_{k,h}^{0}(\textbf{u} - \textbf{u}_{h})\|_{0,\Omega} \\
 \|\textbf{u} - \boldsymbol{\Pi}_{k,h}^{0}\textbf{u}\|_{0,\Omega} + \|\textbf{u} - \textbf{u}_{h}\|_{0,\Omega} \\
\lesssim \max\{\mathfrak{R}(\lambda_{S},\mu_{S}),1\}h^{\ell + 1}|\textbf{u}|_{\ell + 1,\Omega}.
%\end{split}
\end{multline}
%with $C_{10}(\lambda_{S},\mu_{S}) := \max\{C_{9}(\lambda_{S},\mu_{S}),1\}$. 
On the other hand, applying triangular inequality again, we have
\begin{equation*}\label{stima25}
\|\textbf{u} - \boldsymbol{\Pi}_{k,h}\textbf{u}_{h}\|_{0,\Omega} \leq \|\textbf{u} - \boldsymbol{\Pi}_{k,h}\textbf{I}_{k,h}\textbf{u}\|_{0,\Omega} + \|\boldsymbol{\Pi}_{k,h}(\textbf{I}_{k,h}\textbf{u} - \textbf{u}_{h})\|_{0,\Omega}.
\end{equation*}
Hence, from Lemma \ref{lema34}, \eqref{triplenorm} and \eqref{traceineq0}, we deduce that
\begin{multline*}
\|\boldsymbol{\Pi}_{k,h}(\textbf{I}_{k,h}\textbf{u} - \textbf{u}_{h})\|_{0,\Omega}^{2} \lesssim \max\{\lambda_{S}\mu_{S}^{-1},1\}^{2}\displaystyle{\sum_{E \in \mathcal{T}_{h}} \vertiii{\textbf{I}_{k,E}\textbf{u} - \textbf{u}_{h}}_{k,E}^{2}} \\
%\lesssim \max\{\lambda_{S}\mu_{S}^{-1},1\}^{2}\displaystyle{\sum_{E \in \mathcal{T}_{h}} \left(h_{E}\|\textbf{I}_{k,E}\textbf{u} - \textbf{u}_{h}\|_{0,\partial E}^{2} + \|\boldsymbol{\Pi}_{k-2,E}^{0}(\textbf{I}_{k,E}\textbf{u} - \textbf{u}_{h})\|_{0,E}^{2}\right)} \\
%%&\lesssim \max\{\lambda_{S}\mu_{S}^{-1},1\}^{2}\displaystyle{\sum_{E \in \mathcal{T}_{h}} \|\textbf{I}_{k,E}\textbf{u} - \textbf{u}_{h}\|_{0,E}^{2} + h_{E}^{2}|\textbf{I}_{k,E}\textbf{u} - \textbf{u}_{h}|_{1,E}^{2}} \\
%\lesssim \max\{\lambda_{S}\mu_{S}^{-1},1\}^{2}\left(\|\textbf{I}_{k,E}\textbf{u} - \textbf{u}_{h}\|_{0,\Omega}^{2} + h^{2}|\textbf{I}_{k,E}\textbf{u} - \textbf{u}_{h}|_{1,\Omega}\right) \\
\lesssim \max\{\lambda_{S}\mu_{S}^{-1},1\}^{2}\left(\|\textbf{I}_{k,h}\textbf{u} - \textbf{u}\|_{0,\Omega}^{2}\right.\\
\left. + \|\textbf{u} - \textbf{u}_{h}\|_{0,\Omega}^{2} + h^{2}|\textbf{I}_{k,h}\textbf{u} - \textbf{u}|_{1,\Omega}^{2} + h^{2}|\textbf{u} - \textbf{u}_{h}|_{1,\Omega}\right).
\end{multline*}
Therefore, we obtain the estimate
\begin{multline}\label{stima26}
\|\boldsymbol{\Pi}_{k,h}(\textbf{I}_{k,h}\textbf{u} - \textbf{u}_{h})\|_{0,\Omega} \lesssim \max\{\lambda_{S}\mu_{S}^{-1},1\}\left(\|\textbf{u} - \textbf{u}_{h}\|_{0,\Omega} \right.\\
\left.+ \|\textbf{u} - \textbf{I}_{k,h}\textbf{u}\|_{0,\Omega} + h|\textbf{u} - \textbf{I}_{k,h}\textbf{u}|_{1,\Omega} + h|\textbf{u} - \textbf{u}_{h}|_{1,\Omega}\right).
\end{multline}
Now, from Lemma  \ref{lema315} we obtain
\begin{equation}\label{stima27}
\|\textbf{u} - \textbf{I}_{k,h}\textbf{u}\|_{0,\Omega} + h|\textbf{u} - \textbf{I}_{k,h}\textbf{u}|_{1,\Omega} \lesssim \max\{\lambda_{S}^{2}\mu_{S}^{-2},1\}h^{\ell + 1}|\textbf{u}|_{\ell + 1,\Omega},
\end{equation}
and by Theorems \ref{teorema42} and \ref{teorema43}, there holds
\begin{equation}\label{stima28}
\|\textbf{u} - \textbf{u}_{h}\|_{0,\Omega} + h|\textbf{u} - \textbf{u}_{h}|_{1,\Omega} \lesssim C_{9}(\lambda_{S},\mu_{S})h^{\ell + 1}|\textbf{u}|_{\ell + 1,\Omega}.
\end{equation}
with $C_{9}(\lambda_{S},\mu_{S}) := \max\{\mathfrak{R}(\lambda_{S},\mu_{S}),K(\lambda_{S},\mu_{S})\}$. Hence, from \eqref{stima26}, \eqref{stima27} and \eqref{stima28} we conclude that
\begin{equation}\label{stima29}
\|\boldsymbol{\Pi}_{k,h}(\textbf{I}_{k,h}\textbf{u} - \textbf{u}_{h})\|_{0,\Omega} \lesssim \widetilde{C}_{9}(\lambda_{S},\mu_{S})h^{\ell + 1}|\textbf{u}|_{\ell + 1,\Omega},
\end{equation}
where $\widetilde{C}_{9}(\lambda_{S},\mu_{S}) := \max\{\lambda_{S}\mu_{S}^{-1},1\}\max\{C_{9}(\lambda_{S},\mu_{S}),\max\{\lambda_{S}^{2}\mu_{S}^{-2},1\}\}$. Finally, from $\text{i)}$ of Lemma \ref{lema315}, we derive
\begin{equation}\label{stima30}
\|\textbf{u} - \boldsymbol{\Pi}_{k,h}\textbf{I}_{k,h}\textbf{u}\|_{0,\Omega} \lesssim \max\{\lambda_{S}^{2}\mu_{S}^{-2},1\}h^{\ell + 1}|\textbf{u}|_{\ell + 1,\Omega}.
\end{equation}
Therefore, from \eqref{stima29} and \eqref{stima30} we obtain 
\begin{equation}\label{stima31}
\|\textbf{u} - \boldsymbol{\Pi}_{k,h}\textbf{u}_{h}\|_{0,\Omega} \lesssim C_{10}(\lambda_{S},\mu_{S})h^{\ell + 1}|\textbf{u}|_{\ell + 1,\Omega},
\end{equation}
with $C_{10}(\lambda_{S},\mu_{S}) := \max\{\widetilde{C}_{9}(\lambda_{S},\mu_{S}),\max\{\lambda_{S}^{2}\mu_{S}^{-2},1\}\}$. Hence, we conclude from \eqref{stima24} and \eqref{stima31} that
\begin{equation*}
\|\textbf{u} - \boldsymbol{\Pi}_{k,h}^{0}\textbf{u}_{h}\|_{0,\Omega} + \|\textbf{u} - \boldsymbol{\Pi}_{k,h}\textbf{u}_{h}\|_{0,\Omega} \lesssim  \mathfrak{C}(\lambda_{S},\mu_{S})h^{\ell + 1}|\textbf{u}|_{\ell + 1,\Omega},
\end{equation*}
where $ \mathfrak{C}(\lambda_{S},\mu_{S}) := \max\{C_{10}(\lambda_{S},\mu_{S}),\max\{\mathfrak{R}(\lambda_{S},\mu_{S}),1\}\}$. This concludes the proof.
\end{proof}

\section{Numerical experiments}
\label{sec:numerics}
In the following section we report a series of numerical experiments in which 
we illustrate the performance of the proposed numerical method. The results have been obtained with a MATLAB 
code. 

To make matters precise, we are interested in the computation of experimental rates of convergence for the load problem, where we measure the error of  approximation in the $\mathbf{L}^2$ and $\mathbf{H}^1$ norms. Moreover, as we have claimed through our study, the Poisson ratio $\nu$ is not allowed to be equal to $1/2$, since the Lam\'e coefficient $\lambda_S$ tends to infinity. Then, we will focus on values of $\nu$ far enough from $1/2$, in order to obtain the desire results, and avoid the locking phenomenon. Let us remark that in all our numerical tests, we have taken as material density $\varrho=1$ and Young modulus $E=1$.

\subsection{The unit square}
We begin with a simple convex domain as the unit square $\O:=(0,1)^2$. The boundary condition for the forthcoming  tests is $\bu=\boldsymbol{0}$ on the whole boundary $\partial\Omega$. This configuration for the domain leads to solutions that are smooth enough on $\O$, except for values of $\nu$, too close to $1/2$.

The polygonal meshes that we will consider for our tests are the following: 
\begin{itemize}
\item[$\bullet$] $\mathcal{T}_{h}^{1}$: Triangles with small edges.
\item[$\bullet$] $\mathcal{T}_{h}^{2}$: Deformed triangles with middle points.
\item[$\bullet$] $\mathcal{T}_{h}^{3}$: Deformed squares. 
\item[$\bullet$] $\mathcal{T}_{h}^{4}$: Voronoi,
\end{itemize}
and in Figure \ref{fig:meshes} we present examples of these meshes.
\begin{figure}[H]
	\begin{center}
%		\begin{minipage}{11cm}
%			\centering\includegraphics[height=5cm, width=6cm]{}
			\centering\includegraphics[height=5cm, width=6cm]{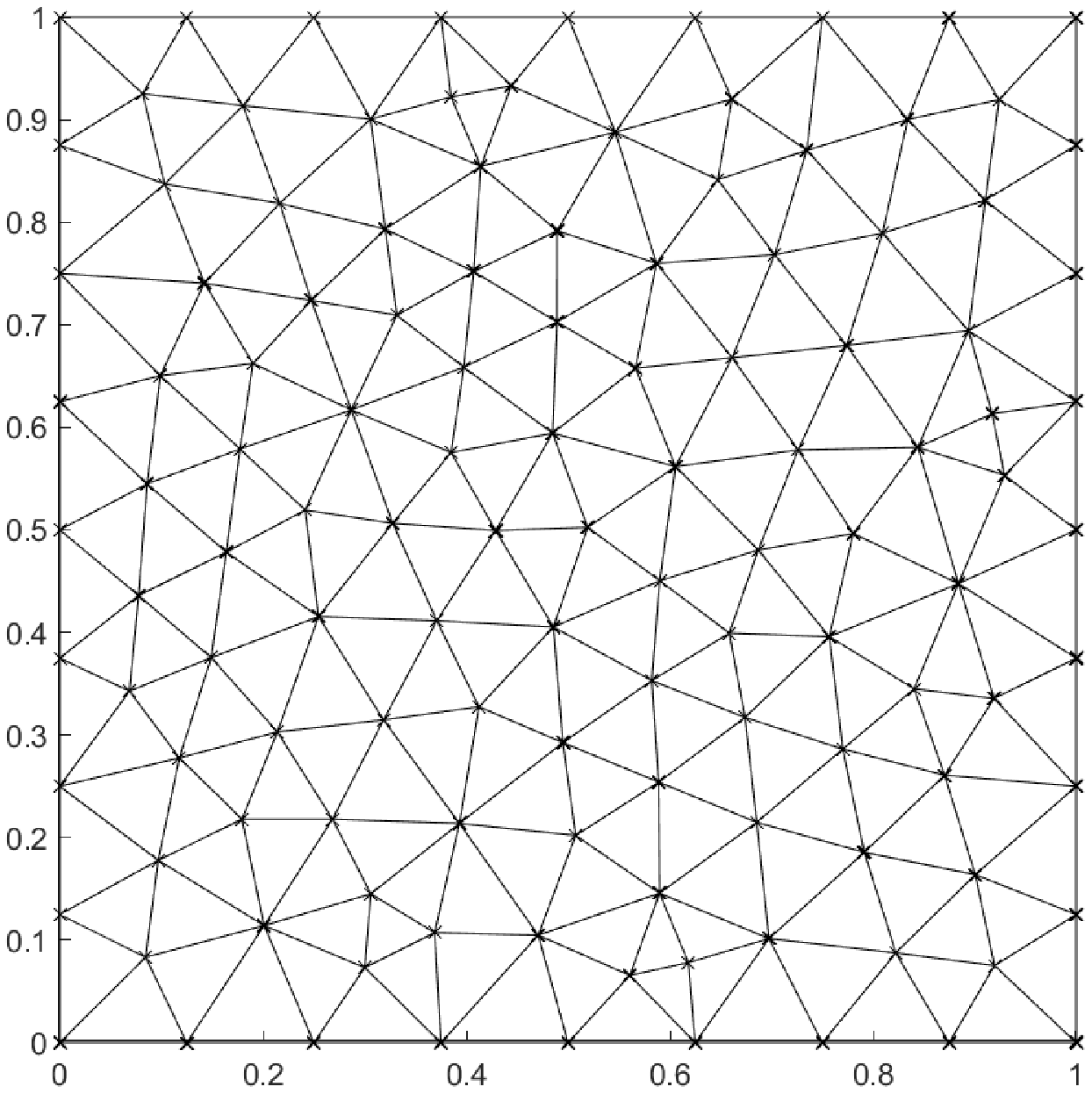}
						\centering\includegraphics[height=5cm, width=6cm]{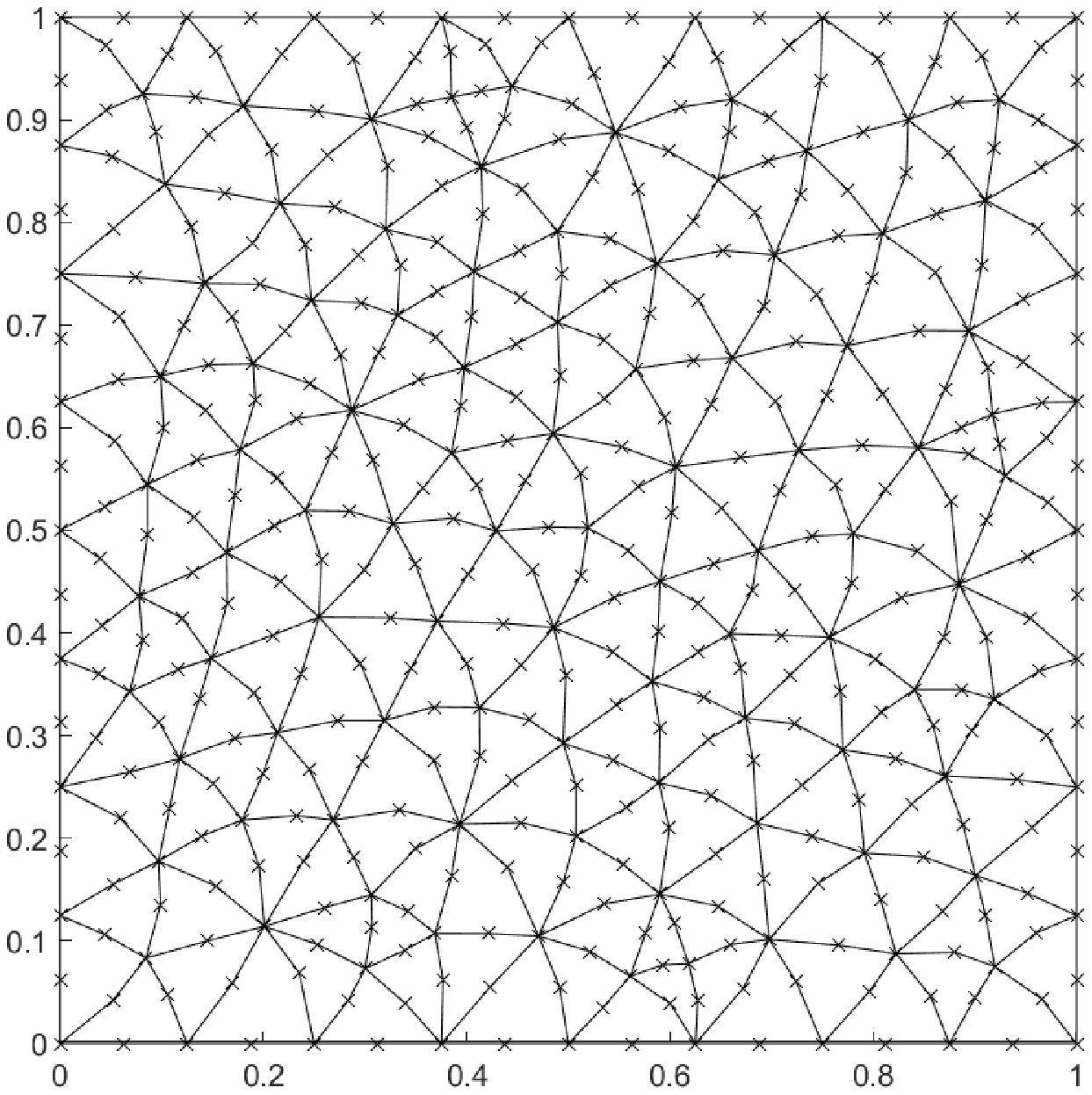}\\
                          \centering\includegraphics[height=5cm, width=6cm]{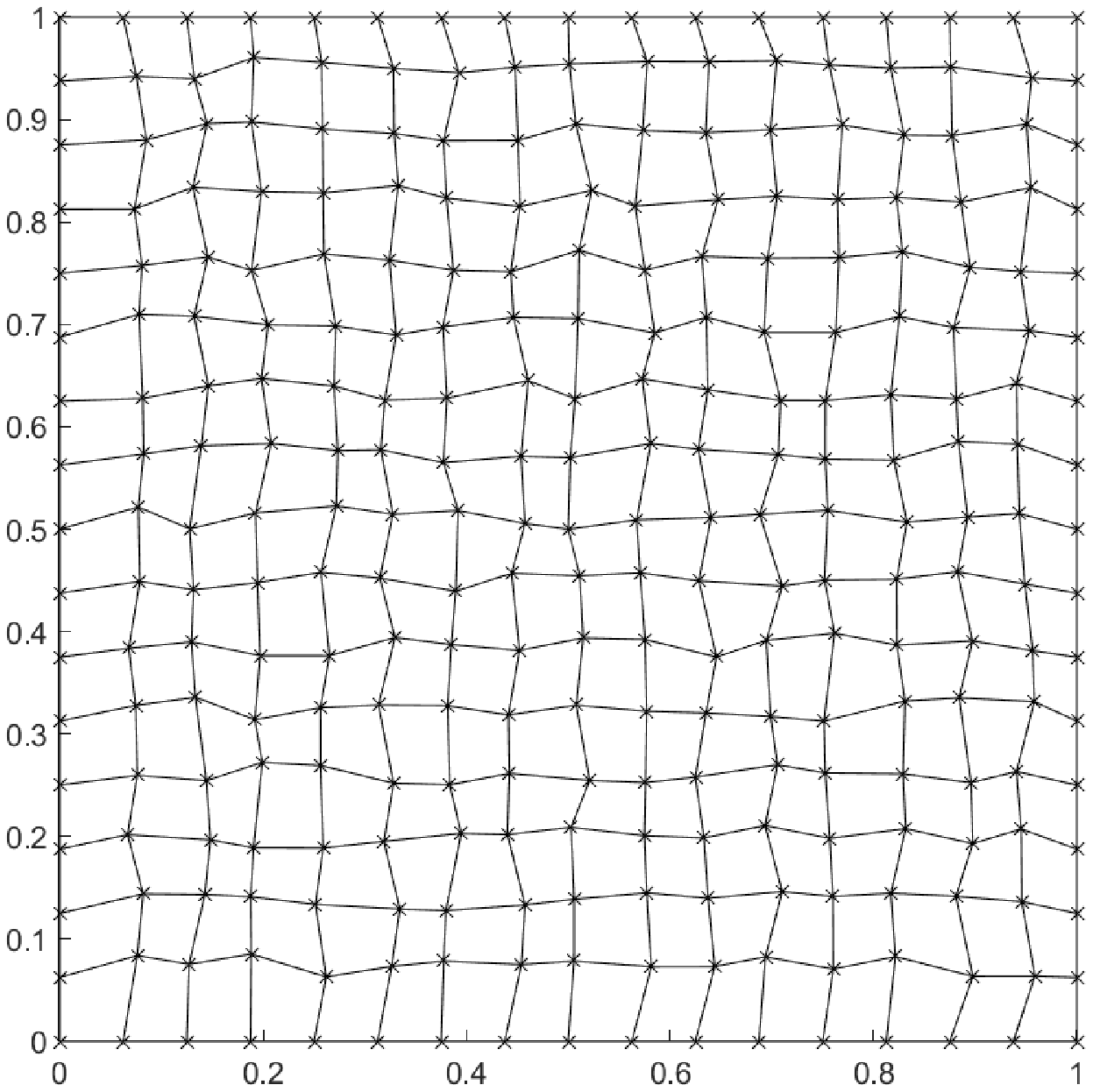}
                          \centering\includegraphics[height=5cm, width=6cm]{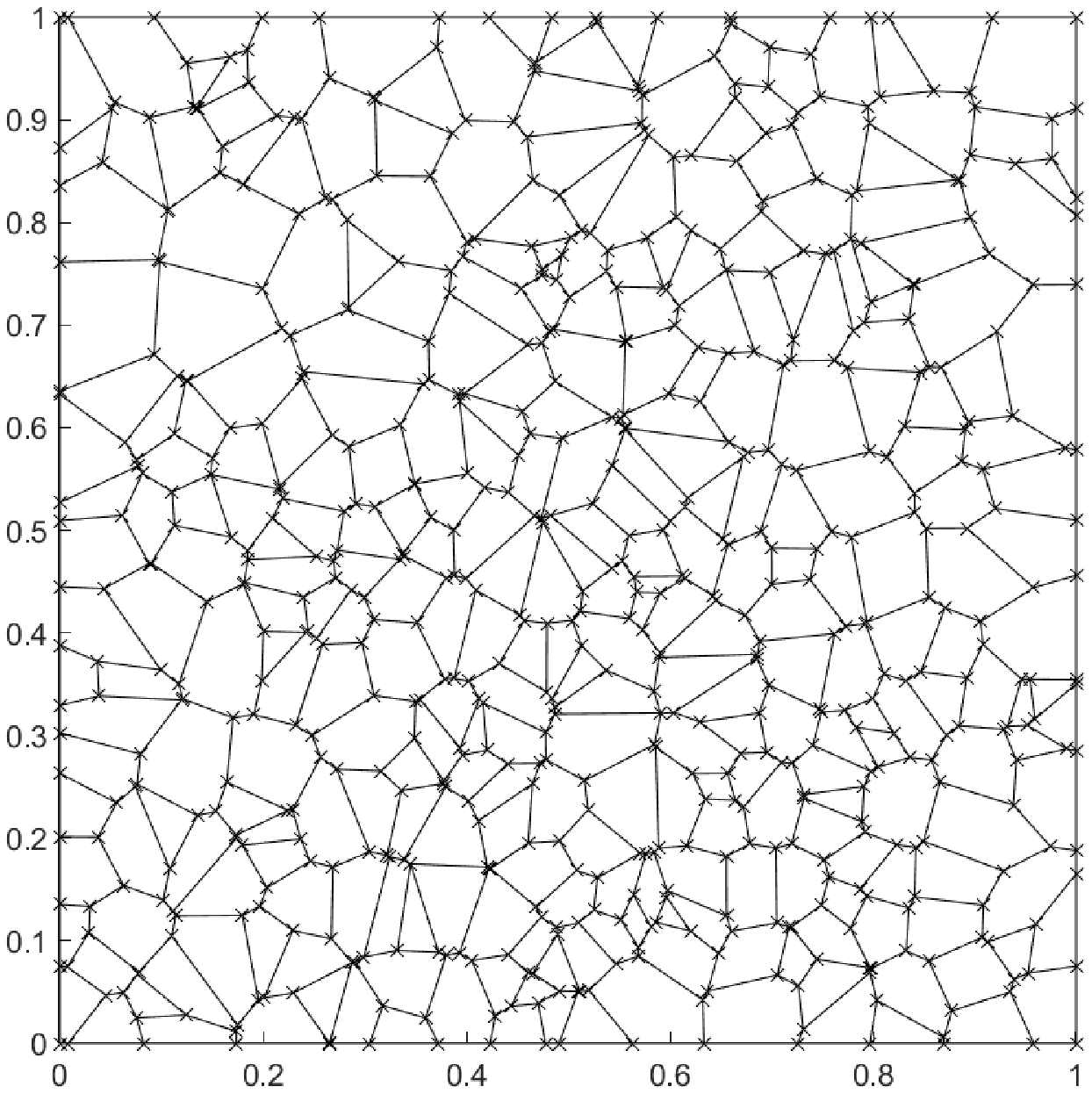}
%			\centering\includegraphics[height=5cm, width=6cm]{}\\
%                          \centering\includegraphics[height=5cm, width=6cm]{}
%                          \centering\includegraphics[height=5cm, width=6cm]{}
%                   \end{minipage}
		\caption{Example of the meshes for our experimentations. Top left: $\mathcal{T}_{h}^{1}$, top right $\mathcal{T}_{h}^{2}$, bottom left $\mathcal{T}_{h}^{3}$ and bottom right $\mathcal{T}_{h}^{4}$.
}
		\label{fig:meshes}
	\end{center}
\end{figure}
Les us mention that $\mathcal{T}_{h}^{1}$ has the particularity that, for some element $E \in \mathcal{T}_{h}^{1}$, the vertices are admissible to be very close between them, in order to almost collapse. This characteristic that can be observed in Figure \ref{zoomT1} is the flexibility that the meshes with small edges allows, making a significant improvement from the classically admissible meshes. In Figure \ref{zoomT1}, we present the way in which the elements on $\mathcal{T}_{h}^{1}$ are constructed: From a triangular mesh, we consider the middle point of each edge of the element $E$ as a new degree of freedom. Then, we move such a point on each edge to a distance $h_{e}/50$ from one vertex of $E$ and $1 - h_{e}/50$ from the other where $h_{e}$ is the length of the edge $e$, resulting a 6 edges polygon.
\begin{figure}[H]
\begin{center}
\centering\includegraphics[height=5cm, width=6cm]{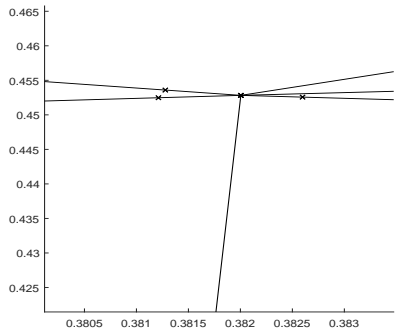}
\caption{\label{zoomT1}Zoom of $\mathcal{T}_{h}^{1}$ mesh}.
\end{center}
\end{figure}

We remark that each of these meshes have the condition of our interest: the polygons are allowed to 
consider small edges as is claimed in assumption \textbf{A1}. 
%This feature is the one that plays an important role, both in the analysis and in the numerical experimentation.

We begin by comparing the performance of the proposed method with two different stabilizations $S(\cdot,\cdot)$, in order to observe if there are significant differences when the small edges method is considered.

\subsection{The derivative stabilization}
The following results have been obtained with the following stabilization term
\begin{equation}
\label{eq:derivative_stab}
S(\textbf{w}_{h},\textbf{v}_{h}) = \displaystyle{\sum_{E \in \mathcal{T}_{h}} S^{E}(\textbf{w}_{h},\textbf{v}_{h})}, \quad S^{E}(\textbf{w}_{h}, \textbf{v}_{h}) = h_{E}\displaystyle{\int_{\partial E} \partial_{s}\textbf{w}_{h}\cdot \partial_{s}\textbf{v}_{h}}.
\end{equation}
This stabilization is the one considered in, for instance, \cite{MR3714637} for the Laplace source problem and \cite{MR4284360} for the Steklov eigenvalue problem. With this stabilization, we perform the computation of the convergence orders for the elasticity problem for different values of the Poisson ratio and hence, different values of the Lam\'e coefficient $\lambda_S$. We remark that each load term $\boldsymbol{f}$ is different for each value of $\nu$.

In Figures \ref{fig:plot1xy} and \ref{fig:plot1x} we report error curves for our method, where the meshes presented in Figure \ref{fig:meshes} have been considered. In these figures, the Poisson ratio takes the values $\nu\in\{0.35, 0.45, 0.47, 0.49\}$ and we observe that the error of the displacement in the $\mathbf{L}^2$ norm and the seminorm decay when the mesh is refined. Moreover, this decay of the error occurs with the optimal order, as is expected according to our theory. However, we observe that when $\nu=0.49$ and $\CT_h^4$ are considered, the error estimate is not completely optimal. This is expectable since  as we have claimed along our paper, when $\nu$ is close to $1/2$ the method is unstable. Moreover, $\CT_h^4$ is not an uniform mesh compared with the rest of the considered meshes, leading to geometrical issues that also may affect the convergence order for a Poisson ratio close to $1/2$.

These results allow us to infer that the small edges approach, together with the stabilization term \eqref{eq:derivative_stab}, perform an accurate approximation of the solution, independently of the polygonal mesh under consideration.
\begin{figure}[H]
	\begin{center}
%		\begin{minipage}{11cm}
			\centering\includegraphics[height=6.3cm, width=7.2cm]{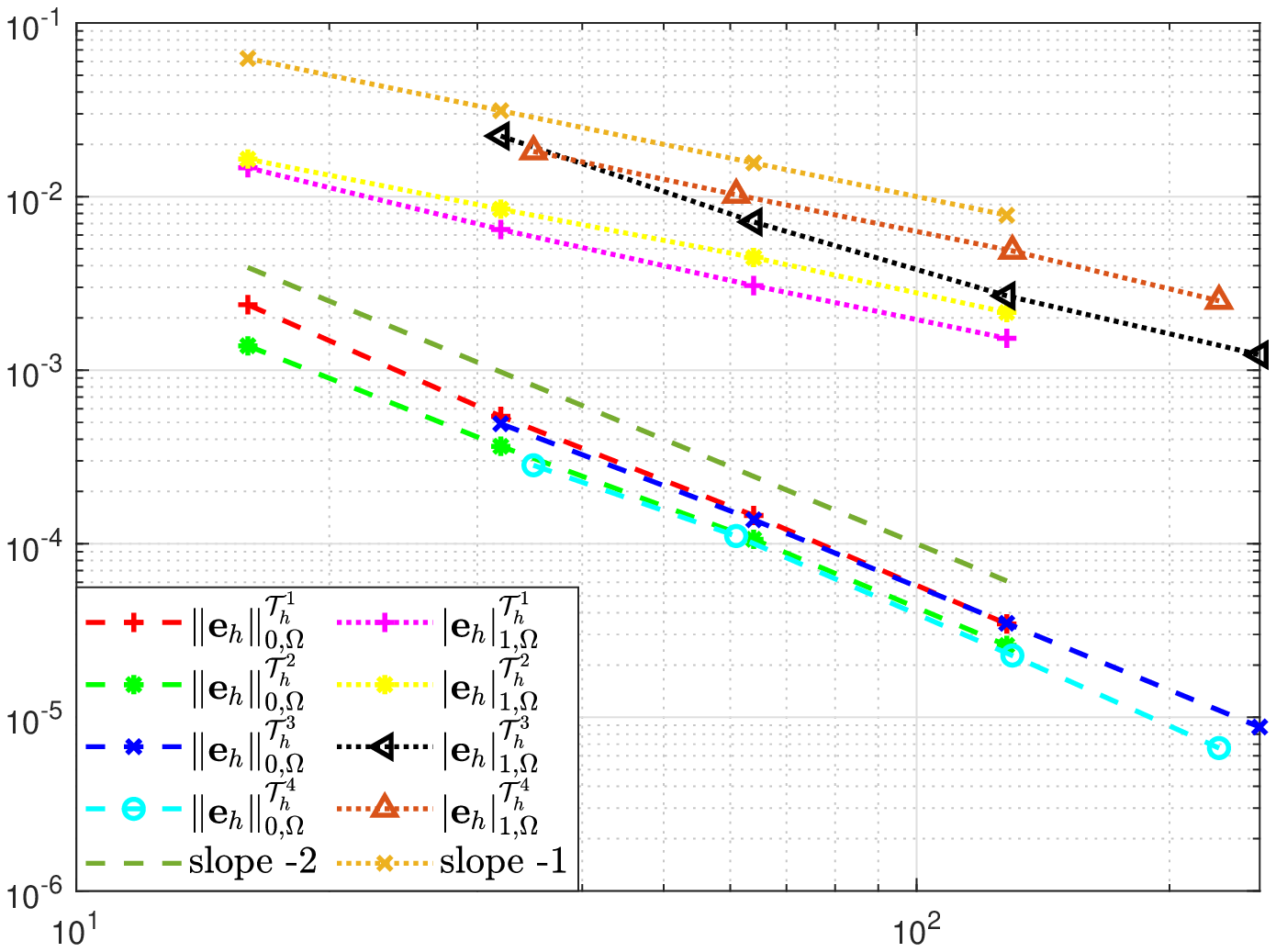}
			\centering\includegraphics[height=6.3cm, width=7.2cm]{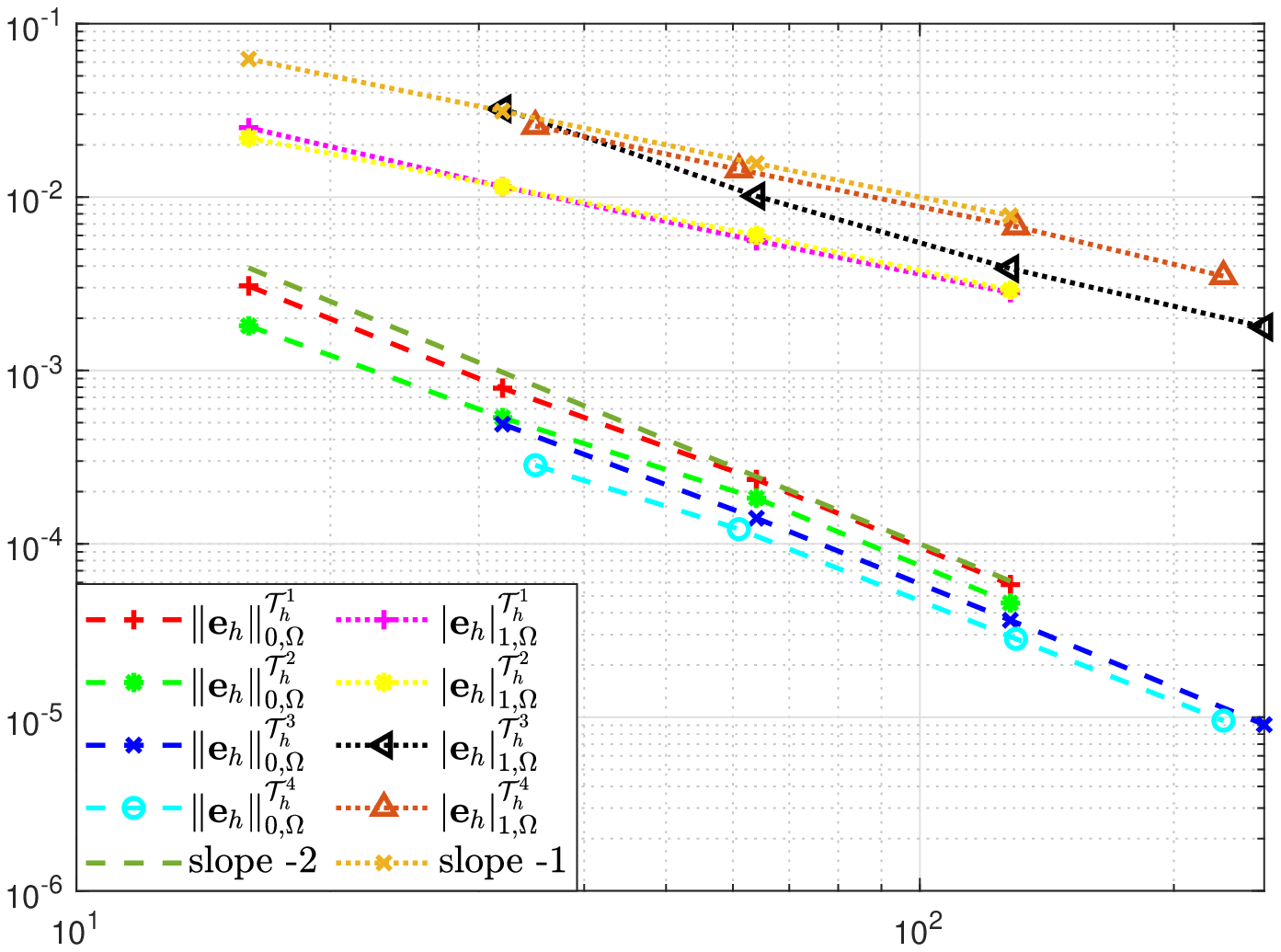}
%			\end{minipage}
			\caption{Plots of computed errors for different polygonal meshes. Left: Poisson ratio $\nu=0.35$, right: Poisson ratio $\nu = 0.45$, and stabilization \eqref{eq:derivative_stab}. 
}
\label{fig:plot1xy}
			\end{center}
			\end{figure}
			\begin{figure}[H]
			\begin{center}
%			\begin{minipage}{11cm}
			\centering\includegraphics[height=6.3cm, width=7.2cm]{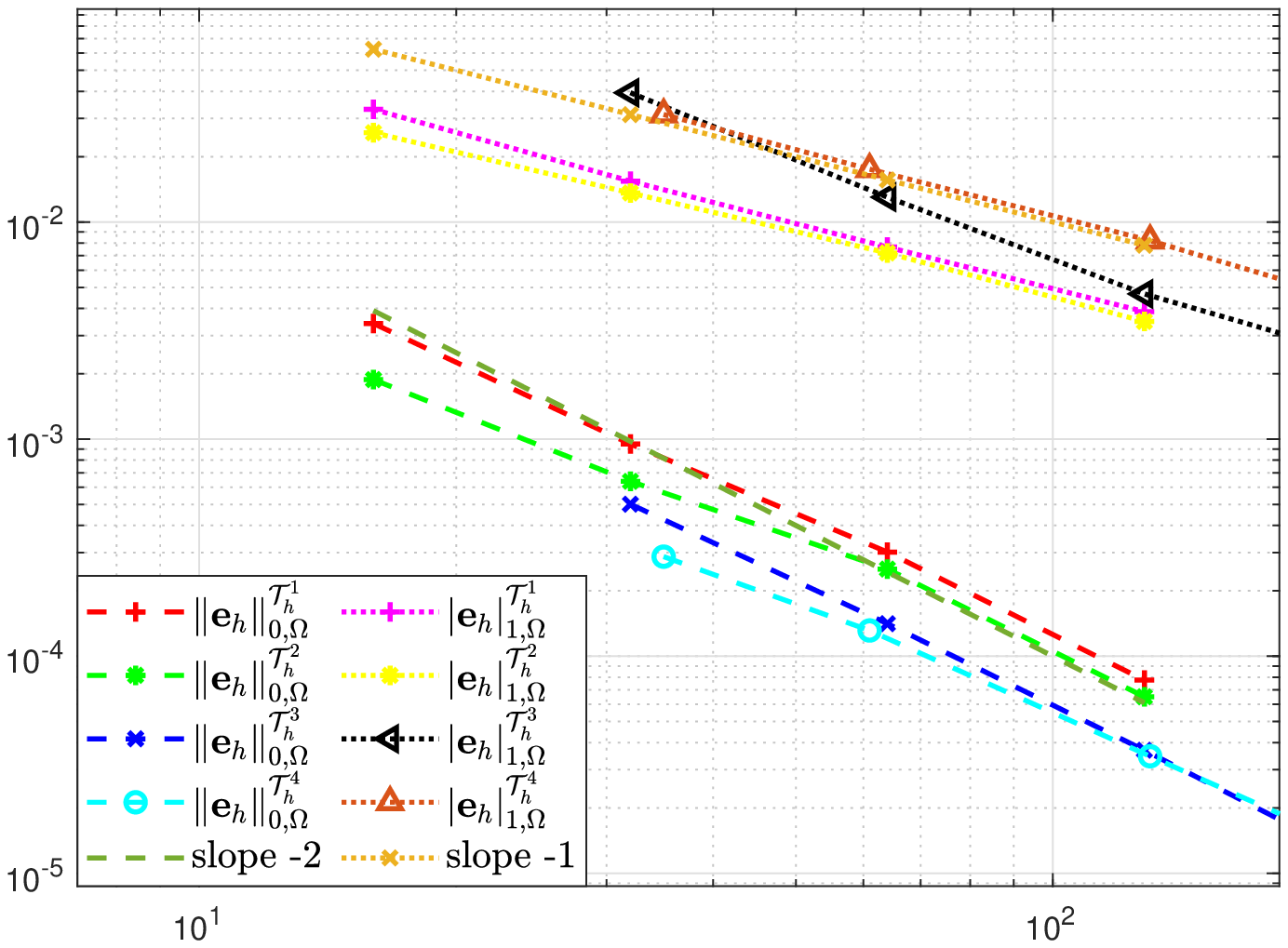}
			\centering\includegraphics[height=6.3cm, width=7.2cm]{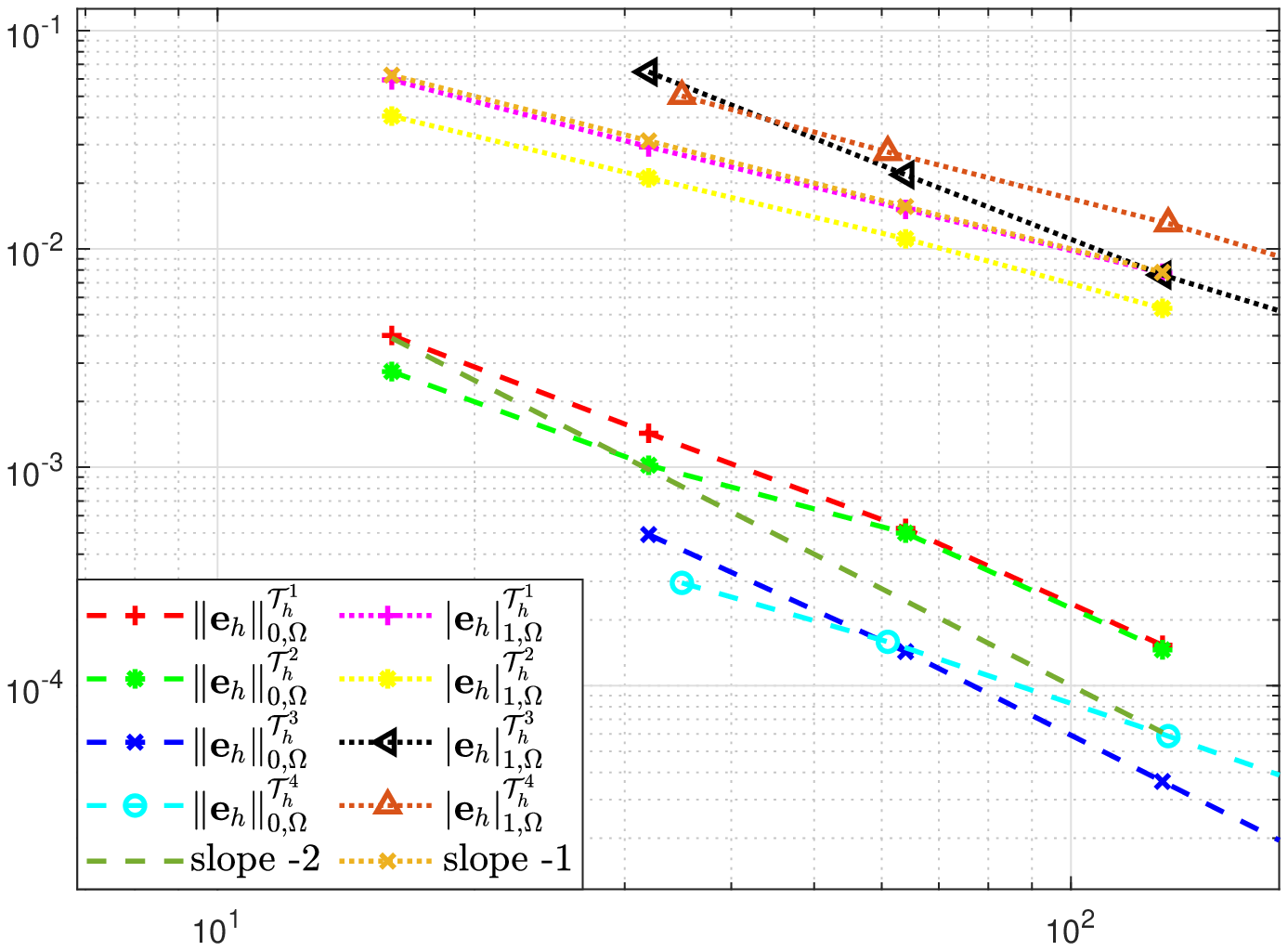}
%                   \end{minipage}
		\caption{Plots of computed errors for different polygonal meshes. Left: Poisson ratio $\nu = 0.47$, right: Poisson ratio $\nu = 0.49$, and stabilization \eqref{eq:derivative_stab}. 
}
		\label{fig:plot1x}
	\end{center}
\end{figure}

\subsection{The classic stabilization}
Now our aim is to compare the results obtained with the stabilization defined in \eqref{eq:derivative_stab}, with the classic 
stabilization that us used in the VEM.  To make matters precise, the stabilization term for the following tests is given by 
\begin{equation}
\label{eq:classic_stabilization}
S(\textbf{w}_{h},\textbf{v}_{h}) = \displaystyle{\sum_{E \in \mathcal{T}_{h}} S^{E}(\textbf{w}_{h}, \textbf{v}_{h}), \quad S^{E}(\textbf{w}_{h}, \textbf{v}_{h})} = \displaystyle{\sum_{i = 1}^{N_{E}} \textbf{w}_{h}(V_{i})\textbf{v}_{h}(V_{i})},
\end{equation}
which corresponds to the evaluation on the vertices of the polygons. 

In what follows, we report error curves for the $\mathbf{L}^2$ norm and $\textbf{H}^{1}$ seminorm when the stabilization  \eqref{eq:classic_stabilization} is considered. These results have obtained, once again, for $\nu\in\{0.35, 0.45, 0.47, 0.49\}$ and the meshes presented in Figure \ref{fig:meshes}.
\begin{figure}[H]
	\begin{center}
%		\begin{minipage}{13cm}
\centering\includegraphics[height=6.3cm, width=7.2cm]{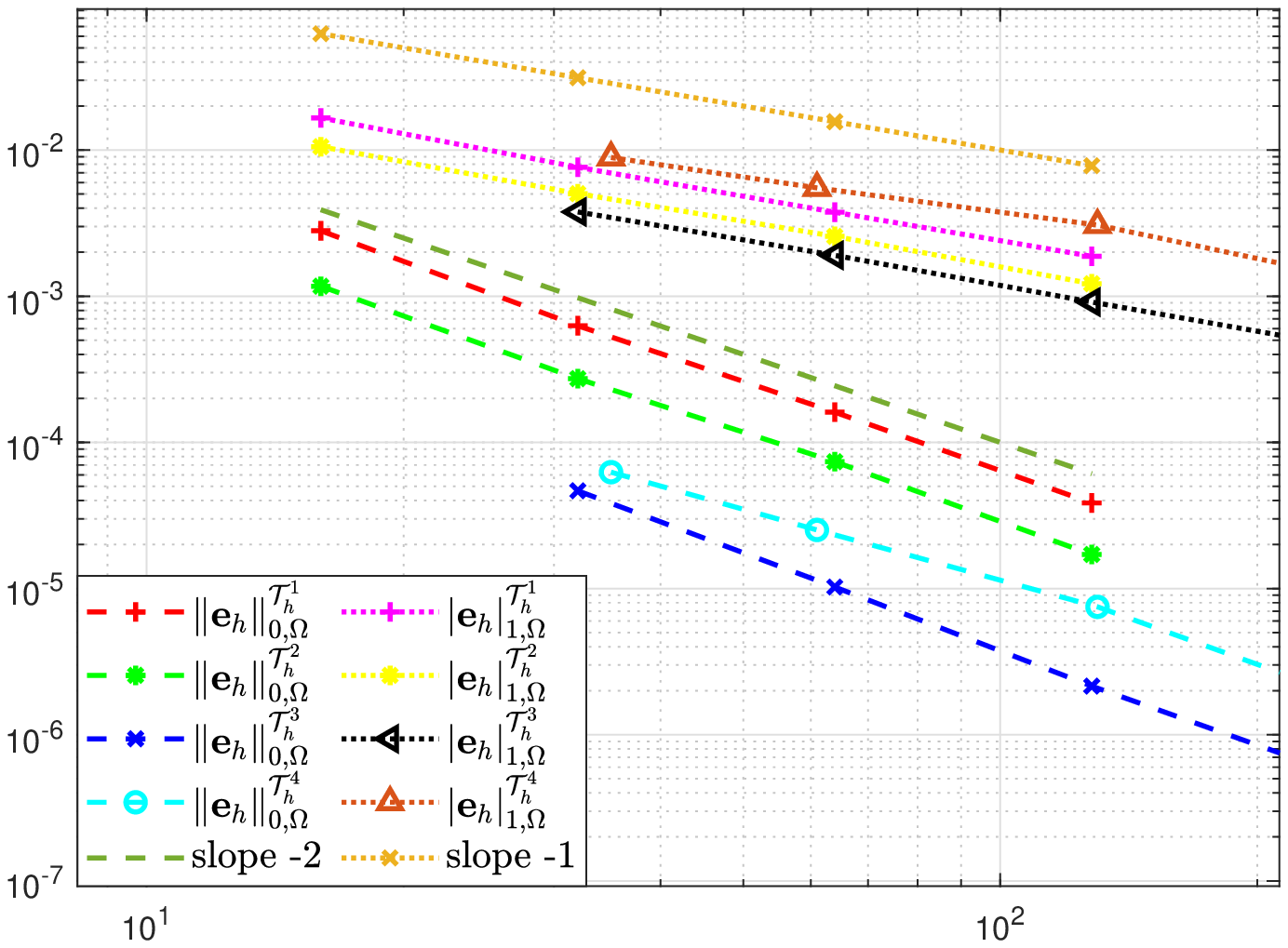}
			\centering\includegraphics[height=6.3cm, width=7.2cm]{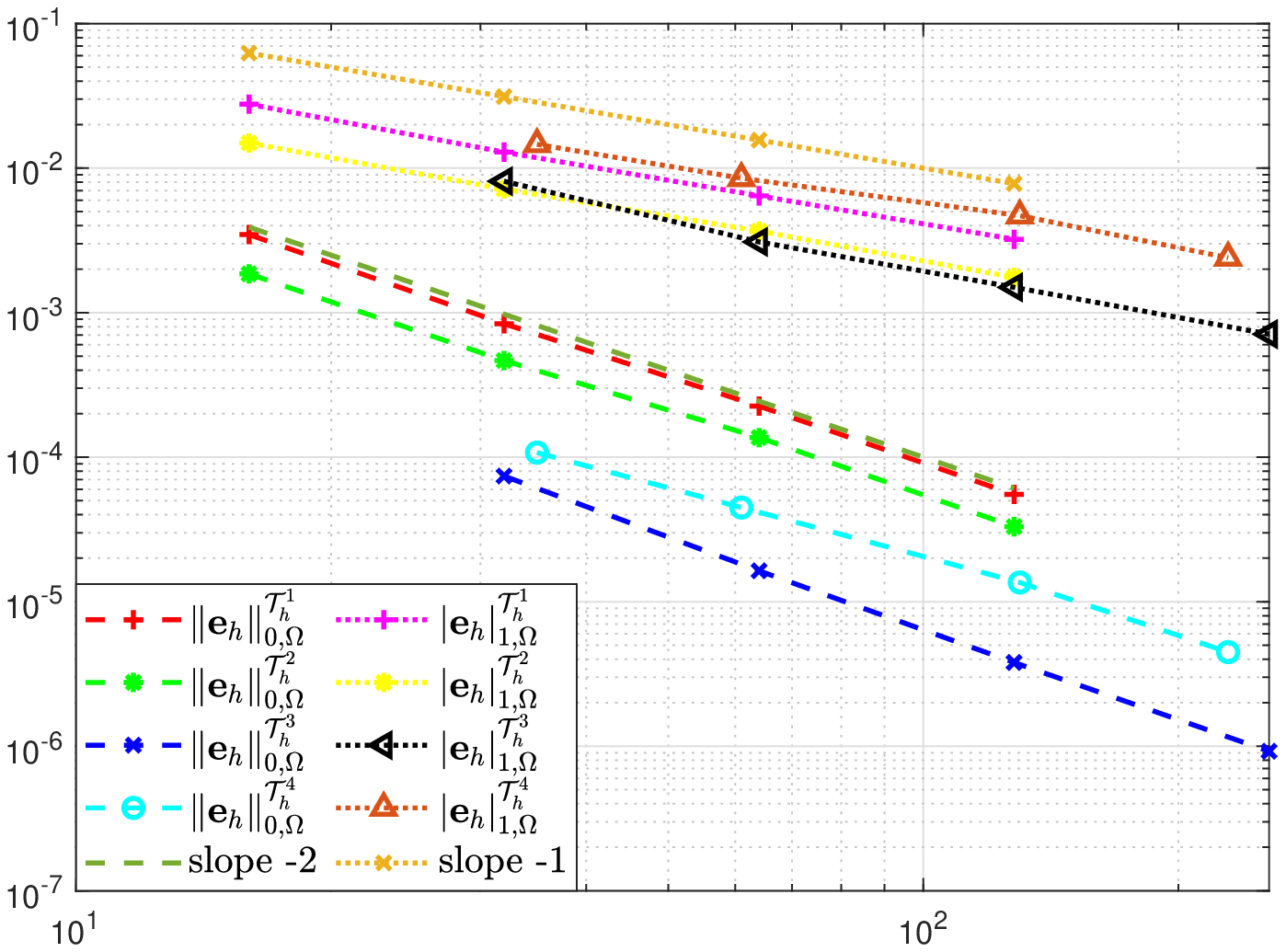}
%			\end{minipage}
		\caption{Plots of computed errors for different polygonal meshes. Left: Poisson ratio $\nu=0.35$, right: Poisson ratio $\nu = 0.45$, and stabilization \eqref{eq:classic_stabilization}. 
}
		\label{fig:plot4xy}
			\end{center}
			\end{figure}
			\begin{figure}[H]
			\begin{center}
%			\begin{minipage}{13cm}
						\centering\includegraphics[height=6.3cm, width=7.2cm]{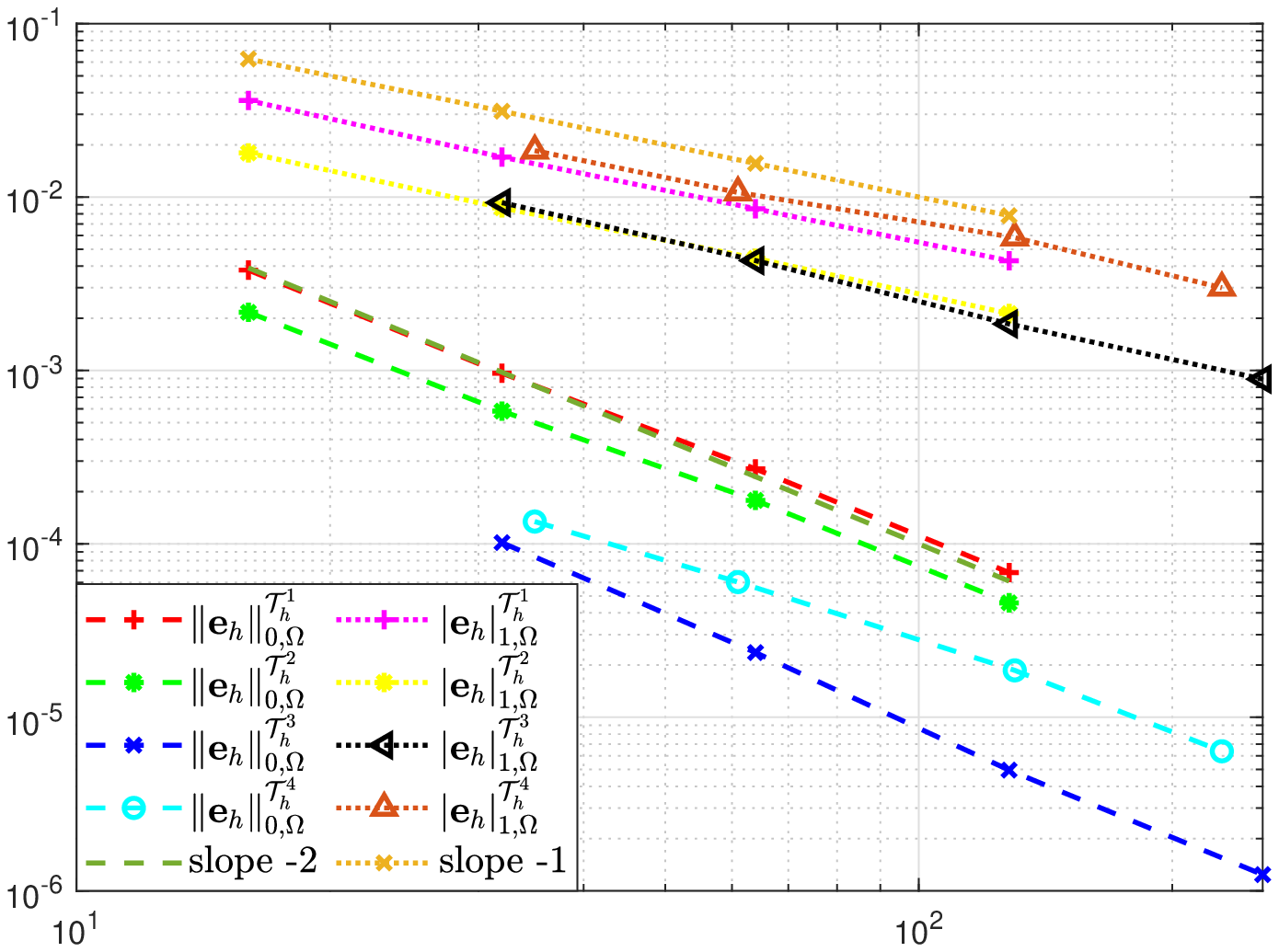}
						\centering\includegraphics[height=6.3cm, width=7.2cm]{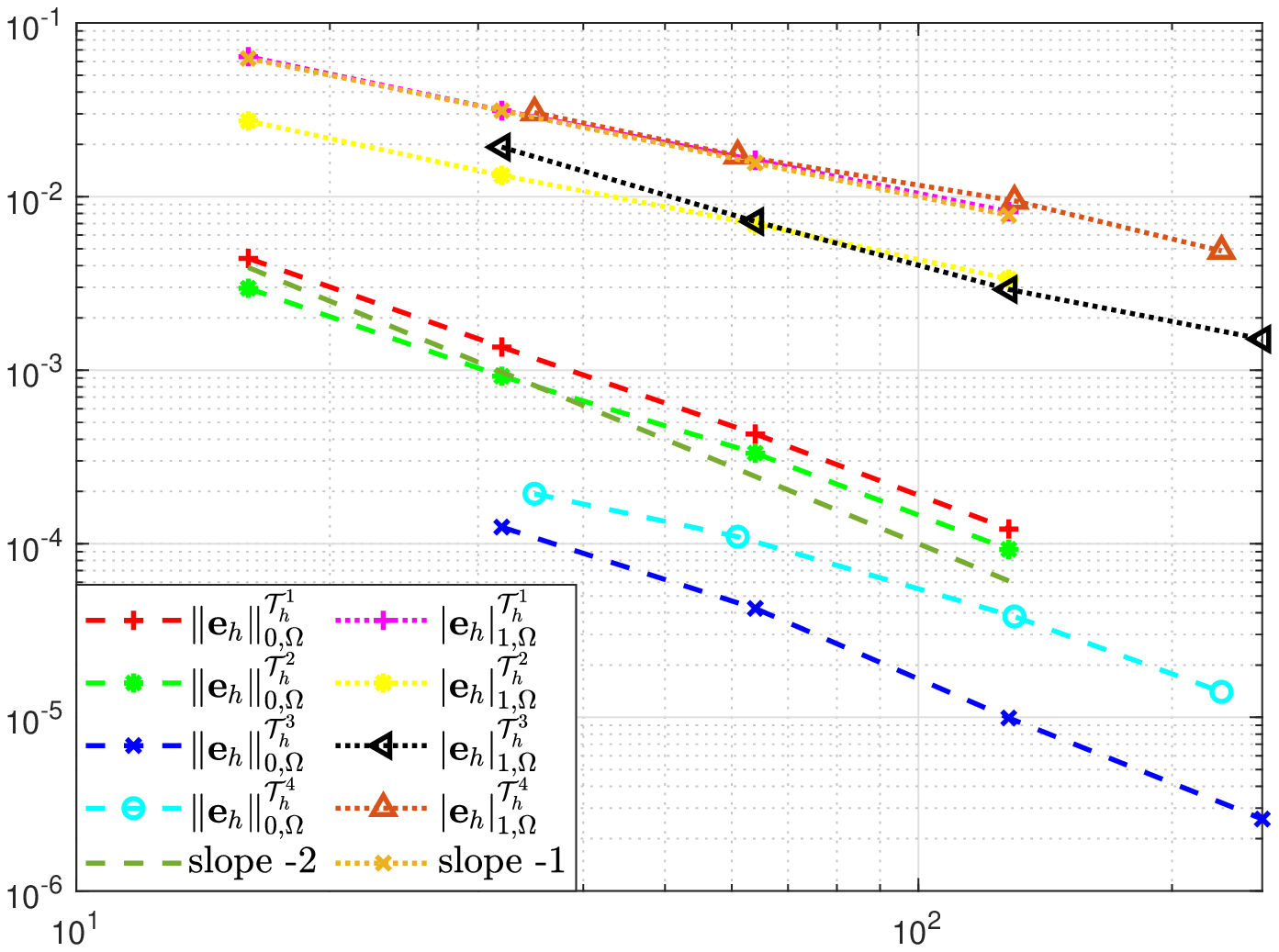}
%                   \end{minipage}
		\caption{Plots of computed errors for different polygonal meshes. Left: Poisson ratio $\nu = 0.47$, right: Poisson ratio $\nu = 0.49$, and stabilization \eqref{eq:classic_stabilization}. 
}
		\label{fig:plot4}
	\end{center}
\end{figure}

From Figures  \ref{fig:plot4xy} and \ref{fig:plot4} we observe that the error in norm and seminorm
behave according to the theory, when the different Poisson ratios are involved in the computation of the solution. This allows us to conclude that our small edges approach for the elasticity equations, works well independent of the stabilization 
implemented on the computation al codes, reinforcing the idea that the VEM, and particularly the small edges approach, is a   versatile tool that approximates accurately the solutions of the elasticity equations.

Finally in Figures \ref{fig:plot7} and \ref{fig:plot8} we present plots of the magnitude of the displacement, where components of the approximated and exact solutions are presented. 
\begin{figure}[H]
	\begin{center}
%		\begin{minipage}{11cm}
			\centering\includegraphics[height=6.5cm, width=6.5cm]{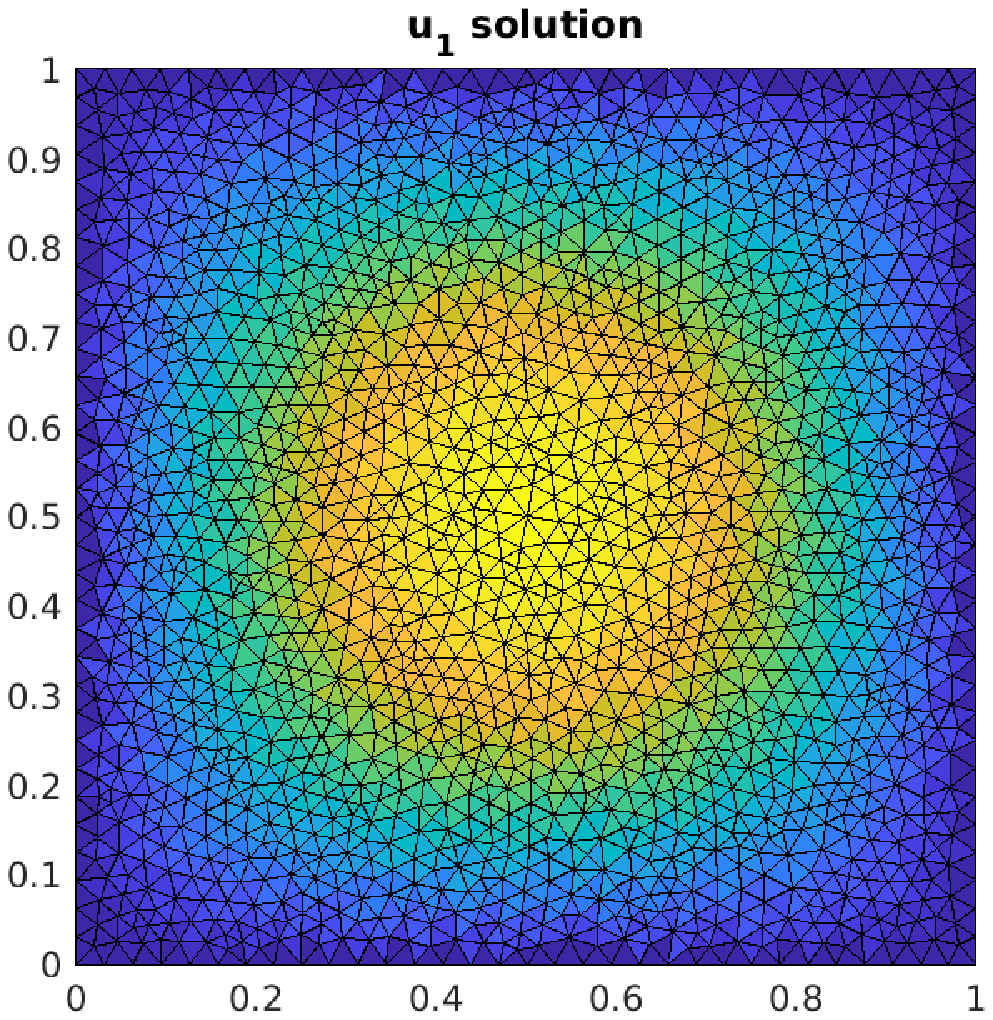}
			\centering\includegraphics[height=6.5cm, width=6.5cm]{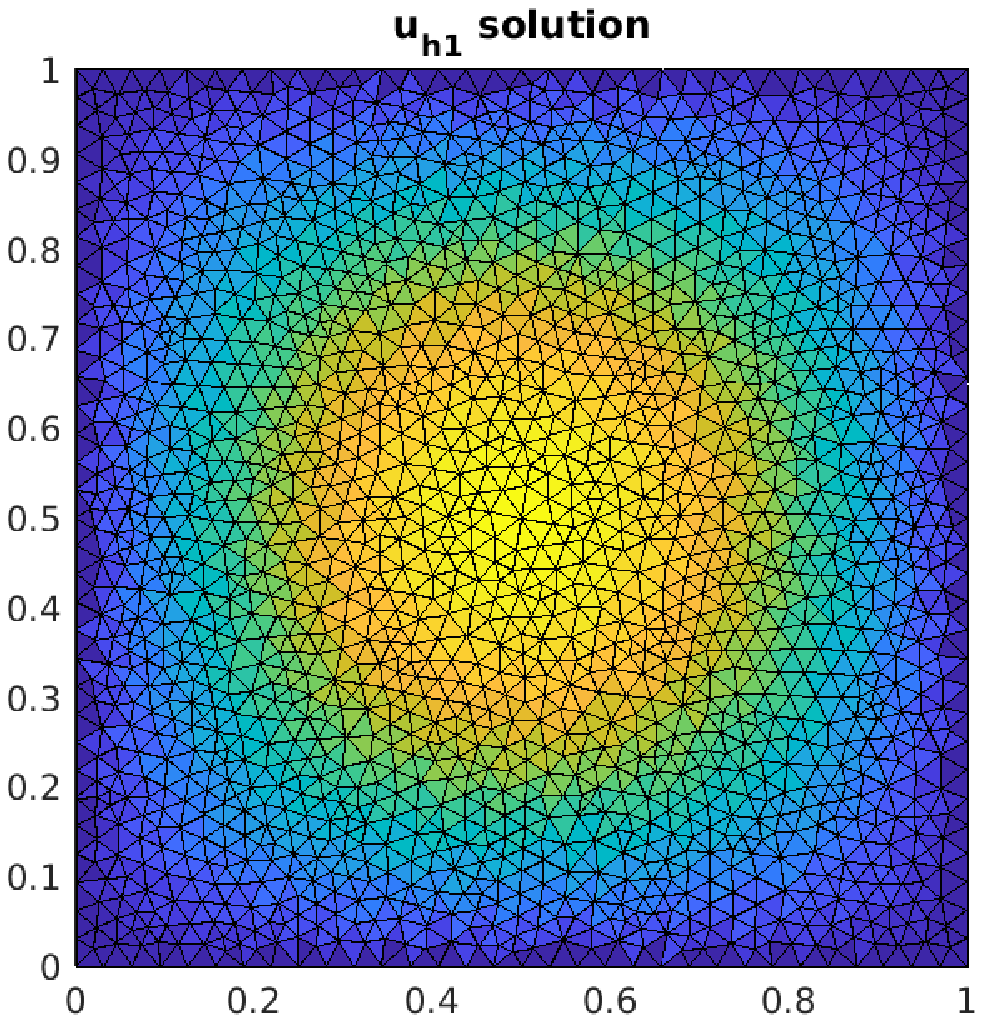}\\
%			                   \end{minipage}
		\caption{First components of the displacement $\boldsymbol{u}_1$. Left: exact solution, right: approximated solution, both computed with $\nu=0.35$.}
		\label{fig:plot7}
	\end{center}
\end{figure}

\begin{figure}[H]
	\begin{center}
%		\begin{minipage}{11cm}
                          \centering\includegraphics[height=6.5cm, width=6.5cm]{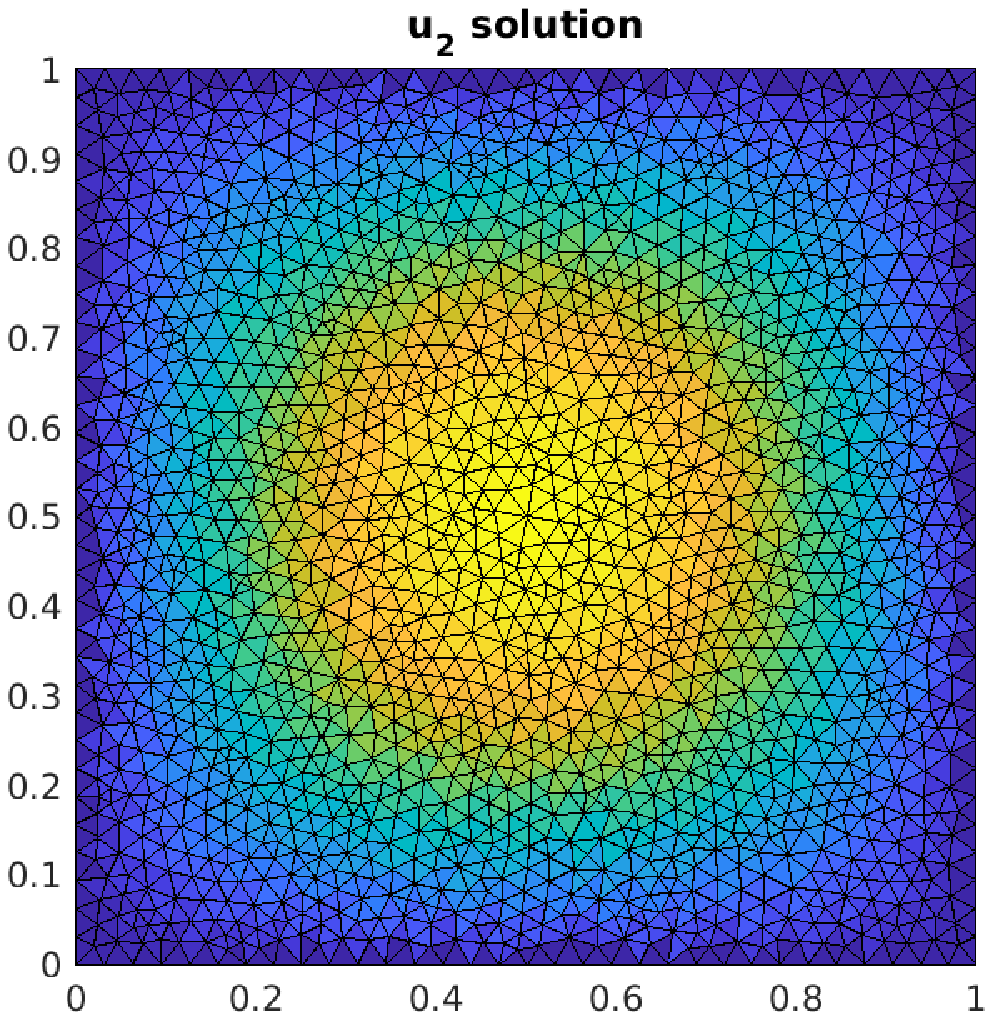}
                          \centering\includegraphics[height=6.5cm, width=6.5cm]{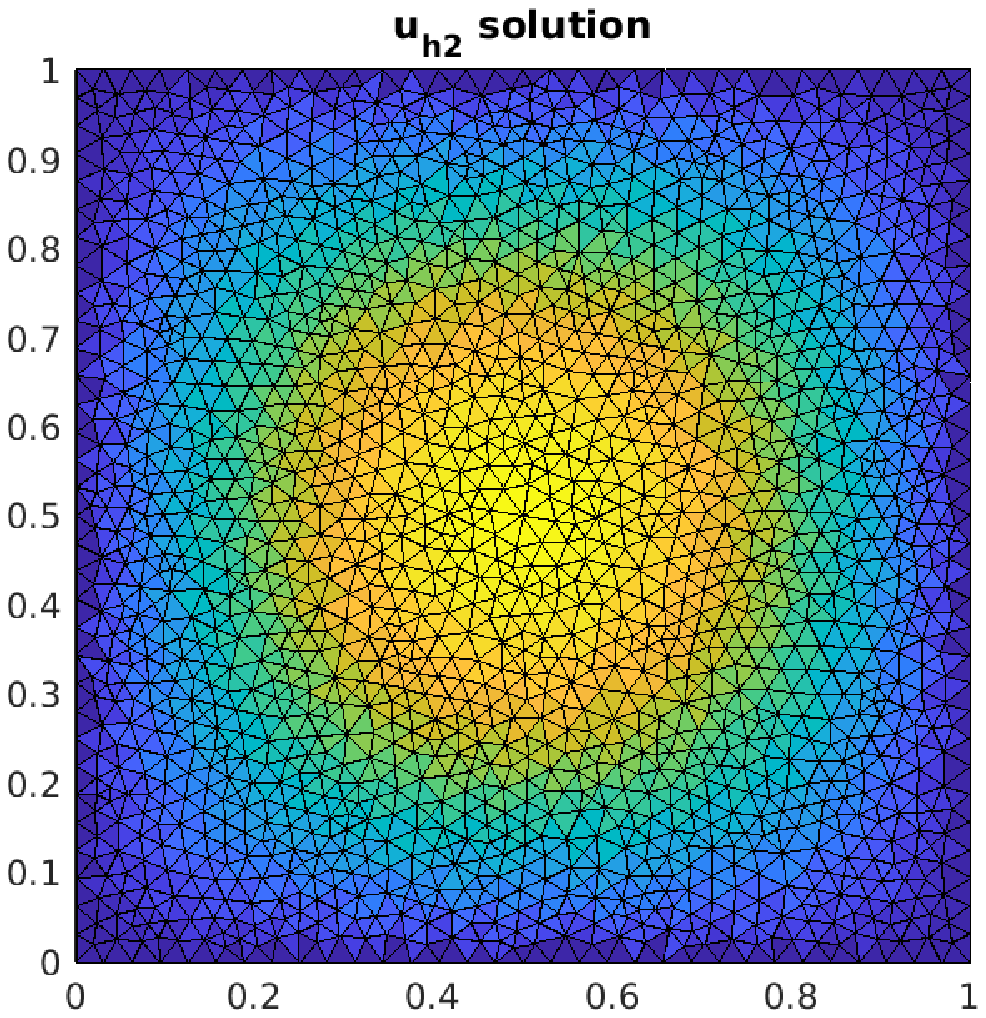}
%                   \end{minipage}
	\caption{Second  components of the displacement $\boldsymbol{u}_2$. Left: exact solution, right: approximated solution, both computed with $\nu=0.35$.}

		\label{fig:plot8}
	\end{center}
\end{figure}

In order to confirm the robustness of the method, we present an additional experiment where we consider a mesh that we denote by  $\mathcal{T}_{h}^{v}$, which consists in the union of three different Voronoi mesh. This  mesh has the particularity that, where the meshes are glued, the nodes are sufficiently close each other, appearing polygons with arbitrary small edges. In Figure \ref{mesh:referato} we present an example of this mesh.
\begin{figure}[H]
\begin{center}
\centering\includegraphics[height=5cm, width=6cm]{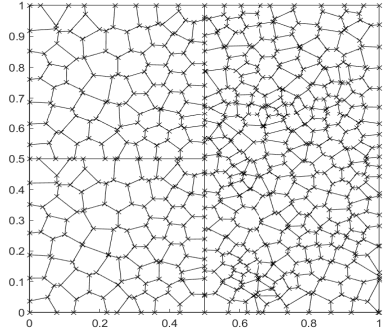}
\caption{\label{mesh:referato} Sample of the mesh $\mathcal{T}_{h}^{v}$, where the three meshes can be observed.} 
\end{center}
\end{figure}
In Figure \ref{curves:referato}, we present the plots of computed error for  $\mathcal{T}_{h}^{v}$ mesh and different values of Poisson ratio, namely $\nu\in\{0.35, 0.45, 0.47, 0.49\}$. Also, we present this experiment considering both stabilization terms \eqref{eq:derivative_stab} and \eqref{eq:classic_stabilization}. 
\begin{figure}[H]
\begin{center}
\centering\includegraphics[height=6.3cm, width=7.2cm]{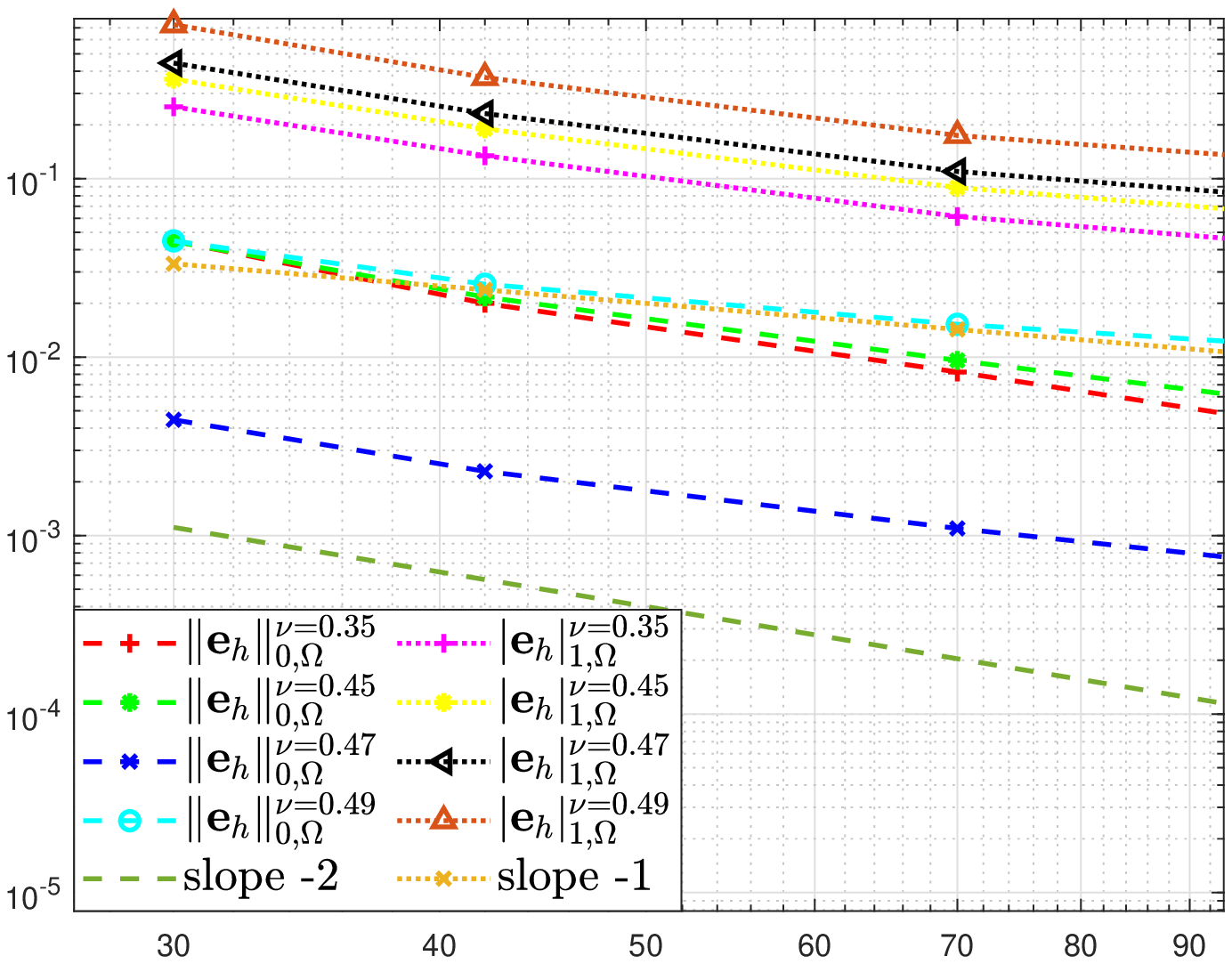}
\centering\includegraphics[height=6.3cm, width=7.2cm]{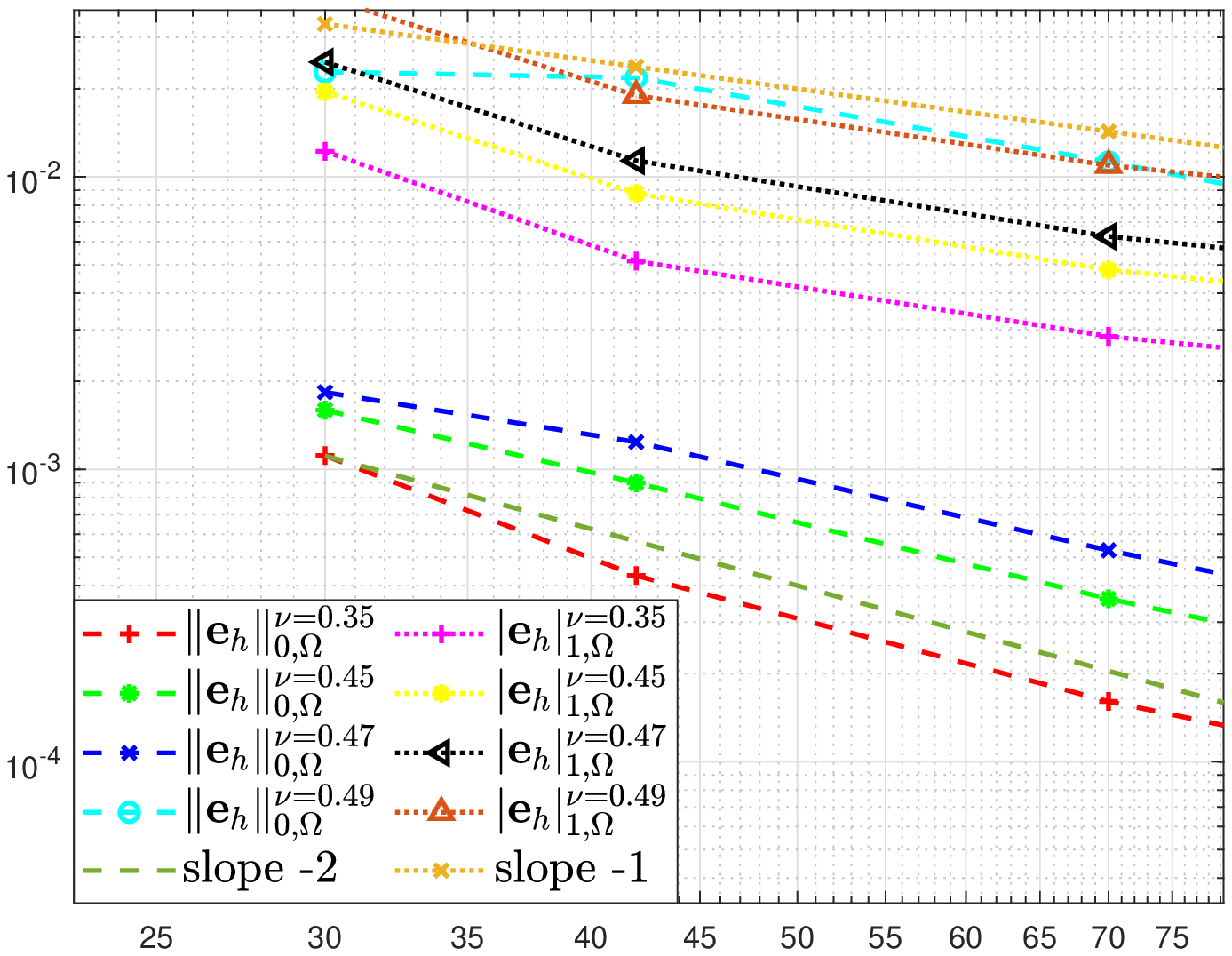}
\caption{\label{curves:referato} Plots of computed errors for different Poisson ratio and $\mathcal{T}_{h}^{v}$. Left: results for the stabilization term \eqref{eq:derivative_stab}, right: results for the stabilization term \eqref{eq:classic_stabilization}.}
\end{center}
\end{figure}

The results clearly show that the method is strongly precise with this more general mesh and for the Poisson ratios. More precisely, there are no significant differences for these meshes when we compare the results with the ones obtained when $\CT_h^1$,  $\CT_h^2$,  $\CT_h^3$, and  $\CT_h^4$ are considered. Moreover, the stabilizations  \eqref{eq:derivative_stab} and \eqref{eq:classic_stabilization} work similarly.

\subsection{Nonconvex domain}
Now we perform a test that goes beyond the theory that we have developed, which assumes the convexity of the domain $\O$ in order to
obtain optimal order of convergence for the proposed VEM method. For this test, we consider a nonconvex domain that we call the L-shaped domain as is defined by $\O:=(0,2)^2\setminus [1,2)^2$. Clearly
the geometry of this domain presents a geometrical singularity that leads to approximate a solution for the elasticity problem that is no sufficiently smooth. Hence, the VEM method will be not capable 
to achieve the optimal order of convergence under this configuration.

%En la siguiente figura, se puede apreciar el comportamiento de las curvas de error, compar\'andolas con una curva de pendiente $-1$, y una curva cuadr\'atica:
%
%\begin{figure}[H]
%\centering
%\includegraphics[scale=0.5]{}
%\caption{Error curves for the norm $\|\cdot\|_{0,\Omega}$ and the seminorm $|\cdot|_{1,\Omega}$, with the stabilization \eqref{eq:classic_stabilization}.}
%\end{figure}
%
%\vspace*{0.5cm}

The polygonal meshes that we will consider for our tests are the following: 
\begin{itemize}
\item[$\bullet$] $\mathcal{T}_{h}^{1}$: Triangles with small edges.
\item[$\bullet$] $\mathcal{T}_{h}^{2}$: Deformed triangles with middle points.
\item[$\bullet$] $\mathcal{T}_{h}^{3}$: Voronoi-squares-deformed squares mixed. 
\item[$\bullet$] $\mathcal{T}_{h}^{4}$: Voronoi.
\end{itemize}

In Figure \ref{fig:meshes2} we present examples of these meshes when the L-shaped domain is considered.
\begin{figure}[h !]
	\begin{center}
%		\begin{minipage}{11cm}
			\centering\includegraphics[height=5cm, width=6cm]{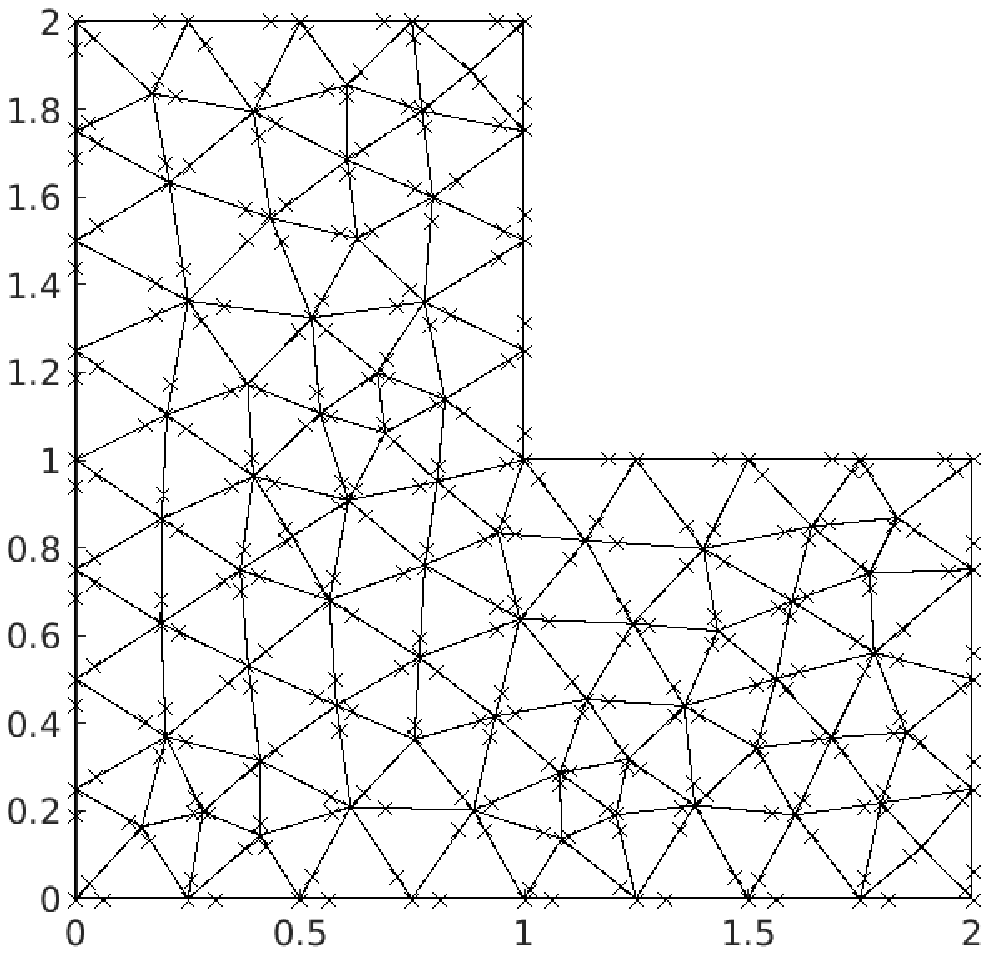}
			\centering\includegraphics[height=5cm, width=6cm]{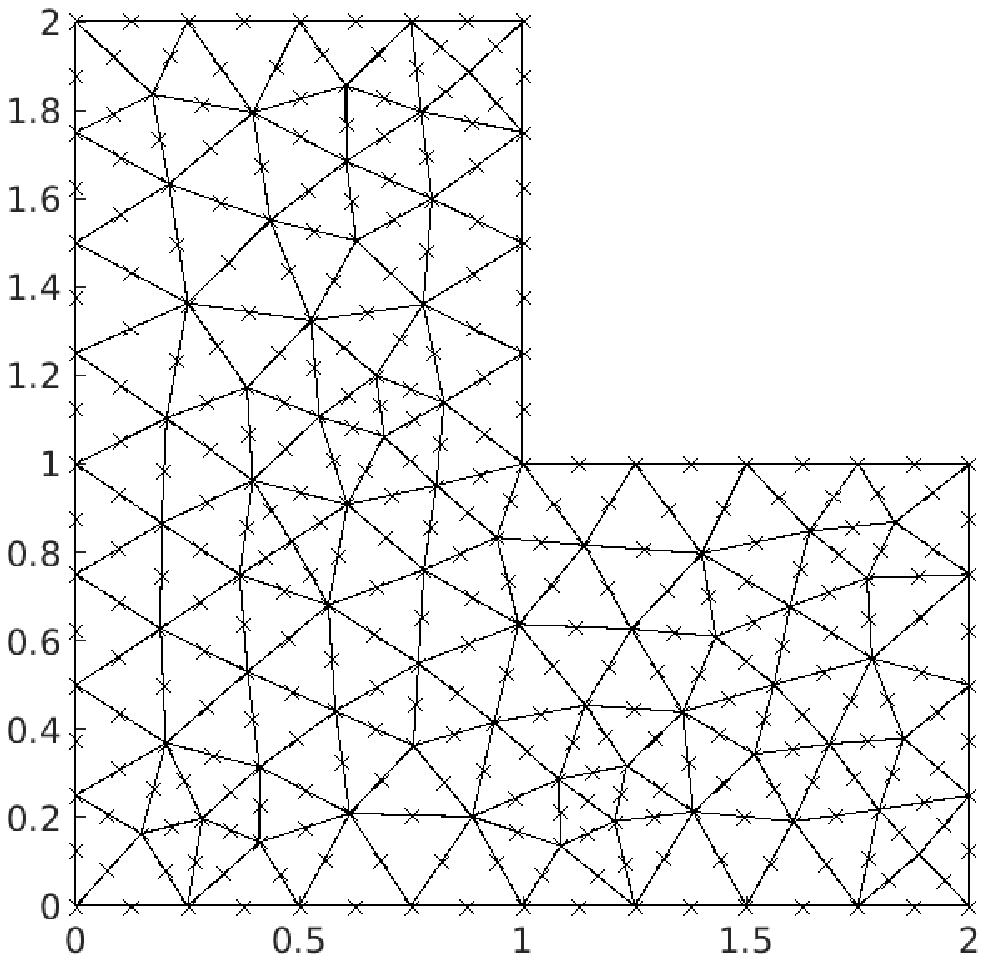}\\
                          \centering\includegraphics[height=5cm, width=6cm]{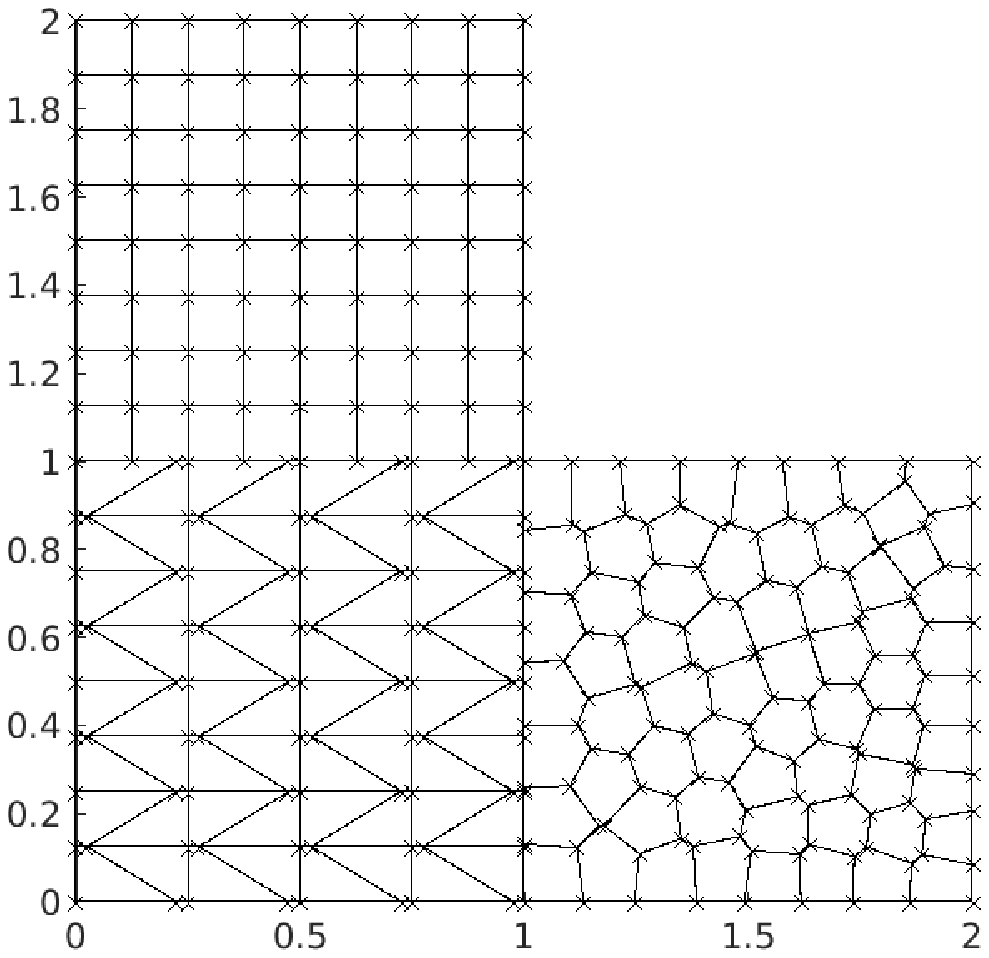}
                          \centering\includegraphics[height=5cm, width=6cm]{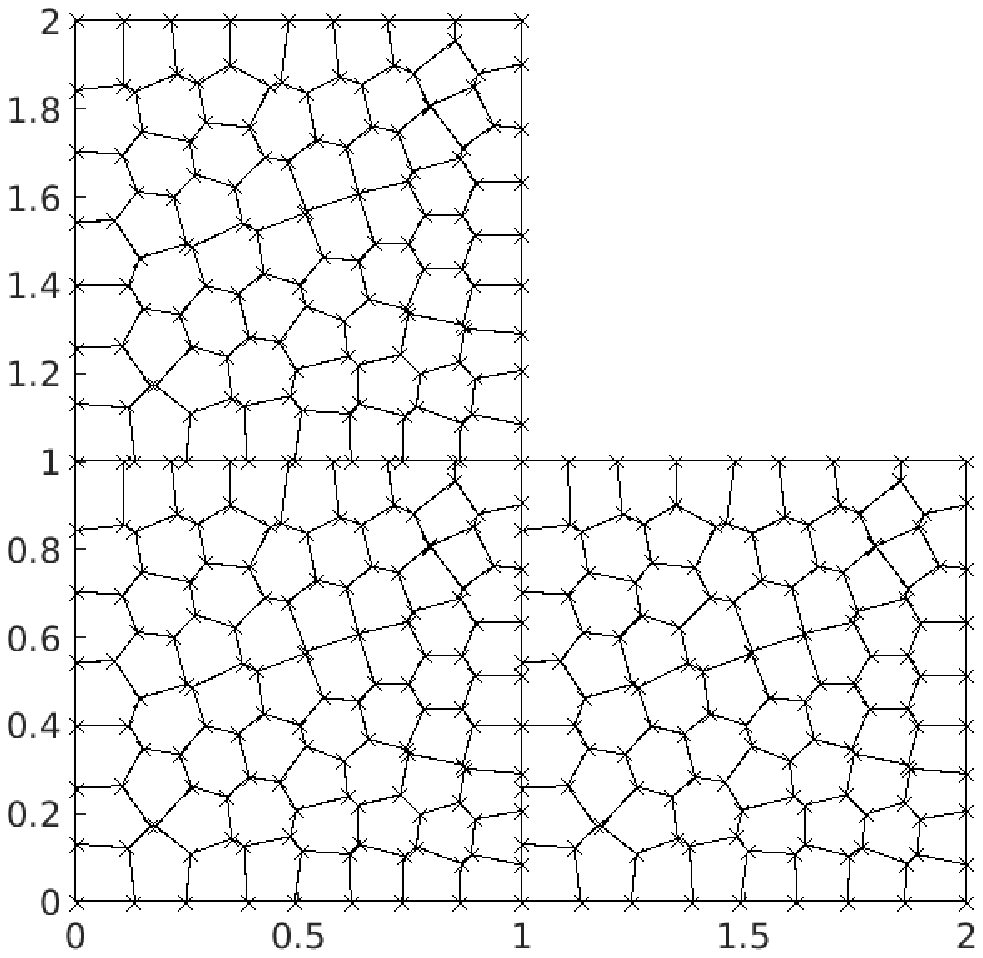}
%                   \end{minipage}
		\caption{Example of the meshes for our experimentations. Top left: $\mathcal{T}_{h}^{1}$, top right $\mathcal{T}_{h}^{2}$, bottom left $\mathcal{T}_{h}^{3}$ and bottom right $\mathcal{T}_{h}^{4}$.
}
		\label{fig:meshes2}
	\end{center}
\end{figure}

For our experiments, we will consider the classic stabilization term \eqref{eq:classic_stabilization}. In what follows, we report error curves for the $\mathbf{L}^2$ norm and seminorm when the stabilization  \eqref{eq:classic_stabilization} is considered. These results have obtained for $\nu\in\{0.35, 0.45, 0.47, 0.49\}$ and the meshes presented in Figure \ref{fig:meshes2}.
\begin{figure}[H]
	\begin{center}
%		\begin{minipage}{11cm}
			\centering\includegraphics[height=6.3cm, width=7.2cm]{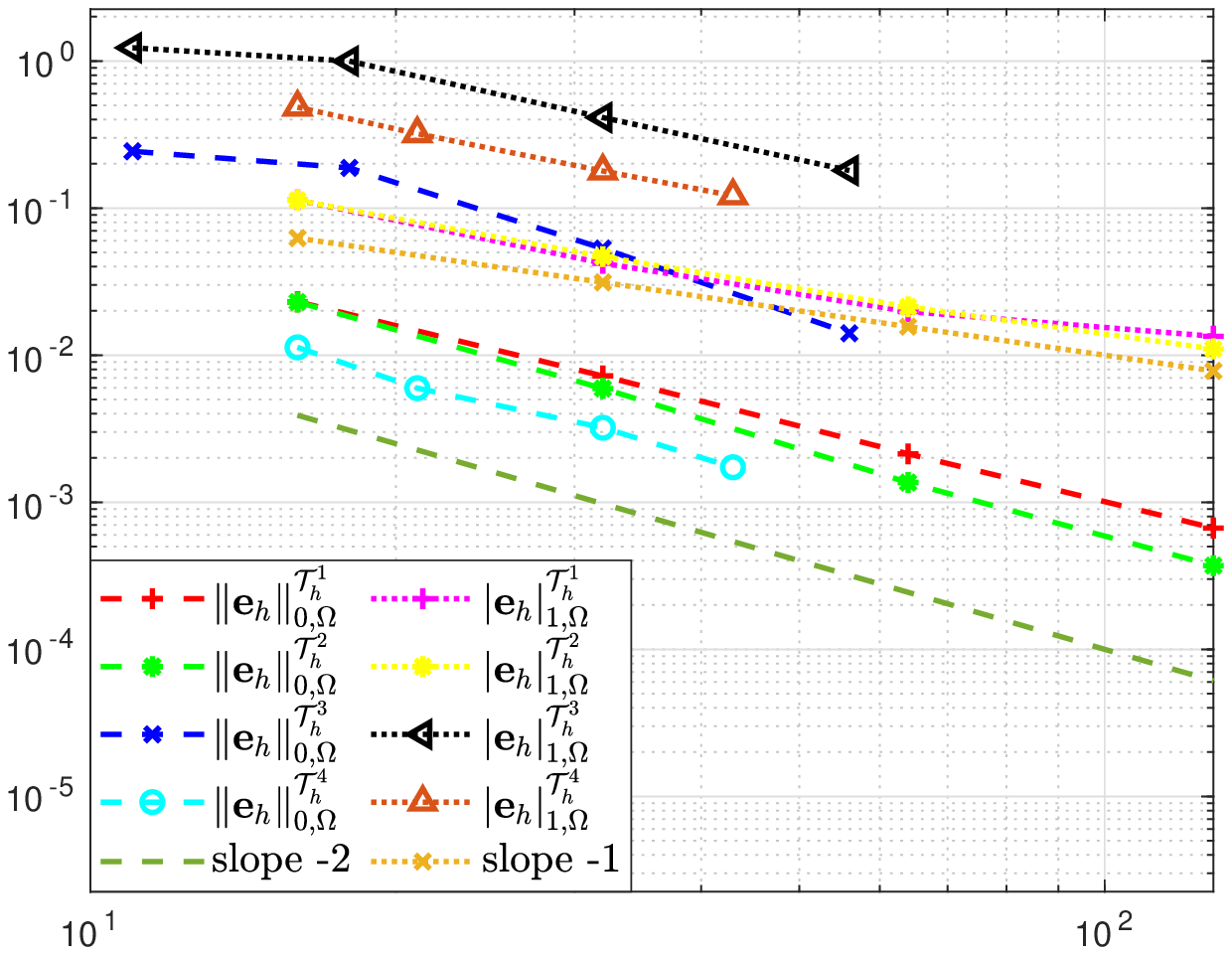}
			\centering\includegraphics[height=6.3cm, width=7.2cm]{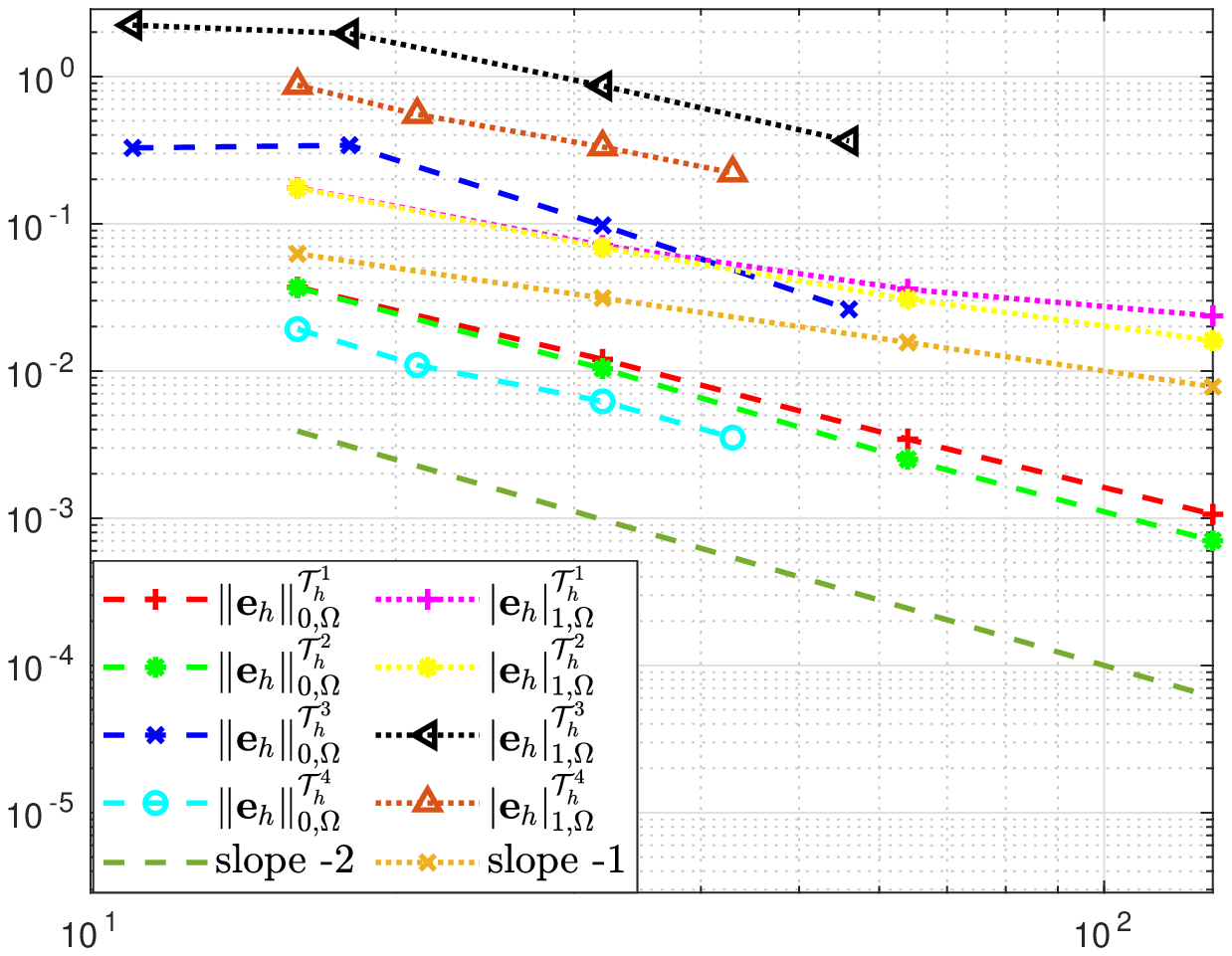}
%			\end{minipage}
		\caption{Plots of computed errors for different polygonal meshes. Left: Poisson ratio  $\nu=0.35$, right: Poisson ratio $\nu = 0.45$, and stablization \eqref{eq:classic_stabilization}. 
}
		\label{fig:plot4xzx}
\end{center}
\end{figure}
\begin{figure}[H]
\begin{center}
%\begin{minipage}{11cm}
			\centering\includegraphics[height=6.3cm, width=7.2cm]{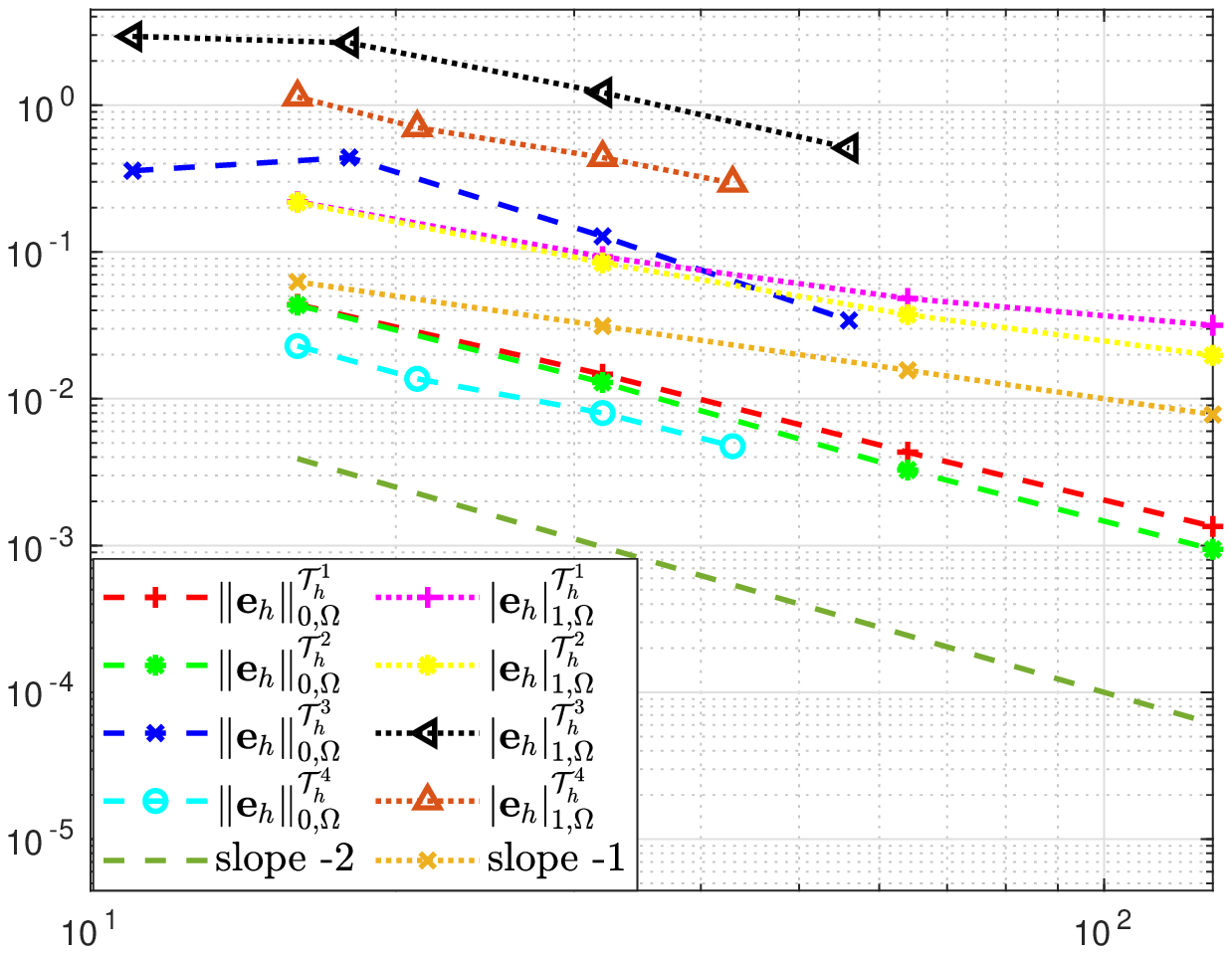} 
			\centering\includegraphics[height=6.3cm, width=7.2cm]{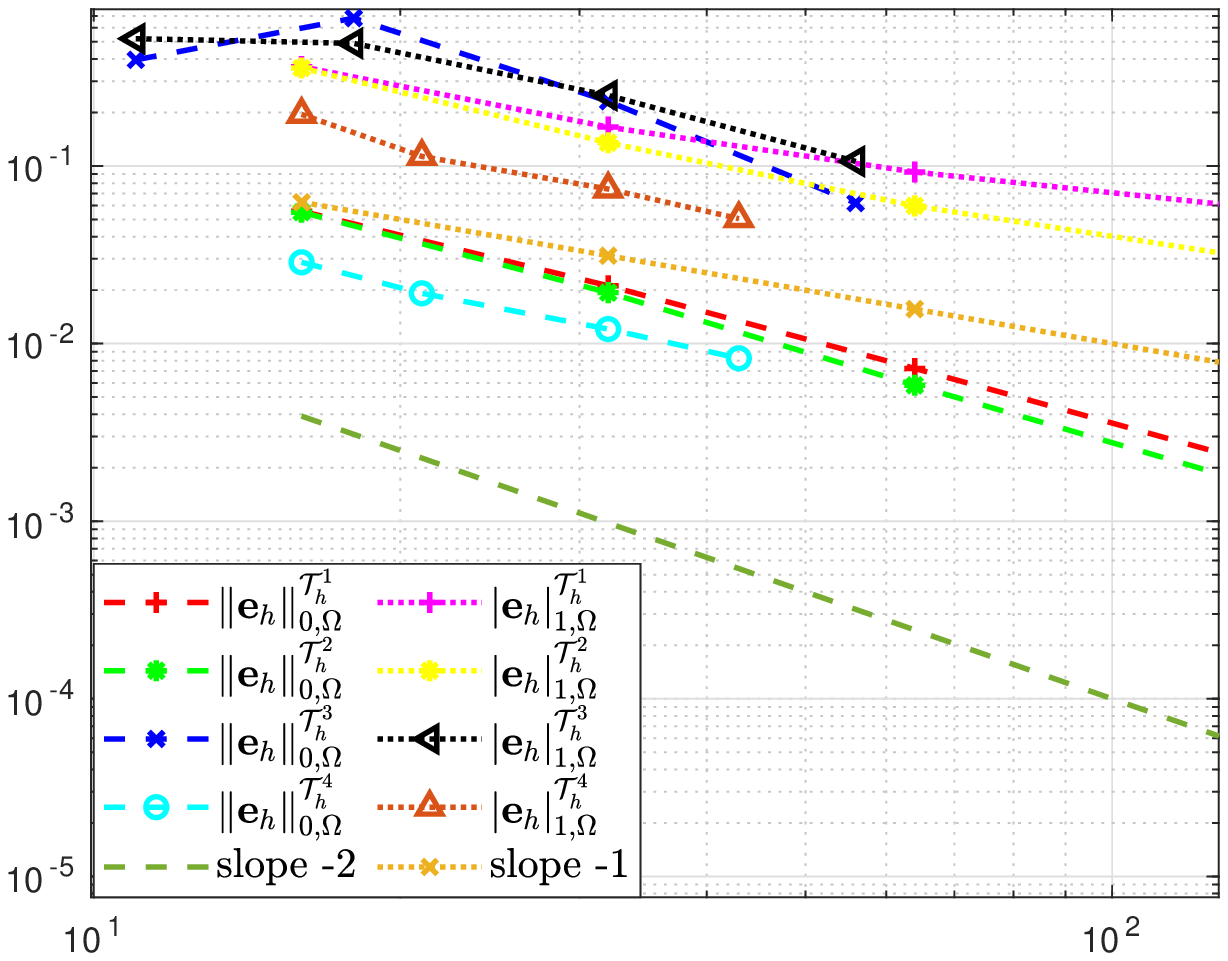}
%                   \end{minipage}
		\caption{Plots of computed errors for different polygonal meshes. Left: Poisson ratio $\nu = 0.47$, right: Poisson ratio $\nu = 0.49$, and stablization \eqref{eq:classic_stabilization}. 
}
		\label{fig:plot4z}
	\end{center}
\end{figure}

%\begin{figure}[H]
%	\begin{center}
%		\begin{minipage}{11cm}
%			\centering\includegraphics[height=4.5cm, width=5cm]{mixed/errores_047.eps}
%                   \end{minipage}
%		\caption{Plots of computed errors for different polygonal meshes, Poisson ratio  $\nu=0.47$, and stablization \eqref{eq:classic_stabilization}. 
%}
%		\label{fig:plot5}
%	\end{center}
%\end{figure}
%\begin{figure}[H]
%	\begin{center}
%		\begin{minipage}{11cm}
%			\centering\includegraphics[height=4.5cm, width=5cm]{mixed/errores_049.eps}
%                   \end{minipage}
%		\caption{Plots of computed errors for different polygonal meshes, Poisson ratio  $\nu=0.49$, and stablization \eqref{eq:classic_stabilization}. 
%}
%		\label{fig:plot6}
%	\end{center}
%\end{figure}

Finally in Figure \ref{fig:plot9} and Figure \ref{fig:plot10} we present plots of the magnitude of the displacement, where components of the approximated and exact solutions are presented. 
\begin{figure}[H]
	\begin{center}
%		\begin{minipage}{11cm}
			\centering\includegraphics[height=6.3cm, width=7.2cm]{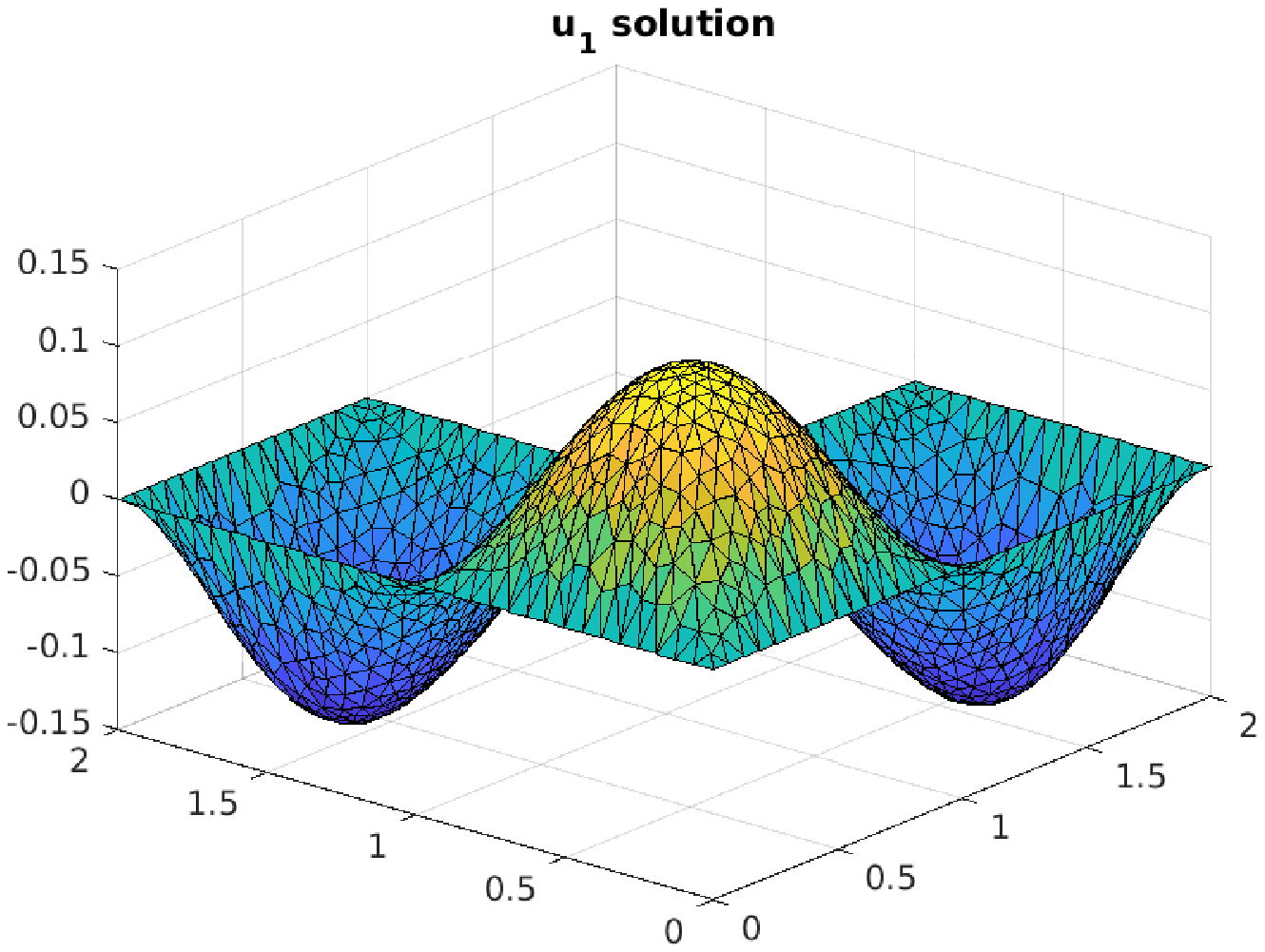}
			\centering\includegraphics[height=6.3cm, width=7.2cm]{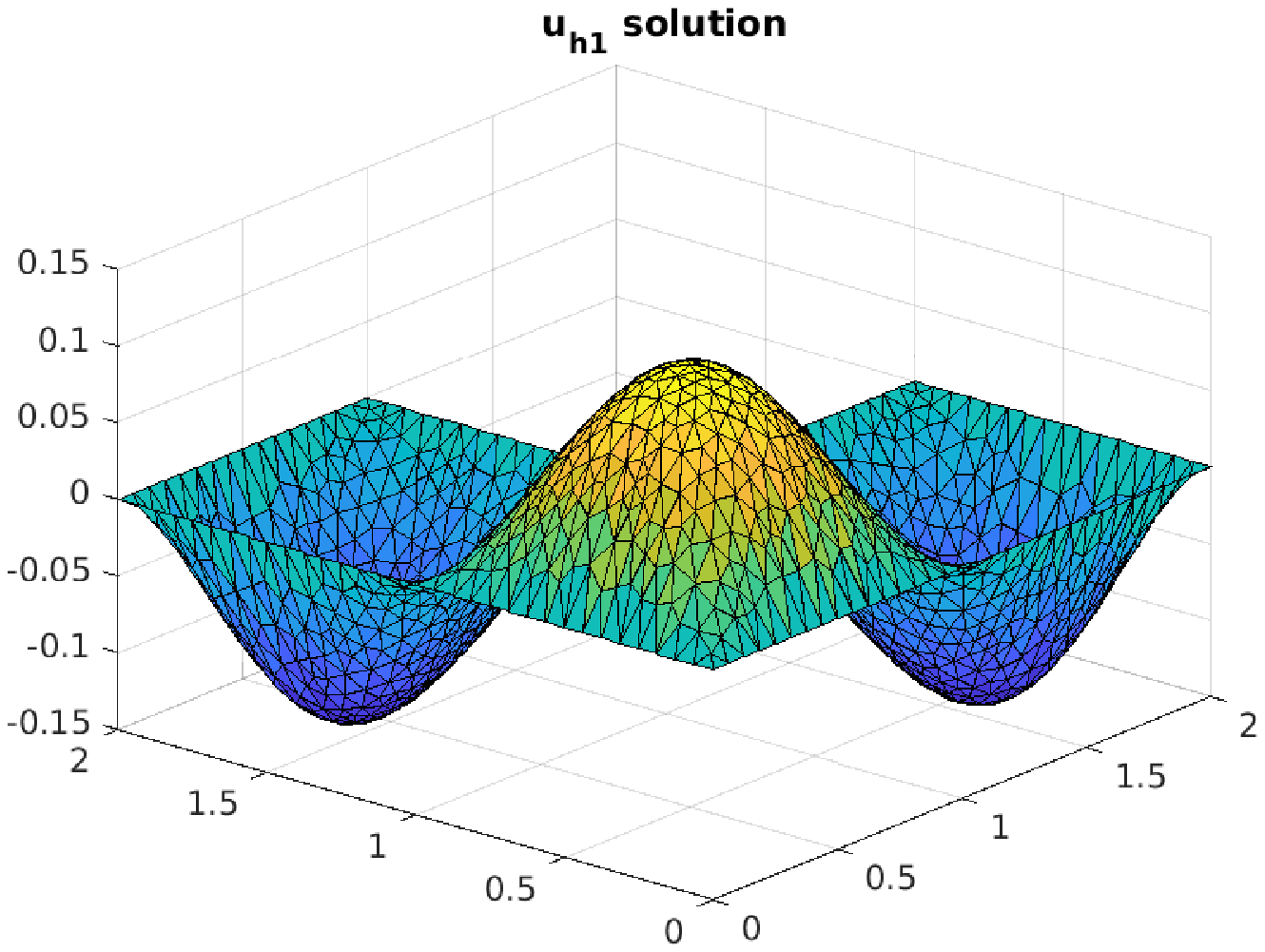}\\
%			                   \end{minipage}
		\caption{First components of the displacement $\boldsymbol{u}_1$. Left: exact solution, right: approximated solution,  both computed with $\nu=0.35$.}
		\label{fig:plot9}
	\end{center}
\end{figure}

\begin{figure}[H]
	\begin{center}
%		\begin{minipage}{11cm}
                          \centering\includegraphics[height=6.3cm, width=7.2cm]{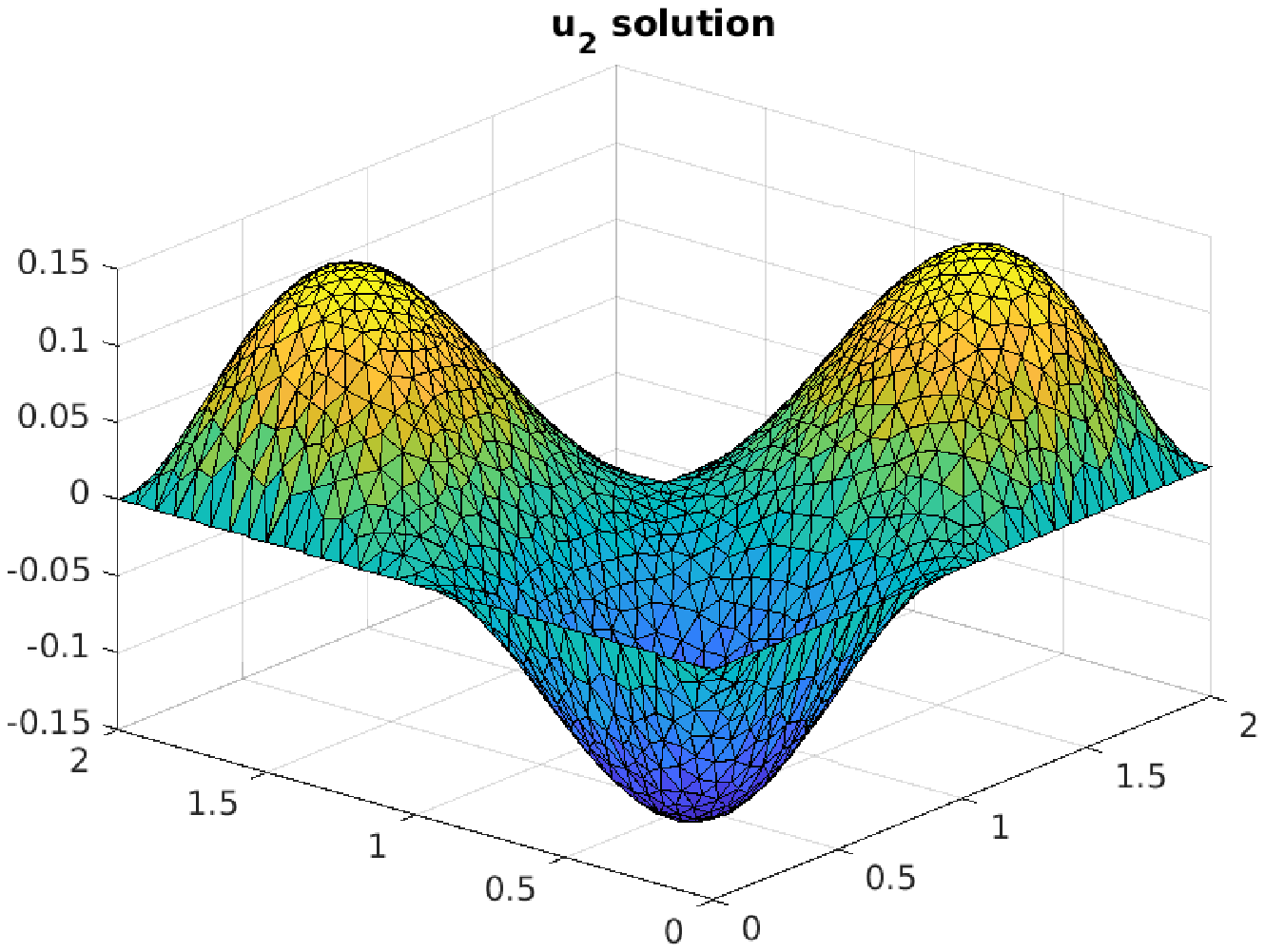}
                          \centering\includegraphics[height=6.3cm, width=7.2cm]{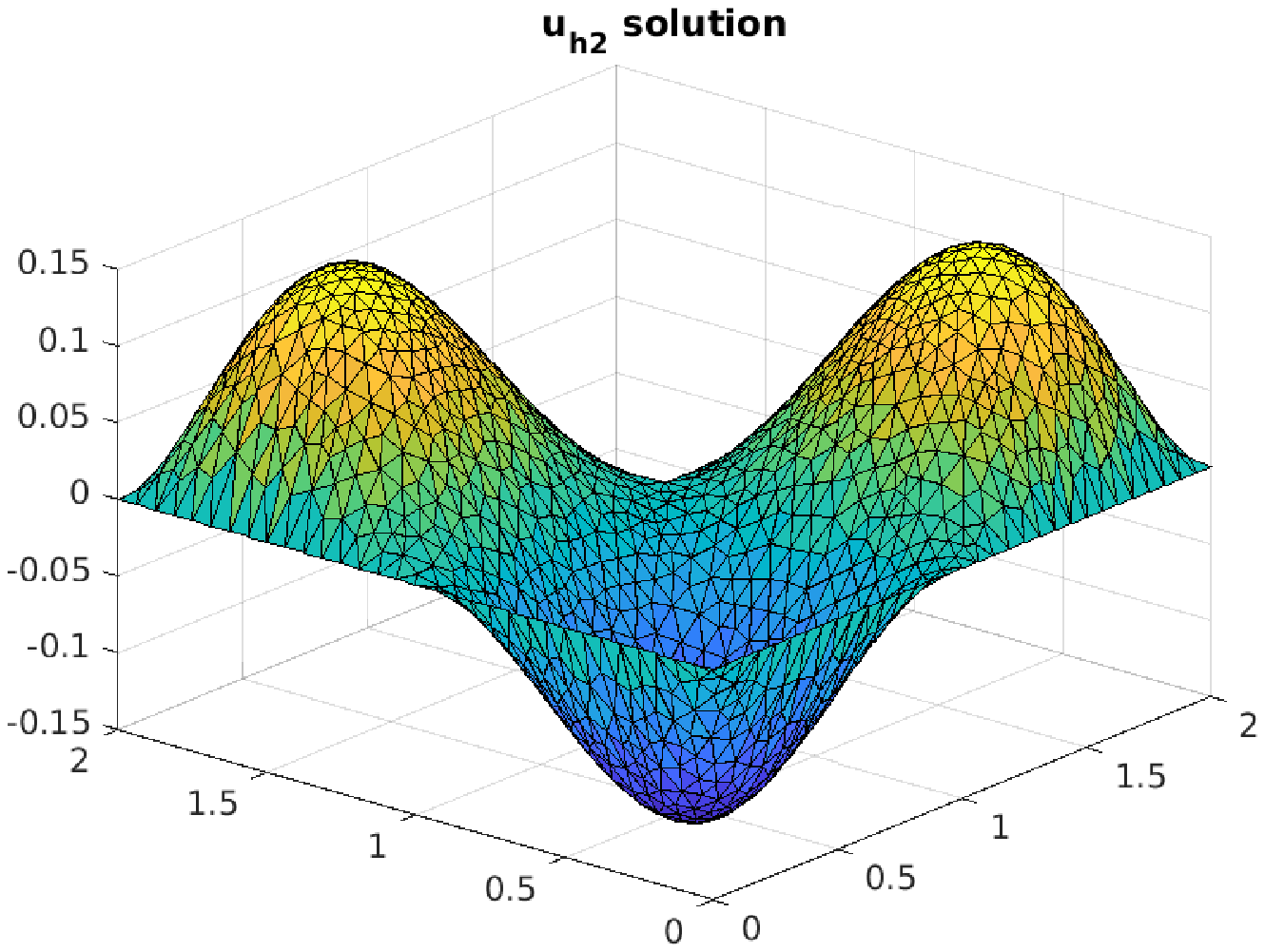}
%                   \end{minipage}
	\caption{Second  components of the displacement $\boldsymbol{u}_2$. Left: exact solution, right: approximated solution, both computed with $\nu=0.35$.}

		\label{fig:plot10}
	\end{center}
\end{figure}

\bibliographystyle{siam}
\footnotesize
\bibliography{AmLeRi_ver_2}

\end{document}